\numberwithin{equation}{section}
\newtheorem{theorem}{Theorem}[section]
\newtheorem{proposition}[theorem]{Proposition}
\newtheorem{lemma}[theorem]{Lemma}
\newtheorem{corollary}[theorem]{Corollary}
\theoremstyle{definition}
\newtheorem{remark}[theorem]{Remark}
\newtheorem*{remark*}{Remark} 
\newtheorem*{proposition*}{Proposition}
\newtheorem*{acknowledgement*}{Acknowledgement}
\newtheorem{definition}[theorem]{Definition}
\newcommand\nnorm[1]{\left\lVert#1\right\rVert}
\renewcommand\norm[1]{\left\lvert#1\right\rvert}
\newcommand{\overbar}[1]{\mkern 1.5mu\overline{\mkern-1.5mu#1\mkern-1.5mu}\mkern 1.5mu}
\newcommand{\Z}{\mathbb{Z}}
\newcommand{\Q}{\mathbb{Q}}
\newcommand{\R}{\mathbb{R}}
\newcommand{\C}{\mathbb{C}}
\begin{document}
\title{Systems of bihomogeneous forms of small bidegree}
\author[L. Hochfilzer]{Leonhard Hochfilzer}
\address{Mathematisches Institut, Bunsenstraße 3-5, 37073 Göttingen, Germany}
\email{leonhard.hochfilzer@mathematik.uni-goettingen.de}

\begin{abstract}
We use the circle method to count the number of integer solutions to systems of bihomogeneous equations of bidegree $(1,1)$ and $(2,1)$ of bounded height in lopsided boxes. Previously, adjusting Birch's techniques to the bihomogeneous setting, Schindler showed an asymptotic formula  provided the number of variables grows at least quadratically with the number of equations considered. Based on recent methods by Rydin Myerson we weaken this assumption and show that the number of variables only needs to satisfy a linear bound in terms of the number of equations.	
\end{abstract}

\maketitle
\tableofcontents

\section{Introduction}
Studying the number of rational solutions of bounded height on a system of equations is a fundamental tool in order to understand the distribution of rational points on varieties. A longstanding result by Birch~\cite{birch_forms} establishes an asymptotic formula for the number of integer points of bounded height that are solutions to a system of homogeneous forms of the same degree in a general setting, provided the number of variables is sufficiently big relative to the singular locus of the variety defined by the system of equations. This was recently improved upon by Rydin Myerson~\cite{myerson_quadratic, myerson_cubic} whenever the degree is $2$ or $3$. These results may be used in order to prove Manin's conjecture for certain Fano varieties, which arise as complete intersections in projective space.

Analogous to Birch's result, Schindler studied systems of bihomogeneous forms~\cite{schindler_bihomogeneous}. Using the hyperbola method, Schindler established Manin's conjecture for certain bihomogeneous varieties as a result~\cite{schindler_manin_biprojective}. The aim of this paper is to improve Schindler's result by applying the ideas of Rydin Myerson to the bihomogeneous setting.

Consider a system of bihomogeneous forms $\bm{F}(\bm{x},\bm{y})= \left( F_1(\bm{x},\bm{y}), \hdots, F_R(\bm{x},\bm{y}) \right)$ with integer coefficients in variables $\bm{x} = (x_1, \hdots, x_{n_1})$ and $\bm{y} = (y_1, \hdots, y_{n_2})$. We assume that all of the forms have the same bidegree, which we denote by $(d_1,d_2)$ for nonnegative integers $d_1,d_2$. By this we mean that for any scalars $\lambda, \mu \in \mathbb{C}$ we have
\begin{equation*}
	F_i(\lambda \bm{x}, \mu \bm{y}) = \lambda^{d_1} \mu^{d_2} F_i(\bm{x},\bm{y}),
\end{equation*}
for all $i = 1, \hdots, R$. This system defines a biprojective variety $V \subset \mathbb{P}_{\mathbb{Q}}^{n_1-1} \times \mathbb{P}_{\mathbb{Q}}^{n_2-1}$. One can also interpret the system in the affine variables $(x_1, \hdots, x_{n_1}, y_1, \hdots, y_{n_2})$ and thus $\bm{F}(\bm{x},\bm{y})$ also defines an affine variety which we will denote by $V_0 \subset \mathbb{A}_{\mathbb{Q}}^{n_1+n_2}$.
We are interested in studying the set of integer solutions to this system of bihomogeneous equations.
Consider two boxes $\mathcal{B}_i \subset [-1,1]^{n_i}$ where each edge is of side length at most one and they are all parallel to the coordinate axes. In order to study the questions from an analytic point of view, for $P_1, P_2 > 1$ we define the following counting function
\begin{equation*}
	N(P_1,P_2) = \# \{ (\bm{x}, \bm{y}) \in \mathbb{Z}^{n_1} \times \mathbb{Z}^{n_2} \mid \bm{x}/P_1 \in \mathcal{B}_1, \;  \bm{y}/P_2 \in \mathcal{B}_2, \;  \bm{F}(\bm{x},\bm{y}) = \bm{0} \}.
\end{equation*}
Generalising the work of Birch~\cite{birch_forms}, Schindler~\cite{schindler_bihomogeneous} used the circle method to achieve an asymptotic formula for $N(P_1,P_2)$ as $P_1,P_2 \rightarrow \infty$ provided certain conditions on the number of variables are satisfied, as we shall describe below.
Before we can state Schindler's result, consider the varieties $V_1^*$ and $V_2^*$ in $\mathbb{A}_\mathbb{Q}^{n_1+n_2}$ to be defined by the equations 
\begin{equation*}
	\mathrm{rank}\left(\frac{\partial F_i }{\partial x_j} \right)_{i,j} < R, \quad \text{and} \quad  \mathrm{rank}\left(\frac{\partial F_i }{\partial y_j} \right)_{i,j} < R
\end{equation*}
respectively. Assume that $V_0$ is a complete intersection, which means that $\dim V_0 = n_1+n_2-R$. Write $b = \max\left\{ \frac{\log(P_1)}{\log(P_2)},1\right\}$ and $u = \max\left\{ \frac{\log (P_2)}{\log(P_1)}, 1 \right\}$. If $n_i > R$ and
\begin{equation} \label{eq.cond_schindler}
	n_1+n_2 - \dim V_i^* > 2^{d_1+d_2-2} \max \{R(R+1)(d_1+d_2-1), R(bd_1 + ud_2) \},
\end{equation}	
is satisfied, for $i=1,2$ then Schindler showed the asymptotic formula
\begin{equation} \label{eq.schindler_asymptotic}
	N(P_1,P_2) = \sigma P_1^{n_1-Rd_1} P_2^{n_2-Rd_2} + O\left(P_1^{n_1-Rd_1} P_2^{n_2-Rd_2} \min \{P_1,P_2 \}^{-\delta}\right),
\end{equation}
for some $\delta > 0$ and where $\sigma$ is positive if the system $\bm{F}(\bm{x},\bm{y}) = \bm{0}$ has a smooth $p$-adic zero for all primes $p$, and the variety $V_0$ has a smooth real zero in $\mathcal{B}_1 \times \mathcal{B}_2$. 
In the case when the equations $F_1(\bm{x},\bm{y}), \hdots, F_R(\bm{x}, \bm{y})$ define a smooth complete intersection $V$, and where the bidegree is $(1,1)$ or $(2,1)$ the goal of this paper is to improve the restriction on the number of variables~\eqref{eq.cond_schindler} and still show~\eqref{eq.schindler_asymptotic}.

The result by Schindler generalises a well-known result by Birch~\cite{birch_forms}, which deals with systems of homogeneous equations; Let $\mathcal{B} \subset [-1,1]^n$ be a box containing the origin with side lengths at most $1$ and edges parallel to the coordinate axes. Given homogeneous equations $G_1(\bm{x}), \hdots, G_R(\bm{x})$ with rational coefficients of common degree $d$ define the counting function 
\begin{equation*}
	N(P) = \# \{ \bm{x} \in \mathbb{Z}^n \colon \bm{x}/P \in \mathcal{B}, \; G_1(\bm{x}) = \cdots = G_R(\bm{x}) = 0 \}.
\end{equation*}
Write $V^* \subset \mathbb{A}^n_{\mathbb{Q}}$ for the variety defined by 
\begin{equation*}
		\mathrm{rank} \left( \frac{\partial G_i}{\partial x_j} \right)_{i,j} < R,
\end{equation*}
commonly referred to as the \emph{Birch singular locus}. Assuming that $G_1, \hdots, G_R$ define a complete intersection $X \subset \mathbb{P}^{n-1}_{\mathbb{Q}}$ and that the number of variables satisfies
\begin{equation} \label{eq.assumption_birch_quadr}
		n-\dim V^* > R(R+1)(d-1)2^{d-1},
\end{equation}
then Birch showed
\begin{equation} \label{eq.birch_asymptotic}
		N(P) = \tilde{\sigma} P^{n-dR} + O(P^{n-dR-\varepsilon}),
\end{equation} 
	where $\tilde{\sigma} > 0$  if the system $\bm{G}(\bm{x})$ has a smooth $p$-adic zero for all primes $p$ and the variety $X$ has a smooth real zero in $\mathcal{B}$.

Building on ideas of M\"uller~\cite{mueller2, mueller1} on quadratic Diophantine inequalities, Rydin Myerson improved Birch's theorem. He weakened the assumption on the number of variables in the cases $d=2,3$~\cite{myerson_quadratic, myerson_cubic} whenever $R$ is reasonably large. Assuming that $X \subset \mathbb{P}^{n-1}_{\mathbb{Q}}$ defines a complete intersection, he was able to replace the condition in \eqref{eq.assumption_birch_quadr} by
\begin{equation} \label{eq.myerson_assumption}
	n - \sigma_{\mathbb{R}} > d2^d R,
\end{equation}
where 
\[
\sigma_{\mathbb{R}} = 1+ \max_{\bm{\beta} \in \mathbb{R}^R \setminus \{\bm{0}\}} \dim \mathrm{Sing} \mathbb{V}(\bm{\beta} \cdot \bm{G}),
\]
and where $\mathbb{V}(\bm{\beta} \cdot \bm{G})$ is the pencil defined by $\sum_{i=1}^R \beta_i G(\bm{x})$ in $\mathbb{P}^{n-1}_{\mathbb{Q}}$. We note at this point that several other authors have replaced the Birch singular locus condition with weaker assumptions, such as Schindler~\cite{schindler2015variant} and Dietmann~\cite{dietmann_weyl} who also considered dimensions of pencils, and very recently Yamagishi~\cite{yamagishi2023birch} who replaced the Birch singular locus with a condition regarding the Hessian of the system. Returning to Rydin Myerson's result if $X$ is non-singular then one can show
\[
\sigma_{\mathbb{R}} \leq R-1
\]
and in this case if $n \geq (d 2^d+1)R$ then one obtains the desired asymptotic.
Notably, the work of Rydin Myerson showed the number of variables $n$ thus only has to grow linearly in the number of equations $R$, whereas $R$ appeared quadratically in Birch's work. If $d\geq 4$ he showed that for \textit{generic} systems of forms it suffices to assume \eqref{eq.myerson_assumption} for the asymptotic~\eqref{eq.birch_asymptotic} to hold. Generic here means that the set of coefficients is required to lie in some non-empty Zariski open subset of the parameter space of coefficients of the equations.

Our goal in this paper is to generalise the results obtained by Rydin Myerson to the case of bihomogeneous varieties whenever the bidegree of the forms is $(1,1)$ or $(2,1)$. Those two cases correspond to degrees $2$ and $3$ in the homogeneous case, respectively. We call a bihomogeneous form \emph{bilinear} if the bidegree is $(1,1)$. Given a bilinear form $F_i(\bm{x},\bm{y})$ we may write it as
\begin{equation*}
	F_i(\bm{x},\bm{y}) = \bm{y}^T A_i \bm{x},
\end{equation*}
for some $n_2 \times n_1$-dimensional matrices $A_i$ with rational entries. Given $\bm{\beta} \in \mathbb{R}^R$ write
\begin{equation*}
	A_{\bm{\beta}} = \sum_{i = 1}^R \beta_i A_i.
\end{equation*}
Regarding $A_{\bm{\beta}}$ as a map $\mathbb{R}^{n_1} \rightarrow \mathbb{R}^{n_2}$ and and $A_{\bm{\beta}}^T$ as a map $\mathbb{R}^{n_2} \rightarrow \mathbb{R}^{n_1}$ we define the  quantities
\begin{equation*}
	\sigma_{\mathbb{R}}^{(1)} \coloneqq \max_{\bm{\beta} \in \mathbb{R}^R \setminus \{ \bm{0}\}} \dim \ker (A_{\bm{\beta}}), \quad \text{and} \quad
	\sigma_{\mathbb{R}}^{(2)} \coloneqq \max_{\bm{\beta} \in \mathbb{R}^R \setminus \{ \bm{0}\}} \dim \ker (A_{\bm{\beta}}^T).
\end{equation*}
We state our first theorem for systems of bilinear forms. Since the situation is completely symmetric with respect to the $\bm{x}$ and $\bm{y}$ variables if the forms are bilinear, we may without loss of generality assume $P_1  \geq P_2$ in the counting function, and still obtain the full result.
\begin{theorem} \label{thm.bilinear}
	Let $F_1(\bm{x},\bm{y}), \hdots, F_R(\bm{x}, \bm{y})$ be bilinear forms with integer coefficients such that the biprojective variety $\mathbb{V}(F_1, \hdots, F_R) \subset \mathbb{P}_{\mathbb{Q}}^{n_1-1} \times \mathbb{P}_{\mathbb{Q}}^{n_2-1}$ is a complete intersection.  Let $P_1\geq P_2> 1$, write $b = \frac{\log(P_1)}{\log(P_2)}$ and assume further that 
	\begin{equation} \label{eq.assumption_n_i_-sigma_i}
		n_i - \sigma_{\mathbb{R}}^{(i)} > (2b+2)R
	\end{equation}
	holds for $i = 1,2$. Then there exists some $\delta > 0$ depending at most on $b$, $\bm{F}$, $R$ and $n_i$ such that 
	\begin{equation*}
		N(P_1,P_2) = \sigma P_1^{n_1-R} P_2^{n_2-R} + O(P_1^{n_1-R} P_2^{n_2-R-\delta})
	\end{equation*}
	holds, where $\sigma > 0$ if the system $\bm{F}(\bm{x},\bm{y}) = \bm{0}$ has a smooth $p$-adic zero for all primes $p$ and if the variety $V_0$ has a smooth real zero in $\mathcal{B}_1 \times \mathcal{B}_2$.
	
	Moreover, if we assume $\mathbb{V}(F_1, \hdots, F_R) \subset \mathbb{P}^{n_1-1}_{\mathbb{Q}} \times \mathbb{P}_{\mathbb{Q}}^{n_2-1}$ to be smooth the same conclusions hold if we assume
	\begin{equation*}
		\min \{n_1, n_2 \} > (2b+2)R \quad \text{and} \quad n_1+n_2 > (4b+5)R
	\end{equation*}
	instead of \eqref{eq.assumption_n_i_-sigma_i}.
\end{theorem}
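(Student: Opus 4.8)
The proof will have two components: a circle-method estimate yielding the first assertion, and a short algebraic reduction deducing the \emph{moreover} clause from it.

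\textbf{The circle method.} The plan is to begin with the identity $N(P_1,P_2)=\int_{[0,1)^R}S(\bm\alpha)\,d\bm\alpha$, where $S(\bm\alpha)=\sum_{\bm x/P_1\in\mathcal B_1}\sum_{\bm y/P_2\in\mathcal B_2}e(\bm\alpha\cdot\bm F(\bm x,\bm y))$ with $e(t)=e^{2\pi i t}$. The feature that makes a linear-in-$R$ hypothesis attainable is that $\bm\alpha\cdot\bm F(\bm x,\bm y)=\bm y^{T}A_{\bm\alpha}\bm x$ is linear in $\bm y$ for fixed $\bm x$, so the sum over $\bm y$ factors coordinatewise and
\[
|S(\bm\alpha)|\le\sum_{\bm x/P_1\in\mathcal B_1}\ \prod_{k=1}^{n_2}\min\bigl\{P_2,\ \|(A_{\bm\alpha}\bm x)_k\|^{-1}\bigr\},
\]
where $\|\cdot\|$ is distance to $\Z$; this takes the place of Weyl differencing with no loss, the exponential sum being already ``linearised''. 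A Davenport-type shrinking argument then converts a lower bound $|S(\bm\alpha)|\gg P_1^{n_1}P_2^{n_2}\min\{P_1,P_2\}^{-\eta}$ into the assertion that $A_{\bm\alpha}\bm x$ lies within $O(P_2^{-1})$ of $\Z^{n_2}$ for $\gg P_1^{n_1}\min\{P_1,P_2\}^{-O(\eta)}$ integer points $\bm x\in P_1\mathcal B_1$; a geometry-of-numbers count bounds the number of such $\bm x$ by a quantity governed by $\dim\ker A_{\bm\alpha'}$ for a rational matrix $A_{\bm\alpha'}$ approximating $A_{\bm\alpha}$, hence in the worst case by $\sigma_{\R}^{(1)}$, and symmetrically (running the argument in the $\bm x$-variables) by $\sigma_{\R}^{(2)}$. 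This is Rydin Myerson's mechanism in place of Birch's, and it is where \eqref{eq.assumption_n_i_-sigma_i} and the lopsidedness $b=\log P_1/\log P_2$ enter: the conductor scale attached to $\bm\alpha$ is $Q\asymp P_1P_2$, and transferring large-values information between the scales $P_1$ and $P_2$ produces the exponents $2b$ and $2$ in $(2b+2)R$. Note that by rank--nullity $n_1-\sigma_{\R}^{(1)}=n_2-\sigma_{\R}^{(2)}$, so the two instances of \eqref{eq.assumption_n_i_-sigma_i} coincide, in keeping with the symmetry of the bilinear problem.

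\textbf{Major arcs and conclusion of the first part.} On the major arcs (rationals $\bm\alpha\approx\bm a/q$ with $q$ and $|q\bm\alpha-\bm a|$ small compared with $P_1P_2$) one approximates $S(\bm\alpha)$ by the product of a complete exponential sum modulo $q$ and a smooth oscillatory integral; summing over $q$ and integrating produces the singular series $\mathfrak S=\prod_p\mathfrak S_p$ and the singular integral $\mathfrak I$, absolute convergence of $\mathfrak S$ being guaranteed precisely by $n_i-\sigma_{\R}^{(i)}$ exceeding a suitable multiple of $R$. Combined with the minor-arc bound $\int_{\mathfrak m}|S(\bm\alpha)|\,d\bm\alpha\ll P_1^{n_1-R}P_2^{n_2-R}\min\{P_1,P_2\}^{-\delta}$ this gives $N(P_1,P_2)=\sigma P_1^{n_1-R}P_2^{n_2-R}+O(P_1^{n_1-R}P_2^{n_2-R-\delta})$ with $\sigma=\mathfrak S\mathfrak I$; positivity of $\mathfrak S$ follows from the hypothesis of smooth $p$-adic zeros for all $p$, and that of $\mathfrak I$ from a smooth real zero of $V_0$ in $\mathcal B_1\times\mathcal B_2$. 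The chief difficulty is the minor-arc estimate: bounding, uniformly in the two scales, the measure of the set of $\bm\alpha$ for which $A_{\bm\alpha}$ is close to a rational matrix of small rank, and balancing the resulting loss against the surplus $n_i-\sigma_{\R}^{(i)}$ of variables.

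\textbf{The moreover clause.} Here it suffices to show that smoothness of $V=\mathbb{V}(F_1,\dots,F_R)$, together with $\min\{n_1,n_2\}>(2b+2)R$ and $n_1+n_2>(4b+5)R$, implies \eqref{eq.assumption_n_i_-sigma_i}, whereupon the first part applies. Pick $\bm\beta\ne\bm0$ with $\dim\ker A_{\bm\beta}=\sigma_{\R}^{(1)}$; since $\dim\ker A_{\bm\beta}-\dim\ker A_{\bm\beta}^{T}=n_1-n_2$ for every $\bm\beta$, this $\bm\beta$ also attains $\dim\ker A_{\bm\beta}^{T}=\sigma_{\R}^{(2)}$. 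Set $m_i=\sigma_{\R}^{(i)}$. If $m_1,m_2\ge1$, let $Y=\PP(\ker A_{\bm\beta})\times\PP(\ker A_{\bm\beta}^{T})\cong\PP^{m_1-1}\times\PP^{m_2-1}$. On $Y$ one has $A_{\bm\beta}\bm x=0$, so $\bm\beta\cdot\bm F$ vanishes identically on $Y$, whence $V\cap Y$ is cut out on $Y$ by at most $R-1$ of the forms $F_i$, each a section of the ample line bundle $\mathcal O(1,1)|_Y$; therefore $V\cap Y\ne\emptyset$ whenever $\dim Y=m_1+m_2-2\ge R-1$. Any point $(\bm x,\bm y)\in V\cap Y$ is a singular point of $V$, since $A_{\bm\beta}\bm x=0$ and $A_{\bm\beta}^{T}\bm y=0$ exhibit the linear dependence $\sum_i\beta_i(A_i^{T}\bm y\mid A_i\bm x)=\bm0$ among the rows of the Jacobian of $\bm F$. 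Hence smoothness forces $m_1+m_2\le R$, and with $m_1-m_2=n_1-n_2$ this gives $n_i-\sigma_{\R}^{(i)}\ge\tfrac12(n_1+n_2-R)>(2b+2)R$ under $n_1+n_2>(4b+5)R$. If instead $m_2=0$, then every $A_{\bm\beta}$ is injective, so $n_1\le n_2$, $\sigma_{\R}^{(1)}=0$, $\sigma_{\R}^{(2)}=n_2-n_1$, and $n_i-\sigma_{\R}^{(i)}=n_1=\min\{n_1,n_2\}>(2b+2)R$; the case $m_1=0$ is symmetric. In every case \eqref{eq.assumption_n_i_-sigma_i} holds, completing the reduction.
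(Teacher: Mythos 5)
Your \emph{moreover} clause is essentially the paper's own reduction and is fine: picking $\bm\beta$ of minimal rank, identifying $\mathbb{V}(A_{\bm\beta}\bm x)\cap\mathbb{V}(\bm y^TA_{\bm\beta})$ (your $Y$) with the singular locus of the pencil $\mathbb{V}(\bm\beta\cdot\bm F)$, cutting with $F_1,\dots,F_{R-1}$ and using ampleness of $\mathcal{O}(1,1)$ plus smoothness of the complete intersection to force $\sigma_{\R}^{(1)}+\sigma_{\R}^{(2)}\le R$, and then concluding via $n_1-\sigma_{\R}^{(1)}=n_2-\sigma_{\R}^{(2)}\ge\tfrac12(n_1+n_2-R)$, is exactly the content of the paper's Lemmas on pencil singular loci and the smooth bilinear case (your degenerate case has the indices $1$ and $2$ interchanged — with $\sigma_{\R}^{(2)}=0$ it is $A_{\bm\beta}^{T}$, not $A_{\bm\beta}$, that is injective — but the two cases are symmetric, so this is only a labelling slip).

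The first part, however, has a genuine gap at its analytic core. You propose a one-point dichotomy: a lower bound on $\norm{S(\bm\alpha)}$ forces many $\bm x$ with $A_{\bm\alpha}\bm x$ within $O(P_2^{-1})$ of $\Z^{n_2}$, and you then claim this count is ``governed by $\dim\ker A_{\bm\alpha'}$ for a rational approximation $\bm\alpha'$, hence in the worst case by $\sigma_{\R}^{(1)}$''. That step is not a valid lemma, and it is precisely the point where the linear-in-$R$ hypothesis is at stake. For $\bm\alpha$ near $\bm a/q$ with $P^{\Delta}<q\lesssim P_2$ (well inside the minor arcs), the condition $\nnorm{A_{\bm\alpha}\bm x}_\infty$ close to $\Z^{n_2}$ becomes a congruence $A_{\bm a}\bm x\equiv\bm c\ (\mathrm{mod}\ q)$ with $\nnorm{\bm c}_\infty=O(1)$, and the number of such $\bm x$ is controlled by the rank of $A_{\bm a}$ \emph{modulo} $q$, not by any real kernel: since $\nnorm{\bm a}_\infty$ grows with $q$, the reduction of $A_{\bm a}$ mod $q$ can drop rank (e.g.\ $a_1\equiv-\lambda a_2\ (\mathrm{mod}\ q)$ with $\lambda$ an eigenvalue of $A_2$ mod $q$) even when every real pencil $A_{\bm\beta}$, $\bm\beta\neq\bm 0$, has full rank, so the count exceeds any bound of the shape $P_1^{\sigma_{\R}^{(1)}+\varepsilon}$ by powers of $P_2$. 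Controlling this arithmetic degeneracy is exactly what classically forces Birch's rank-$<R$ locus and the quadratic-in-$R$ condition; you acknowledge the minor-arc estimate as ``the chief difficulty'' but supply no mechanism for it, and the exponents $2b+2$ are never derived. The paper avoids the problem entirely by Rydin Myerson's two-point device, which your sketch does not contain: one compares $S(\bm\alpha)$ with $S(\bm\alpha+\bm\beta)$, so the linearised counting problem involves only the real \emph{difference} vector $\bm\beta$ and an absolute-value inequality $\norm{A_{\bm\beta}\bm u}$ small, which genuinely is bounded by $B^{\sigma_{\R}^{(i)}}$ via the singular-value/ellipsoid argument together with a compactness (Bolzano--Weierstrass) step; the resulting auxiliary inequality in $\nnorm{\bm\beta}_\infty$ then bounds the \emph{measure} of the set where $\norm{S}$ is large, and only a weak Schindler-type sup bound is needed on the minor arcs. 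Without this two-point comparison (or a worked substitute for it), the first assertion of the theorem is not proved by your argument.
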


We now move on to systems of forms $F_1(\bm{x},\bm{y}), \hdots, F_R(\bm{x},\bm{y})$ of bidegree $(2,1)$. We may write such a form $F_i(\bm{x},\bm{y})$  as
\begin{equation*}
	F_i(\bm{x},\bm{y}) = \bm{x}^T H_i(\bm{y}) \bm{x},
\end{equation*}
where $H_i(\bm{y})$ is a symmetric $n_1 \times n_1$ matrix whose entries are linear forms in the variables $\bm{y} = (y_1, \hdots, y_{n_2})$. 
Similarly to above, given $\bm{\beta} \in \mathbb{R}^R$ we write
\begin{equation*}
	H_{\bm{\beta}}(\bm{y}) = \sum_{i = 1}^R\beta_i H_i(\bm{y}).
\end{equation*}
Given $\ell \in\{ 1, \hdots, n_2\}$ write $\bm{e}_\ell \in \R^{n_2}$ for the standard unit basis vectors. Write \[\mathbb{V}(\bm{x}^T H_{\bm{\beta}} (\bm{e}_\ell) \bm{x})_{\ell = 1, \hdots, n_2} = \mathbb{V}(\bm{x}^T H_{\bm{\beta}} (\bm{e}_1) \bm{x}, \hdots, \bm{x}^T H_{\bm{\beta}} (\bm{e}_{n_2}) \bm{x}) \subset \mathbb{P}_{\mathbb{Q}}^{n_1-1}\]
for this intersection of pencils, and
 define
\begin{equation} \label{eq.defn_s_1}
	s^{(1)}_{\mathbb{R}} \coloneqq 1 + \max_{\bm{\beta} \in \mathbb{R}^R \setminus \{ \bm{0} \} } \dim \mathbb{V}(\bm{x}^T H_{\bm{\beta}} (\bm{e}_\ell) \bm{x})_{\ell = 1, \hdots, n_2}.
\end{equation}
Further write $\mathbb{V}(H_{\bm{\beta}}(\bm{y}) \bm{x})$ for the biprojective variety defined by the system of equations
\[
\mathbb{V}(H_{\bm{\beta}}(\bm{y}) \bm{x}) = \mathbb{V}((H_{\bm{\beta}}(\bm{y}) \bm{x})_1, \hdots, (H_{\bm{\beta}}(\bm{y}) \bm{x})_{n_1}) \subset \mathbb{P}_{\mathbb{Q}}^{n_1-1} \times \mathbb{P}_{\mathbb{Q}}^{n_2-1}
\]
and define
\begin{equation} \label{eq.defn_s_2}
	s_{\mathbb{R}}^{(2)} \coloneqq \left\lfloor \frac{\max_{\bm{\beta} \in \mathbb{R}^R \setminus \{ 0\}} \dim \mathbb{V}(H_{\bm{\beta}}(\bm{y}) \bm{x})}{2} \right\rfloor +1, 
\end{equation}
where $\lfloor x \rfloor$ denotes the largest integer $m$ such that $m \leq x$.

\begin{theorem} \label{thm.2,1_different_dimensions}
	Let $F_1(\bm{x},\bm{y}), \hdots, F_R(\bm{x},\bm{y})$ be bihomogeneous forms with integer coefficients of bidegree $(2,1)$ such that the biprojective variety $\mathbb{V}(F_1, \hdots, F_R) \subset \mathbb{P}_{\mathbb{Q}}^{n_1-1} \times \mathbb{P}_{\mathbb{Q}}^{n_2-1}$ is a complete intersection.  Let $P_1,P_2 > 1$ be real numbers. Write $b = \max\left\{\frac{\log(P_1)}{\log(P_2)}, 1 \right\}$	 and $u = \max\left\{\frac{\log(P_2)}{\log(P_1)}, 1 \right\}$. Assume further that	
	\begin{equation} \label{eq.assumption_n_i_(2,1)_introduction}
		n_1 - s_{\mathbb{R}}^{(1)} > (8b+4u)R \quad \text{and} \quad \frac{n_1+n_2}{2} - s_{\mathbb{R}}^{(2)} > (8b+4u)R
	\end{equation}
	is satisfied. Then there exists some $\delta > 0$ depending at most on $b$, $u$, $R$, $n_i$ and $\bm{F}$ such that 
	\begin{equation} \label{eq.asymptotic_bidegree_2_1}
		N(P_1,P_2) = \sigma P_1^{n_1-2R} P_2^{n_2-R} + O(P_1^{n_1-2R} P_2^{n_2-R} \min\{ P_1,P_2\}^{-\delta})
	\end{equation}
	holds, where $\sigma > 0$ if the system $\bm{F}(\bm{x},\bm{y}) = \bm{0}$ has a smooth $p$-adic zero for all primes $p$, and if the variety $V_0$ has a smooth real zero in $\mathcal{B}_1 \times \mathcal{B}_2$.
	
	If we assume that $\mathbb{V}(F_1, \hdots, F_R) \subset \mathbb{P}_{\mathbb{Q}}^{n_1-1} \times \mathbb{P}_{\mathbb{Q}}^{n_2-1}$ is smooth, then the same conclusions hold if we assume
	\begin{equation} \label{eq.assumptions_n_i_(2,1)_smooth_case}
		n_1 > (16b+8u+1)R, \quad \text{and} \quad n_2 > (8b+4u+1)R 
	\end{equation}
	instead of \eqref{eq.assumption_n_i_(2,1)_introduction}.
\end{theorem}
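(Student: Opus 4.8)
The plan is to run the Hardy--Littlewood circle method simultaneously in the two parameters $P_1,P_2$, keeping Schindler's bihomogeneous major-arc analysis~\cite{schindler_bihomogeneous} (an adaptation of Birch~\cite{birch_forms}) but replacing the classical Weyl inequality by the geometry-of-numbers estimates of Rydin Myerson~\cite{myerson_quadratic,myerson_cubic}. Write $e(z)=e^{2\pi i z}$, $F_{\bm\beta}=\sum_{i=1}^R\beta_iF_i$ and $H_{\bm\beta}(\bm y)=\sum_{i=1}^R\beta_iH_i(\bm y)$, and set
\[
S(\bm\alpha)=\sum_{\bm x/P_1\in\mathcal B_1}\ \sum_{\bm y/P_2\in\mathcal B_2}e\bigl(\bm\alpha\cdot\bm F(\bm x,\bm y)\bigr),\qquad\bm\alpha\in\mathbb T^R,
\]
so that $N(P_1,P_2)=\int_{\mathbb T^R}S(\bm\alpha)\,d\bm\alpha$. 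Since $F_i(\bm x,\bm y)=\bm x^TH_i(\bm y)\bm x$ is quadratic in $\bm x$ and linear in $\bm y$, there are two ways of simplifying the phase by a single differencing step, and the minor-arc analysis will use both. Differencing in the $\bm x$-variables (Cauchy--Schwarz over the $\bm y$-block followed by $\bm x\mapsto\bm x-\bm h$) gives, for $\bm x,\bm y$ ranging over suitable sub-boxes,
\[
|S(\bm\alpha)|^2\ll P_2^{n_2}\sum_{|\bm h|\ll P_1}\ \Bigl|\sum_{\bm x,\bm y}e\bigl(2\bm h^TH_{\bm\alpha}(\bm y)\bm x-\bm h^TH_{\bm\alpha}(\bm y)\bm h\bigr)\Bigr|,
\]
whose phase is of bidegree $(1,1)$ in $(\bm x,\bm y)$ up to a term free of $\bm x$; differencing instead in the (linear) $\bm y$-variables (Cauchy--Schwarz over the $\bm x$-block followed by $\bm y\mapsto\bm y-\bm k$) collapses the $\bm y$-sum and leaves the pure quadratic exponential sum
\[
|S(\bm\alpha)|^2\ll P_1^{n_1}P_2^{n_2}\sum_{|\bm k|\ll P_2}\ \Bigl|\sum_{\bm x}e\bigl(\bm x^TH_{\bm\alpha}(\bm k)\bm x\bigr)\Bigr|,
\]
with coefficient matrix $H_{\bm\alpha}(\bm k)=\sum_\ell k_\ell H_{\bm\alpha}(\bm e_\ell)$, the $\bm k$-sum here being in fact a product of one-dimensional geometric sums once the volume weights are retained.

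The crux is the minor-arc estimate: there is $\eta>0$ such that $|S(\bm\alpha)|\ll P_1^{n_1}P_2^{n_2}\min\{P_1,P_2\}^{-\eta}$ unless $\bm\alpha$ lies within $O(\min\{P_1,P_2\}^{-c})$ of a rational point $\bm a/q$ of denominator at most a fixed small power of $\min\{P_1,P_2\}$. Following Rydin Myerson, I would prove this by using Davenport's geometry-of-numbers lemma to pass from an exponential-sum bound to a count of integer points on which a family of linear forms is near-integral, and then running a dimension-growth argument forcing $\bm\alpha$ near a degeneracy locus. From the second display, recombining the $\bm k$-sum as a product of geometric sums leads to counting $\bm x$ with $|\bm x|\ll P_1$ for which all $n_2$ values $\bm x^TH_{\bm\alpha}(\bm e_\ell)\bm x$ lie within $O(1/P_2)$ of integers; an overabundance of such $\bm x$ forces the pencil of quadratic forms $\bm x^TH_{\bm\beta}(\bm e_\ell)\bm x$ to vanish on a large linear subspace, hence $\bm\alpha$ into a neighbourhood of the locus controlled by $s^{(1)}_{\mathbb R}$ in~\eqref{eq.defn_s_1}. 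From the first display, a second differencing in $\bm x$ followed by the geometry-of-numbers lemma leads instead to counting pairs $(\bm y,\bm x)$ with $|\bm y|\ll P_2$ and $|\bm x|\ll P_1$ for which all $n_1$ entries of $H_{\bm\alpha}(\bm y)\bm x$ lie within $O(1/P_1)$ of integers; this being a bilinear condition in $(\bm y,\bm x)$, a bilinear pigeonhole argument --- which accounts both for the floor in~\eqref{eq.defn_s_2} and for the factor $\tfrac{n_1+n_2}{2}$ in~\eqref{eq.assumption_n_i_(2,1)_introduction} --- forces $\bm\alpha$ near the locus $\mathbb V(H_{\bm\beta}(\bm y)\bm x)$ controlled by $s^{(2)}_{\mathbb R}$. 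Depending on the Diophantine-approximation regime of $\bm\alpha$ (equivalently, on the size of $q$), one or the other of these reductions is the efficient one, which is why covering the full minor-arc range requires both inequalities in~\eqref{eq.assumption_n_i_(2,1)_introduction}; tracking the ranges $|\bm h|\ll P_1$, $|\bm k|\ll P_2$, $|\bm x|\ll P_1$ together with the size $\asymp P_1^2P_2$ of the values of $\bm F$ through the two differencings is what produces the coefficients $8b$ and $4u$. Carrying out this dimension-growth step for a bilinear --- rather than symmetric --- family of forms, with the two independent scalings $P_1,P_2$ weighted correctly and bilinearity exploited to halve the relevant dimension, is the step I expect to be the main obstacle; the remaining ingredients are by now routine adaptations of~\cite{birch_forms,schindler_bihomogeneous}.

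Granting the minor-arc estimate, the complement of the minor arcs is a union of major arcs about rationals of small denominator, on which $S(\bm\alpha)$ is approximated --- by Poisson summation and partial summation adapted to the lopsided boxes --- by a product of a complete exponential sum and an oscillatory integral. Summing over denominators and integrating yields the main term $\sigma P_1^{n_1-2R}P_2^{n_2-R}$ with $\sigma=\mathfrak S\cdot\mathfrak J$ the product of the singular series and the singular integral; the complete-intersection hypothesis on $\mathbb V(\bm F)$ together with~\eqref{eq.assumption_n_i_(2,1)_introduction} ensures the absolute convergence of both, while the usual Hensel-lifting and implicit-function arguments give $\mathfrak S>0$ (resp. $\mathfrak J>0$) exactly when $\bm F=\bm 0$ has a smooth $p$-adic zero for every prime $p$ (resp. a smooth real zero in $\mathcal B_1\times\mathcal B_2$). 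The combined contribution of the minor arcs and of pruning the major arcs down to their centres is $O\bigl(P_1^{n_1-2R}P_2^{n_2-R}\min\{P_1,P_2\}^{-\delta}\bigr)$, which is~\eqref{eq.asymptotic_bidegree_2_1}.

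Finally, for the variant under the smoothness of $\mathbb V(\bm F)$, I would deduce~\eqref{eq.assumption_n_i_(2,1)_introduction} from~\eqref{eq.assumptions_n_i_(2,1)_smooth_case} by bounding the two invariants algebraically. Since $H_{\bm\beta}(\bm y)\bm x=\tfrac12\sum_i\beta_i\nabla_{\bm x}F_i(\bm x,\bm y)$, the variety $\mathbb V(H_{\bm\beta}(\bm y)\bm x)$ is the Birch singular locus of the pencil $F_{\bm\beta}$ in the $\bm x$-directions, and smoothness of $\mathbb V(\bm F)$ bounds its dimension by $n_2+O(R)$ independently of $n_1$, whence~\eqref{eq.defn_s_2} gives $s^{(2)}_{\mathbb R}\le\tfrac12 n_2+O(R)$; a parallel argument bounds $s^{(1)}_{\mathbb R}$ by $\max\{0,\,n_1-n_2\}+O(R)$. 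Inserting these bounds, the hypotheses $n_1>(16b+8u+1)R$ and $n_2>(8b+4u+1)R$ imply both inequalities in~\eqref{eq.assumption_n_i_(2,1)_introduction}, and the first part of the theorem applies.
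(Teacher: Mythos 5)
Your overall architecture (circle method in two parameters, a Rydin--Myerson style auxiliary inequality on the minor arcs, standard major arcs, and algebraic bounds on $s_{\mathbb R}^{(1)},s_{\mathbb R}^{(2)}$ in the smooth case) is the same as the paper's, and your smooth-case numerology matches Lemmas~\ref{lem.sigma_2-1_small} and~\ref{lem.sigma_2_bounds}. But the proposal has a genuine gap exactly at the point you yourself flag as ``the main obstacle'': the dimension-growth estimate for the second auxiliary count, i.e.\ the bound on the number of pairs $(\bm x^{(1)},\bm x^{(2)})$ with $\norm{\bm x^{(1)T}H_{\bm\beta}(\bm e_\ell)\bm x^{(2)}}$ small for all $\ell$, in terms of the invariant $s_{\mathbb R}^{(2)}$ of \eqref{eq.defn_s_2}. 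This is the new technical content of the theorem: the paper devotes Lemmas~\ref{lem.fixed_matrix_eigenvalues_counting}--\ref{lem.bilinear_small}, Corollary~\ref{cor.N_2_small_or_bilinear_vanishes} and Proposition~\ref{prop.N_2aux_small} to it, adapting the singular-value and minor calculus of Rydin Myerson's cubic paper to the \emph{non-symmetric} family $\widetilde H_{\bm\beta}(\bm x)$, producing two distinct subspaces $Y_1,Y_2$ (rather than one), and then converting ``the trilinear form is $O(C^{-1})$-small on $X\times Y_1\times Y_2$'' into exact vanishing by a normalisation-and-compactness argument in $\bm\beta$ before comparing with $\mathbb V(H_{\bm\beta}(\bm y)\bm x)$. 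A ``bilinear pigeonhole'' is not a substitute for this chain; in particular your claimed direct passage from an overabundance of $\bm x$ with all $\bm x^TH_{\bm\alpha}(\bm e_\ell)\bm x$ near-integral to vanishing of the quadric pencil on a large linear subspace does not hold as stated --- one must first linearise (Weyl differencing / the trilinear form as in Lemma~\ref{lem.aux_small_or_bilinear_big}) and then pass to a limit in $\bm\beta$, which is how Proposition~\ref{prop.aux_counting_2-1_small} actually proceeds.

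Two further points are asserted but not supplied. First, the factor $\tfrac{n_1+n_2}{2}$ in \eqref{eq.assumption_n_i_(2,1)_introduction} and the whole case $n_1\neq n_2$: in the paper these do not come out of the minor-arc analysis directly, but from proving the result for $n_1=n_2$ (Proposition~\ref{thm.2,1}) and then padding the shorter variable block with dummy variables, checking $\tilde s_{\mathbb R}^{(1)}$, $\tilde s_{\mathbb R}^{(2)}$ for the padded system and factoring the counting function; your sketch gives no argument that a direct treatment of lopsided dimensions yields the same constants. Second, the quantitative bookkeeping --- why the auxiliary bounds must take the shape $N_i^{\mathrm{aux}}(\bm\beta;B)\ll B^{d_1n_1+d_2n_2-n_i-2^{d_1+d_2-1}\mathscr C}$ with $\mathscr C>(bd_1+ud_2)R$, and how this produces $8b+4u$ --- is exactly the content of Theorem~\ref{thm.n_aux_imply_result} (via Proposition~\ref{prop.auxiliary_ineq_from_counting_function} and Proposition~\ref{prop.main_prop}), and ``tracking the ranges'' is not a proof of it. The major-arc treatment, the positivity of $\sigma$, and the deduction of \eqref{eq.assumption_n_i_(2,1)_introduction} from \eqref{eq.assumptions_n_i_(2,1)_smooth_case} are fine in outline and agree with the paper.
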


We remark that we preferred to give conditions in terms of the geometry of the variety regarded as a biprojective variety, as opposed to an affine variety. The reason for this is the potential application of this result to proving Manin's conjecture for this variety, which will be addressed in due course.
 
 Compared to the result by Schindler we thus basically remove the assumption that the number of variables needs to  grow at least quadratically in $R$. In particular, if the complete intersection defined by the system is assumed to be smooth, then our results requires fewer variables than Schindler's provided
 \[
 d_1b+d_2u < \frac{R+1}{2}
 \]
 is satisfied, in the cases $(d_1,d_2) = (1,1)$ or $(2,1)$. In particular, if $R$ is large this means our result provides significantly more flexibility in the choice of $u$ and $b$.
  
One cannot hope to achieve the asymptotic formula~\eqref{eq.schindler_asymptotic} in general where a condition of the shape $n_i > R(bd_1+ud_2)$ is not present. To see this note that the counting function satisfies
\[
N(P_1,P_2) \gg P_1^{n_1} + P_2^{n_2},
\]
coming from the solutions when $x_1 = \cdots = x_{n_1}= 0$ and $y_1 = \cdots = y_{n_2}= 0$. The asymptotic formula~\eqref{eq.schindler_asymptotic} thus implies 
\[
P_i^{n_i} \ll P_1^{n_1-d_1R} P_2^{n_2-d_2R},
\]
for $i=1,2$. Noting that $P_1^{u} = P_2$ if $u > 1$ and $P_2^{b}  =P_1$ if $b >1$ and comparing the exponents one necessarily finds $n_i > R(bd_1+ud_2)$.

If the forms are diagonal then one can take boxes $\mathcal{B}_i$, which avoid the coordinate axes in order to remedy this obstruction. In fact this is the approach taken by Blomer and Br\"{u}dern~\cite{blomer_bruedern_hyp} and they proved an asymptotic formula of a system of multihomogeneous equations without a restriction on the number of variables similar to the type described above.

If the forms are not diagonal the problem still persists, even if one were to take boxes avoiding the coordinate axes. In general there may be 'bad' vectors $\bm{y}$ away from the coordinate axes such that
\[
\#\left\{ \bm{x} \in \mathbb{Z}^{n_1} \colon \bm{F}(\bm{x},\bm{y}) = \bm{0}, \norm{\bm{x}} \leq P_1 \right\} \gg P_1^{n_1-a},
\]
where $a < d_1R$ for example. This is in contrast to the diagonal case, where the only vectors $\bm{y}$ where this occurs lie on at least one coordinate axis. It would be interesting to consider a modified counting function where one excludes such vectors $\bm{y}$, and analogously 'bad' vectors $\bm{x}$. In a general setting it seems difficult to control the set of such vectors. In particular, it is not clear how one would deal with the Weyl differencing step if one were to consider such a counting function.

\subsection{Manin's conjecture}
Let $V \subset \mathbb{P}_{\mathbb{Q}}^{n_1-1} \times \mathbb{P}_{\mathbb{Q}}^{n_2-1}$ be a non-singular complete intersection defined by a system of forms $F_i(\bm{x},\bm{y})$, $i = 1, \hdots, R$ of common bidegree $(d_1,d_2)$. Assume $n_i > d_iR$ so that $V$ is a Fano variety, which means that the inverse of the canonical bundle in the Picard group, the \emph{anticanonical bundle}, is very ample. For a field $K$, write $V(K)$ for the set of $K$-rational points of $V$. In the context of Manin's conjecture we define this to be the set of $K$-morphisms
\[
\mathrm{Spec}(K) \rightarrow V_K,
\]
where $V_K$ denotes the base change of $V$ to the field $K$.
 For a subset $U(\Q) \subset V(\Q)$ and $P \geq 1$ consider the counting function
\[
N_U(P) = \# \left\{ (\bm{x}, \bm{y}) \in U(\mathbb{Q}) \colon H(\bm{x},\bm{y}) \leq P \right\},
\]
where $H(\cdot, \cdot)$ is the \emph{anticanonical height} induced by the anticanonical bundle and a choice of global sections. 
In our case one such height may be explicitly given as follows. If $(\bm{x}, \bm{y}) \in U(\mathbb{Q})$ we may pick representatives $\bm{x} \in \mathbb{Z}^{n_1}$, and $\bm{y} \in \mathbb{Z}^{n_2}$ such that $(x_1, \hdots, x_{n_1}) = (y_1, \hdots, y_{n_2}) = 1$ and we define 
\[
H(\bm{x},\bm{y}) = \left( \max_i \lvert x_i \rvert \right)^{n_1-Rd_1}  \left( \max_i \lvert y_i \rvert \right)^{n_2-Rd_2}.
\]
Manin's Conjecture in this context states that, provided $V$ is a Fano variety such that $V(\mathbb{Q}) \subset V$ is Zariski dense, there exists a subset $U(\mathbb{Q}) \subset V(\mathbb{Q})$ where $(V \setminus U)(\mathbb{Q})$ is a \emph{thin} set such that
\[
N_U(P) \sim c P (\log P)^{\rho-1},
\]
where $\rho$ is the Picard rank of the variety $V$ and $c$ is a constant as predicted and interpreted by Peyre~\cite{peyre_constant}. We briefly recall the definition of a thin set, according to Serre~\cite{serre_thin_sets}.  First recall a set $A \subset V(K)$ is of type
\begin{itemize}
	\item[($C_1$)] if $A \subseteq W(K)$, where $W \subsetneq V$ is Zariski closed, 
	\item[($C_2$)] if $A \subseteq \pi(V'(K))$, where $V'$ is irreducible such that $\dim V = \dim V'$, where $\pi \colon V' \rightarrow V$ is a generically finite morphism of degree at least $2$.
\end{itemize}
Now a subset of the $K$-rational points of $V$ is \emph{thin} if it is a finite union of sets of type $(C_1)$ or $(C_2)$. Originally Batyrev--Manin~\cite{batyrev_manin} conjectured that it suffices to assume that $(V \setminus U)$ is Zariski closed, but there have been found various counterexamples to this, the first one being due to Batyrev--Tschinkel~\cite{batyrev_tschinkel_counterexample}.

In~\cite{schindler_manin_biprojective} Schindler showed an asymptotic formula of the shape above, if $V$ is smooth and $d_1,d_2 \geq 2$ and 
\[
n_i > 3 \cdot 2^{d_1+d_2} d_1 d_2 R^3 + R
\]
is satisfied for $i=1,2$. If $R=1$ she moreover verified that the constant obtained agrees with the one predicted by Peyre, and thus proved Manin's conjecture for bihomogeneous hypersurfaces when the conditions above are met. The proof uses the asymptotic~\eqref{eq.schindler_asymptotic} established in~\cite{schindler_bihomogeneous} along with uniform counting results on fibres. That is, for a vector $\bm{y} \in \mathbb{Z}^{n_2}$ one may consider the counting function 
\[
N_{\bm{y}}(P) = \# \left\{ \bm{x} \in \mathbb{Z}^{n_1} \colon \bm{F}(\bm{x},\bm{y}) = \bm{0}, \lvert \bm{x} \rvert \leq P \right\},
\]
and to understand its asymptotic behaviour uniformly means to understand the dependence of $\bm{y}$ on the constant in the error term. Similarly she considered $N_{\bm{x}}(P)$ for 'good' $\bm{x}$ and combined the three resulting estimates to obtain an asymptotic formula for the number of solutions $\widetilde{N}(P_1,P_2)$ to the system $\bm{F}(\bm{x},\bm{y}) = \bm{0}$, where $\lvert \bm{x} \rvert \leq P_1$, $\lvert \bm{y} \rvert \leq P_2$ and $\bm{x},\bm{y}$ are 'good'. Considering only 'good' tuples essentially removes a closed subset from $V$, and thus, after an application of a slight modification of the hyperbola method developed as in~\cite{blomer_bruedern_hyp} she obtained an asymptotic formula for $N_U(P)$ of the desired shape.

In forthcoming work the result established in Theorem~\ref{thm.2,1_different_dimensions} will be used in verifying Manin's Conjecture for $V$, when $(d_1,d_2) = (2,1)$ in fewer variables than would be expected using Schindler's method as described above. Further, since the Picard rank of $V$ is strictly greater than $1$, it would be interesting to consider the \emph{all heights approach} as suggested by Peyre~\cite[Question V.4.8]{peyre_book_all_heights}. As noted by Peyre himself, in the case when a variety has Picard rank $1$, the answer to his Question 4.8 follows provided one can prove Manin's conjecture with respect to the height function induced by the anticanonical bundle.

Schindler's results have been improved upon in a few special cases. Browning and Hu showed Manin's conjecture in the case of smooth biquadratic hypersurfaces in $\mathbb{P}^{n-1}_{\mathbb{Q}} \times \mathbb{P}^{n-1}_{\mathbb{Q}}$ if the number of variables satisfies $n>35$. If the bidegree is $(2,1)$ then Hu showed that $n>25$ suffices in order to obtain Manin's conjecture. Systems of bilinear varieties are flag varieties and thus Manin's conjecture follows from the result for flag varieties, which was proven by Franke, Manin and Tschinkel~\cite{jens_manin_tschinkel_manin_flag} using the theory of Eisenstein series. In the special case when the variety is defined by $\sum_{i=0}^s x_iy_i = 0$ then Robbiani~\cite{robbiani_bilinear} showed how one may use the circle method to establish Manin's conjecture if $s \geq 3$, which was later improved to $s \geq 2$ by Spencer~\cite{spencer_manin_bilinear}. 

\subsection*{Acknowledgements}
The author would like to thank Damaris Schindler for many helpful comments and conversations regarding this project. The author would further like to thank Christian Bernert and Simon Rydin Myerson for helpful conversations.
\subsection*{Conventions}
The symbol $\varepsilon >0$ is an arbitrarily small value, which we may redefine whenever convenient, as is usual in analytic number theory. Given forms $g_\ell$, $\ell = 1, \hdots, k$  we write $\mathbb{V}(g_\ell)_{\ell = 1, \hdots, k}$ or sometimes just $\mathbb{V}(g_\ell)_\ell$ for the intersection $\mathbb{V}(g_1, \hdots, g_k)$. Further, we may sometimes consider a vector of forms $\bm{h} = (h_1, \hdots, h_k)$ and we similarly write $\mathbb{V}(\bm{h})$ for the intersection $\mathbb{V}(h_1, \hdots, h_k)$.

For a real number $x \in \R$ we will write $e(x) = e^{2 \pi i x}$. We will use Vinogradov's notation $O(\cdot)$ and $\ll$. 

We shall repeatedly use the convention that the dimension of the empty set $-1$.
\section{Multilinear forms}
Both Theorem~\ref{thm.bilinear} and Theorem~\ref{thm.2,1_different_dimensions} follow from a more general result. If we have control over the number of 'small' solutions to the associated linearised forms then we can show that the asymptotic~\eqref{eq.schindler_asymptotic} holds. More explicitly, given a bihomogeneous form $F(\bm{x},\bm{y})$ with integer coefficients of bidegree $(d_1,d_2)$ for positive integers $d_1,d_2$, we may write it as
\begin{equation*}
F(\bm{x},\bm{y}) = \sum_{\bm{j}} \sum_{\bm{k}} F_{\bm{j},\bm{k}} x_{j_1} \cdots x_{j_{d_1}} y_{k_1} \cdots y_{k_{d_2}},
\end{equation*}
where the coefficients $F_{\bm{j},\bm{k}} \in \Q$ are symmetric in $\bm{j}$ and $\bm{k}$. 
We define the associated multilinear form
\begin{equation*}
	\Gamma_{F}(\widetilde{\bm{x}}, \widetilde{\bm{y}}) \coloneqq d_1! d_2! \sum_{\bm{j}} \sum_{\bm{k}} F_{\bm{j},\bm{k}} x^{(1)}_{j_1} \cdots x^{(d_1)}_{j_{d_1}} y^{(1)}_{k_1} \cdots y^{(d_2)}_{k_{d_2}},
\end{equation*}
where $\widetilde{\bm{x}} = (\bm{x}^{(1)}, \hdots, \bm{x}^{(d_1)})$ and $\widetilde{\bm{y}} = (\bm{y}^{(1)}, \hdots, \bm{y}^{(d_2)})$ for vectors $\bm{x}^{(i)}$ of $n_1$ variables and vectors $\bm{y}^{(i)}$ of $n_2$ variables. Write further $\widehat{\bm{x}} = (\bm{x}^{(1)}, \hdots, \bm{x}^{(d_1-1)})$ and $\widehat{\bm{y}} = (\bm{y}^{(1)}, \hdots, \bm{y}^{(d_2-1)})$. Given $\bm{\beta} \in \mathbb{R}^R$ we define the auxiliary counting function $N_1^{\mathrm{aux}}(\bm{\beta}; B)$ to be the number of integer vectors satisfying $\widehat{\bm{x}} \in (-B,B)^{(d_1-1)n_1}$ and $\widetilde{\bm{y}} \in (-B,B)^{d_2n_2}$ such that 
\begin{equation*}
	\norm{\Gamma_{\bm{\beta} \cdot \bm{F}}(\widehat{\bm{x}}, \bm{e}_\ell, \widetilde{\bm{y}})} < \nnorm{\bm{\beta} \cdot \bm{F}}_\infty B^{d_1+d_2-2},
\end{equation*}
for $\ell = 1, \hdots, n_1$ where $\nnorm{\bm{\beta} \cdot \bm{F}}_\infty \coloneqq \frac{1}{d_1!d_2!} \max_{\bm{j}, \bm{k}} \norm{\frac{\partial^{d_1+d_2}(\bm{\beta} \cdot \bm{F})}{\partial x_{j_1} \cdots \partial x_{j_{d_1}} \partial y_{k_1} \cdots \partial y_{k_{d_2}}  }}$.  We  define $N_2^{\mathrm{aux}}(\bm{\beta};B)$ analogously.

The technical core of this paper is the following theorem.
\begin{theorem} \label{thm.n_aux_imply_result}
	Assume $n_1,n_2 > (d_1+d_2)R$ and let $\bm{F}(\bm{x},\bm{y}) = (F_1(\bm{x},\bm{y}), \hdots, F_R(\bm{x},\bm{y}))$ be a system of bihomogeneous forms with integer coefficients of common bidegree $(d_1,d_2)$ such that the variety $\mathbb{V}(\bm{F}) \subset \mathbb{P}_{\mathbb{Q}}^{n_1-1} \times \mathbb{P}_{\mathbb{Q}}^{n_2-1}$ is a complete intersection. Let $P_1,P_2 > 1$ and write $b = \max \left\{\log(P_1) /\log (P_2),1 \right\}$ and $u = \max \left\{\log(P_2) /\log (P_1),1 \right\}$. 
	
	Assume there exist $C_0 \geq 1$ and $\mathscr{C} > (bd_1+ud_2)R$ such that for all $\bm{\beta} \in \mathbb{R}^R \setminus \{ \bm{0}\}$ and all $B>0$ we have
	\begin{equation} \label{eq.N_aux_general_condition}
		N_i^{\mathrm{aux}}(\bm{\beta};B) \leq C_0 B^{d_1n_1 + d_2n_2 - n_i - 2^{d_1+d_2-1}\mathscr{C}}
	\end{equation}
	for $i=1,2$.  There exists some $\delta > 0$ depending on $b$, $u$, $C_0$, $R$, $d_i$ and $n_i$ such that
	\begin{equation*} 
		N(P_1,P_2) = \sigma P_1^{n_1-d_1R}P_2^{n_2-d_2R} + O \left(P_1^{n_1-d_1R}P_2^{n_2-d_2R} \min\{P_1,P_2 \}^{-\delta} \right).
	\end{equation*}
	The factor $\sigma = \mathfrak{I} \mathfrak{S}$ is the product of the singular integral $\mathfrak{I}$ and the singular series $\mathfrak{S}$, as defined in~\eqref{eq.def_singular_integral} and~\eqref{eq.def_singular_series}, respectively. Moreover, if the system $\bm{F}(\bm{x},\bm{y}) = \bm{0}$ has a non-singular real zero in $\mathcal{B}_1 \times \mathcal{B}_2$ and a non-singular $p$-adic zero for every prime $p$, then $\sigma >0$.
	\end{theorem}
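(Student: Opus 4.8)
The plan is to follow the Birch--Davenport circle method framework, as adapted to the bihomogeneous setting by Schindler, but to replace the Weyl differencing/minor arcs input with the hypotheses~\eqref{eq.N_aux_general_condition} on the auxiliary counting functions. First I would set up the exponential sum
\[
S(\bm{\alpha}) = \sum_{\bm{x}/P_1 \in \mathcal{B}_1} \sum_{\bm{y}/P_2 \in \mathcal{B}_2} e\!\left( \bm{\alpha} \cdot \bm{F}(\bm{x},\bm{y}) \right),
\]
so that $N(P_1,P_2) = \int_{[0,1)^R} S(\bm{\alpha})\, d\bm{\alpha}$, and introduce a parameter $\theta$ governing the major arcs $\mathfrak{M}$: unions of boxes around rationals $\bm{a}/q$ with $q \leq P_1^{\theta_1} P_2^{\theta_2}$ or similar (the precise lopsided shape dictated by $b,u$). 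The heart of the matter is the minor-arc estimate: I would apply Weyl differencing $d_1 + d_2 - 1$ times (first in the $\bm{x}$-variables, then in the $\bm{y}$-variables, or interleaved) to bound $|S(\bm{\alpha})|^{2^{d_1+d_2-1}}$ by an average, over the auxiliary variables $\widehat{\bm{x}}, \widetilde{\bm{y}}$, of a product of linear exponential sums in a single remaining block of variables whose coefficients are the entries of $\Gamma_{\bm{\alpha}\cdot\bm{F}}(\widehat{\bm{x}}, \bm{e}_\ell, \widetilde{\bm{y}})$. By the standard geometry-of-numbers / Davenport-style argument, if $S(\bm{\alpha})$ is large then a positive proportion of these coefficient vectors must be simultaneously close to integers, i.e. the number of $(\widehat{\bm{x}}, \widetilde{\bm{y}})$ with $\|\Gamma_{\bm{\alpha}\cdot\bm{F}}(\widehat{\bm{x}}, \bm{e}_\ell, \widetilde{\bm{y}})\|$ small (in the sense defining $N_i^{\mathrm{aux}}$, after a rational approximation $\bm{\alpha} \approx \bm{a}/q$ and passing to $\bm{\beta} = q\bm{\alpha} - \bm{a}$ or to a genuinely real $\bm{\beta}$) is forced to be abnormally large — which, via~\eqref{eq.N_aux_general_condition}, contradicts $\bm{\alpha}$ being on a minor arc unless $|S(\bm{\alpha})|$ is already small. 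This yields $|S(\bm{\alpha})| \ll \max\{P_1,P_2\}^{d_1 n_1/? } \cdots$; more precisely a bound of the form $P_1^{d_1 n_1}P_2^{d_2 n_2}$-normalised times a saving, good enough to sum over the minor arcs because $\mathscr{C} > (bd_1+ud_2)R$ provides the exponent slack. A subtlety I would need to handle carefully, following Rydin Myerson, is the transition between rational $\bm{\alpha}$ near a major arc and the genuinely real parameter $\bm{\beta}$ appearing in the hypothesis: one wants the $N_i^{\mathrm{aux}}$ bound for \emph{all} real $\bm{\beta}\neq \bm 0$ precisely so that a single clean Davenport-type lemma covers every $\bm{\alpha}$, with the normalisation by $\|\bm{\beta}\cdot\bm{F}\|_\infty$ making the statement scaling-invariant.

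Next I would treat the major arcs. Here the hypotheses $n_i > (d_1+d_2)R$ together with $\mathscr{C} > (bd_1+ud_2)R$ guarantee enough variables that the singular series $\mathfrak{S} = \prod_p \sigma_p$ and singular integral $\mathfrak{I}$ converge absolutely, by the usual argument bounding local densities via the same auxiliary-variable count (Hua-type / Deligne-free estimates for complete intersections, or again via~\eqref{eq.N_aux_general_condition} applied to the complete exponential sums). The main-term extraction is routine once the exponential sum over a single major arc is approximated by (Gauss sum) $\times$ (oscillatory integral): one gets $S(\bm{\alpha}) \approx q^{-n_1-n_2} S_{\bm{a},q} I(\cdots)$, sums the Gauss sums over $\bm{a}, q$ to build $\mathfrak{S}$, extends the truncated singular integral to all of $\R^R$ at the cost of an acceptable error (using $n_i > (d_1+d_2)R$), and arrives at the predicted main term $\sigma P_1^{n_1-d_1R} P_2^{n_2-d_2R}$ with a power saving $\min\{P_1,P_2\}^{-\delta}$, the value of $\delta$ being a positive quantity that depends only on the slack $\mathscr{C} - (bd_1+ud_2)R$, on $C_0$, and on $b,u,R,d_i,n_i$. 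The complete-intersection hypothesis on $\mathbb{V}(\bm{F})$ is what ensures the dimension bookkeeping (so that $V_0$ has the expected dimension and the trivial solution contributions are lower order) and that the exponents match up.

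Finally, for the positivity of $\sigma = \mathfrak{I}\mathfrak{S}$: writing $\mathfrak{S} = \prod_p \sigma_p$ with $\sigma_p = \lim_{k\to\infty} p^{-k(n_1+n_2-R)} \#\{(\bm{x},\bm{y}) \bmod p^k : \bm{F} \equiv \bm 0\}$, a non-singular $p$-adic zero gives $\sigma_p > 0$ by Hensel's lemma; the convergence of the product together with $\sigma_p > 0$ for all $p$ (and the standard fact that $\sigma_p = 1 + O(p^{-1-\eta})$ for almost all $p$, from the auxiliary count) gives $\mathfrak{S} > 0$. Likewise a non-singular real zero inside $\mathcal{B}_1 \times \mathcal{B}_2$ makes the singular integral $\mathfrak{I}$, which is the real density of solutions in the box, strictly positive by the implicit function theorem (one localises near the smooth point and integrates a transverse slice). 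Hence $\sigma > 0$.

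I expect the minor-arc analysis to be the main obstacle — specifically, carrying out the Weyl differencing in the \emph{lopsided} regime (tracking the correct powers of $P_1$ versus $P_2$ through each differencing step, since differencing in $\bm{x}$ costs $P_1$-factors and in $\bm{y}$ costs $P_2$-factors) and then converting a large value of $|S(\bm{\alpha})|$ into a lower bound for $N_i^{\mathrm{aux}}(\bm{\beta};B)$ with the \emph{right} $B$ (a power of $P_1$ or $P_2$ depending on $b,u$) and the right normalisation, so that hypothesis~\eqref{eq.N_aux_general_condition} can be invoked. Getting the shape $d_1 n_1 + d_2 n_2 - n_i - 2^{d_1+d_2-1}\mathscr{C}$ in the exponent to line up exactly with what the differencing produces, for both $i=1$ and $i=2$, is the delicate combinatorial bookkeeping that the rest of the proof hinges on.
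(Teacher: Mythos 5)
Your overall skeleton (exponential sum, major/minor dissection, standard major-arc expansion, Hensel plus implicit function theorem for positivity) matches the paper, and your treatment of the major arcs and of $\sigma>0$ is essentially right. The genuine gap is in the minor arcs, which is the heart of the theorem. You propose to turn the hypothesis \eqref{eq.N_aux_general_condition} into a \emph{pointwise} bound for $\lvert S(\bm{\alpha})\rvert$ on each minor arc, via Weyl differencing of the single sum $S(\bm{\alpha})$ and then ``passing to $\bm{\beta}=q\bm{\alpha}-\bm{a}$''. This cannot work as stated: differencing a single $S(\bm{\alpha})$ produces the counting function $M_i(\bm{\alpha}\cdot\bm{F};\cdot)$, whose conditions are \emph{distance-to-nearest-integer} conditions on $\Gamma_{\bm{\alpha}\cdot\bm{F}}$, whereas $N_i^{\mathrm{aux}}(\bm{\beta};B)$ is a genuinely Archimedean count, normalised by $\nnorm{\bm{\beta}\cdot\bm{F}}_\infty$; at a rational point $\bm{\alpha}=\bm{a}/q$ the quantities $\Gamma_{\bm{\alpha}\cdot\bm{F}}$ are frequently integral and $S(\bm{\alpha})$ really is large, so no minor-arc-quality pointwise bound can follow from \eqref{eq.N_aux_general_condition} alone, and the proposed deduction ``large $\lvert S(\bm{\alpha})\rvert$ forces a large auxiliary count, contradiction'' is circular without first knowing $\bm{\alpha}$ is well approximable. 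Moreover, even granting a Birch-type dichotomy, bounding $\int_{\mathfrak{m}}\lvert S\rvert$ by (measure of arcs)$\times$(sup bound) with only the linear-in-$R$ saving $\mathscr{C}>(bd_1+ud_2)R$ reproduces exactly the bookkeeping that forced Birch and Schindler to require roughly $R^2$ variables; the ``exponent slack'' you invoke is not enough for that route.

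What the paper actually does, following Rydin Myerson, is structurally different and is the missing idea. One differences the \emph{product} $S(\bm{\alpha})\overline{S(\bm{\alpha}+\bm{\beta})}$ (Lemma~\ref{lem.auxiliary_ineq_lemma}): the top-bidegree part of the resulting phase is $\bm{\beta}\cdot\bm{F}$, with the unknown $\bm{\alpha}$ cancelling, and this is precisely why the hypothesis for all \emph{real} $\bm{\beta}\neq\bm{0}$ becomes usable, yielding the auxiliary inequality \eqref{eq.aux_ineq} for $\min\{\lvert S(\bm{\alpha})\rvert,\lvert S(\bm{\alpha}+\bm{\beta})\rvert\}$ (Proposition~\ref{prop.auxiliary_ineq_from_counting_function}). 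This is a \emph{repulsion} statement: $S$ cannot be large at two nearby-but-not-too-nearby points. The minor-arc integral is then controlled by a level-set covering argument (Lemma~\ref{lem.general_integral_bound} and Lemma~\ref{lem.good_int_bound}), which bounds the measure of $\{\lvert S\rvert\geq t\}$ by covering it with small boxes, combined with a comparatively weak sup bound on $\mathfrak{m}(\Delta)$ taken from Schindler's classical Weyl estimate (Lemma~\ref{lem.sup_bound}), which rests on linear independence of the forms rather than on \eqref{eq.N_aux_general_condition}. Without the two-point minimum structure and this covering lemma, the linear condition $\mathscr{C}>(bd_1+ud_2)R$ cannot be made to close the minor arcs, so your proposal as written has a genuine gap at its central step.
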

While showing that~\eqref{eq.N_aux_general_condition} holds is rather straightforward when the bidegree is $(1,1)$ it becomes significantly more difficult when the bidegree increases. In fact, in Rydin Myerson's work a similar upper bound on a similar auxiliary counting function needs to be shown. He is successful in doing so when the degree is $2$ or $3$ and the system defines a complete intersection, but for higher degrees he was only able to show this upper bound for generic systems. 
Our strategy is as follows. We will establish Theorem~\ref{thm.n_aux_imply_result} in Section~\ref{sec.weyl_differencing_etc} and Section~\ref{sec.circle_method} and then use this to show Theorem~\ref{thm.bilinear} and Theorem~\ref{thm.2,1_different_dimensions} in Section~\ref{sec.bilinear_proof}  and in Section~\ref{sec.proof.2-1}.

\section{Geometric preliminaries}

The following Lemma is taken from \cite{schindler_manin_biprojective}.
\begin{lemma}[Lemma 2.2 in \cite{schindler_manin_biprojective}] \label{lem.geometry_intersections}
	Let $W$ be a smooth variety that is complete over some algebraically closed field and consider a closed irreducible subvariety $Z \subseteq W$ such that $\dim Z \geq 1$. Given an effective divisor $D$ on $W$ then the dimension of every irreducible component of $D \cap Z$ is at least $\dim Z-1$.	If $D$ is moreover ample we have in addition that $D \cap Z$ is nonempty.
\end{lemma}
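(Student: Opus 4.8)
The plan is to reduce both claims to standard facts about ample and effective divisors on complete varieties. For the dimension bound, I would first observe that we may reduce to the case where $D$ is an effective \emph{Cartier} divisor, at least locally: on a smooth variety $W$ every Weil divisor is Cartier, so $D$ is cut out locally by a single equation. Then, on any affine open $U \subseteq W$ meeting an irreducible component $Y$ of $D \cap Z$, the intersection $Y \cap U$ is a component of the zero locus of one regular function on the irreducible affine variety $Z \cap U$. By Krull's principal ideal theorem (Hauptidealsatz), every component of the vanishing locus of a single function on an irreducible variety has codimension at most $1$; hence $\dim Y = \dim(Y \cap U) \geq \dim(Z \cap U) - 1 = \dim Z - 1$. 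One has to be slightly careful that the function is not identically zero on $Z$, but if it were, then $Z \subseteq D$ and $D \cap Z = Z$, which has dimension $\dim Z \geq \dim Z - 1$, so the bound still holds.

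For the nonemptiness claim when $D$ is ample, the standard approach is: since $D$ is ample, some positive multiple $mD$ is very ample and defines a closed embedding $W \hookrightarrow \mathbb{P}^N$ under which $mD$ is the restriction of a hyperplane $H$. Then $mD \cap Z$, as a subset of projective space, is $H \cap \overline{Z}$, and since $\dim Z \geq 1$, the projective variety $Z$ has positive dimension, so it cannot be disjoint from a hyperplane in $\mathbb{P}^N$ — a positive-dimensional projective variety meets every hyperplane (otherwise it would be a positive-dimensional closed subvariety of the affine space $\mathbb{P}^N \setminus H$, contradicting completeness/properness, which forces affine subvarieties to be finite). Since $mD \cap Z \neq \emptyset$ and $D$ and $mD$ have the same support, $D \cap Z \neq \emptyset$ as well. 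Combining with the first part, every component of $D \cap Z$ then has dimension at least $\dim Z - 1 \geq 0$, which is automatically consistent with nonemptiness.

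I expect the main obstacle to be purely expository rather than mathematical: making the reduction to the Cartier/local-equation situation clean, and handling the edge case $Z \subseteq \supp D$ uniformly, so that the single statement "every component has dimension $\geq \dim Z - 1$" holds without case distinctions. A secondary point requiring a line of care is that $Z$ is assumed irreducible but $D \cap Z$ need not be; the principal ideal theorem is exactly the tool that controls each component separately. Since this lemma is quoted verbatim from \cite{schindler_manin_biprojective}, in the paper itself one would simply cite it, but the sketch above is how I would reconstruct the proof from first principles.
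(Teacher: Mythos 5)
The paper does not actually prove this lemma: it is imported verbatim as Lemma 2.2 of \cite{schindler_manin_biprojective} and used as a black box, so there is no in-paper argument to compare yours against; I can only assess your sketch on its own terms. Your first half is the standard argument and is correct: since $W$ is smooth, the effective divisor is locally principal, and Krull's Hauptidealsatz on affine opens meeting a given component $Y$ of $D\cap Z$ gives $\dim Y \geq \dim Z - 1$, with the degenerate case $Z \subseteq \supp D$ handled as you say (completeness is not needed here). In the ample half there is one point that needs more care than you give it: you assert that under a very-ample embedding attached to $mD$ the divisor $mD$ \emph{is} the restriction of a hyperplane. Very ampleness of $\mathcal{O}_W(mD)$ only guarantees an embedding under which hyperplane sections are \emph{linearly equivalent} to $mD$, and a linearly equivalent effective divisor has a different support, so meeting $Z$ for such a divisor would not yield $\supp D \cap Z \neq \emptyset$. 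The gap is easily closed in either of two standard ways: use the embedding given by the complete linear system $\lvert mD \rvert$ (again a closed immersion), under which the effective member $mD$ itself is the pullback of an honest hyperplane, and then your completeness argument applies; or argue intrinsically that if $\supp D \cap Z = \emptyset$ then the canonical section of $\mathcal{O}_W(D)$ is nowhere vanishing on $Z$, so $\mathcal{O}_W(D)\vert_Z$ is simultaneously trivial and ample on the complete variety $Z$, forcing $\dim Z = 0$ and contradicting $\dim Z \geq 1$. With that adjustment your proof is complete and is, as far as I can tell, the same standard route the cited source takes.
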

In particular the following corollary will be very useful.

\begin{corollary} \label{cor.geometry_intersections}
	Let $V \subseteq \mathbb{P}^{n_1-1}_{\mathbb{C}} \times \mathbb{P}^{n_2-1}_{\mathbb{C}}$ be a closed variety such that $\dim V \geq 1$. Consider $H = \mathbb{V}(f)$ where $f(\bm{x}, \bm{y})$ is a polynomial of bidegree at least $(1,1)$ in the variables $(\bm{x}, \bm{y}) = (x_1, \hdots, x_{n_1}, y_1, \hdots, y_{n_2})$. Then
	\begin{equation*}
		\dim (V \cap H) \geq \dim V - 1, 
	\end{equation*}
	in particular $V \cap H$ is non-empty.
\end{corollary}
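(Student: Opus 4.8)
The plan is to deduce this directly from Lemma~\ref{lem.geometry_intersections}, applied to the ambient space $W = \mathbb{P}^{n_1-1}_{\mathbb{C}} \times \mathbb{P}^{n_2-1}_{\mathbb{C}}$, which is smooth and projective, hence complete, over the algebraically closed field $\mathbb{C}$. Since both the claimed dimension bound and the non-emptiness assertion pass to a single irreducible component, I would first replace $V$ by an irreducible component $Z \subseteq V$ with $\dim Z = \dim V$. By hypothesis $\dim Z = \dim V \geq 1$, so $Z$ is a closed irreducible subvariety of $W$ of dimension at least one, exactly as the lemma requires, and $Z \cap H \subseteq V \cap H$.

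The key step is to check that $H = \mathbb{V}(f)$ is an \emph{effective ample} divisor on $W$. Writing $(e_1,e_2)$ for the bidegree of $f$, so that $e_1,e_2 \geq 1$ by assumption, the polynomial $f$ is a nonzero global section of the line bundle $\mathcal{O}_W(e_1,e_2) = \pi_1^*\mathcal{O}(e_1) \otimes \pi_2^*\mathcal{O}(e_2)$, where $\pi_1,\pi_2$ are the two projections; hence $H$, the divisor of zeros of this section, is effective and $\mathcal{O}_W(H) \cong \mathcal{O}_W(e_1,e_2)$. That $\mathcal{O}_W(e_1,e_2)$ is ample when $e_1,e_2\geq 1$ is standard: it is in fact very ample, being the pullback of $\mathcal{O}(1)$ under the composition of the $e_i$-uple Veronese embeddings in each factor with the Segre embedding of the resulting product of projective spaces. (Alternatively, one may invoke that the tensor product of a very ample line bundle with a globally generated one is very ample, applied to $\pi_1^*\mathcal{O}(1)\otimes\pi_2^*\mathcal{O}(1)$ together with the remaining factors $\pi_1^*\mathcal{O}(e_1-1)\otimes\pi_2^*\mathcal{O}(e_2-1)$.)

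With this in hand I would simply apply Lemma~\ref{lem.geometry_intersections} with this $W$, the subvariety $Z$, and $D = H$: since $D$ is an effective ample divisor and $\dim Z \geq 1$, every irreducible component of $D \cap Z$ has dimension at least $\dim Z - 1$, and $D \cap Z$ is non-empty. Because $D \cap Z = H \cap Z \subseteq H \cap V$, this gives $H \cap V \neq \emptyset$ and $\dim(V \cap H) \geq \dim Z - 1 = \dim V - 1$, as required. I do not expect a genuine obstacle here; the only points needing care are the harmless reduction to a top-dimensional irreducible component (so that the hypothesis $\dim Z \geq 1$ of the lemma is satisfied) and the verification that a bihomogeneous hypersurface of bidegree $(e_1,e_2)$ with $e_1,e_2\geq 1$ is the zero locus of a section of an ample line bundle on the product of projective spaces, i.e. the ampleness of $\mathcal{O}(e_1,e_2)$.
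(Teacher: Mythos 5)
Your proposal is correct and follows essentially the same route as the paper: apply Lemma~\ref{lem.geometry_intersections} with $W = \mathbb{P}^{n_1-1}_{\mathbb{C}} \times \mathbb{P}^{n_2-1}_{\mathbb{C}}$, $D = H$ and $Z$ an irreducible component of $V$, using that a bidegree $(e_1,e_2)$ hypersurface with $e_1,e_2\geq 1$ is an effective ample divisor. The only difference is that you spell out the ampleness of $\mathcal{O}(e_1,e_2)$ and the reduction to a top-dimensional component, which the paper leaves implicit.
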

\begin{proof}
	Since the bidegree of $f$ is at least $(1,1)$ we have that $H$ defines an effective and ample divisor on $\mathbb{P}^{n_1-1}_{\mathbb{C}} \times \mathbb{P}^{n_2-1}_{\mathbb{C}}$. We apply Lemma \ref{lem.geometry_intersections} with  $W = \mathbb{P}^{n_1-1}_{\mathbb{C}} \times \mathbb{P}^{n_2-1}_{\mathbb{C}}$, $D = H$ and $Z$ any irreducible component of $V$. 
\end{proof}

\begin{lemma} \label{lem.sing_bf_small}
	Let $\bm{F}(\bm{x},\bm{y})$ be a system of $R$ bihomogeneous equations of the same bidegree $(d_1,d_2)$ with $d_1, d_2 \geq 1$. Assume that $\mathbb{V}(\bm{F}) \subset \mathbb{P}_{\mathbb{C}}^{n_1-1} \times \mathbb{P}_{\mathbb{C}}^{n_2-1}$ is a smooth complete intersection. Given $\bm{\beta} \in \mathbb{R}^R \setminus \{ \bm{0}\}$ we have
	\begin{equation*}
		\dim \mathrm{Sing} \mathbb{V}(\bm{\beta} \cdot \bm{F}) \leq R-2,
	\end{equation*}
	where we write $\bm{\beta} \cdot \bm{F} = \sum_i \beta_i F_i$.
\end{lemma}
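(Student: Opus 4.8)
The plan is to combine the Jacobian criterion for smoothness of a biprojective complete intersection with the description of the singular locus of a bihomogeneous hypersurface via partial derivatives, and to conclude with an iterated application of Corollary~\ref{cor.geometry_intersections}.

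Write $G = \bm{\beta}\cdot\bm{F}$ and fix an index $k$ with $\beta_k\neq 0$; note $G\not\equiv 0$, since otherwise the ideal $(F_1,\dots,F_R)$ would be generated by the $R-1$ forms $F_i$ with $i\neq k$, contradicting that $\mathbb{V}(\bm{F})$ has codimension $R$. The first step is to show that, because $d_1,d_2\geq 1$,
\[
\mathrm{Sing}\,\mathbb{V}(G) = \bigl\{ ([\bm{x}],[\bm{y}]) : \nabla_{\bm{x}}G(\bm{x},\bm{y}) = \bm{0} \ \text{ and } \ \nabla_{\bm{y}}G(\bm{x},\bm{y}) = \bm{0} \bigr\}.
\]
This uses the Euler identities $\bm{x}\cdot\nabla_{\bm{x}}G = d_1 G$ and $\bm{y}\cdot\nabla_{\bm{y}}G = d_2 G$: at a point of $\mathbb{V}(G)$ the differential of $G$ on the tangent space of $\mathbb{P}_{\mathbb{C}}^{n_1-1}\times\mathbb{P}_{\mathbb{C}}^{n_2-1}$ is represented by $(\nabla_{\bm{x}}G,\nabla_{\bm{y}}G)$, while conversely simultaneous vanishing of both gradients forces $G=0$.

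Next, since $\mathbb{V}(\bm{F})$ is a smooth complete intersection, the Jacobian criterion in biprojective space gives that at each $P\in\mathbb{V}(\bm{F})$ the rows $(\nabla_{\bm{x}}F_i(P),\nabla_{\bm{y}}F_i(P))$, $i=1,\dots,R$, are linearly independent. As $\bm{\beta}\neq\bm{0}$ this forces $(\nabla_{\bm{x}}G(P),\nabla_{\bm{y}}G(P))\neq\bm{0}$, so $P\notin\mathrm{Sing}\,\mathbb{V}(G)$; hence $\mathrm{Sing}\,\mathbb{V}(G)\cap\mathbb{V}(\bm{F})=\emptyset$. Since $\beta_k\neq 0$ one has $\mathbb{V}(G)\cap\mathbb{V}(F_i)_{i\neq k}=\mathbb{V}(\bm{F})$, and because $\mathrm{Sing}\,\mathbb{V}(G)\subseteq\mathbb{V}(G)$ it follows that
\[
\mathrm{Sing}\,\mathbb{V}(G)\cap\mathbb{V}(F_i)_{i\neq k}=\emptyset .
\]

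Finally, suppose for contradiction that $\dim\mathrm{Sing}\,\mathbb{V}(G)\geq R-1$ and choose an irreducible component $S'$ with $\dim S'\geq R-1$. Intersect $S'$ in turn with the $R-1$ hypersurfaces $\mathbb{V}(F_i)$, $i\neq k$, each of bidegree $(d_1,d_2)$ and hence at least $(1,1)$; by Corollary~\ref{cor.geometry_intersections} each such intersection lowers the dimension by at most one and remains nonempty as long as the current dimension is at least $1$, and since the dimension before the $j$-th step is at least $R-j\geq 1$ for $j=1,\dots,R-1$, all $R-1$ steps go through and produce nonempty varieties. We thus obtain a nonempty subvariety of $S'\cap\mathbb{V}(F_i)_{i\neq k}\subseteq\mathrm{Sing}\,\mathbb{V}(G)\cap\mathbb{V}(F_i)_{i\neq k}=\emptyset$, a contradiction (the case $R=1$ is immediate, since then $\mathbb{V}(G)=\mathbb{V}(\bm{F})$ is smooth and $\mathrm{Sing}\,\mathbb{V}(G)=\emptyset$). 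Hence $\dim\mathrm{Sing}\,\mathbb{V}(G)\leq R-2$. I expect the part requiring most care to be the first step: identifying the scheme-theoretic singular locus of the biprojective hypersurface $\mathbb{V}(G)$ with the common zero set of its partial derivatives, and pinning down the precise biprojective Jacobian criterion. The dimension induction at the end is routine bookkeeping.
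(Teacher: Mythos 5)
Your proposal is correct and follows essentially the same route as the paper: identify $\mathrm{Sing}\,\mathbb{V}(\bm{\beta}\cdot\bm{F})$ with the common vanishing of both gradients, use smoothness of the complete intersection $\mathbb{V}(\bm{F})$ (via linear independence of the Jacobian rows) to conclude that $\mathrm{Sing}\,\mathbb{V}(\bm{\beta}\cdot\bm{F})\cap\mathbb{V}(F_i)_{i\neq k}$ is empty, and then derive a contradiction from $\dim\geq R-1$ by applying Corollary~\ref{cor.geometry_intersections} a total of $R-1$ times. The only cosmetic difference is that the paper phrases the middle step contrapositively, showing this intersection lies inside $\mathrm{Sing}\,\mathbb{V}(\bm{F})=\emptyset$, while you argue directly that smooth points of $\mathbb{V}(\bm{F})$ cannot be singular points of the pencil.
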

\begin{proof}
	 The singular locus of $\mathbb{V}(\bm{\beta} \cdot \bm{F})$ is given by
	 \begin{equation*}
	 	\mathrm{Sing} \mathbb{V}(\bm{\beta} \cdot \bm{F}) = \mathbb{V}\left(\frac{\partial (\bm{\beta} \cdot \bm{F})}{\partial x_j} \right)_{j = 1, \hdots, n_1} \cap \mathbb{V}\left(\frac{\partial (\bm{\beta} \cdot \bm{F})}{\partial y_j} \right)_{j = 1, \hdots, n_2}.
	 \end{equation*}
	 Assume without loss of generality $\beta_R \neq 0$ so that $\mathbb{V}(\bm{F}) = \mathbb{V}(F_1, \hdots, F_{R-1}, \bm{\beta} \cdot \bm{F})$. We claim that we have the following inclusion 
	 \begin{equation} \label{eq.singular_loci_containment}
	 	\mathbb{V}(F_1, \hdots, F_{R-1}) \cap \mathrm{Sing} \mathbb{V}(\bm{\beta} \cdot \bm{F}) \subseteq \mathrm{Sing} \mathbb{V}(\bm{F}).
	 \end{equation}
	 	 To see this note first that $\mathbb{V}(F_1, \hdots, F_{R-1}) \cap \mathrm{Sing} \mathbb{V}(\bm{\beta} \cdot \bm{F}) \subseteq \mathbb{V}(\bm{F})$. Further, the Jacobian matrix $J(\bm{F})$ of $\bm{F}$ is given by
	 \begin{equation*}
	 	J(\bm{F}) = \left( \frac{\partial F_i}{\partial z_j} \right)_{ij},
	 \end{equation*}
	 where $i = 1, \hdots, R$ and $z_j$ ranges through $x_1, \hdots, x_{n_1}, y_1, \hdots, y_{n_2}$.
	 Now if the equations
	 \begin{equation*}
	 	\frac{\partial (\bm{\beta} \cdot \bm{F})}{\partial x_j} = \frac{\partial (\bm{\beta} \cdot \bm{F})}{\partial y_j} = 0,
	 \end{equation*}
	 are satisfied then this implies that the rows of $J(\bm{F})$ are linearly dependent. Since $\mathbb{V}(\bm{F})$ is a complete intersection we deduce the claim. 
 
	 Assume now for a contradiction that $\dim \mathrm{Sing} \mathbb{V}(\bm{\beta} \cdot \bm{F}) \geq R-1$ holds. Applying Corollary~\ref{cor.geometry_intersections} $(R-1)$-times with $V = \mathrm{Sing} \mathbb{V} (\bm{\beta} \cdot \bm{F})$, noting that the bidegree of $F_i$ is at least $(1,1)$, we find 
	 \begin{equation*}
	 	\mathbb{V}(F_1, \hdots, F_{R-1}) \cap \mathrm{Sing} \mathbb{V}(\bm{\beta} \cdot \bm{F}) \neq \emptyset.
	 \end{equation*}
	 This contradicts~\eqref{eq.singular_loci_containment} since $\mathrm{Sing} \mathbb{V}(\bm{F})  =\emptyset$ by assumption.
\end{proof}

\begin{lemma} \label{lem.dimensions_varieties_not_too_big}
	Let $n_1 \leq n_2$ be two positive integers. For $i = 1, \hdots, n_2$ let $A_i \in \mathrm{M}_{n_1 \times n_1}(\C)$ be symmetric matrices. Consider the varieties $V_1 \subset \mathbb{P}_\C^{n_1-1}$ 
	and $V_2 \subset \mathbb{P}_\C^{n_1-1} \times \mathbb{P}_\C^{n_2-1}$ 
	defined by  
	\begin{align*}
		V_1 &= \mathbb{V}(\bm{t}^T A_i \bm{t})_{i = 1, \hdots, n_2} \\
		V_2 &= \mathbb{V}\left(\sum_{i=1}^{n_2}y_i A_i \bm{x}\right).
	\end{align*}
	Then we have
	\begin{equation*} \label{eq:dim_varieties_not_too_big}
		\dim V_2 \leq \dim V_1 + n_2-1.
	\end{equation*}
	In particular, if $V_1 = \emptyset$ then $\dim V_2 \leq n_2-2$.
\end{lemma}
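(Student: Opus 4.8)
The plan is to compare both $V_1$ and $V_2$ to one and the same auxiliary \emph{bilinear} incidence variety living on $\mathbb{P}_\C^{n_1-1}\times\mathbb{P}_\C^{n_1-1}$. Write $M(\bm{x})$ for the $n_1\times n_2$ matrix whose $i$-th column is $A_i\bm{x}$, so that $\sum_{i}y_iA_i\bm{x}=M(\bm{x})\bm{y}$ and $V_2=\mathbb{V}(M(\bm{x})\bm{y})$. I would introduce
\[
\mathcal{E}=\mathbb{V}\bigl(\bm{x}^{T}A_i\bm{z}\bigr)_{i=1,\dots,n_2}\subseteq\mathbb{P}_\C^{n_1-1}\times\mathbb{P}_\C^{n_1-1},
\]
the variety cut out by the $n_2$ bidegree-$(1,1)$ forms $\bm{x}^{T}A_i\bm{z}$. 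Since each $A_i$ is symmetric, $\bm{x}^{T}A_i\bm{z}=\bm{z}^{T}A_i\bm{x}$, so $\mathcal{E}$ is preserved by the involution exchanging the two factors, and restricting to the diagonal $\Delta\subseteq\mathbb{P}_\C^{n_1-1}\times\mathbb{P}_\C^{n_1-1}$ (that is, setting $\bm{z}=\bm{x}$) gives exactly $\mathcal{E}\cap\Delta=V_1$. The proof then splits into two independent estimates: (a) $\dim V_2\le\max\{\dim\mathcal{E}+(n_2-n_1),\,n_2-2\}$, and (b) $\dim\mathcal{E}\le\dim V_1+(n_1-1)$. Granting both, and using $\dim V_1\ge-1$, one gets $\dim V_2\le\dim V_1+n_2-1$, with the ``in particular'' being the case $\dim V_1=-1$.

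For (a) I would stratify the first factor by the rank of $M(\bm{x})$: put $S_r=\{[\bm{x}]:\rk M(\bm{x})=r\}$ and $d_r=\dim S_r$. Over $S_r$, the first projection presents $V_2$ as a projective bundle with fibre $\mathbb{P}(\ker M(\bm{x}))$, of dimension $n_2-1-r$, and presents $\mathcal{E}$ as a projective bundle with fibre $\mathbb{P}(\ker M(\bm{x})^{T})$, of dimension $n_1-1-r$ (the latter because $\bm{x}^{T}A_i\bm{z}=0$ for all $i$ exactly says that $\bm{z}$ is orthogonal to every column of $M(\bm{x})$; here $\rk M(\bm{x})^{T}=\rk M(\bm{x})$). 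Hence, over the indices $r$ with $S_r\neq\emptyset$,
\[
\dim V_2=\max_{r}\bigl(d_r+n_2-1-r\bigr),\qquad \dim\mathcal{E}=\max_{r\le n_1-1}\bigl(d_r+n_1-1-r\bigr).
\]
If the maximum defining $\dim V_2$ is attained at some index $r\le n_1-1$, that same index occurs in the formula for $\dim\mathcal{E}$ and yields $\dim V_2=(d_r+n_1-1-r)+(n_2-n_1)\le\dim\mathcal{E}+(n_2-n_1)$; if it is attained at $r=n_1$, then $d_{n_1}\le n_1-1$ forces $\dim V_2\le n_2-2$.

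For (b) I would use that $\Delta$ is a smooth, hence regularly embedded, subvariety of codimension $n_1-1$ in the smooth projective variety $\mathbb{P}_\C^{n_1-1}\times\mathbb{P}_\C^{n_1-1}$. If $\dim\mathcal{E}\ge n_1-1$, then $\mathcal{E}$ contains an irreducible subvariety of dimension $n_1-1$, whose nonzero effective class pairs nontrivially with the diagonal class $[\Delta]=\sum_{i=0}^{n_1-1}h_1^{i}h_2^{n_1-1-i}$; therefore $\mathcal{E}\cap\Delta=V_1$ is non-empty, and regularity of the embedding forces every component of $V_1$ to have dimension at least $\dim\mathcal{E}-(n_1-1)$, so that $\dim\mathcal{E}\le\dim V_1+(n_1-1)$. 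If instead $\dim\mathcal{E}\le n_1-2$ the same inequality is automatic since $\dim V_1\ge-1$. Combining (a) and (b) completes the argument.

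The one delicate point is the non-emptiness of $\mathcal{E}\cap\Delta$ once $\dim\mathcal{E}\ge n_1-1$; this is precisely where intersection theory on $\mathbb{P}_\C^{n_1-1}\times\mathbb{P}_\C^{n_1-1}$ enters (equivalently: $\Delta$ is the zero scheme of a regular section of $\pi_1^{*}\mathcal{O}(1)\otimes\pi_2^{*}\mathcal{Q}$, with $\mathcal{Q}$ the rank-$(n_1-1)$ tautological quotient bundle on the second factor, and one invokes non-vanishing of its top Chern class against $\mathcal{E}$). Note that the cheaper route of cutting $\mathcal{E}$ down to $\Delta$ by the bidegree-$(1,1)$ minors $x_iz_j-x_jz_i$ and applying Corollary~\ref{cor.geometry_intersections} is too lossy, since $\binom{n_1}{2}$ such equations are needed whereas $\operatorname{codim}\Delta$ is only $n_1-1$.
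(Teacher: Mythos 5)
Your argument is correct, and its first half is essentially the paper's: you introduce exactly the same auxiliary variety (the paper calls it $V_3$, your $\mathcal{E}$), stratify $\mathbb{P}_\C^{n_1-1}$ by the rank of $M(\bm{x})=A(\bm{x})$, and compare the fibre dimensions $n_2-1-r$ and $n_1-1-r$ of $V_2$ and $\mathcal{E}$ over each stratum; the paper justifies the same fibre-dimension bookkeeping with properness, irreducibility of the strata pieces and Chevalley semicontinuity, while your projective-bundle description over each rank stratum (plus the separate treatment of the stratum $r=n_1$, which you absorb into the harmless bound $n_2-2$) is a legitimate and somewhat lighter justification. Where you genuinely diverge is the second half: the paper compares $V_3$ with $V_1$ by passing to affine cones and intersecting with the affine diagonal, where non-emptiness is free (both cones contain the origin) and the affine dimension theorem \cite[Proposition 7.1]{hartshorne2013algebraic} gives $\dim \widetilde{V_1}\geq\dim\widetilde{V_3}-n_1$; you instead stay projective, using that $\Delta\subset\mathbb{P}_\C^{n_1-1}\times\mathbb{P}_\C^{n_1-1}$ is a regular embedding of codimension $n_1-1$ whose class $\sum_i h_1^ih_2^{n_1-1-i}$ pairs positively with every nonzero effective $(n_1-1)$-dimensional class, which yields both the non-emptiness of $\mathcal{E}\cap\Delta=V_1$ (when $\dim\mathcal{E}\geq n_1-1$) and, via Krull, the bound $\dim\mathcal{E}\leq\dim V_1+n_1-1$. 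Your route buys a statement entirely inside the biprojective setting and correctly identifies why the naive use of Corollary~\ref{cor.geometry_intersections} with the $\binom{n_1}{2}$ minors is too lossy, at the cost of invoking intersection theory on the product; the paper's cone trick is more elementary and sidesteps the non-emptiness issue altogether. One small wording correction: Krull applied to a top-dimensional component $Z$ of $\mathcal{E}$ gives the needed inequality $\dim V_1\geq\dim Z-(n_1-1)=\dim\mathcal{E}-(n_1-1)$, but not literally that \emph{every} component of $V_1$ has dimension at least $\dim\mathcal{E}-(n_1-1)$, since components arising from lower-dimensional components of $\mathcal{E}$ may be smaller; this does not affect your conclusion.
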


\begin{proof}
	Consider the variety $V_3 \subset \mathbb{P}_\C^{n_1-1} \times \mathbb{P}_\C^{n_1-1}$ defined by
	\[
	V_3 = \mathbb{V}(\bm{z}^T A_i \bm{x})_{i = 1, \hdots, n_2}. 
	\]
	Further for $\bm{x} = (x_1, \hdots, x_{n_1})^T$ consider  
	\[
	A (\bm{x}) = (A_1 \bm{x} \cdots A_{n_2} \bm{x}) \in \mathrm{M}_{n_1 \times n_2}(\C)[x_1, \hdots, x_{n_1}].
	\]
	We may write $V_2 = \mathbb{V}(A(\bm{x})\bm{y})$ and $V_3 = \mathbb{V}(\bm{z}^T A(\bm{x}))$. Our first goal is to relate the dimensions of the varieties above as follows
	\begin{equation} \label{eq.dim_v_2_v_3}
			\dim V_2 \leq \dim V_3 +n_2-n_1.
	\end{equation}
	For $r = 0, \hdots, n_1$ define the quasi-projective varieties $D_r \subset \mathbb{P}_\C^{n_1-1}$ given by
	\[
	D_r = \{ \bm{x} \in \mathbb{P}^{n_1-1}_\C \colon \mathrm{rank}(A (\bm{x})) = r \}.
	\]
	These are quasiprojective since they may be written as the intersection of the vanishing of all $(r+1) \times (r+1)$ minors of $A(\bm{x})$ with the complement of the vanishing of all $r \times r$ minors.
	For each $r$ let
	\[
	D_r = \bigcup_{i \in I_r} D_r^{(i)}
	\]
	be a decomposition into finitely many irreducible components. Since $\bigcup_r D_r = \mathbb{P}_\C^{n_1-1}$ we have
	\begin{equation*} \label{eq.dim_v_2_over_union}
	\dim V_2 = \max_{\substack{0 \leq r < n_2 \\ i \in I_r}} \dim ((D_r^{(i)} \times \mathbb{P}_\C^{n_2-1}) \cap V_2).
	\end{equation*}
	Note that $r = n_2$ doesn't play a role here, since the intersection $(D_{n_2}^{(i)} \times \mathbb{P}_\C^{n_2-1}) \cap V_2$ is empty. Similarly we get
	\begin{equation*} \label{eq.dim_v_1_over_union}
		\dim V_3 = \max_{\substack{0 \leq r < n_2 \\ i \in I_r}} \dim ((D_r^{(i)} \times \mathbb{P}_\C^{n_1-1}) \cap V_3).
	\end{equation*}
	For $0 \leq r < n_2$ and $i \in I_r$ consider now the surjective projection maps 
	\[
	\pi_{2,r,i} \colon (D_r^{(i)} \times \mathbb{P}_\C^{n_2-1}) \cap V_2 \rightarrow D_r^{(i)}, \; (\bm{x}, \bm{y}) \mapsto \bm{x},
	\]
	and
	\[
	\pi_{3,r,i} \colon (D_r^{(i)} \times \mathbb{P}_\C^{n_1-1}) \cap V_3 \rightarrow D_r^{(i)}, \; (\bm{x}, \bm{z}) \mapsto \bm{x},
	\]
	We note that by the way $D_r^{(i)}$ was constructed here, the fibres of both of these projection morphisms have constant dimension for fixed $r$. By the rank-nullity theorem we find that the dimensions of the fibres are related as follows
	\begin{equation} \label{eq.dim_fibres_rank_null}
	\dim \pi_{2,r,i}^{-1}(\bm{x}) = \dim \pi_{3,r,i}^{-1}(\bm{x}) +n_2-n_1.
	\end{equation}
	We claim that the morphism $\pi_{2,r,i}$ is proper. For this note that the structure morphism $\mathbb{P}^{n_1-1}_\C \rightarrow \mathrm{Spec} \, \C$ is proper whence $D_r^{(i)} \times \mathbb{P}_\C^{n_1-1} \rightarrow D_r^{(i)}$ must be proper too, as properness is preserved under base change. As $(D_r^{(i)} \times \mathbb{P}_\C^{n_2-1}) \cap V_2$ is closed inside  $D_r^{(i)} \times \mathbb{P}_\C^{n_1-1}$ the restriction $\pi_{2,r,i}$ must also be proper. By an analogous argument it follows $\pi_{3,r,i}$ is also proper.
	
	Further note that the fibres of $\pi_{2,r,i}$ are irreducible since they define linear subspaces of $(D_r^{(i)} \times \mathbb{P}_\C^{n_2-1}) \cap V_2$, and similarly the fibres of $\pi_{3,r,i}$ are irreducible. Since $D_r^{(i)}$ is irreducible by construction and all the fibres have constant dimension, it follows 
	that $(D_r^{(i)} \times \mathbb{P}_\C^{n_2-1}) \cap V_2$ is irreducible. Similarly $(D_r^{(i)} \times \mathbb{P}_\C^{n_1-1}) \cap V_3$ is irreducible. 

	
	Hence all the conditions of Chevalley's upper semicontinuity theorem are satisfied~\cite[Th\'eor\`eme 13.1.3]{EGA4},
	so that for any $\bm{x} \in D_r^{(i)}$ we obtain
	\begin{equation} \label{eq.dim_of_fibres}
	\dim \pi_{2,r,i}^{-1}(\bm{x}) = \dim ((D_r^{(i)} \times \mathbb{P}_\C^{n_2-1}) \cap V_2) - \dim D_r^{(i)},
	\end{equation}
	and
	\begin{equation} \label{eq.dim_of_fibres__V3}
	\dim \pi_{3,r,i}^{-1}(\bm{x}) = \dim ((D_r^{(i)} \times \mathbb{P}_\C^{n_1-1}) \cap V_3) - \dim D_r^{(i)}.
	\end{equation}
	Hence~\eqref{eq.dim_of_fibres} and~\eqref{eq.dim_of_fibres__V3} together with~\eqref{eq.dim_fibres_rank_null} yield
	\[
	\dim ((D_r^{(i)} \times \mathbb{P}_\C^{n_2-1}) \cap V_2) = \dim ((D_r^{(i)} \times \mathbb{P}_\C^{n_1-1}) \cap V_3)  + n_2-n_1.
	\]
	Choosing $r$ and $i$ such that $\dim V_2 = \dim ((D_r^{(i)} \times \mathbb{P}_\C^{n_2-1}) \cap V_2)$ the claim \eqref{eq.dim_v_2_v_3} now follows.

	Thus it is enough to find an upper bound for $\dim V_3$. 
	To this end, consider the affine cones $\widetilde{V_1} = \mathbb{V}(\bm{u}^T A_i \bm{u})_{i = 1, \hdots, n_2} \subset \mathbb{A}_{\mathbb{C}}^{n_1}$ and  $\widetilde{V_3} = \mathbb{V}(\bm{x}^T A(\bm{z})) \subset \mathbb{A}_{\mathbb{C}}^{n_1} \times \mathbb{A}_{\mathbb{C}}^{n_1}$. Note in particular, that $\widetilde{V}_1 \neq \emptyset$ even if $V_1 =  \emptyset$. 
	
	Write $\widetilde{\Delta} \subset \mathbb{A}_{\mathbb{C}}^{n_1} \times \mathbb{A}_{\mathbb{C}}^{n_1}$ for the diagonal given by $\mathbb{V}(x_i = z_i)_i$. Then $\widetilde{V_3} \cap \widetilde{\Delta} \cong  \widetilde{V_1} \neq \emptyset$. Thus, the affine dimension theorem~\cite[Proposition 7.1]{hartshorne2013algebraic} yields
	\[
	\dim \widetilde{V_1} \geq \dim \widetilde{V_3} -n_1.
	\] 
	Noting $\dim V_1 +1 \geq \dim \widetilde{V_1}$ and $\dim \widetilde{V_3} \geq \dim V_3 +2$ now gives the desired result. We remind the reader at this point that this is compatible with the convention $\dim \emptyset = -1$.
\end{proof}

\section{The auxiliary inequality} \label{sec.weyl_differencing_etc}
We remind the reader of the notation $e(x) = e^{2 \pi i x}$. For $\bm{\alpha} \in [0,1]^R$ define 
\begin{equation*}
	S(\bm{\alpha},P_1,P_2) = S(\bm{\alpha}) \coloneqq \sum_{\bm{x} \in P_1 \mathcal{B}_1} \sum_{\bm{y} \in P_2 \mathcal{B}_2} e \left( \bm{\alpha} \cdot \bm{F}\left(\bm{x},\bm{y} \right) \right),
\end{equation*}
where the sum ranges over $\bm{x} \in \mathbb{Z}^{n_1}$ such that $\bm{x}/P_1 \in \mathcal{B}_1$ and similarly for $\bm{y}$. Throughout this section we will assume $P_1 \geq P_2$. Note crucially that we have
\begin{equation*}
	N(P_1,P_2) = \int_{[0,1]^R} S(\bm{\alpha}) d \bm{\alpha}.
\end{equation*}
As noted in the introduction we can rewrite the forms as 
\begin{equation*}
	F_i(\bm{x},\bm{y}) = \sum_{\bm{j}} \sum_{\bm{k}} F^{(i)}_{\bm{j},\bm{k}} x_{j_1} \cdots x_{j_{d_1}} y_{k_1} \cdots y_{k_{d_2}},
\end{equation*}
and given $\bm{\alpha} \in \mathbb{R}^R$, as in \cite{schindler_bihomogeneous}, we consider the multilinear forms
\begin{equation*}
	\Gamma_{\bm{\alpha} \cdot \bm{F}}(\widetilde{\bm{x}}, \widetilde{\bm{y}}) \coloneqq d_1! d_2! \sum_{i} \alpha_i \sum_{\bm{j}} \sum_{\bm{k}} F^{(i)}_{\bm{j},\bm{k}} x^{(1)}_{j_1} \cdots x^{(d_1)}_{j_{d_1}} y^{(1)}_{k_1} \cdots y^{(d_2)}_{k_{d_2}}.
\end{equation*}
Further we write $\widehat{\bm{x}} = (\bm{x}^{(1)}, \hdots, \bm{x}^{(d_1-1)})$ and similarly for $\widehat{\bm{y}}$. For any real number $\lambda$ we write $\nnorm{\lambda} = \min_{k \in \mathbb{Z}} \norm{\lambda-k}$. We now define $M_1(\bm{\alpha} \cdot \bm{F}; P_1,P_2,P^{-1})$ to be the number of integral $\widehat{\bm{x}} \in (-P_1,P_1)^{(d_1-1)n_1}$ and $\widetilde{\bm{y}} \in (-P_2,P_2)^{d_2n_2}$ such that for all $\ell = 1, \hdots, n_1$ we have
\begin{equation*}
	\nnorm{\Gamma_{\bm{\alpha} \cdot \bm{F}}(\widehat{\bm{x}}, \bm{e}_\ell, \widetilde{\bm{y}})} < P^{-1}.
\end{equation*}
Similarly, we define $M_2(\bm{\alpha} \cdot \bm{F}; P_1,P_2,P^{-1})$ to be the number of integral $\widetilde{\bm{x}} \in (-P_1,P_1)^{d_1n_1}$ and $\widehat{\bm{y}} \in (-P_2,P_2)^{(d_2-1)n_2}$ such that for all $\ell = 1, \hdots, n_2$ we have
\begin{equation*}
	\nnorm{\Gamma_{\bm{\alpha} \cdot \bm{F}}(\widetilde{\bm{x}}, \widehat{\bm{y}},\bm{e}_\ell,)} < P^{-1}.
\end{equation*}
For our purposes we will need a slight generalization of Lemma 2.1 in \cite{schindler_bihomogeneous} that deals with a polynomial $G(\bm{x},\bm{y})$, which is not necessarily bihomogeneous. If $G(\bm{x},\bm{y})$ has bidegree $(d_1,d_2)$ write 
\begin{equation*}
	G(\bm{x},\bm{y}) = \sum_{\substack{0 \leq r \leq d_1 \\ 0 \leq l \leq d_2}} G^{(r,l)}(\bm{x},\bm{y}),
\end{equation*}
where $G^{(r,l)}(\bm{x},\bm{y})$ is homogeneous of bidegree $(r,l)$. Using notation as above we first show the following preliminary Lemma, which is a version of Weyl's inequality for our context.

From now on we will often use the notation $\tilde{d} = d_1+d_2-2$.
\begin{lemma} \label{lem.weyl_differencing_general_poly}
	Let $\varepsilon > 0$. Let $G(\bm{x},\bm{y}) \in \mathbb{R}[x_1, \hdots, x_{n_1},y_1, \hdots, y_{n_2}]$ be a polynomial of bidegree $(d_1,d_2)$ with $d_1,d_2 \geq 1$.  For the exponential sum
	\begin{equation*}
		S_G(P_1,P_2) = \sum_{\bm{x} \in P_1 \mathcal{B}_1} \sum_{\bm{x} \in P_
		2 \mathcal{B}_2} e\left( G(\bm{x},\bm{y})\right)
	\end{equation*}
	we have the following bound
	\begin{equation*}
		\norm{S_G(P_1,P_2)}^{2^{\tilde{d}}} \ll P_1^{n_1(2^{\tilde{d}}-d_1+1) + \varepsilon} P_2^{n_2(2^{\tilde{d}}-d_2)} M_1 \left( G^{(d_1,d_2)}, P_1, P_2, P_1^{-1} \right).
	\end{equation*}
\end{lemma}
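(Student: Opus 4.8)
The strategy is the standard Weyl differencing argument adapted to the bihomogeneous/bilinear setting, differencing $d_1-1$ times in the $\bm{x}$-variables and $d_2$ times in the $\bm{y}$-variables so that one is left with a linear expression in each block, just as in Lemma 2.1 of \cite{schindler_bihomogeneous}. First I would square $S_G$ and difference once in $\bm{x}$: writing $S_G(P_1,P_2) \overline{S_G(P_1,P_2)}$ as a double sum and substituting $\bm{x} = \bm{x}' + \bm{h}_1$, one obtains $|S_G|^2 \ll P_1^{n_1}\max_{\bm{h}_1} |\sum \sum e(G(\bm{x}+\bm{h}_1,\bm{y}) - G(\bm{x},\bm{y}))|$, where the new phase has bidegree $(d_1-1,d_2)$ in its ``leading'' part and lower-order terms of bidegree $(r,l)$ with $r \le d_1 - 1$. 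Because the lower-order homogeneous pieces $G^{(r,l)}$ with $r<d_1$ or $l<d_2$ only ever lose degree under differencing, they will never obstruct the count on the final diagonal — only the top piece $G^{(d_1,d_2)}$ survives into the multilinear form $\Gamma$. Iterating this $d_1-1$ times in $\bm{x}$ and then $d_2$ times in $\bm{y}$ (or interleaving; the order does not matter), applying Cauchy--Schwarz at each step and absorbing the max over the shift variables by summing trivially, raises the power of $|S_G|$ to $2^{d_1-1+d_2}=2^{\tilde d+1}$; a final square-root / one extra differencing step, or more precisely stopping one step earlier in the $\bm{x}$-count so as to retain a genuine exponential sum over $\bm{x}^{(d_1)}$ rather than a count, gives the exponent $2^{\tilde d}$ claimed. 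At that point the inner sum over the last block of $\bm{x}$-variables is $\prod_{\ell=1}^{n_1} \sum_{x^{(d_1)}_\ell} e(x^{(d_1)}_\ell \Gamma_{G^{(d_1,d_2)}}(\widehat{\bm x}, \bm e_\ell, \widetilde{\bm y}) + (\text{lower order linear terms}))$, which by the standard geometric-series bound $|\sum_{|t|<P} e(t\theta)| \ll \min(P, \|\theta\|^{-1})$ and a dyadic/divisor-counting argument is controlled by the number of near-solutions to $\|\Gamma_{G^{(d_1,d_2)}}(\widehat{\bm x}, \bm e_\ell, \widetilde{\bm y})\| < P_1^{-1}$, i.e. by $M_1(G^{(d_1,d_2)}, P_1,P_2,P_1^{-1})$.

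Concretely, the bookkeeping of exponents goes as follows. Each of the $d_1 - 1$ differencings in $\bm{x}$ contributes a factor $P_1^{n_1}$ from summing over the shift $\bm{h}$, and each of the $d_2$ differencings in $\bm{y}$ contributes $P_2^{n_2}$; after $k_1$ steps in $\bm{x}$ and $k_2$ in $\bm{y}$ the trivial bound on the surviving exponential sum is $P_1^{n_1}P_2^{n_2}$, so Cauchy--Schwarz gives $|S_G|^{2^{k_1+k_2}} \ll (P_1^{n_1})^{2^{k_1+k_2}-1}\cdots$. Carrying this out with $k_1 = d_1-1$, $k_2 = d_2$ and collecting the powers of $P_1$ and $P_2$ produces exactly $P_1^{n_1(2^{\tilde d} - d_1 + 1)} P_2^{n_2(2^{\tilde d}-d_2)}$ as the prefactor, with the $P_1^\varepsilon$ absorbing the divisor-function losses from converting the geometric-series bound into a solution count (the classical ``$\min(P,\|\cdot\|^{-1})$'' trick: partition according to the size of $\|\Gamma\|$, sum a geometric series, and count lattice points in each range). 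I would either cite the precise differencing computation from \cite[Lemma 2.1]{schindler_bihomogeneous} and merely indicate the modification needed to handle the non-homogeneous lower-order terms $G^{(r,l)}$, or reproduce it in a couple of lines. The key observation making the generalization essentially free is that differencing a monomial of bidegree $(r,l)$ with respect to an $\bm{x}$-variable yields terms of bidegree $\le (r-1, l)$ and strictly smaller in the $\bm{x}$-degree, so after $d_1-1$ differencings in $\bm{x}$ the only bidegree-$(1,l)$ contributions with $l = d_2$ come from $G^{(d_1,d_2)}$; all other surviving terms are of strictly smaller $\bm{y}$-degree and get killed by the subsequent $d_2$ differencings in $\bm{y}$, leaving $\Gamma_{G^{(d_1,d_2)}}$ alone in the final linear phase.

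The main obstacle — though it is more of a careful-bookkeeping issue than a genuine difficulty — is tracking the precise power of $P_1$ versus $P_2$ and making sure one differences in the correct order and the correct number of times so that exactly one block of $\bm{x}$-variables remains as an honest exponential sum (to be estimated by $\min(P_1, \|\cdot\|^{-1})$ and hence yield $M_1$ with modulus $P_1^{-1}$), rather than ending up with a raw solution count or the wrong normalization; since we have assumed $P_1 \ge P_2$ throughout the section, all the shift variables live in boxes of the appropriate sizes and the asymmetric exponent $2^{\tilde d} - d_1 + 1$ versus $2^{\tilde d} - d_2$ in the two blocks is forced by the fact that we difference $d_1 - 1$ times (not $d_1$ times) in $\bm{x}$ but the full $d_2$ times in $\bm{y}$. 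A secondary point requiring a line of justification is that the lower-order terms $G^{(r,l)}$, while irrelevant to the leading-order count, do appear inside the linear phase of the final sum as extra shift-dependent constants; these are harmless because they are independent of the summation variable $x^{(d_1)}_\ell$ over which the geometric series is evaluated (or can be absorbed into the $\|\cdot\|$ by the triangle inequality after the usual completion-of-the-sum manoeuvre), so they do not affect the bound by $M_1(G^{(d_1,d_2)}, P_1,P_2,P_1^{-1})$.
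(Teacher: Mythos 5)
Your overall strategy is the same as the paper's: Weyl differencing in the style of Schindler/Schmidt, keeping a genuine exponential sum over the last block of $\bm{x}$-variables, bounding it by the geometric-series estimate $\min\{P_1,\nnorm{\cdot}^{-1}\}$, and converting the result into the count $M_1$, with $P_1^{\varepsilon}$ absorbing logarithms. But the bookkeeping as you set it up does not close. You difference $d_1-1$ times in $\bm{x}$ and $d_2$ times in $\bm{y}$, which is $\tilde{d}+1$ Cauchy--Schwarz doublings and forces the exponent $2^{\tilde{d}+1}$ on the left-hand side, not $2^{\tilde{d}}$; your three proposed repairs (a final square root, one extra differencing, or stopping one step earlier) are mutually inconsistent, and none of them yields the stated inequality -- for instance, taking a square root would also halve the prefactor and the count. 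The correct count, as in the paper, is $d_2-1$ differencings in $\bm{y}$ followed by $d_1-1$ in $\bm{x}$ (total $\tilde{d}$), which leaves both $\bm{x}^{(d_1)}$ and $\bm{y}^{(d_2)}$ as honest summation variables, and the multilinear form $\Gamma_{G^{(d_1,d_2)}}$ then carries exactly $d_1$ blocks of $\bm{x}$-variables and $d_2$ blocks of $\bm{y}$-variables, matching the definition of $M_1$.

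The second, related gap is the origin of the factor $P_1^{n_1(2^{\tilde{d}}-d_1+1)+\varepsilon}$. With the correct differencing counts, pure Cauchy--Schwarz bookkeeping only produces $P_1^{n_1(2^{\tilde{d}}-d_1)}P_2^{n_2(2^{\tilde{d}}-d_2)}$; the additional $P_1^{n_1+\varepsilon}$ comes from Davenport's classification argument applied to the sum over $\bm{y}^{(d_2)}$ of the product $\prod_\ell \min\{P_1,\nnorm{\widetilde{\gamma}(\widehat{\bm{x}},\bm{e}_\ell,\widehat{\bm{y}},\bm{y}^{(d_2)})}^{-1}\}$: one sorts $\bm{y}^{(d_2)}$ by the fractional parts of the phases into boxes of width $P_1^{-1}$, observes that the difference of two members of the same class satisfies $\nnorm{\Gamma_{G^{(d_1,d_2)}}(\widehat{\bm{x}},\bm{e}_\ell,\widehat{\bm{y}},\bm{u}-\bm{v})}<P_1^{-1}$ (this is also precisely where the lower-order pieces $G^{(r,l)}$ drop out, being constant in $\bm{y}^{(d_2)}$), so each class has at most $N(\widehat{\bm{x}},\widehat{\bm{y}})$ elements, and summing over the classes yields $N(\widehat{\bm{x}},\widehat{\bm{y}})(P_1\log P_1)^{n_1}$. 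Your ``dyadic/divisor-counting'' paraphrase attributes the conversion to the $\bm{x}^{(d_1)}$ geometric series alone and folds the extra $P_1^{n_1}$ into the differencing count; as written it accounts neither for the $\bm{y}^{(d_2)}$-block entering $M_1$ at size $P_2$ nor for the $+1$ in the exponent of $P_1$. With the differencing counts corrected and this classification step over $\bm{y}^{(d_2)}$ inserted, the rest of your plan goes through.
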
 
\begin{proof}
	The proof is quite involved but follows closely the proof of Lemma 2.1 in \cite{schindler_bihomogeneous}, which in turn is based on idas of Schmidt \cite[Section 11]{schmidt85} and Davenport \cite[Section 3]{davenport_32_variables}. 
	
	Our first goal is to apply a Weyl differencing process $d_2-1$-times to the $\bm{y}$ part of $G$ and then $d_1-1$-times to the $\bm{x}$ part of the resulting polynomial. Clearly this is trivial if $d_2=1$ or $d_1=1$, respectively. Therefore assume for now that $d_2 \geq 2$. We start by applying the Cauchy-Schwarz inequality and the triangle inequality to find
	\begin{equation} \label{eq.S_G_cauchy_schwarz}
		\norm{S_G(P_1,P_2)}^{2^{d_2-1}} \ll P_1^{n_1(2^{d_2-1}-1)} \sum_{\bm{x} \in P_1 \mathcal{B}_1} \norm{S_{\bm{x}}(P_1,P_2)}^{2^{d_2-1}},
	\end{equation}
	where we define
	\begin{equation*}
		S_{\bm{x}}(P_1,P_2) = \sum_{\bm{y} \in P_2 \mathcal{B}_2} e(G(\bm{x},\bm{y})).
	\end{equation*}
	Now write $\mathcal{U} = P_2 \mathcal{B}_2$, write $\mathcal{U}^D = \mathcal{U}-\mathcal{U}$ for the difference set and define 
	\begin{equation*}
		\mathcal{U}(\bm{y}^{(1)}, \hdots, \bm{y}^{(t)}) = \bigcap_{\varepsilon_1 = 0,1} \cdots \bigcap_{\varepsilon_t = 0,1}\left( \mathcal{U}- \varepsilon_1 \bm{y}^{(1)} - \hdots - \varepsilon_t \bm{y}^{(t)} \right).
	\end{equation*}
	Write $\mathcal{F}(\bm{y}) = G(\bm{x},\bm{y})$ and set 
	\begin{equation*}
		\mathcal{F}_d(\bm{y}^{(1)}, \hdots, \bm{y}^{(d)}) = \sum_{\varepsilon_1=0,1} \cdots \sum_{ \varepsilon_d = 0,1} (-1)^{\varepsilon_1 + \hdots + \varepsilon_d} \mathcal{F}(\varepsilon_1 \bm{y}^{(1)} + \hdots + \varepsilon_d \bm{y}^{(d)}).
	\end{equation*}
	Equation (11.2) in \cite{schmidt85} applied to our situation gives
	\begin{multline*}
		\norm{S_{\bm{x}}(P_1,P_2)}^{2^{d_2-1}} \ll \norm{\mathcal{U}^D}^{2^{d_2-1}-d_2} \sum_{\bm{y}^{(1)} \in \mathcal{U}^D} \cdots \\
		\sum_{\bm{y}^{(d_2-2)} \in \mathcal{U}^D} \norm{\sum_{\bm{y}^{(d_2-1)} \in \mathcal{U}(\bm{y}^{(1)}, \hdots, \bm{y}^{(d_2-2)})} e \left( \mathcal{F}_{d_2-1} \left(\bm{y}^{(1)}, \hdots, \bm{y}^{(d_2-1)}\right)   \right) }^2,
	\end{multline*}
	and we note that this did not require $\mathcal{F}(\bm{y})$ to be homogeneous in Schmidt's work. It is not hard to see that for $\bm{z}, \bm{z}' \in \mathcal{U}(\bm{y}^{(1)}, \hdots, \bm{y}^{(d_2-2)}) $ we have
	\begin{multline*}
		\mathcal{F}_{d_2-1} (\bm{y}^{(1)}, \cdots, \bm{z}) - \mathcal{F}_{d_2-1} (\bm{y}^{(1)}, \cdots, \bm{z}') =  \\
		\mathcal{F}_{d_2} (\bm{y}^{(1)}, \cdots, \bm{y}^{(d_2-1)}, \bm{y}^{(d_2)} ) - \mathcal{F}_{d_2-1} (\bm{y}^{(1)}, \cdots, \bm{y}^{(d_2-1)}),
	\end{multline*}
	for some $\bm{y}^{(d_2-1)} \in \mathcal{U}(\bm{y}^{(1)}, \hdots, \bm{y}^{(d_2-2)})^D$ and $\bm{y}^{(d_2)} \in \mathcal{U}(\bm{y}^{(1)}, \hdots, \bm{y}^{(d_2-1)})$. Thus we find 
	\begin{multline} \label{eq.S_x_bound_long}
\norm{S_{\bm{x}}(P_1,P_2)}^{2^{d_2-1}} \ll \norm{\mathcal{U}^D}^{2^{d_2-1}-d_2} \sum_{\bm{y}^{(1)} \in \mathcal{U}^D} \cdots \sum_{\bm{y}^{(d_2-2)} \in \mathcal{U}^D} \sum_{\bm{y}^{(d_2-1)} \in \mathcal{U}(\bm{y}^{(1)}, \hdots, \bm{y}^{(d_2-2)})^D} \\
\sum_{\bm{y}^{(d_2)} \in \mathcal{U}(\bm{y}^{(1)}, \hdots, \bm{y}^{(d_2-1)}) } e \left( \mathcal{F}_{d_2} \left(\bm{y}^{(1)}, \hdots, \bm{y}^{(d_2)} \right)   - \mathcal{F}_{d_2-1} \left(\bm{y}^{(1)}, \hdots, \bm{y}^{(d_2-1)} \right)\right).
	\end{multline}
	We may write the polynomial $G(\bm{x},\bm{y})$ as follows
	\begin{equation*}
		G(\bm{x}, \bm{y}) = \sum_{\substack{0 \leq r \leq d_1 \\ 0 \leq l \leq d_2}} \sum_{\bm{j}_r, \bm{k}_l} G_{\bm{j}_r,\bm{k}_l}^{(r,l)} \bm{x}_{\bm{j}_r} \bm{y}_{\bm{k}_l},
	\end{equation*}
	for some real $G_{\bm{j}_r,\bm{k}_l}^{(r,l)}$. Further write $\mathcal{F}(\bm{y}) = \mathcal{F}^{(0)}(\bm{y}) + \hdots + \mathcal{F}^{(d_2)}(\bm{y})$, where $\mathcal{F}^{(d)}(\bm{y})$ denotes the degree $d$ homogeneous part of $\mathcal{F}(\bm{y})$. Lemma 11.4 (A) in \cite{schmidt85} states that $\mathcal{F}_{d_2}$ transpires to be the multilinear form associated to $\mathcal{F}^{(d_2)}(\bm{y})$. From this we see
	\begin{equation} \label{eq.F_squiggle_difference}
		\mathcal{F}_{d_2} - \mathcal{F}_{d_2-1} = \sum_{\substack{0 \leq r \leq d_1 \\ 0 \leq l \leq d_2}} \sum_{\bm{j}_r, \bm{k}_l} G_{\bm{j}_r,\bm{k}_l}^{(r,l)} x_{j_r(1)} \cdots x_{j_r(r)} h_{\bm{k}_l} \left(\bm{y}^{(1)}, \hdots, \bm{y}^{(d_2)} \right),
	\end{equation}
	where 
	\begin{equation*}
		h_{\bm{k}_{d_2}} \left(\bm{y}^{(1)}, \hdots, \bm{y}^{(d_2)} \right) = d_2! y_{k_{d_2}(1)}^{(1)} \cdots y_{k_{d_2}(d_2)}^{(d_2)} + \tilde{h}_{\bm{k}_{d_2}} \left(\bm{y}^{(1)}, \hdots, \bm{y}^{(d_2-1)} \right),
	\end{equation*}
	for some polynomials $\tilde{h}_{\bm{k}_{d_2}}$ of degree $d_2$ that are independent of $\bm{y}^{(d_2)}$ and further $h_{\bm{k}_{l}}$ are polynomials of degree $l$ that are always independent of $\bm{y}^{(d_2)}$ whenever $l \leq d_2-1$. Write $\widetilde{\bm{y}} = (\bm{y}^{(1)}, \hdots, \bm{y}^{(d_2)})$. Now set
	\begin{equation*}
		S_{\widetilde{\bm{y}}} = \sum_{\bm{x} \in P_1 \mathcal{B}_1} e \left( \sum_{\substack{0 \leq r \leq d_1 \\ 0 \leq l \leq d_2}} \sum_{\bm{j}_r, \bm{k}_l} G^{(r,l)}_{\bm{j}_r, \bm{k}_l} x_{j_r(1)} \cdots x_{j_r(r)} h_{\bm{k}_l} (\widetilde{\bm{y}}) \right).
	\end{equation*}
	Now we swap the order of summation of $\sum_{\bm{x}}$ in \eqref{eq.S_G_cauchy_schwarz} with the sums over $\bm{y}^{(i)}$ in \eqref{eq.S_x_bound_long}. Using the Cauchy-Schwarz inequality and \eqref{eq.F_squiggle_difference} we thus obtain
	\begin{equation*}
		\norm{S_G(P_1,P_2)}^{2^{\tilde{d}}} \ll P_1^{n_1(2^{\tilde{d}}-2^{d_1-1})} P_2^{n_2(2^{\tilde{d}}-d_2)} \sum_{\bm{y}^{(1)}} \cdots \sum_{\bm{y}^{(d_2)}} \norm{S_{\widetilde{\bm{y}}}}^{2^{d_1-1}}.
	\end{equation*}	
	The above still holds if $d_2=1$, which can be seen directly. Applying the same differencing process to $S_{\widetilde{\bm{y}}}$ gives
	\begin{equation} \label{eq.S_G_weyl_differencing}
		\norm{S_G(P_1,P_2)}^{2^{\tilde{d}}} \ll P_1^{n_1(2^{\tilde{d}}-d_1)} P_2^{n_2(2^{\tilde{d}}-d_2)} \sum_{\bm{y}^{(1)}} \cdots \sum_{\bm{y}^{(d_2)}} \sum_{\bm{x}^{(1)}} \cdots \norm{\sum_{\bm{x}^{(d_1)}} e \left( \gamma(\widetilde{\bm{x}}, \widetilde{\bm{y}}) \right)},
	\end{equation}
	where
	\begin{equation*}
		\gamma(\widetilde{\bm{x}}, \widetilde{\bm{y}}) = \sum_{\substack{0 \leq r \leq d_1 \\ 0 \leq l \leq d_2}} \sum_{\bm{j}_r, \bm{k}_l} G^{(r,l)}_{\bm{j}_r,\bm{k}_l} g_{\bm{j}_r} (\widetilde{\bm{x}}) h_{\bm{k}_l}(\widetilde{\bm{y}}),
	\end{equation*}
	and where similar to before we have
	\begin{equation*}
		g_{\bm{j}_{d_1}} (\widetilde{\bm{x}}) = d_1! x_{j_{d_1}(1)}^{(1)} \cdots  x_{j_{d_1}(d_1)}^{(d_1)} + \tilde{g}_{\bm{j}_{d_1}} (\bm{x}^{(1)}, \hdots, \bm{x}^{(d_1-1)}),
	\end{equation*}
	with $\tilde{g}_{\bm{j}_{d_1}}$ and $g_{\bm{j}_{r}}$ for $r < d_1$ not depending on $\bm{x}^{(d_1)}$. We note that \eqref{eq.S_G_weyl_differencing} holds for all $d_1,d_2 \geq 1$ and all the summations $\sum_{\bm{x}^{(i)}}$ and $\sum_{\bm{y}^{(j)}}$ in \eqref{eq.S_G_weyl_differencing} are over boxes contained in $[-P_1,P_1]^{n_1}$ and $[-P_2,P_2]^{n_2}$, respectively. Write $\widehat{\bm{x}} = (\bm{x}^{(1)}, \hdots, \bm{x}^{(d_1-1)})$ and $\widehat{\bm{y}} = (\bm{y}^{(1)}, \hdots, \bm{y}^{(d_2-1)})$. We now wish to estimate the quantity
	\begin{equation} \label{eq.sum_of_hats}
		\sum(\widehat{\bm{x}}, \widehat{\bm{y}}) \coloneqq \sum_{\bm{y}^{(d_2)}} \norm{ \sum_{\bm{x}^{(d_1)}} e \left( \gamma(\widetilde{\bm{x}}, \widetilde{\bm{y}}) \right)}.
	\end{equation}
	Viewing $\sum_{a < x \leq b} e(\beta x)$ for $b-a \geq 1$ as a geometric series we recall the following elementary estimate
	\begin{equation*}
		\norm{\sum_{a < x \leq b} e(\beta x)} \ll \min \{ b-a, \nnorm{\beta}^{-1} \}.
	\end{equation*}
	This yields
	\begin{equation*}
		\norm{\sum_{\bm{x}^{(d_1)}} e \left( \gamma(\widetilde{\bm{x}}, \widetilde{\bm{y}}) \right)} \ll \prod_{\ell = 1}^{n_1} \min \left\{ P_1, \nnorm{\widetilde{\gamma}(\widehat{\bm{x}}, \bm{e}_\ell, \widetilde{\bm{y}})}^{-1} \right\},
	\end{equation*}
	where $\bm{e}_\ell$ denotes the $\ell$-th unit vector and where
	\begin{equation*}
		\widetilde{\gamma}(\widetilde{\bm{x}}, \widetilde{\bm{y}}) = d_1! \sum_{0 \leq l \leq d_2} \sum_{\bm{j}_{d_1}, \bm{k}_l} G^{(d_1,l)}_{\bm{j}_{d_1}, \bm{k}_l} x_{j_{d_1}(1)}^{(1)} \cdots  x_{j_{d_1}(d_1)}^{(d_1)} h_{\bm{k}_l}(\widetilde{\bm{y}}).
	\end{equation*}
	We now apply a standard argument in order to estimate this product, as in Davenport~\cite[Chapter 13]{davenport_book}.
    For a real number $z$ write $\{z \}$ for its fractional part. Let $\bm{r} = (r_1, \hdots, r_{n_1}) \in \Z^{n_1}$ be such that $0 \leq r_\ell < P_1$ holds for $\ell = 1, \hdots, n_1$. Define $\mathcal{A}(\widehat{\bm{x}}, \widehat{\bm{y}}, \bm{r})$ to be the set of $\bm{y}^{(d_2)}$ in the sum in \eqref{eq.sum_of_hats} such that
    \begin{equation*}
    	r_\ell P_1^{-1} \leq \left\{ \widetilde{\gamma} \left(\widehat{\bm{x}}, \bm{e}_\ell, \widehat{\bm{y}}, \bm{y}^{(d_2)}\right) \right\} < (r_\ell+1)P_1^{-1},
    \end{equation*}
    holds for all $\ell = 1, \hdots, n_1$ and write $A(\widehat{\bm{x}}, \widehat{\bm{y}}, \bm{r})$ for its cardinality. We obtain the estimate
    \begin{equation*}
    	\sum(\widehat{\bm{x}}, \widehat{\bm{y}}) \ll \sum_{\bm{r}} A(\widehat{\bm{x}}, \widehat{\bm{y}}, \bm{r}) \prod_{\ell = 1}^{n_1} \min \left\{ P_1, \max \left\{ \frac{P_1}{r_\ell}, \frac{P_1}{P_1-r_{\ell}-1} \right\} \right\}, 
    \end{equation*}
    where the sum $\sum_{\bm{r}}$ is over integral $\bm{r}$ with $0 \leq r_\ell < P_1$ for all $\ell = 1, \hdots, n_1$. Our next aim is to find a bound for $A(\widehat{\bm{x}}, \widehat{\bm{y}}, \bm{r})$ that is independent of $\bm{r}$. Given $\bm{u}, \bm{v} \in \mathcal{A}(\widehat{\bm{x}}, \widehat{\bm{y}}, \bm{r})$ then
    \begin{equation*}
    	\nnorm{\widetilde{\gamma} \left(\widehat{\bm{x}}, \bm{e}_\ell, \widehat{\bm{y}}, \bm{u}\right)- \widetilde{\gamma} \left(\widehat{\bm{x}}, \bm{e}_\ell, \widehat{\bm{y}}, \bm{v}\right)} < P_1^{-1},
    \end{equation*}
    for $\ell = 1, \hdots, n_1$. Similar as before we now define the multilinear forms
    \begin{equation*}
    	\Gamma_G(\widetilde{\bm{x}}, \widetilde{\bm{y}}) \coloneqq d_1! d_2! \sum_{\bm{j}_{d_1}, \bm{k}_{d_2}} G^{(d_1,d_2)}_{\bm{j}_{d_1}, \bm{k}_{d_2}} x_{j_{d_1}(1)}^{(1)} \cdots x_{j_{d_1}(d_1)}^{(d_1)} y_{k_{d_2}(1)}^{(1)} \cdots y_{k_{d_2}(d_2)}^{(d_2)},
    \end{equation*}
    which only depend on the $(d_1,d_2)$-degree part of $G$. For fixed $\widehat{\bm{x}}, \widehat{\bm{y}}$ let $N(\widehat{\bm{x}}, \widehat{\bm{y}})$ be the number of $\bm{y} \in (-P_2,P_2)^{n_2}$
 such that
 \begin{equation*}
 	\nnorm{\Gamma_G( \widehat{\bm{x}}, \bm{e}_\ell, \widehat{\bm{y}}, \bm{y})} < P_1^{-1},
 \end{equation*}
 for al $\ell = 1, \hdots, n_1$. Observe now crucially
 \begin{equation*}
 	\widetilde{\gamma} \left(\widehat{\bm{x}}, \bm{e}_\ell, \widehat{\bm{y}}, \bm{u}\right)- \widetilde{\gamma} \left(\widehat{\bm{x}}, \bm{e}_\ell, \widehat{\bm{y}}, \bm{v}\right) = \Gamma_G( \widehat{\bm{x}}, \bm{e}_\ell, \widehat{\bm{y}}, \bm{u}- \bm{v}).
 \end{equation*}
 Thus we find $A(\widehat{\bm{x}}, \widehat{\bm{y}}, \bm{r}) \leq N(\widehat{\bm{x}}, \widehat{\bm{y}})$ for all $\bm{r}$ as specified above. Using this we get
 \begin{equation*}
 	\sum_{\bm{y}^{(d_2)}} \norm{ \sum_{\bm{x}^{(d_1)}} e \left( \gamma(\widetilde{\bm{x}}, \widetilde{\bm{y}}) \right)} \ll N(\widehat{\bm{x}}, \widehat{\bm{y}}) (P_1 \log P_1)^{n_1}.
 \end{equation*}
Finally, summing over $\widehat{\bm{x}}$ and $\widehat{\bm{y}}$ we obtain
\begin{equation*}
	\norm{S_G(P_1,P_2)}^{2^{\tilde{d}}} \ll P_1^{n_1(2^{\tilde{d}}-d_1+1) + \varepsilon} P_2^{n_2(2^{\tilde{d}}-d_2)} M_1 \left( G^{(d_1,d_2)}, P_1, P_2, P_1^{-1} \right). \qedhere
\end{equation*}  \end{proof}

Inspecting the proof of Lemma 4.1 in \cite{schindler_bihomogeneous} we find that for a polynomial $G(\bm{x}, \bm{y})$ as above given $\theta \in (0,1]$ the following holds
\begin{multline*}
	M_1(G^{(d_1,d_2)},P_1,P_2,P_1^{-1}) \ll P_1^{n_1(d_1-1)} P_2^{n_2d_2} P_2^{-\theta(n_1d_1 + n_2d_2)}  \\ 
	\times \max_{i=1,2} \left\{ P_2^{n_i \theta} M_i\left(G^{(d_1,d_2)}; P_2^\theta, P_2^\theta, P_1^{-d_1} P_2^{-d_2} P_2^{\theta(\tilde{d}+1)}\right) \right\}
\end{multline*}
Using this and Lemma \ref{lem.weyl_differencing_general_poly} we deduce the next Lemma.

\begin{lemma} \label{lem.schindler_counting_linearised}
	Let $P_1,P_2 > 1$, $\theta \in (0,1]$ and $\bm{\alpha} \in \mathbb{R}^R$. Write $S_G = S_G(P_1,P_2)$. Using the same notation as above for $i=1$ or $i=2$ we have
	\begin{equation*}
		\norm{S_G}^{2^{\tilde{d}}} \ll_{d_i,n_i,\varepsilon} P_1^{n_12^{\tilde{d}} + \varepsilon} P_2^{n_22^{\tilde{d}}} P_2^{\theta n_i- \theta(n_1d_1+n_2d_2) }  \times M_i\left(G^{(d_1,d_2)}; P_2^\theta, P_2^\theta, P_1^{-d_1} P_2^{-d_2}P_2^{\theta(\tilde{d}+1)}\right).
	\end{equation*}
\end{lemma}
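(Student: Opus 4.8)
The plan is to chain together the two estimates displayed immediately before the statement; nothing beyond that is needed. First I would invoke Lemma~\ref{lem.weyl_differencing_general_poly}, which gives
\[
\norm{S_G}^{2^{\tilde{d}}} \ll P_1^{n_1(2^{\tilde{d}}-d_1+1)+\varepsilon} P_2^{n_2(2^{\tilde{d}}-d_2)}\, M_1\!\left(G^{(d_1,d_2)}, P_1, P_2, P_1^{-1}\right),
\]
and then substitute the bound on $M_1(G^{(d_1,d_2)}, P_1, P_2, P_1^{-1})$ extracted by inspecting Schindler's Lemma 4.1, namely
\[
M_1(G^{(d_1,d_2)},P_1,P_2,P_1^{-1}) \ll P_1^{n_1(d_1-1)} P_2^{n_2 d_2} P_2^{-\theta(n_1 d_1+n_2 d_2)} \max_{i=1,2}\left\{ P_2^{n_i\theta} M_i\!\left(G^{(d_1,d_2)}; P_2^\theta, P_2^\theta, P_1^{-d_1}P_2^{-d_2}P_2^{\theta(\tilde{d}+1)}\right)\right\}.
\]

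\textbf{Collecting exponents.} The next step is pure bookkeeping. Multiplying the two displays, the power of $P_1$ becomes $n_1(2^{\tilde{d}}-d_1+1)+n_1(d_1-1)+\varepsilon = n_1 2^{\tilde{d}}+\varepsilon$, while the "main" power of $P_2$ becomes $n_2(2^{\tilde{d}}-d_2)+n_2 d_2 = n_2 2^{\tilde{d}}$; what is left over is the factor $P_2^{-\theta(n_1 d_1+n_2 d_2)}$ together with the maximum over $i\in\{1,2\}$ of $P_2^{n_i\theta} M_i(G^{(d_1,d_2)}; P_2^\theta, P_2^\theta, P_1^{-d_1}P_2^{-d_2}P_2^{\theta(\tilde{d}+1)})$. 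Thus
\[
\norm{S_G}^{2^{\tilde{d}}} \ll P_1^{n_1 2^{\tilde{d}}+\varepsilon} P_2^{n_2 2^{\tilde{d}}} P_2^{-\theta(n_1 d_1+n_2 d_2)} \max_{i=1,2}\left\{ P_2^{n_i\theta} M_i\!\left(G^{(d_1,d_2)}; P_2^\theta, P_2^\theta, P_1^{-d_1}P_2^{-d_2}P_2^{\theta(\tilde{d}+1)}\right)\right\}.
\]
Finally I would fix $i\in\{1,2\}$ to be an index at which this maximum is attained; for that $i$ the displayed inequality is precisely the claimed bound, with the factor $P_2^{\theta n_i-\theta(n_1 d_1+n_2 d_2)} M_i(\dots)$, and the implied constant inherits its dependence on $d_i$, $n_i$, $\varepsilon$ from Lemma~\ref{lem.weyl_differencing_general_poly}. (This is why the conclusion is phrased as holding for \emph{one} of $i=1$ or $i=2$, rather than for both simultaneously.)

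\textbf{Main obstacle.} I do not expect a real obstacle in the proof as stated, since the two input inequalities have already been recorded. The only point requiring genuine care sits in the preparatory passage rather than the proof itself: checking that the argument of Schindler's Lemma 4.1, although stated for bihomogeneous systems, applies to the bihomogeneous top-bidegree part $G^{(d_1,d_2)}$ of an inhomogeneous $G$ and produces exactly the shape above, with both box sides taken equal to $P_2^\theta$ (this uses the running hypothesis $P_1\ge P_2$). Given that, the lemma follows by the mechanical substitution and exponent count described above.
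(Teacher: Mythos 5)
Your proposal is correct and follows essentially the same route as the paper: it simply combines Lemma~\ref{lem.weyl_differencing_general_poly} with the bound on $M_1(G^{(d_1,d_2)},P_1,P_2,P_1^{-1})$ extracted from the proof of Schindler's Lemma 4.1, collects the exponents, and selects the index $i$ realising the maximum. The exponent bookkeeping ($n_1(2^{\tilde d}-d_1+1)+n_1(d_1-1)=n_12^{\tilde d}$, $n_2(2^{\tilde d}-d_2)+n_2d_2=n_22^{\tilde d}$) and the remark about the standing assumption $P_1\ge P_2$ match the paper's treatment.
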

Using the preceding Lemma and adapting the proof of \cite[Lemma 3.1]{myerson_quadratic} to our setting we can now show the following.
\begin{lemma} \label{lem.auxiliary_ineq_lemma}
	Let $\varepsilon > 0$, $\theta \in (0,1]$ and $\bm{\alpha}, \bm{\beta} \in \mathbb{R}^R$. Then for $i = 1$ or $i=2$ we have
	\begin{equation} \label{eq.almost_auxiliary}
		\min \left\{ \norm{\frac{S(\bm{\alpha})}{P_1^{n_1+\varepsilon} P_2^{n_2}}}, \norm{\frac{S(\bm{\alpha} + \bm{\beta})}{P_1^{n_1+\varepsilon} P_2^{n_2}}} \right\}^{2^{\tilde{d}+1}} \\ \ll_{d_i,n_i,\varepsilon} \frac{M_i\left(\bm{\beta} \cdot \bm{F}; P_2^\theta, P_2^\theta, P_1^{-d_1} P_2^{-d_2}P_2^{\theta(\tilde{d}+1)}\right)}{P_2^{\theta(n_1 d_1 + n_2d_2)-\theta n_i}} 
	\end{equation}
\end{lemma}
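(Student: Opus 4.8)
The plan is to bound the product $S(\bm{\alpha})\overline{S(\bm{\alpha}+\bm{\beta})}$, whose modulus controls $\min\{|S(\bm{\alpha})|,|S(\bm{\alpha}+\bm{\beta})|\}$, and to exploit a shift of variables that transfers the dependence from $\bm{\alpha}$ to $\bm{\beta}$ in the top-bidegree part, so that Lemma~\ref{lem.schindler_counting_linearised} produces the count $M_i(\bm{\beta}\cdot\bm{F};\dots)$. First I would note the elementary inequality $\min\{|A|,|B|\}^{2^{\tilde d+1}}\le(|A||B|)^{2^{\tilde d}}=|A\overline{B}|^{2^{\tilde d}}$, applied with $A=S(\bm{\alpha})/(P_1^{n_1+\varepsilon}P_2^{n_2})$ and $B=S(\bm{\alpha}+\bm{\beta})/(P_1^{n_1+\varepsilon}P_2^{n_2})$; this reduces the claim to an upper bound for $|S(\bm{\alpha})\overline{S(\bm{\alpha}+\bm{\beta})}|^{2^{\tilde d}}$. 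Expanding the product and re-indexing the second block of summation variables via $\bm{x}'=\bm{x}+\bm{h}$, $\bm{y}'=\bm{y}+\bm{k}$ gives
\[
S(\bm{\alpha})\overline{S(\bm{\alpha}+\bm{\beta})}=\sum_{\bm{h}}\sum_{\bm{k}}\widetilde{S}_{\bm{h},\bm{k}},\qquad \widetilde{S}_{\bm{h},\bm{k}}=\sum_{\substack{\bm{x}\in P_1\mathcal{B}_1\cap(P_1\mathcal{B}_1-\bm{h})\\ \bm{y}\in P_2\mathcal{B}_2\cap(P_2\mathcal{B}_2-\bm{k})}}e\!\left(G_{\bm{h},\bm{k}}(\bm{x},\bm{y})\right),
\]
where $G_{\bm{h},\bm{k}}(\bm{x},\bm{y})=\bm{\alpha}\cdot\bm{F}(\bm{x},\bm{y})-(\bm{\alpha}+\bm{\beta})\cdot\bm{F}(\bm{x}+\bm{h},\bm{y}+\bm{k})$ and $(\bm{h},\bm{k})$ ranges over $O_{n_1,n_2}(P_1^{n_1}P_2^{n_2})$ integer vectors. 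The key observation is that $G_{\bm{h},\bm{k}}$ is a (generally inhomogeneous) polynomial of bidegree $(d_1,d_2)$ whose bihomogeneous $(d_1,d_2)$-part equals $-\bm{\beta}\cdot\bm{F}$, because translating the arguments of a bihomogeneous form does not change its top-bidegree part.

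Next I would apply Lemma~\ref{lem.schindler_counting_linearised} to each $\widetilde{S}_{\bm{h},\bm{k}}$. One small technical remark is needed here: that lemma is stated for the sum over the fixed boxes $\mathcal{B}_1\times\mathcal{B}_2$, whereas $\widetilde{S}_{\bm{h},\bm{k}}$ runs over the sub-boxes $P_1\mathcal{B}_1\cap(P_1\mathcal{B}_1-\bm{h})$ and $P_2\mathcal{B}_2\cap(P_2\mathcal{B}_2-\bm{k})$; but the Weyl differencing of Lemma~\ref{lem.weyl_differencing_general_poly} and the argument adapting Schindler's Lemma~4.1 only use that the summation ranges are axis-parallel boxes inside $[-P_1,P_1]^{n_1}\times[-P_2,P_2]^{n_2}$, so the bound applies verbatim. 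Since $G_{\bm{h},\bm{k}}^{(d_1,d_2)}=-\bm{\beta}\cdot\bm{F}$ and the counting function $M_i(\,\cdot\,;\dots)$ is invariant under negation of the form (its defining condition involves only $\nnorm{\cdot}$), Lemma~\ref{lem.schindler_counting_linearised} yields, for $i=1$ or $i=2$,
\[
|\widetilde{S}_{\bm{h},\bm{k}}|^{2^{\tilde d}}\ll_{d_i,n_i,\varepsilon}P_1^{n_12^{\tilde d}+\varepsilon}P_2^{n_22^{\tilde d}}\,P_2^{\theta n_i-\theta(n_1d_1+n_2d_2)}\,M_i\!\left(\bm{\beta}\cdot\bm{F};P_2^{\theta},P_2^{\theta},P_1^{-d_1}P_2^{-d_2}P_2^{\theta(\tilde d+1)}\right),
\]
with a right-hand side independent of $(\bm{h},\bm{k})$. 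In fact the bound of Lemma~\ref{lem.schindler_counting_linearised} holds with the larger of the two admissible right-hand sides, and since for $G_{\bm{h},\bm{k}}$ both are independent of $(\bm{h},\bm{k})$, a single $i$ may be used uniformly in the sum below (this is why the conclusion of Lemma~\ref{lem.auxiliary_ineq_lemma} is also stated only for one of $i=1,2$).

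Finally I would take $2^{\tilde d}$-th roots in the last display, sum the resulting $O(P_1^{n_1}P_2^{n_2})$ estimates over $\bm{h}$ and $\bm{k}$, raise the outcome to the power $2^{\tilde d}$, and divide by $(P_1^{n_1+\varepsilon}P_2^{n_2})^{2^{\tilde d+1}}$. Using $2^{\tilde d}\cdot 2=2^{\tilde d+1}$ the powers of $P_1$ and $P_2$ cancel exactly after a harmless redefinition of $\varepsilon$ (any surviving positive power of $P_1$ can only help, since $P_1>1$), and one is left with precisely $M_i(\bm{\beta}\cdot\bm{F};P_2^{\theta},P_2^{\theta},P_1^{-d_1}P_2^{-d_2}P_2^{\theta(\tilde d+1)})/P_2^{\theta(n_1d_1+n_2d_2)-\theta n_i}$, as claimed. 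I do not expect a serious obstacle: the one genuinely new step — and the one I would check most carefully — is the shift $\bm{x}'=\bm{x}+\bm{h},\ \bm{y}'=\bm{y}+\bm{k}$ that moves the dependence onto $\bm{\beta}$ in the top-bidegree part, which is exactly the device of Rydin Myerson's Lemma~3.1 and the source of the single extra factor of $2$ (hence $2^{\tilde d+1}$ rather than $2^{\tilde d}$). The remaining points are bookkeeping: that $M_i(-\bm{\beta}\cdot\bm{F};\dots)=M_i(\bm{\beta}\cdot\bm{F};\dots)$, that $G_{\bm{h},\bm{k}}$ indeed has bidegree $(d_1,d_2)$ when $\bm{\beta}\cdot\bm{F}\not\equiv0$ (and the degenerate case is trivial, as then $M_i$ counts the whole box and $|S(\bm{\alpha})|\le P_1^{n_1}P_2^{n_2}$ already suffices), and the legitimacy of the sub-box version of Lemma~\ref{lem.schindler_counting_linearised} noted above.
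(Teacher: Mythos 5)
Your proposal is correct and follows essentially the same route as the paper: both difference the product $S(\bm{\alpha})\overline{S(\bm{\alpha}+\bm{\beta})}$ by shifting one set of summation variables so that the top-bidegree part of the resulting polynomial is $\pm\bm{\beta}\cdot\bm{F}$, and then apply Lemma~\ref{lem.schindler_counting_linearised} to each shifted sum over sub-boxes, with a choice of $i$ that is uniform in the shifts since the bound depends only on the $(d_1,d_2)$-part. The only cosmetic differences are the sign convention (the paper orders the product so the top part is $+\bm{\beta}\cdot\bm{F}$, you use $-\bm{\beta}\cdot\bm{F}$ together with the negation-invariance of $M_i$) and that the paper moves the exponent $2^{\tilde{d}}$ inside the sum over shifts by iterated Cauchy--Schwarz, whereas you sum the uniform bounds and re-raise, which is equivalent here.
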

\begin{proof}
	Note first that for two real numbers $\lambda, \mu >0$ we have 
	\begin{equation*}
		\min\{\lambda, \mu\} \leq \sqrt{\lambda\mu}.
	\end{equation*}
	Therefore it suffices to show
	\begin{equation*}
		\norm{\frac{S(\bm{\alpha})S(\bm{\alpha} + \bm{\beta})}{P_1^{2n_1+2\varepsilon} P_2^{2n_2}} }^{2^{\tilde{d}}} \ll_{d_i,n_i,\varepsilon} \frac{M_i\left(\bm{\beta}; P_2^\theta, P_2^\theta, P_1^{-d_1} P_2^{-d_2}P_2^{\theta(\tilde{d}+1)}\right)}{P_2^{\theta(n_1 d_1 + n_2d_2)-\theta n_i}}.
	\end{equation*}
	holds for $i=1$ or $i=2$.
	Note first that 
	\begin{equation*}
		\norm{S(\bm{\alpha} + \bm{\beta}) \overbar{S}(\bm{\alpha})} = \norm{\sum_{\substack{\bm{x} \in P_1 \mathcal{B}_1 \\ \bm{y} \in  P_2 \mathcal{B}_2}} \sum_{\substack{\bm{x}+\bm{z} \in P_1 \mathcal{B}_1 \\ \bm{y}+\bm{w} \in  P_2 \mathcal{B}_2}} e\left( (\bm{\alpha} + \bm{\beta}) \cdot \bm{F}(\bm{x},\bm{y}) - \bm{\alpha} \cdot \bm{F} (\bm{x}+\bm{z}, \bm{y}+\bm{w}) \right)},
	\end{equation*}
	so by the triangle inequality we get
	\begin{equation*}
		\norm{S(\bm{\alpha} + \bm{\beta}) \overbar{S}(\bm{\alpha})} \leq \sum_{\substack{\nnorm{\bm{z}}_\infty \leq P_1 \\ \nnorm{\bm{w}}_\infty \leq P_2}} \norm{\sum_{\substack{\bm{x} \in P_1\mathcal{B}_{\bm{z}} \\ \bm{y} \in P_2\mathcal{B}_{\bm{w}}}} e\left( \bm{\beta} \cdot \bm{F}(\bm{x},\bm{y}) - g_{\bm{\alpha},\bm{\beta},\bm{z},\bm{w}}(\bm{x},\bm{y}) \right)},
	\end{equation*}
	where $g_{\bm{\alpha},\bm{\beta},\bm{z},\bm{w}}(\bm{x},\bm{y})$ is of degree at most $d_1+d_2-1$ in $(\bm{x},\bm{y})$ and we have some boxes $\mathcal{B}_{\bm{z}} \subset \mathcal{B}_1$ and $\mathcal{B}_{\bm{w}} \subset \mathcal{B}_2$. Applying Cauchy's inequality $\tilde{d}$-times we deduce 
	\begin{equation*}
		\norm{S(\bm{\alpha} + \bm{\beta}) \overbar{S}(\bm{\alpha})}^{2^{\tilde{d}}} \leq P_1^{n_1(2^{\tilde{d}}-1)}P_2^{n_2(2^{\tilde{d}}-1)} \sum_{\substack{\nnorm{\bm{z}}_\infty \leq P_1 \\ \nnorm{\bm{w}}_\infty \leq P_2}} \norm{\sum_{\substack{\bm{x} \in P_1\mathcal{B}_{\bm{z}} \\ \bm{y} \in P_2\mathcal{B}_{\bm{w}}}} e\left( \bm{\beta} \cdot \bm{F}(\bm{x},\bm{y}) - g_{\bm{\alpha},\bm{\beta},\bm{z},\bm{w}}(\bm{x},\bm{y}) \right)}^{2^{\tilde{d}}}.
	\end{equation*}
If we write $G(\bm{x}, \bm{y}) = \bm{\beta} \cdot \bm{F}(\bm{x},\bm{y}) - g_{\bm{\alpha},\bm{\beta},\bm{z},\bm{w}}(\bm{x},\bm{y})$ then note that $G^{(d_1,d_2)} = \bm{\beta} \cdot \bm{F}$. Using Lemma \ref{lem.schindler_counting_linearised} we therefore obtain
\begin{multline*}
	\norm{S(\bm{\alpha} + \bm{\beta}) \overbar{S}(\bm{\alpha})}^{2^{\tilde{d}}} \ll P_1^{2^{\tilde{d}+1}n_1+\varepsilon}P_2^{2^{\tilde{d}+1}n_2} P_2^{-\theta (n_1d_1 + n_2d_2) + \theta n_i}\\ \times M_i(\bm{\beta} \cdot \bm{F}, P_2^\theta, P_2^\theta, P_1^{-d_1}P_2^{-d_2}P_2^{\theta(\tilde{d}+1)}),
\end{multline*}
for $i=1$ or $i=2$, which readily delivers the result.
\end{proof}
As in the introduction, for $\bm{\beta} \in \mathbb{R}^R$ we define the auxiliary counting function $N_1^{\mathrm{aux}}(\bm{\beta}; B)$ to be the number of integer vectors $\widehat{\bm{x}} \in (-B,B)^{(d_1-1)n_1}$ and $\widetilde{\bm{y}} \in (-B,B)^{d_2n_2}$ such that 
\begin{equation*}
	\norm{\Gamma_{\bm{\beta} \cdot \bm{F}}(\widehat{\bm{x}}, \bm{e}_\ell, \widetilde{\bm{y}})} < \nnorm{\bm{\beta} \cdot \bm{F}}_\infty B^{\tilde{d}},
\end{equation*}
for $\ell = 1, \hdots, n_1$ where $\nnorm{f}_\infty \coloneqq \frac{1}{d_1!d_2!} \max_{\bm{j}, \bm{k}} \norm{\frac{\partial^{d_1+d_2}f}{\partial x_{j_1} \cdots \partial x_{j_{d_1}} \partial y_{k_1} \cdots \partial y_{k_{d_2}}  }}$. We also analogously define $N_2^{\mathrm{aux}}(\bm{\beta}; B)$.
We now formulate an analogue for \cite[Proposition 3.1]{myerson_quadratic}.
\begin{proposition} \label{prop.auxiliary_ineq_from_counting_function}
	Let $C_0 \geq 1$ and $\mathscr{C} > 0$ such that for all $\bm{\beta} \in \mathbb{R}^R$ and $B>0$ we have for $i=1,2$ that
	\begin{equation} \label{eq.auxiliary_ineq_assumption}
		N_i^{\mathrm{aux}}(\bm{\beta}; B) \leq C_0 B^{d_1n_1+d_2n_2 - n_i - 2^{\tilde{d}+1} \mathscr{C}}.
	\end{equation}
	Assume further that the forms $F_i$ are linearly independent, so that there exist $M>\mu>0$ such that
	\begin{equation} \label{eq.assumption_lin_ind}
		\mu \nnorm{\bm{\beta}}_\infty \leq \nnorm{\bm{\beta} \cdot \bm{F}}_\infty \leq M \nnorm{\bm{\beta}}_\infty. 
		\end{equation}
		Then there exists a constant $C>0$ depending on $C_0,d_i,n_i,\mu$ and $ M$ such that the following \emph{auxiliary inequality} 
		\begin{equation*} \label{eq.aux_ineq_section_2}
		\min \left\{ \norm{\frac{S(\bm{\alpha})}{P_1^{n_1+\varepsilon} P_2^{n_2}}}, \norm{\frac{S(\bm{\alpha} + \bm{\beta})}{P_1^{n_1+\varepsilon} P_2^{n_2}}} \right\} \leq
		 C\max\left\{P_2^{-1}, P_1^{-d_1}P_2^{-d_2} \nnorm{\bm{\beta}}_\infty^{-1}, \nnorm{\bm{\beta}}_\infty^{\frac{1}{\tilde{d}+1}} \right\}^{\mathscr{C}}
		\end{equation*}
		holds for all real numbers $P_1, P_2 > 1$.
\end{proposition}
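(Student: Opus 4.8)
The plan is to feed the hypothesis~\eqref{eq.auxiliary_ineq_assumption} into Lemma~\ref{lem.auxiliary_ineq_lemma}, choosing the free parameter $\theta\in(0,1]$ there optimally. Write $N_i'=d_1n_1+d_2n_2-n_i$ for the common dimension of the domains over which $M_i$ and $N_i^{\mathrm{aux}}$ count. Lemma~\ref{lem.auxiliary_ineq_lemma} provides, for each $\theta\in(0,1]$, an index $i\in\{1,2\}$ with
\[
\min\left\{\left|\frac{S(\bm\alpha)}{P_1^{n_1+\varepsilon}P_2^{n_2}}\right|,\left|\frac{S(\bm\alpha+\bm\beta)}{P_1^{n_1+\varepsilon}P_2^{n_2}}\right|\right\}^{2^{\tilde{d}+1}}\ll_{d_i,n_i,\varepsilon}\frac{M_i(\bm\beta\cdot\bm F;B,B,\delta)}{B^{N_i'}},\qquad B:=P_2^\theta,\ \ \delta:=P_1^{-d_1}P_2^{-d_2}B^{\tilde{d}+1},
\]
since $P_2^{\theta(n_1d_1+n_2d_2)-\theta n_i}=B^{N_i'}$ and $P_2^{\theta(\tilde d+1)}=B^{\tilde d+1}$. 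Thus it suffices to bound the normalised count $M_i(\bm\beta\cdot\bm F;B,B,\delta)/B^{N_i'}$ for a well-chosen $B=P_2^\theta$. Throughout I may assume $\max\{P_2^{-1},P_1^{-d_1}P_2^{-d_2}\nnorm{\bm\beta}_\infty^{-1},\nnorm{\bm\beta}_\infty^{1/(\tilde{d}+1)}\}\le1$, as otherwise the claim follows from the trivial bound $|S(\bm\alpha)|\ll P_1^{n_1}P_2^{n_2}$; in particular $\nnorm{\bm\beta}_\infty\le1$.

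The heart of the argument is that, for an appropriate $B$, the mod-one count $M_i$ is dominated by the genuine-value count $N_i^{\mathrm{aux}}$ over the same box. By multilinearity of $\Gamma_{\bm\beta\cdot\bm F}$ and~\eqref{eq.assumption_lin_ind} there is a constant $C_1=C_1(d_j,n_j)$ with $|\Gamma_{\bm\beta\cdot\bm F}(\widehat{\bm x},\bm e_\ell,\widetilde{\bm y})|\le C_1\nnorm{\bm\beta\cdot\bm F}_\infty B^{\tilde{d}+1}\le C_1 M\nnorm{\bm\beta}_\infty B^{\tilde{d}+1}$ for all $(\widehat{\bm x},\widetilde{\bm y})\in(-B,B)^{N_i'}$ and all $\ell$. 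Hence if one arranges both
\[
B\le c_1\nnorm{\bm\beta}_\infty^{-1/(\tilde{d}+1)}\quad\text{and}\quad B\le\mu\nnorm{\bm\beta}_\infty P_1^{d_1}P_2^{d_2},
\]
with $c_1=c_1(d_j,n_j,M)$ chosen so that the first bound forces $C_1M\nnorm{\bm\beta}_\infty B^{\tilde{d}+1}<\tfrac12$, then: the first bound makes every value $\Gamma_{\bm\beta\cdot\bm F}(\widehat{\bm x},\bm e_\ell,\widetilde{\bm y})$ lie in $(-\tfrac12,\tfrac12)$, so the condition $\nnorm{\cdot}<\delta$ defining $M_i$ becomes $|\cdot|<\delta$; and the second bound rearranges (using $\nnorm{\bm\beta\cdot\bm F}_\infty\ge\mu\nnorm{\bm\beta}_\infty$) to $\delta\le\nnorm{\bm\beta\cdot\bm F}_\infty B^{\tilde{d}}$, so $|\cdot|<\delta$ implies $|\cdot|<\nnorm{\bm\beta\cdot\bm F}_\infty B^{\tilde{d}}$. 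Since $M_i(\bm\beta\cdot\bm F;B,B,\delta)$ and $N_i^{\mathrm{aux}}(\bm\beta;B)$ range over the same box $(-B,B)^{N_i'}$, this gives $M_i(\bm\beta\cdot\bm F;B,B,\delta)\le N_i^{\mathrm{aux}}(\bm\beta;B)$.

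Accordingly I would take $B=\min\{P_2,\,c_1\nnorm{\bm\beta}_\infty^{-1/(\tilde{d}+1)},\,\mu\nnorm{\bm\beta}_\infty P_1^{d_1}P_2^{d_2}\}$, which under the reductions above lies in $(1,P_2]$ up to harmless absolute constants, so that $\theta=\log B/\log P_2\in(0,1]$ is legitimate in Lemma~\ref{lem.auxiliary_ineq_lemma}; the ranges in which this $\min$ has bounded size are handled trivially, since there one of the three members of the maximum is already bounded below. Feeding $M_i(\bm\beta\cdot\bm F;B,B,\delta)\le N_i^{\mathrm{aux}}(\bm\beta;B)\le C_0B^{N_i'-2^{\tilde{d}+1}\mathscr{C}}$ from~\eqref{eq.auxiliary_ineq_assumption} into the display above gives $\min\{\ldots\}^{2^{\tilde{d}+1}}\ll_{C_0,d_i,n_i,\varepsilon}B^{-2^{\tilde{d}+1}\mathscr{C}}$, and the choice of $B$ yields $B^{-1}\ll_{d_j,n_j,\mu,M}\max\{P_2^{-1},P_1^{-d_1}P_2^{-d_2}\nnorm{\bm\beta}_\infty^{-1},\nnorm{\bm\beta}_\infty^{1/(\tilde{d}+1)}\}$. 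Extracting $2^{\tilde{d}+1}$-th roots and absorbing the implied constants and $C_0^{1/2^{\tilde{d}+1}}$ into $C$ delivers the auxiliary inequality.

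I do not expect a conceptual obstacle; the work lies in bookkeeping. The delicate points will be: checking that the above $B$ really is a legitimate value $P_2^\theta$ with $\theta\in(0,1]$ (this is where $\nnorm{\bm\beta}_\infty\le1$, $P_1\ge P_2$, and the reduction to $\max\{\cdots\}\le1$ get used, together with a separate trivial treatment of the degenerate ranges of $P_1,P_2,\bm\beta$), pinning down the constant $C_1$ in the multilinear bound, and verifying that each of the three members of the maximum is genuinely produced — $P_2^{-1}$ from $\theta=1$, $\nnorm{\bm\beta}_\infty^{1/(\tilde{d}+1)}$ from the passage from $\nnorm{\cdot}$ to $|\cdot|$, and $P_1^{-d_1}P_2^{-d_2}\nnorm{\bm\beta}_\infty^{-1}$ from the constraint $\delta\le\nnorm{\bm\beta\cdot\bm F}_\infty B^{\tilde{d}}$.
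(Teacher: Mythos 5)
Your proposal is correct, and it runs on the same engine as the paper --- Lemma~\ref{lem.auxiliary_ineq_lemma} together with the two elementary observations that $\norm{\Gamma_{\bm{\beta}\cdot\bm{F}}}<\tfrac12$ forces $\nnorm{\Gamma_{\bm{\beta}\cdot\bm{F}}}=\norm{\Gamma_{\bm{\beta}\cdot\bm{F}}}$, and that the $M_i$-threshold $P_1^{-d_1}P_2^{-d_2}B^{\tilde d+1}$ sits below the $N_i^{\mathrm{aux}}$-threshold $\nnorm{\bm{\beta}\cdot\bm{F}}_\infty B^{\tilde d}$ exactly when $B\le \nnorm{\bm{\beta}\cdot\bm{F}}_\infty P_1^{d_1}P_2^{d_2}$ --- but the endgame is packaged differently. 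The paper never proves $M_i\le N_i^{\mathrm{aux}}$: it defines $\theta$ implicitly through $\min\{\norm{S(\bm{\alpha})},\norm{S(\bm{\alpha}+\bm{\beta})}\}$ via~\eqref{eq.theta_definition}, deduces that with this choice the counting inequality $N_i^{\mathrm{aux}}<M_i$ must hold, and then extracts from a tuple counted by $M_i$ but not by $N_i^{\mathrm{aux}}$ the dichotomy \eqref{eq.beta_F_bound_1}/\eqref{eq.beta_F_bound_2}, with the boundary cases $\theta\le 0$ and $\theta\ge 1$ treated separately. You instead choose $B=P_2^\theta$ explicitly, as $\min\{P_2,\,c_1\nnorm{\bm{\beta}}_\infty^{-1/(\tilde d+1)},\,\mu\nnorm{\bm{\beta}}_\infty P_1^{d_1}P_2^{d_2}\}$, prove the genuine domination $M_i\le N_i^{\mathrm{aux}}$ for both $i$ (so it does not matter which index Lemma~\ref{lem.auxiliary_ineq_lemma} supplies), feed in~\eqref{eq.auxiliary_ineq_assumption}, and read off $B^{-1}\ll\max\{\cdots\}$ from the definition of $B$; your conditions (a),(b) are precisely the complements of the paper's dichotomy. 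This is more constructive --- $\theta$ depends only on $\bm{\beta},P_1,P_2$ rather than on the exponential sum --- and the paper's $\theta\ge1$ and $\theta\le0$ cases are absorbed, respectively, into the cap $B\le P_2$ and your trivial treatment of the ranges where the explicit $B$ is bounded. Two bookkeeping remarks: you should indeed carry out the degenerate-range reduction carefully (it is where $P_2>1$, $\nnorm{\bm{\beta}}_\infty\le1$ and $\nnorm{\bm{\beta}}_\infty\ge P_1^{-d_1}P_2^{-d_2}$ are used), and note that absorbing constants like $\max\{c_1^{-1},\mu^{-1}\}^{\mathscr{C}}$ makes your final $C$ depend on $\mathscr{C}$ as well --- but the paper's own proof has the same implicit dependence when \eqref{eq.P_theta_max} is raised to the power $\mathscr{C}$, so this is not a defect of your route.
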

\begin{proof}
	The strategy of this proof will closely follow the proof of \cite[Proposition 3.1]{myerson_quadratic}. By Lemma \ref{lem.auxiliary_ineq_lemma} we know that \eqref{eq.almost_auxiliary} holds for $i=1$ or $i=2$. Assume that there is some $\theta \in (0,1]$  such that for the same $i$ we have
	\begin{equation} \label{eq.counting_inequality}
		N_i^{\mathrm{aux}} (\bm{\beta}; P_2^\theta) < M_i(\bm{\beta} \cdot \bm{F}, P_2^\theta, P_2^\theta, P_1^{-d_1}P_2^{-d_2}P_2^{\theta(\tilde{d}+1)}),
	\end{equation}
 Going forward with the case $i=1$, noting that the case $i=2$ can be proven completely analogously, this means that there exists a $(d_1-1)$-tuple $\widehat{\bm{x}}$ and a $d_2$-tuple $\widetilde{\bm{y}}$ which is counted by $M_1(\bm{\beta} \cdot \bm{F}, P_2^\theta, P_2^\theta, P_1^{-d_1}P_2^{-d_2}P_2^{\theta(\tilde{d}+1)})$ but not by $N_1^{\mathrm{aux}} (\bm{\beta}; P_2^\theta)$. Therefore this pair of tuples satisfies
	\begin{equation} \label{eq.tuples_in_box}
		\nnorm{\widehat{\bm{x}}^{(i)}}_\infty, \nnorm{\widetilde{\bm{y}}^{(j)}}_\infty \leq P_2^\theta, \; \text{for} \; i = 1,\hdots, d_1-1 \; \text{and} \; j = 1, \hdots, d_2,
	\end{equation}
	and
	\begin{equation} \label{eq.gamma_upper_bound}
		\nnorm{\Gamma_{\bm{\beta} \cdot \bm{F}}(\widehat{\bm{x}}, \bm{e}_\ell, \widetilde{\bm{y}})} < P_1^{-d_1}P_2^{-d_2} P_2^{\theta(\tilde{d}+1)}, \; \text{for} \; \ell = 1, \hdots, n_1,
	\end{equation}
	since it is counted by $M_1(\bm{\beta} \cdot \bm{F}, P_2^\theta, P_2^\theta, P_1^{-d_1}P_2^{-d_2}P_2^{\theta(\tilde{d}+1)})$. On the other hand, since it is not counted by $N_1^{\mathrm{aux}} (\bm{\beta}; P_2^\theta)$ there exists $\ell_0 \in \{1, \hdots, n_1 \}$ such that
	\begin{equation} \label{eq.gamma_lower_bound}
		\norm{\Gamma_{\bm{\beta} \cdot \bm{F}} (\widehat{\bm{x}}, \bm{e}_{\ell_0}, \widetilde{\bm{y}})} \geq \nnorm{\bm{\beta} \cdot \bm{F}}_\infty P_2^{\tilde{d} \theta}.
	\end{equation}
	From \eqref{eq.gamma_upper_bound} we get that for $\ell_0$ we must have either
	\begin{equation} \label{eq.gamma_really_small}
		\norm{\Gamma_{\bm{\beta} \cdot \bm{F}}(\widehat{\bm{x}}, \bm{e}_{\ell_0}, \widetilde{\bm{y}})} < P_1^{-d_1} P_2^{-d_2} P_2^{\theta(\tilde{d}+1)}
	\end{equation}
	or
	\begin{equation} \label{eq.gamma_bigger_but_int_close}
		\norm{\Gamma_{\bm{\beta} \cdot \bm{F}}(\widehat{\bm{x}}, \bm{e}_{\ell_0}, \widetilde{\bm{y}})} \geq \frac{1}{2}.
	\end{equation}
	If \eqref{eq.gamma_really_small} holds then \eqref{eq.gamma_lower_bound} implies
	\begin{equation} \label{eq.beta_F_bound_1}
		\nnorm{\bm{\beta} \cdot \bm{F}}_\infty < \frac{P_1^{-d_1}P_2^{-d_2}P_2^{(\tilde{d}+1)\theta}}{P_2^{\tilde{d}\theta}} = P_2^\theta P_1^{-d_1} P_2^{-d_2}
	\end{equation}
	If on the other hand \eqref{eq.gamma_bigger_but_int_close} holds then \eqref{eq.tuples_in_box} gives
	\begin{equation} \label{eq.beta_F_bound_2}
		\frac{1}{2} \leq \norm{\Gamma_{\bm{\beta} \cdot \bm{F}}(\widehat{\bm{x}}, \bm{e}_{\ell_0}, \widetilde{\bm{y}})} \ll \nnorm{\bm{\beta} \cdot \bm{F}}_\infty P_2^{(\tilde{d}+1)\theta}.
	\end{equation}
	Since either \eqref{eq.beta_F_bound_1} or \eqref{eq.beta_F_bound_2} holds then via \eqref{eq.assumption_lin_ind} we deduce 
	\begin{equation} \label{eq.P_theta_max}
		P_2^{-\theta} \ll_{\mu,M} \max\left\{ P_1^{-d_1}P_2^{-d_2} \nnorm{\bm{\beta}}_\infty^{-1}, \nnorm{\bm{\beta}}_\infty^{\frac{1}{\tilde{d}+1}}  \right\}.
	\end{equation} 
	Since \eqref{eq.almost_auxiliary} holds for $i=1$ and due to the assumption \eqref{eq.auxiliary_ineq_assumption} we see that \eqref{eq.counting_inequality} holds if there exists some $C_1 > 0$ such that
	\begin{equation} \label{eq.aux_ineq_predecessor}
		P_2^{-\theta 2^{\tilde{d}+1}\mathscr{C}} \leq C_1 \min \left\{ \norm{\frac{S(\bm{\alpha})}{P_1^{n_1+\varepsilon} P_2^{n_2}}}, \norm{\frac{S(\bm{\alpha} + \bm{\beta})}{P_1^{n_1+\varepsilon} P_2^{n_2}}} \right\}^{2^{\tilde{d}+1}}.
	\end{equation}
	Now \emph{define} $\theta$ such that we have equality in the equation above, i.e. such that we have
	\begin{equation} \label{eq.theta_definition}
		P_2^\theta = C_1^{\frac{1}{2^{\tilde{d}+1} \mathscr{C}}} \min \left\{ \norm{\frac{S(\bm{\alpha})}{P_1^{n_1+\varepsilon} P_2^{n_2}}}, \norm{\frac{S(\bm{\alpha} + \bm{\beta})}{P_1^{n_1+\varepsilon} P_2^{n_2}}} \right\}^{-\frac{1}{\mathscr{C}}}.
	\end{equation}
	If $\theta \in (0,1]$ then~\eqref{eq.aux_ineq_predecessor} holds and so together with the assumption~\eqref{eq.auxiliary_ineq_assumption} as argued above this implies~\eqref{eq.P_theta_max} holds, which gives the result in this case. But $\theta$ will always be positive; for if $\theta \leq 0$ then~\eqref{eq.theta_definition} implies 
	\begin{equation*}
		\min \left\{ \norm{\frac{S(\bm{\alpha})}{P_1^{n_1+\varepsilon} P_2^{n_2}}}, \norm{\frac{S(\bm{\alpha} + \bm{\beta})}{P_1^{n_1+\varepsilon} P_2^{n_2}}} \right\} \geq C_1^{-\frac{1}{2^{\tilde{d}+1}}}.
	\end{equation*}
	However, note that clearly $\norm{S(\bm{\alpha})} \leq (P_1+1)^{n_1} (P_2+1)^{n_2}$. Without loss of generality we may take $P_i$ large enough, depending on $\varepsilon$, so that this clearly leads to a contradiction. Finally, if $\theta \geq 1$ then we find $P_2^{- \mathscr{C} \theta} \leq P_2^{-\mathscr{C}}$, and so from~\eqref{eq.theta_definition} we obtain.
	\[
	\min \left\{ \norm{\frac{S(\bm{\alpha})}{P_1^{n_1+\varepsilon} P_2^{n_2}}}, \norm{\frac{S(\bm{\alpha} + \bm{\beta})}{P_1^{n_1+\varepsilon} P_2^{n_2}}} \right\} \ll P_2^{-\mathscr{C}}.
	\]
	This gives the result.
\end{proof}
\section{The circle method} \label{sec.circle_method}
The aim of this section is to use the auxiliary inequality 
\begin{equation} \label{eq.aux_ineq}
		P_1^{-\varepsilon} \min \left\{ \norm{\frac{S(\bm{\alpha})}{P_1^{n_1} P_2^{n_2}}}, \norm{\frac{S(\bm{\alpha} + \bm{\beta})}{P_1^{n_1} P_2^{n_2}}} \right\} \leq \\
		 C\max\left\{P_2^{-1}, P_1^{-d_1}P_2^{-d_2} \nnorm{\bm{\beta}}_\infty^{-1}, \nnorm{\bm{\beta}}_\infty^{\frac{1}{\tilde{d}+1}} \right\}^{\mathscr{C}},
\end{equation}
where $C \geq 1$ 
 and apply the circle method in order to deduce an estimate for $N(P_1,P_2)$. In this section we will use the notation $P = P_1^{d_1}P_2^{d_2}$. Write $b = \max \left\{1, \log P_1 / \log P_2\right\}$ and $u = \max \left\{1, \log P_2/\log P_1 \right\}$. If $P_1 \geq P_2$ then $b = \log P_1 / \log P_2$ and thus $P_2^{bd_1+d_2} = P$ holds. The main result will be the following. 
\begin{proposition} \label{prop.main_prop}
	Let $\mathscr{C} > (bd_1+ud_2)R$, $C\geq 1$ and $\varepsilon >0$ such that the auxiliary inequality \eqref{eq.aux_ineq} holds for all $\bm{\alpha}, \bm{\beta} \in \mathbb{R}^R$, all $P_1, P_2 > 1$ and all boxes $\mathcal{B}_i \subset [-1,1]^{n_i}$ with side lengths at most $1$ and edges parallel to the coordinate axes. There exists some $\delta > 0$ depending on $b$, $u$, $R$, $d_i$ and $n_i$ such that
	\begin{equation*} 
		N(P_1,P_2) = \sigma P_1^{n_1-d_1R}P_2^{n_2-d_2R} + O \left(P_1^{n_1-d_1R}P_2^{n_2-d_2R} P^{-\delta} \right).
	\end{equation*}
	The factor $\sigma = \mathfrak{I} \mathfrak{S}$ is the product of the singular integral $\mathfrak{I}$ and the singular series $\mathfrak{S}$, as defined in~\eqref{eq.def_singular_integral} and~\eqref{eq.def_singular_series}, respectively. 
\end{proposition}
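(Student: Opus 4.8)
The plan is to run the Hardy--Littlewood circle method in the anisotropic scaling $P = P_1^{d_1}P_2^{d_2}$, deploying the auxiliary inequality \eqref{eq.aux_ineq} precisely where the classical argument would invoke Weyl's inequality together with a strong lower bound on $n_1,n_2$. We may assume throughout that $P_1 \ge P_2$ and that $P_1,P_2$ are large, so that $u = 1$, $b = \log P_1/\log P_2$ and $P = P_2^{bd_1+d_2}$; the regime $P_2 > P_1$ is handled identically with the roles of $(P_1,d_1)$ and $(P_2,d_2)$ interchanged. Fix a small $\delta_1 > 0$ and let the major arcs $\mathfrak{M}$ be the union, over $q \le P^{\delta_1}$ and $\bm{a}$ with $(\bm{a},q) = 1$, of the boxes $\{\bm{\alpha}\in[0,1]^R : \nnorm{q\alpha_i - a_i}_\infty \le P^{\delta_1}P^{-1}\}$, and put $\mathfrak{m} = [0,1]^R\setminus\mathfrak{M}$. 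Since $N(P_1,P_2) = \int_{[0,1]^R}S(\bm{\alpha})\,d\bm{\alpha}$, it suffices to produce the asserted main term from $\int_{\mathfrak{M}}S$ with an error of the shape $P_1^{n_1-d_1R}P_2^{n_2-d_2R}P^{-\delta}$, and to bound $\int_{\mathfrak{m}}|S|$ by the same.

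The new ingredient is the minor-arc estimate. First, a pointwise reduction: by the familiar combination of Dirichlet's theorem applied to the coefficients of $\bm{\alpha}\cdot\bm{F}$ with the Weyl inequality of Lemma \ref{lem.weyl_differencing_general_poly} and a geometry-of-numbers argument, as in \cite{schindler_bihomogeneous}, there is a small $c = c(\delta_1) > 0$ with the property that $|S(\bm{\alpha})| \ge P^{-c}P_1^{n_1}P_2^{n_2}$ forces $\bm{\alpha}\in\mathfrak{M}$; hence $|S| < P^{-c}P_1^{n_1}P_2^{n_2}$ on all of $\mathfrak{m}$. Next, decompose $\mathfrak{m}$ dyadically: for dyadic $\Delta \le P^{-c}$ let $\mathfrak{m}(\Delta) = \{\bm{\alpha}\in\mathfrak{m} : \Delta/2 < |S(\bm{\alpha})|(P_1^{n_1}P_2^{n_2})^{-1} \le \Delta\}$. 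The key point is a covering bound for $\mathrm{meas}\,\mathfrak{m}(\Delta)$ coming from \eqref{eq.aux_ineq}. If $\bm{\alpha}$ and $\bm{\alpha}+\bm{\beta}$ both lie in $\mathfrak{m}(\Delta)$, then the left-hand side of \eqref{eq.aux_ineq} is $\gg P^{-\varepsilon}\Delta$; provided $\Delta > P_2^{-\mathscr{C}}$ the term $P_2^{-1}$ cannot dominate the right-hand side, so one concludes that either $\nnorm{\bm{\beta}}_\infty \ll P^{\varepsilon}P^{-1}\Delta^{-1/\mathscr{C}}$ or $\nnorm{\bm{\beta}}_\infty \gg P^{-\varepsilon}\Delta^{(\tilde{d}+1)/\mathscr{C}}$. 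These two ranges are disjoint because $\Delta \ge P_2^{-\mathscr{C}} \ge P^{-\mathscr{C}/(\tilde{d}+2)}$, the last inequality being a consequence of $b \ge 1$. Thus a maximal $P^{\varepsilon}P^{-1}\Delta^{-1/\mathscr{C}}$-separated subset of $\mathfrak{m}(\Delta)$ is automatically $\gg P^{-\varepsilon}\Delta^{(\tilde{d}+1)/\mathscr{C}}$-separated, so has $\ll P^{\varepsilon}\Delta^{-(\tilde{d}+1)R/\mathscr{C}}$ elements, and its $P^{\varepsilon}P^{-1}\Delta^{-1/\mathscr{C}}$-neighbourhood covers $\mathfrak{m}(\Delta)$; this gives $\mathrm{meas}\,\mathfrak{m}(\Delta) \ll P^{\varepsilon}P^{-R}\Delta^{-(\tilde{d}+2)R/\mathscr{C}}$.

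Summing over $\Delta$ now closes the minor-arc estimate, and is where the hypothesis $\mathscr{C} > (bd_1 + ud_2)R$ enters twice. The contribution to $\int_{\mathfrak{m}}|S|$ from the (trivially bounded) range $\Delta \le P_2^{-\mathscr{C}}$ is $\ll P_2^{-\mathscr{C}}P_1^{n_1}P_2^{n_2} = P_1^{n_1-d_1R}P_2^{n_2-d_2R}P_2^{(bd_1+d_2)R-\mathscr{C}}$, which saves a power of $P$ since $\mathscr{C} > (bd_1+ud_2)R = (bd_1+d_2)R$. For $P_2^{-\mathscr{C}} < \Delta \le P^{-c}$ the covering bound gives $\Delta P_1^{n_1}P_2^{n_2}\,\mathrm{meas}\,\mathfrak{m}(\Delta) \ll P^{\varepsilon}P_1^{n_1-d_1R}P_2^{n_2-d_2R}\Delta^{1-(\tilde{d}+2)R/\mathscr{C}}$; since $\mathscr{C} > (bd_1+ud_2)R \ge (d_1+d_2)R = (\tilde{d}+2)R$ the exponent $1-(\tilde{d}+2)R/\mathscr{C}$ is positive, so the dyadic sum is dominated by its largest term $\asymp(P^{-c})^{1-(\tilde{d}+2)R/\mathscr{C}}$. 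Choosing $\varepsilon$ small enough in terms of $c$ and this exponent yields $\int_{\mathfrak{m}}|S| \ll P_1^{n_1-d_1R}P_2^{n_2-d_2R}P^{-\delta}$ for some $\delta > 0$.

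The major-arc contribution is treated by the standard Hardy--Littlewood analysis, following Birch \cite{birch_forms} in the form adapted to biprojective varieties by Schindler \cite{schindler_bihomogeneous}: on the arc about $\bm{a}/q$ one replaces $S(\bm{a}/q + \bm{\eta})$ by $q^{-(n_1+n_2)}S_{\bm{a},q}\,I(\bm{\eta})$ with $S_{\bm{a},q}$ a complete exponential sum and $I$ an oscillatory integral, integrates, and arrives at $\mathfrak{S}(P^{\delta_1})\,\mathfrak{I}(P^{\delta_1})\,P_1^{n_1-d_1R}P_2^{n_2-d_2R}$ up to an error that is again the main term times a negative power of $P$; one then completes the truncated singular series and singular integral to the $\mathfrak{S}$ and $\mathfrak{I}$ of \eqref{eq.def_singular_series} and \eqref{eq.def_singular_integral}, and obtains $\sigma = \mathfrak{I}\mathfrak{S} > 0$ from the existence of non-singular real and $p$-adic points via the implicit function theorem and Hensel's lemma. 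I expect the essential difficulty to be the minor-arc covering step: one must make the dichotomy extracted from \eqref{eq.aux_ineq} uniform across the lopsided range $b \ge 1$, control the accumulating $P^{\varepsilon}$ losses in the greedy covering, and confirm that the window of $\Delta$ in which the dichotomy is non-vacuous is exactly the one dictated by $\mathscr{C} > (bd_1 + ud_2)R$.
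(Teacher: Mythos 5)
Your minor-arc treatment is essentially the one the paper itself uses: the dyadic level-set decomposition plus the covering bound extracted from the dichotomy in \eqref{eq.aux_ineq} is exactly Lemma~\ref{lem.general_integral_bound} and Lemma~\ref{lem.good_int_bound}, and the pointwise saving on $\mathfrak{m}$ is Lemma~\ref{lem.sup_bound}. (One small remark: the disjointness you claim for the two ranges $\nnorm{\bm{\beta}}_\infty\ll P^{\varepsilon}P^{-1}\Delta^{-1/\mathscr{C}}$ and $\nnorm{\bm{\beta}}_\infty\gg P^{-\varepsilon}\Delta^{(\tilde{d}+1)/\mathscr{C}}$ can fail at the bottom of your $\Delta$-range when $P_1$ is close to $P_2$, but it is not needed: inside a box of side half the large threshold every pairwise difference is forced into the small range, which is all the covering argument requires.)

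The genuine gap is in the major arcs. You treat the completion of the truncated singular series and singular integral as ``standard, following Birch/Schindler'', but those completions rest on bounds of the shape $\norm{S_{\bm{a},q}}\ll q^{-R-1-\delta}$ and $\norm{S_\infty(\bm{\gamma})}\ll\nnorm{\bm{\gamma}}_\infty^{-R-\delta}$, which in Birch's and Schindler's work come precisely from the quadratic-in-$R$ variable counts that this proposition dispenses with. Under the present hypotheses (only \eqref{eq.aux_ineq} with $\mathscr{C}>(bd_1+ud_2)R$, plus linear independence of the forms) the only unconditional pointwise decay available is $\norm{S_{\bm{a},q}}\ll q^{-\nu}$ for some small $\nu$ (Lemma~\ref{lem.singular_series}(iii)); summed over the $\asymp q^{R}$ residues $\bm{a}$ and over $q$ this diverges, so the classical completion argument does not go through. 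The paper instead has to reuse the auxiliary inequality on the major arcs: applying \eqref{eq.aux_ineq} at rational points $\bm{\alpha}=\bm{a}/q$, $\bm{\alpha}+\bm{\beta}=\bm{a}'/q'$ with a suitable choice of $P_1=P_2$ yields the repulsion estimate of Lemma~\ref{lem.singular_series}(i), hence a count of how many fractions can carry a large $\norm{S_{\bm{a},q}}$, and this combined with the weak pointwise decay gives absolute convergence of $\mathfrak{S}$ with a power-saving tail (Lemma~\ref{lem.singular_series}(iv)); likewise \eqref{eq.aux_ineq} with $\bm{\alpha}=\bm{0}$, $\bm{\beta}=P^{-1}\bm{\gamma}$ gives $S_\infty(\bm{\gamma})\ll\nnorm{\bm{\gamma}}_\infty^{-\mathscr{C}+\varepsilon'}$ and hence the completion of $\mathfrak{I}$ (Lemma~\ref{lem.singular_integral}). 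Making these applications legitimate also requires a major-arc approximation whose error term is uniform in $q$ and $\bm{\beta}$ (Lemma~\ref{lem.approx_exp_sum_major_arc}), which Schindler's lemma as stated does not provide. Without these ingredients your major-arc paragraph does not close, and this is the part of the proof where the new hypothesis actually has to do work beyond the minor arcs.
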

Note that this result holds for general bidegree, and therefore in the proof one may assume $P_1 \geq P_2$ throughout. For instance if one wishes to show the above proposition for bidegree $(2,1)$, the result follows from the asymmetric results of bidegree $(2,1)$ and bidegree $(1,2)$.

\subsection{The minor arcs}
First we will show that the contributions from the minor arcs do not affect the main term. For this we will prove a Lemma similar to Lemma 2.1 in~\cite{myerson_quadratic}.
\begin{lemma} \label{lem.general_integral_bound}
	Let $r_1, r_2 \colon (0, \infty) \rightarrow (0, \infty)$ be strictly decreasing and increasing bijections, respectively, and let $A >0$ be a real number. For any $\nu >0$ let $E_0 \subset \mathbb{R}^R$ be a hypercube of side lengths $\nu$ whose edges are parallel to the coordinate axes. Let $E \subseteq E_0$ be a measurable set and let $\varphi \colon E \rightarrow [0, \infty)$ be a measurable function.
	
	Assume that for all $\bm{\alpha}, \bm{\beta} \in \mathbb{R}^R$ such that $\bm{\alpha}, \bm{\alpha} + \bm{\beta} \in E$ we have
	\begin{equation} \label{eq.general_aux_ineq}
		\min \left\{ \varphi(\bm{\alpha}), \varphi(\bm{\alpha}+ \bm{\beta}) \right\} \leq \max\left\{A, r_1^{-1}\left( \nnorm{\bm{\beta}}_\infty \right),r_2^{-1}\left( \nnorm{\bm{\beta}}_\infty \right) \right\}.
	\end{equation}
	Then for all integers $k \leq \ell$ such that $A < 2^k$ we get
	\begin{equation} \label{eq.general_integral_estimate}
		\int_E \varphi(\bm{\alpha}) d\bm{\alpha} \ll_R \\
		\nu^R 2^k + \sum_{i = k}^{\ell-1} 2^i \left( \frac{\nu r_1 (2^i)}{\min\{r_2(2^i),\nu \}} \right)^R + \left( \frac{\nu r_1(2^\ell)}{\min\{ r_2(2^\ell), \nu \}} \right)^R \sup_{\bm{\alpha} \in E} \varphi(\bm{\alpha}).
	\end{equation}
\end{lemma}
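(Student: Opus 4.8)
The plan is to combine a dyadic decomposition of $\int_E \varphi$ in the size of $\varphi$ with a bound on the measure of the superlevel sets
\[
E(t) := \{\bm{\alpha} \in E : \varphi(\bm{\alpha}) > t\}, \qquad t \geq A.
\]
The geometric heart of the argument is the following consequence of \eqref{eq.general_aux_ineq}: if $\bm{\alpha},\bm{\alpha}'\in E(t)$ with $t\ge A$, then applying the hypothesis with $\bm{\beta}=\bm{\alpha}'-\bm{\alpha}$ gives $\max\{A, r_1^{-1}(\nnorm{\bm{\beta}}_\infty), r_2^{-1}(\nnorm{\bm{\beta}}_\infty)\} > t \ge A$, hence $r_1^{-1}(\nnorm{\bm{\beta}}_\infty) > t$ or $r_2^{-1}(\nnorm{\bm{\beta}}_\infty) > t$. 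Since $r_1$ is a strictly decreasing bijection of $(0,\infty)$ and $r_2$ a strictly increasing one, applying $r_1$ (resp. $r_2$) to these inequalities shows $\nnorm{\bm{\beta}}_\infty < r_1(t)$ or $\nnorm{\bm{\beta}}_\infty > r_2(t)$. In other words, \emph{no two points of $E(t)$ lie at $\ell^\infty$-distance in the interval $[r_1(t),r_2(t)]$}.

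Next I would convert this ``forbidden distance'' structure into a covering bound. Assume $r_1(t)\le r_2(t)$ (if $r_1(t) > r_2(t)$ the interval is empty, so one only has the trivial bound $\mathrm{meas}(E(t))\le\nu^R$, and since then $\min\{r_2(t),\nu\}\le r_2(t)<r_1(t)$ this is already dominated by the claimed estimate). Choose a maximal subset $\bm{\alpha}_1,\dots,\bm{\alpha}_N\in E(t)$ whose pairwise $\ell^\infty$-distances are all $\ge r_2(t)$. By maximality, every point of $E(t)$ lies at $\ell^\infty$-distance $< r_2(t)$ from some $\bm{\alpha}_i$, hence, by the forbidden-distance property, at distance $< r_1(t)$; thus $E(t)$ is covered by the $N$ cubes $B_\infty(\bm{\alpha}_i,r_1(t))$, each of which, being also contained in $E_0$, contributes measure $\ll_R \min\{r_1(t),\nu\}^R$. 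On the other hand the cubes $B_\infty(\bm{\alpha}_i,r_2(t)/2)$ are pairwise disjoint and lie in a cube of side $\nu+r_2(t)$, so $N\ll_R (1+\nu/r_2(t))^R\ll_R (\nu/\min\{r_2(t),\nu\})^R$. Multiplying, $\mathrm{meas}(E(t)) \ll_R \left(\frac{\nu\, r_1(t)}{\min\{r_2(t),\nu\}}\right)^R$ for all $t\ge A$ (using $\min\{r_1(t),\nu\}\le r_1(t)$).

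Finally I would assemble the integral. Split $\int_E \varphi = \int_{E\cap\{\varphi\le 2^k\}}\varphi + \int_{E\cap\{\varphi>2^k\}}\varphi$; the first term is at most $2^k\,\mathrm{meas}(E)\le 2^k\nu^R$. For the second, decompose $\{\varphi>2^k\}$ into the dyadic layers $2^i<\varphi\le 2^{i+1}$ for $k\le i\le \ell-1$ together with the tail $\{\varphi>2^\ell\}$, bounding $\varphi$ by $2^{i+1}$ on the $i$-th layer and by $\sup_{\bm{\alpha}\in E}\varphi(\bm{\alpha})$ on the tail. Since $2^i\ge 2^k > A$ on all of these, the superlevel-set estimate applies at $t=2^i$ and at $t=2^\ell$, and substituting it yields precisely \eqref{eq.general_integral_estimate}.

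The only real obstacle is the middle step: correctly extracting the gap structure from \eqref{eq.general_aux_ineq} via the monotonicity of $r_1$ and $r_2$, and then carrying out the separation/packing argument while keeping careful track of the truncations at scale $\nu$ forced by the ambient cube $E_0$ — in particular checking that the degenerate cases ($r_1(t)>r_2(t)$, $r_1(t)>\nu$, or $r_2(t)\ge\nu$) are all absorbed into the single stated bound. The dyadic summation that follows is entirely routine.
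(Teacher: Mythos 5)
Your proof is correct and follows essentially the same route as the paper: both extract from \eqref{eq.general_aux_ineq} that points of a superlevel set of $\varphi$ (for $t>A$) have pairwise $\ell^\infty$-distances either below $r_1(t)$ or above $r_2(t)$, deduce the measure bound $\ll_R \bigl(\nu r_1(t)/\min\{r_2(t),\nu\}\bigr)^R$ by a covering argument, and then sum dyadically over the levels $2^k,\dots,2^\ell$. The only (cosmetic) difference is that you realise the covering via a maximal $r_2(t)$-separated set plus a packing count, whereas the paper tiles $E_0$ by boxes of side $r_2(t)/2$ and bounds each piece by an $r_1(t)$-box; your explicit treatment of the degenerate case $r_1(t)>r_2(t)$ is a welcome extra check.
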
 
Note that if we take 
\begin{equation*}
	\varphi(\bm{\alpha}) = C^{-1} P_1^{-n_1-\varepsilon} P_2^{-n_2} \norm{S(\bm{\alpha})}, \quad r_1(t) = P_1^{-d_1}P_2^{-d_2}t^{-\frac{1}{\mathscr{C}}}, \quad r_2(t) = t^{\frac{\tilde{d}+1}{\mathscr{C}}}, \quad A = P_2^{-\mathscr{C}}
\end{equation*}
where $C$ is the constant in \eqref{eq.aux_ineq}, then the assumption \eqref{eq.general_aux_ineq}~is just the auxiliary inequality \eqref{eq.aux_ineq}. 
\begin{proof}
	Given $t \geq 0$ define the set
	\[
	D(t) = \left\{\bm{\alpha} \in E \colon \varphi(\bm{\alpha}) \geq t \right\}.
	\]
	If $\bm{\alpha}$ and $\bm{\alpha}+ \bm{\beta}$ are both contained in $D(t)$ then by~\eqref{eq.general_aux_ineq} one of the following must hold
	\[
	A \geq t, \quad \nnorm{\bm{\beta}}_\infty \leq r_1(t), \quad \text{or} \quad \nnorm{\bm{\beta}}_\infty \geq r_2(t).
	\]
	In particular, if $t > A$ then either $\nnorm{\bm{\beta}}_\infty \leq r_1(t)$ or $\nnorm{\bm{\beta}}_\infty \geq r_2(t)$. Assuming that $t > A$ is satisfied consider a box $\mathfrak{b} \subset \R^R$ with sidelengths $r_2(t)/2$  whose edges are parallel to the coordinate axes. Given $\bm{\alpha} \in \mathfrak{b} \cap D(t)$ set
	\[
	\mathfrak{B}(\bm{\alpha}) = \left\{ \bm{\alpha} + \bm{\beta} \colon \bm{\beta} \in \R^R, \nnorm{\bm{\beta}}_\infty \leq r_1(t) \right\}.
	\]
If $\bm{\alpha} + \bm{\beta} \in \mathfrak{b} \cap D(t)$ then by construction $\nnorm{\bm{\beta}}_\infty \leq r_2(t) /2 < r_2(t)$ whence $\nnorm{\bm{\beta}}_\infty \leq r_1(t)$. Therefore we have $\mathfrak{b} \cap D(t) \subset \mathfrak{B}(\bm{\alpha})$, which in turn implies that the measure of $\mathfrak{b} \cap D(t)$ is bounded by $(2r_1(t))^R$. Since $D(t)$ is contained in $E_0$ one can cover $D(t)$ with at most
\[
\ll_R \frac{\nu^R}{\min\{r_2(t),\nu\}^R}
\]
boxes $\mathfrak{b}$ whose sidelenghts are $r_2(t)/2$. Therefore we find
\[
\mu(D(t)) \ll_R \left( \frac{\nu r_1(t)}{\min \{r_2(t), \nu \}}\right)^R,
\]
where we write $\mu(D(t))$ for the Lebesgue measure of $D(t)$. If $k < \ell$ are two integers then 
\begin{equation*}
	\int_E \varphi(\bm{\alpha}) d \bm{\alpha} = \int_{E \setminus D(2^k)} \varphi(\bm{\alpha}) d \bm{\alpha} + \sum_{i = k}^\ell \int_{D(2^i) \setminus D(2^{i+1})} \varphi(\bm{\alpha}) d \bm{\alpha} + \int_{D(2^\ell)} \varphi(\bm{\alpha}) d \bm{\alpha}.
\end{equation*}
We can trivially bound $\int_{E \setminus D(2^k)}\varphi(\bm{\alpha}) d\bm{\alpha} \leq \nu^R 2^k$, and further we can bound 
\[
\int_{D(2^i) \setminus D(2^{i+1})} \varphi(\bm{\alpha}) d\bm{\alpha} \leq 2^{i+1} \mu(D(2^i)), \quad \text{and} \quad \int_{D(2^\ell)} \varphi(\bm{\alpha}) d \bm{\alpha} \leq \mu(D(2^\ell)) \sup_{\bm{\alpha} \in E} \varphi(\bm{\alpha}).
\]
If $2^k > A$ then for any $i \geq k$ by our discussion above we find \[\mu(D(2^i)) \ll_R \left( \frac{\nu r_1(2^i)}{\min \{r_2(2^i), \nu \}}\right)^R.\]
Therefore
the result follows.
\end{proof}

Recall the notation $P=P_1^{d_1}P_2^{d_2}$. From now on we will  assume $P_1 \geq P_2$. 
Note that the assumption in Proposition~\ref{prop.auxiliary_ineq_from_counting_function} that $\mathscr{C} > R(bd_1 + ud_2)$ holds, is equivalent to $\mathscr{C} > R(bd_1 + d_2)$ if $P_1 \geq P_2$.
\begin{lemma} \label{lem.good_int_bound}
	Let $T \colon \mathbb{R}^R \rightarrow \mathbb{C}$ be a measurable function. With notation as in Lemma \ref{lem.general_integral_bound} assume that for all $\bm{\alpha}, \bm{\beta} \in \mathbb{R}^R$ and for all $P_1 \geq P_2 > 1$, and $\mathscr{C} >0$ we have
	\begin{equation} \label{eq.less_general_aux_ineq}
		\min \left\{ \norm{\frac{T(\bm{\alpha})}{P_1^{n_1}P_2^{n_2}}},\norm{\frac{T(\bm{\alpha}+\bm{\beta})}{P_1^{n_1}P_2^{n_2}}}  \right\} \leq \max \left\{P_2^{-1}, P_1^{-d_1}P_2^{-d_2} \nnorm{\bm{\beta}}_\infty^{-1}, \nnorm{\bm{\beta}}_\infty^{\frac{1}{\tilde{d}+1}} \right\}^\mathscr{C}.
	\end{equation}
	Write $P = P_1^{d_1} P_2^{d_2}$ and assume that that we have
	\begin{equation} \label{eq.general_sup_bound}
		\sup_{\bm{\alpha} \in E} \norm{T(\bm{\alpha})} \leq P_1^{n_1}P_2^{n_2} P^{-\delta},
	\end{equation}
	for some $\delta > 0$. Then we have
	\begin{multline} \label{eq.many_cases_estimate}
		\int_E \frac{T(\bm{\alpha})}{P_1^{n_1} P_2^{n_2}}d\bm{\alpha} \ll_{\mathscr{C},d_i,R} \\
		\begin{cases}
			\nu^R P^{-R} P_2^{(\tilde{d}+2)R-\mathscr{C}} + P_2^{-\mathscr{C}}  \quad &\text{if} \; \mathscr{C} < R \\
			\nu^R P^{-R} P_2^{(\tilde{d}+2)R-\mathscr{C}} + P^{-R} \log P_2 + P_2^{-\mathscr{C}} \quad  &\text{if }  \mathscr{C} = R \\
			\nu^R P^{-R} P_2^{(\tilde{d}+2)R-\mathscr{C}} + P^{-R-\delta(1-R/\mathscr{C})} + P_2^{-\mathscr{C}}\quad  &\text{if }  R < \mathscr{C} < (d_1+d_2)R \\
			\nu^R P^{-R}  \log P_2 + P^{-R-\delta(1-R/\mathscr{C})}+P_2^{-\mathscr{C}} \quad  &\text{if } \mathscr{C} = (d_1+d_2)R \\
			\nu^RP^{-R-\delta(1-(d_1+d_2)R/\mathscr{C})} + P^{-R-\delta(1-R/\mathscr{C})} +P_2^{-\mathscr{C}} \quad  &\text{if } \mathscr{C} > (d_1+d_2)R. \\
		\end{cases}
	\end{multline}
\end{lemma}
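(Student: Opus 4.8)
The plan is to obtain \eqref{eq.many_cases_estimate} as a direct application of Lemma~\ref{lem.general_integral_bound}. We invoke that lemma with
\[
\varphi(\bm{\alpha}) = \norm{\frac{T(\bm{\alpha})}{P_1^{n_1}P_2^{n_2}}}, \qquad r_1(t) = P^{-1}t^{-1/\mathscr{C}}, \qquad r_2(t) = t^{(\tilde{d}+1)/\mathscr{C}}, \qquad A = P_2^{-\mathscr{C}},
\]
where as always $P = P_1^{d_1}P_2^{d_2}$. Then $r_1$ and $r_2$ are strictly decreasing and increasing bijections of $(0,\infty)$ onto itself, with $r_1^{-1}(s) = (P^{-1}s^{-1})^{\mathscr{C}}$ and $r_2^{-1}(s) = (s^{1/(\tilde{d}+1)})^{\mathscr{C}}$, so the hypothesis \eqref{eq.general_aux_ineq} is exactly the auxiliary inequality \eqref{eq.less_general_aux_ineq}, the right-hand side of the former being precisely the $\mathscr{C}$-th power of the maximum in the latter. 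We take $k$ to be the least integer with $2^k > A$, so $2^k \asymp P_2^{-\mathscr{C}}$, and $\ell$ to be the least integer with $2^\ell > \sup_{\bm{\alpha}\in E}\varphi(\bm{\alpha})$; then $2^\ell \ll P^{-\delta}$ by \eqref{eq.general_sup_bound}, and we may as well assume $2^\ell \gg P^{-\delta}$, since a smaller value of $\sup_E\varphi$ only improves the estimate (and if this would force $\ell<k$ we set $\ell=k$, in which case the sum below is empty). With these choices \eqref{eq.general_integral_estimate} applies.

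The three terms on the right of \eqref{eq.general_integral_estimate} are handled separately. The leading term is $\nu^R 2^k \ll \nu^R P_2^{-\mathscr{C}}$, which contributes the $P_2^{-\mathscr{C}}$ present in every case. For the boundary term $\bigl(\nu r_1(2^\ell)/\min\{r_2(2^\ell),\nu\}\bigr)^R\sup_E\varphi$ we substitute $2^\ell\asymp P^{-\delta}$, $\sup_E\varphi\le P^{-\delta}$, $r_1(2^\ell)=P^{-1+\delta/\mathscr{C}}$, $r_2(2^\ell)=P^{-\delta(\tilde{d}+1)/\mathscr{C}}$, and distinguishing $\min=\nu$ from $\min=r_2(2^\ell)$ (and using $\tilde{d}+2=d_1+d_2$) this is
\[
\ll \nu^R P^{-R-\delta(1-(d_1+d_2)R/\mathscr{C})} + P^{-R-\delta(1-R/\mathscr{C})}.
\]
When $\mathscr{C}>R$ these are exactly the non-logarithmic $P$-power terms appearing in \eqref{eq.many_cases_estimate}; when $\mathscr{C}\le R$ the exponents are such that, for $\delta$ small enough (it suffices that $\delta<\mathscr{C}$), they are $\ll P^{-\mathscr{C}}\ll P_2^{-\mathscr{C}}$ and hence absorbed.

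The heart of the matter is the middle sum $\Sigma = \sum_{i=k}^{\ell-1} 2^i\bigl(\nu r_1(2^i)/\min\{r_2(2^i),\nu\}\bigr)^R$. Since $r_1(2^i)=P^{-1}2^{-i/\mathscr{C}}$ and $r_2(2^i)=2^{i(\tilde{d}+1)/\mathscr{C}}$, I split the range of $i$ into those with $r_2(2^i)\le\nu$, where the summand equals $\nu^R P^{-R}2^{i(1-(d_1+d_2)R/\mathscr{C})}$, and those with $r_2(2^i)>\nu$, where it equals $P^{-R}2^{i(1-R/\mathscr{C})}$. Thus $\Sigma$ is a sum of at most two geometric progressions, with common ratios $2^{1-(d_1+d_2)R/\mathscr{C}}$ and $2^{1-R/\mathscr{C}}$, and the five cases of \eqref{eq.many_cases_estimate} correspond precisely to the five sign patterns of the pair $\bigl(1-(d_1+d_2)R/\mathscr{C},\,1-R/\mathscr{C}\bigr)$: both negative for $\mathscr{C}<R$; negative and zero for $\mathscr{C}=R$; negative and positive for $R<\mathscr{C}<(d_1+d_2)R$; zero and positive for $\mathscr{C}=(d_1+d_2)R$; both positive for $\mathscr{C}>(d_1+d_2)R$. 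A progression with negative exponent is dominated by its smallest index $i=k$, where $2^k\asymp P_2^{-\mathscr{C}}$, contributing $\nu^R P^{-R}P_2^{(\tilde{d}+2)R-\mathscr{C}}$; one with exponent zero picks up a factor $\ell-k\ll\log P_2$ (using $2^k\asymp P_2^{-\mathscr{C}}$ and $2^\ell\asymp P^{-\delta}$), contributing $\nu^R P^{-R}\log P_2$ or $P^{-R}\log P_2$; and one with positive exponent is dominated by its largest index ($\asymp\ell$), reproducing the boundary-term contributions above. Assembling the three terms in each of the five ranges of $\mathscr{C}$ yields \eqref{eq.many_cases_estimate}.

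I expect the main difficulty to be bookkeeping rather than any isolated hard estimate: one must track the two sub-ranges $r_2(2^i)\le\nu$ and $r_2(2^i)>\nu$ simultaneously, confirm that the single choice of $\ell$ controls both the tail of $\Sigma$ and the boundary term at once, deal with the degenerate possibilities (one sub-range empty, or $k\ge\ell$, in which cases the missing terms only improve the bound), and verify in the two borderline cases $\mathscr{C}=R$ and $\mathscr{C}=(d_1+d_2)R$ that the number of relevant dyadic levels is genuinely $O(\log P_2)$. One should also keep in mind that the conclusion is asserted only for $\delta$ sufficiently small — absorbed into the implied constants — which is precisely what allows several transient $P$-power terms to be swallowed by $P_2^{-\mathscr{C}}$.
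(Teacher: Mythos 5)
Your overall strategy coincides with the paper's: the same specialisation of Lemma~\ref{lem.general_integral_bound} (identical choices of $\varphi$, $r_1$, $r_2$, $A$), dyadic endpoints $2^k\asymp P_2^{-\mathscr{C}}$ and $2^\ell\asymp P^{-\delta}$, reduction to $k\le\ell$, and the case analysis of two geometric progressions with ratios $2^{1-(d_1+d_2)R/\mathscr{C}}$ and $2^{1-R/\mathscr{C}}$. The one place where your write-up genuinely breaks down is the absorption of the boundary term, i.e.\ of the piece $\nu^R P^{-R-\delta(1-(d_1+d_2)R/\mathscr{C})}$, in the regimes $\mathscr{C}\le (d_1+d_2)R$. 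For $R<\mathscr{C}\le(d_1+d_2)R$ this quantity is \emph{not} among the terms of \eqref{eq.many_cases_estimate} (it only appears in the case $\mathscr{C}>(d_1+d_2)R$), so saying that for $\mathscr{C}>R$ the boundary contributions are ``exactly'' listed terms leaves it unbounded there; and for $\mathscr{C}\le R$ your claim that it is $\ll P^{-\mathscr{C}}\ll P_2^{-\mathscr{C}}$ once $\delta<\mathscr{C}$ is false: at $\mathscr{C}=R$ the piece equals $\nu^R P^{-R+\delta(d_1+d_2-1)}$, which exceeds $P^{-R}=P^{-\mathscr{C}}$ for \emph{every} $\delta>0$, and in any case the factor $\nu^R$ cannot be discarded when comparing against $P_2^{-\mathscr{C}}$ (only against $\nu^R P^{-R}P_2^{(\tilde{d}+2)R-\mathscr{C}}$ or $\nu^R P^{-R}\log P_2$). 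Note also that the lemma is asserted for the given $\delta>0$ with implied constants depending only on $\mathscr{C}$, $d_i$, $R$, so ``taking $\delta$ sufficiently small'' is not available as stated.

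The repair is exactly the mechanism the paper uses. Since $\sup_E\varphi\le P^{-\delta}\le 2^\ell$, the boundary term is dominated by the $i=\ell$ summand of the middle sum extended to $i=\ell$, namely by $\nu^R P^{-R}2^{\ell(1-(d_1+d_2)R/\mathscr{C})}+P^{-R}2^{\ell(1-R/\mathscr{C})}$. When the relevant exponent is negative (i.e.\ $\mathscr{C}<(d_1+d_2)R$, resp.\ $\mathscr{C}<R$), monotonicity in $i$ together with $k\le\ell$ and $2^k\asymp P_2^{-\mathscr{C}}$ bounds it by the $i=k$ value, giving $\nu^R P^{-R}P_2^{(\tilde{d}+2)R-\mathscr{C}}$, resp.\ $P^{-R}P_2^{R-\mathscr{C}}\le P_2^{-\mathscr{C}}$, with no condition on $\delta$ whatsoever; at the borderline exponents $\mathscr{C}=(d_1+d_2)R$, resp.\ $\mathscr{C}=R$, it equals $\nu^R P^{-R}$, resp.\ $P^{-R}$, and is absorbed by the logarithmic terms. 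Equivalently, the reduction to $k\le\ell$ already gives $P^{\delta}\ll P_2^{\mathscr{C}}$, which is precisely the inequality your absorption step actually requires. With this correction (and a short check that in the degenerate case $\ell\le k$ the boundary term, which does not vanish, is likewise controlled) your argument reproduces the paper's proof.
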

We expect the main term of $N(P_1,P_2)$ to be of order $P_1^{n_1-Rd_1}P_2^{n_2-Rd_2} = P_1^{n_1}P_2^{n_2} P^{-R}$. Thus the Lemma indicates why it is necessary for us to assume $\mathscr{C} > R(bd_1+d_2)$, using this method of proof at least.
\begin{proof}
	We apply Lemma \ref{lem.auxiliary_ineq_lemma} by taking
	\begin{equation} \label{eq.choices}
		\varphi(\bm{\alpha}) = \frac{\norm{T(\bm{\alpha})}}{P_1^{n_1}P_2^{n_2}}, \quad r_1(t) = P_1^{-d_1}P_2^{-d_2}t^{-\frac{1}{\mathscr{C}}}, \quad r_2(t) = t^{\frac{\tilde{d}+1}{\mathscr{C}}},
		 \text{ and } A = P_2^{-\mathscr{C}}.
	\end{equation}
	Then our assumption \eqref{eq.less_general_aux_ineq} is just \eqref{eq.general_aux_ineq}. We will choose our parameters $k$ and $\ell$ such that the $\sum_{i=k}^{\ell-1}$ term dominates the right hand side of \eqref{eq.general_integral_estimate}. Let
	\begin{equation} \label{eq.choosing_k_l}
		k = \left\lceil \log_2P_2^{-\mathscr{C}} \right\rceil, \quad \text{and} \quad \ell = \left\lceil \log_2P^{-\delta} \right\rceil,
	\end{equation}
	so that we have 
	\begin{equation*} \label{eq.k_ell_inequality}
		 P_2^{-\mathscr{C}} < 2^k \leq 2 P_2^{-\mathscr{C}},  \quad \text{and} \quad  P^{-\delta} \leq 2^\ell < 2P^{-\delta}.
	\end{equation*}
	Without loss of generality we assume $ k< \ell$ since otherwise the bound in the assumption \eqref{eq.general_sup_bound} would be sharper than any of those listed in \eqref{eq.many_cases_estimate}. Substituting our choices \eqref{eq.choices} into \eqref{eq.general_integral_estimate} we get
	\begin{multline}
 \label{eq.integral_bound_with_choices}
		\int_E \frac{\norm{T(\bm{\alpha})}}{P_1^{n_1}P_2^{n_2}} \ll_R \nu^R2^k + \sum_{i=k}^{\ell-1} 2^i \left( \frac{\nu P_1^{-d_1}P_2^{-d_2}2^{-i/\mathscr{C}}}{\min\left\{ \nu, 2^{i(\tilde{d}+1)/\mathscr{C}} \right\}} \right)^R + \\
		\left( \frac{\nu P_1^{-d_1}P_2^{-d_2} 2^{-\ell/\mathscr{C}}}{\min\left\{ \nu, 2^{\ell(\tilde{d}+1)/\mathscr{C}} \right\}} \right)^R \sup_{\bm{\alpha}\in E} \frac{\norm{T(\bm{\alpha})}}{P_1^{n_1}P_2^{n_2}}.
	\end{multline}
	From \eqref{eq.general_sup_bound} and \eqref{eq.choosing_k_l} we see that
	\begin{equation} \label{eq.estimate_1}
		\sup_{\bm{\alpha}\in E} \frac{\norm{T(\bm{\alpha})}}{P_1^{n_1}P_2^{n_2}} \leq P^{-\delta} \leq 2^\ell.
	\end{equation}
	Further, we clearly have
	\begin{equation} \label{eq.estimate_2}
		\frac{ P_1^{-d_1}P_2^{-d_2}2^{-i/\mathscr{C}}}{\min\left\{ \nu, 2^{i(\tilde{d}+1)/\mathscr{C}} \right\}} \leq \nu^{-1}P_1^{-d_1}P_2^{-d_2}2^{-i/\mathscr{C}} + 2^{-i(\tilde{d}+2)/\mathscr{C}}P_1^{-d_1}P_2^{-d_2}.
	\end{equation}
	Substituting the estimates \eqref{eq.estimate_1} and \eqref{eq.estimate_2} into \eqref{eq.integral_bound_with_choices} we obtain
	\begin{equation} \label{eq.another_estimate}
		\int_E \frac{\norm{T(\bm{\alpha})}}{P_1^{n_1}P_2^{n_2}} \ll_R \nu^R2^k + \sum_{i=k}^\ell \nu^R P_1^{-d_1R}P_2^{-d_2R} 2^{i(1-(\tilde{d}+2)R/\mathscr{C})} + \sum_{i=k}^\ell P_1^{-d_1R}P_2^{-d_2R} 2^{i(1-R/\mathscr{C})}.
	\end{equation}
	Note now that 
	\begin{equation} \label{eq.three_cases_estimate_1}
		\sum_{i=k}^\ell 2^{i(1-R(\tilde{d}+2)/\mathscr{C})} \ll_{\mathscr{C},d_i,R} \begin{cases}
			2^{k(1-R(\tilde{d}+2)/\mathscr{C})} \quad &\text{if $\mathscr{C} < (\tilde{d}+2)R$} \\
			\ell-k &\text{if $\mathscr{C} = (\tilde{d}+2)R$} \\
			2^{\ell(1-R(\tilde{d}+2)/\mathscr{C})} &\text{if $\mathscr{C} > (\tilde{d}+2)R$},
		\end{cases}
	\end{equation}
	where we used $k < \ell$ for the second alternative.
	Recall from \eqref{eq.choosing_k_l} that we have
	\begin{equation*}
		2^k \geq P_2^{-\mathscr{C}} \quad \text{and} \quad 2^\ell \leq 2P^{-\delta},
	\end{equation*}
	so using this in \eqref{eq.three_cases_estimate_1} we get
	\begin{equation} \label{eq.three_cases_estimate_2}
		\sum_{i=k}^\ell 2^{i(1-(\tilde{d}+2)/\mathscr{C})} \ll_{\mathscr{C},d_i,R} \begin{cases}
			P_2^{(\tilde{d}+2)R-\mathscr{C}} \quad &\text{if $\mathscr{C} < (\tilde{d}+2)R$} \\
			\log P_2 &\text{if $\mathscr{C} = (\tilde{d}+2)R$} \\
			P^{-\delta(1-(\tilde{d}+2)R/\mathscr{C})} &\text{if $\mathscr{C} > (\tilde{d}+2)R$}. \end{cases}
	\end{equation}
	Arguing similarly for $\sum_{i=k}^\ell 2^{i(1-R/\mathscr{C})}$ we find
	\begin{equation} \label{eq.three_cases_estimate_3}
		\sum_{i=k}^\ell 2^{i(1-R/\mathscr{C})} \ll_{\mathscr{C},d_i,R} \begin{cases}
			P_2^{R-\mathscr{C}} \quad &\text{if $\mathscr{C} < R$} \\
			\log P_2 &\text{if $\mathscr{C} = R$} \\
			P^{-\delta(1-R/\mathscr{C})} &\text{if $\mathscr{C} > R$}. \end{cases}.
	\end{equation}
	Finally we note that by our choice of $k$ we have $2^k \leq 2P_2^{-\mathscr{C}}$ and we recall that $\tilde{d}+2 = d_1 + d_2$. Using this, as well as \eqref{eq.three_cases_estimate_2} and \eqref{eq.three_cases_estimate_3} in \eqref{eq.another_estimate} we deduce the result.
\end{proof} 
We will finish this section by defining the major and minor arcs and showing that the minor arcs do not contribute to the main term.
For $\Delta > 0$ we define the \emph{major arcs } to be the set given by
\begin{equation*}
	\mathfrak{M}(\Delta) \coloneqq  \bigcup_{\substack{q \in \mathbb{N} \\ q \leq P^\Delta}} \bigcup_{\substack{0 \leq a_i \leq q \\ (a_1,\hdots, a_R,q) = 1}} \left\{ \bm{\alpha} \in [0,1]^R  \colon 2 \nnorm{q\bm{\alpha}- \bm{a}}_\infty < P_1^{-d_1}P_2^{-d_2} P^\Delta \right\},
\end{equation*}
and the \emph{minor arcs } to be the given by
\begin{equation*}
	\mathfrak{m}(\Delta) \coloneqq [0,1]^R \setminus \mathfrak{M}(\Delta).
\end{equation*}
Write further
\begin{equation} \label{eq.delta_0_definition}
	\delta_0 = \frac{\min_{i=1,2}\left\{n_1+n_2-\dim V_i^* \right\}}{(\tilde{d}+1)2^{\tilde{d}}R}.
\end{equation}
Note that if the forms $F_i$ are linearly independent, then $V_i^*$ are proper subvarieties of $\mathbb{A}_{\mathbb{C}}^{n_1+n_2}$ so that $\dim V_i^* \leq n_1+n_2-1$ whence $\delta_0 \geq \frac{1}{(\tilde{d}+1)2^{\tilde{d}}R}$. To see this for $V_1^*$ note that requiring
\begin{equation*}
	\mathrm{rank}\left( \frac{\partial F_i}{\partial x_j} \right)_{i,j} < R
\end{equation*}
is equivalent to requiring all the $R \times R$ minors of $\left( \frac{\partial F_i}{\partial x_j} \right)_{i,j}$ vanish. This defines a system of polynomials of degree $R(d_1+d_2-1)$ in $(\bm{x},\bm{y})$, which are not all zero unless there exists $\bm{\beta} \in \mathbb{R}^R \setminus \{ \bm{0} \}$ such that
\begin{equation*}
	\sum_{i = 1}^R \beta_i \left( \frac{\partial F_i}{\partial x_j} \right) = 0 \quad \text{for } j = 1, \hdots, n_1
\end{equation*}
holds identically in $(\bm{x},\bm{y})$.
This is the same as saying that 
\begin{equation*}
	\nabla_{\bm{x}} \left( \sum_{i= 1}^R \beta_i F_i \right) = 0
\end{equation*}
holds identically. From this we find that $\sum_{i= 1}^R \beta_i F_i$ must be a form entirely in the $\bm{y}$ variables. But this is a linear combination of homogeneous bidegree $(d_1,d_2)$ forms with $d_1 \geq 1$ and thus we must in fact have $\sum_{i= 1}^R \beta_i F_i= 0$ identically, contradicting linear independence. The argument works analogously for $V_2^*$.

The next Lemma shows that the assumption \eqref{eq.general_sup_bound} holds with $E = \mathfrak{m}(\Delta)$ and $T(\bm{\alpha}) = C^{-1}P_1^{-\varepsilon}S(\bm{\alpha})$.
\begin{lemma} \label{lem.sup_bound}
	Let $0 < \Delta \leq R(\tilde{d}+1)(bd_1+d_2)^{-1}$ and let $\varepsilon > 0$. Then we have the upper bound
	\begin{equation} \label{eq.sup_bound}
		\sup_{\bm{\alpha} \in \mathfrak{m}(\Delta)}\norm{S(\bm{\alpha})} \ll P_1^{n_1}P_2^{n_2} P^{-\Delta \delta_0 + \varepsilon}.
	\end{equation}
	\end{lemma}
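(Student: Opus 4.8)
The plan is to run the standard Weyl-differencing plus geometry-of-numbers argument for minor arcs, in the bihomogeneous form developed by Schindler in \cite[\S4]{schindler_bihomogeneous}, with the bookkeeping adapted to the lopsided ranges $P_1\geq P_2$ and to the present normalisation of $\delta_0$. It suffices to prove the contrapositive: if $\bm{\alpha}\in[0,1]^R$ satisfies $\norm{S(\bm{\alpha})}\geq P_1^{n_1}P_2^{n_2}P^{-\Delta\delta_0+\varepsilon}$, then $\bm{\alpha}\in\mathfrak{M}(\Delta)$ (for $P$ large in terms of $\varepsilon$; for bounded $P$ the bound \eqref{eq.sup_bound} is trivial after adjusting the implied constant, since $\norm{S(\bm\alpha)}\leq(P_1+1)^{n_1}(P_2+1)^{n_2}$). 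First I would feed this lower bound into Lemma~\ref{lem.weyl_differencing_general_poly} with $G=\bm{\alpha}\cdot\bm{F}$, noting that then $G^{(d_1,d_2)}=\bm{\alpha}\cdot\bm{F}$; since $\norm{S}$ is raised there to the power $2^{\tilde{d}}$, this forces
\begin{equation*}
	M_1\!\left(\bm{\alpha}\cdot\bm{F};\,P_1,P_2,P_1^{-1}\right)\ \gg\ P_1^{n_1(d_1-1)}P_2^{n_2 d_2}\,P^{-2^{\tilde{d}}\Delta\delta_0-\varepsilon},
\end{equation*}
that is, many integer tuples $(\widehat{\bm{x}},\widetilde{\bm{y}})\in(-P_1,P_1)^{(d_1-1)n_1}\times(-P_2,P_2)^{d_2 n_2}$ satisfy $\nnorm{\Gamma_{\bm{\alpha}\cdot\bm{F}}(\widehat{\bm{x}},\bm{e}_\ell,\widetilde{\bm{y}})}<P_1^{-1}$ for every $\ell=1,\dots,n_1$.

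Next I would pass, via the rescaling inequality recorded just before Lemma~\ref{lem.schindler_counting_linearised} (applied with $G^{(d_1,d_2)}=\bm{\alpha}\cdot\bm{F}$), from this lower bound on the count at scale $(P_1,P_2)$ with threshold $P_1^{-1}$ to a lower bound, for an appropriately chosen $\theta\in(0,1]$ and some $i\in\{1,2\}$, on $M_i\!\left(\bm{\alpha}\cdot\bm{F};P_2^\theta,P_2^\theta,P_1^{-d_1}P_2^{-d_2}P_2^{\theta(\tilde{d}+1)}\right)$ at the smaller symmetric scale $P_2^\theta$ with a much finer threshold; here $\theta$ is dictated by the size of $\norm{S(\bm{\alpha})}$, as in the proof of Proposition~\ref{prop.auxiliary_ineq_from_counting_function}. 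Now the geometry-of-numbers dichotomy enters. For fixed $(\widehat{\bm{x}},\widehat{\bm{y}})$ the quantities $\Gamma_{\bm{\alpha}\cdot\bm{F}}(\widehat{\bm{x}},\bm{e}_\ell,\widehat{\bm{y}},\cdot)$, $\ell=1,\dots,n_1$, are $n_1$ linear forms in the last block of variables, and the locus where the associated coefficient matrix degenerates for some $\bm{\beta}\in\mathbb{R}^R\setminus\{\bm{0}\}$ is exactly the affine variety $V_1^*$ (respectively $V_2^*$ if $i=2$), which is bihomogeneous and cut out by the $R\times R$ minors of $(\partial F_i/\partial x_j)_{i,j}$ — a system of forms of total degree $R(d_1+d_2-1)=R(\tilde{d}+1)$. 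A Davenport-type argument then shows that the lower bound on $M_i$ at the small scale forces one of two alternatives: either one of the tuples counted yields a simultaneous rational approximation $q\bm{\alpha}\approx\bm{a}$ with $q\leq P^{\Delta}$, $(a_1,\dots,a_R,q)=1$, and $2\nnorm{q\bm{\alpha}-\bm{a}}_\infty<P_1^{-d_1}P_2^{-d_2}P^{\Delta}$, so that $\bm{\alpha}\in\mathfrak{M}(\Delta)$ and we are done; or else the tuples counted must cluster within bounded distance of the cone over $V_1^*$. In the latter case one bounds the number of integer tuples in the relevant box lying near $V_1^*$ by a lattice-point-near-a-variety estimate: the saving is a negative power of $\min\{P_1,P_2\}$ with exponent proportional to $n_1+n_2-\dim V_1^*$ divided by the degree $R(\tilde{d}+1)$ of the defining equations, and with $P_1\geq P_2$, so that $\min\{P_1,P_2\}=P_2$ and $P=P_2^{\,bd_1+d_2}$, this rewrites as a negative power of $P$. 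Calibrating the constants — this is where $\delta_0$ from \eqref{eq.delta_0_definition} and the hypothesis $\Delta\leq R(\tilde{d}+1)(bd_1+d_2)^{-1}$ are used — the second alternative contradicts the lower bound on $M_i$ once $P$ is large, so $\bm{\alpha}\in\mathfrak{M}(\Delta)$.

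The main obstacle is the geometry-of-numbers step together with the exponent bookkeeping: one must run the Davenport rescaling not for genuine linear forms but for the block-multilinear forms $\Gamma_{\bm{\alpha}\cdot\bm{F}}$, keeping track of the two distinct ranges $P_1$, $P_2$ and the threshold $P_1^{-1}$ simultaneously, choose $\theta$ correctly, and convert the statement that $\bm{\alpha}$ admits no good rational approximation into the quantitative assertion that the surviving tuples lie near $V_i^*$, with every exponent lining up to produce exactly the constant $\delta_0$ of \eqref{eq.delta_0_definition}. The lattice-point count near $V_i^*$ must moreover be uniform in $\bm{\alpha}$ and respect the lopsided box, for which one uses that $V_i^*$ is bihomogeneous in the corresponding variables (and, when $d_1\geq 2$, degenerately also in the others). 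All of this is carried out in detail in \cite{schindler_bihomogeneous}, to which I would refer for the computations; the minimum over $i=1,2$ in \eqref{eq.delta_0_definition} reflects that the rescaling step may land on either of the counting functions $M_1$, $M_2$, and one needs the bound valid in both cases.
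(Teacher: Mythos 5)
Your argument is essentially the paper's: the proof here consists of a single application of \cite[Lemma 4.3]{schindler_bihomogeneous} with the choice $\theta = \Delta/((\tilde{d}+1)R)$, the hypothesis $\Delta \leq R(\tilde{d}+1)(bd_1+d_2)^{-1}$ serving exactly to guarantee $0 < \theta \leq (bd_1+ud_2)^{-1}$ as required there, and the dichotomy you sketch (Weyl differencing, rescaling to scale $P_2^\theta$, Davenport-type alternative between a good rational approximation and points clustering near $V_i^*$, with the exponent bookkeeping producing $\delta_0$) is precisely the content of that cited lemma. So the proposal is correct and follows the same route, merely unpacking the proof of Schindler's lemma instead of quoting it.
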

\begin{proof}
	The result follows straightforward from \cite[Lemma 4.3]{schindler_bihomogeneous} by setting the parameter $\theta$ to be 
	\begin{equation*}
		\theta = \frac{\Delta}{(\tilde{d}+1)R}.
	\end{equation*}
	If we have $0 < \Delta \leq R(\tilde{d}+1)(bd_1+d_2)^{-1}$ this ensures that the assumption $0 < \theta \leq (bd_1+d_2)^{-1}$ in \cite[Lemma 4.3]{schindler_bihomogeneous} is satisfied.
\end{proof}
Before we state the next proposition, recall that we assume $P_1 \geq P_2$ throughout, as was mentioned at the beginning of this section.
\begin{proposition} \label{prop.minor_arcs_estimate}
	Let $\varepsilon > 0$ and let $0 < \Delta \leq R(\tilde{d}+1)(bd_1+d_2)^{-1}$. Under the assumptions of Proposition \ref{prop.main_prop} we have 
	\begin{equation*}
		\int_{\mathfrak{m}(\Delta)}S(\bm{\alpha}) d \bm{\alpha} \ll P_1^{n_1-d_1R}P_2^{n_2-d_2R} P^{-\Delta \delta_0(1-(d_1+d_2)R/\mathscr{C})+\varepsilon}.
	\end{equation*}
\end{proposition}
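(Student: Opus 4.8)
The plan is to deduce Proposition~\ref{prop.minor_arcs_estimate} directly from Lemma~\ref{lem.good_int_bound}, which was tailored for exactly this purpose: I would check its two hypotheses on the minor arcs, apply it in the appropriate regime, and then collapse the resulting estimate into the stated form.

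First I would set $E = \mathfrak{m}(\Delta)$; since $\mathfrak{m}(\Delta)\subseteq[0,1]^R$ we may take $\nu=1$ in the notation of Lemma~\ref{lem.general_integral_bound}. Take $T(\bm\alpha) = C^{-1}P_1^{-\varepsilon}S(\bm\alpha)$ with $C$ the constant of \eqref{eq.aux_ineq}; with this normalisation the auxiliary inequality \eqref{eq.aux_ineq}, which holds by the standing hypothesis of Proposition~\ref{prop.main_prop}, is precisely hypothesis \eqref{eq.less_general_aux_ineq}. The second hypothesis \eqref{eq.general_sup_bound} follows from Lemma~\ref{lem.sup_bound}: the assumption $0<\Delta\le R(\tilde d+1)(bd_1+d_2)^{-1}$ is exactly what that lemma requires, so $\sup_{\mathfrak{m}(\Delta)}|S(\bm\alpha)|\ll P_1^{n_1}P_2^{n_2}P^{-\Delta\delta_0+\varepsilon}$, and since $\delta_0\ge \frac{1}{(\tilde d+1)2^{\tilde d}R}>0$ this furnishes \eqref{eq.general_sup_bound} for $T$ with some $\delta>0$ that we may take arbitrarily close to $\Delta\delta_0$.

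Next I would identify the relevant branch of \eqref{eq.many_cases_estimate}. Working in the regime $P_1\ge P_2$ of this section we have $u=1$, so the hypothesis $\mathscr{C}>(bd_1+ud_2)R$ of Proposition~\ref{prop.main_prop} reads $\mathscr{C}>(bd_1+d_2)R$, and since $b\ge 1$ this gives $\mathscr{C}>(d_1+d_2)R$, placing us in the last alternative of \eqref{eq.many_cases_estimate}:
\[
\int_{\mathfrak{m}(\Delta)}\frac{|T(\bm\alpha)|}{P_1^{n_1}P_2^{n_2}}\,d\bm\alpha \ll P^{-R-\delta(1-(d_1+d_2)R/\mathscr{C})} + P^{-R-\delta(1-R/\mathscr{C})} + P_2^{-\mathscr{C}}.
\]
Undoing the normalisation via $\int_{\mathfrak{m}}|S| = CP_1^{\varepsilon}P_1^{n_1}P_2^{n_2}\int_{\mathfrak{m}}|T|/(P_1^{n_1}P_2^{n_2})$ and writing $P_1^{n_1}P_2^{n_2}=P_1^{n_1-d_1R}P_2^{n_2-d_2R}P^{R}$, the first term yields $\ll P_1^{n_1-d_1R}P_2^{n_2-d_2R}P^{-\delta(1-(d_1+d_2)R/\mathscr{C})+\varepsilon}$, which is the asserted bound once $\delta$ is pushed to $\Delta\delta_0$ and the loss absorbed into $\varepsilon$; the second term is no larger because $1-R/\mathscr{C}\ge 1-(d_1+d_2)R/\mathscr{C}$.

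The remaining point, and the one demanding care, is that the term $P_2^{-\mathscr{C}}$ must also be shown to fit under the stated error. Here I would use $P=P_2^{bd_1+d_2}$ (valid since $P_1\ge P_2$) to rewrite everything as a power of $P_2$, reducing the claim to a comparison of exponents in which the \emph{strict} inequality $\mathscr{C}>(bd_1+d_2)R$ is essential — it makes $P^{R}P_2^{-\mathscr{C}}=P_2^{(bd_1+d_2)R-\mathscr{C}}$ a genuine negative power of $P_2$ — and in which the constraint $\Delta\le R(\tilde d+1)(bd_1+d_2)^{-1}$ keeps $\Delta\delta_0$ controlled. Carrying out this collapse of the three-term estimate into the single clean exponent $-\Delta\delta_0(1-(d_1+d_2)R/\mathscr{C})+\varepsilon$, uniformly over the permitted range of $\Delta$, is the only non-mechanical part of the argument; everything else is routine substitution into Lemma~\ref{lem.good_int_bound}.
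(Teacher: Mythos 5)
Your proposal is correct and follows essentially the same route as the paper: apply Lemma~\ref{lem.good_int_bound} with $E=\mathfrak{m}(\Delta)$ and $T$ a normalised version of $S$ (the paper uses $C^{-1}P^{-\varepsilon}S$ rather than $C^{-1}P_1^{-\varepsilon}S$, an immaterial difference), verify \eqref{eq.less_general_aux_ineq} from the auxiliary inequality and \eqref{eq.general_sup_bound} from Lemma~\ref{lem.sup_bound}, land in the last case of \eqref{eq.many_cases_estimate} since $\mathscr{C}>(bd_1+d_2)R\geq(d_1+d_2)R$, and absorb $P_2^{-\mathscr{C}}$ by writing it as $P^{-R}P^{R-\mathscr{C}(bd_1+d_2)^{-1}}$. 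Your treatment of the $P_2^{-\mathscr{C}}$ term is, if anything, slightly more explicit than the paper's, which simply asserts $P_2^{-\mathscr{C}}\ll P^{-R-\tilde{\delta}}$ and concludes.
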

\begin{proof}
We apply Lemma \ref{lem.general_integral_bound}	with
\begin{equation*}
	T(\bm{\alpha}) = C^{-1}P^{-\varepsilon} S(\bm{\alpha}), \quad E_0 = [0,1]^R, \quad E = \mathfrak{m}(\Delta), \quad \text{and} \quad \delta = \Delta \delta_0,
\end{equation*}
where $C > 0$ is some real number.
With these choices \eqref{eq.less_general_aux_ineq} follows from the auxiliary inequality \eqref{eq.aux_ineq} since for any $\varepsilon > 0$ we have $P^{-\varepsilon} \leq P_1^{-\varepsilon}$. From Lemma \ref{lem.sup_bound} we have the bound
\begin{equation*}
	\sup_{\bm{\alpha} \in E}CT(\bm{\alpha}) \ll P_1^{n_1}P_2^{n_2}P^{-\delta}.
\end{equation*}
We may increase $C$ if necessary so that we recover \eqref{eq.general_sup_bound}. Therefore the hypotheses of Lemma \ref{lem.good_int_bound}. Since we assume $\mathscr{C} > (bd_1+d_2)R$, we also note
\[
P_2^{-\mathscr{C}} = P^{-R} P^{R-\mathscr{C} (bd_1+d_2)^{-1}} \ll_{\mathscr{C}} P^{-R-\tilde{\delta}},
\]
for some $\tilde{\delta} >0$. Therefore if we assume $\mathscr{C} > (bd_1+d_2)R$ then Lemma~\ref{lem.good_int_bound} gives
\begin{equation*}
	\int_{\mathfrak{m}(\Delta)}S(\bm{\alpha}) d \bm{\alpha} \ll P_1^{n_1-d_1R}P_2^{n_2-d_2R} P^{-\Delta \delta_0(1-(d_1+d_2)R/\mathscr{C})+\varepsilon},
\end{equation*}
as desired.
\end{proof}

\subsection{The major arcs}
The aim of this section is to identify the main term via integrating the exponential sum $S(\bm{\alpha})$ over the major arcs, and analyse the singular integral and singular series appropriately. For $\bm{a} \in \mathbb{Z}^R$ and $q \in \mathbb{N}$ consider the complete exponential sum
\begin{equation*}
	S_{\bm{a},q} \coloneqq q^{-n_1-n_2} \sum_{\bm{x},\bm{y}} e\left( \frac{\bm{a}}{q} \cdot \bm{F}(\bm{x},\bm{y}) \right),
\end{equation*}
where the sum $\sum_{\bm{x},\bm{y}}$ runs through a complete set of residues modulo $q$. Further, for $P\geq 1$ and $\Delta > 0$ we define the truncated singular series
\begin{equation*}
	\mathfrak{S}(P) \coloneqq \sum_{q \leq P^\Delta} \sum_{\bm{a}} S_{\bm{a},q},
\end{equation*}
where the sum $\sum_{\bm{a}}$ runs over $\bm{a} \in \mathbb{Z}^R$ such that $0 \leq a_i < q$ for $i = 1, \hdots, R$ and $(a_1, \hdots, a_R, q) = 1$. For $\bm{\gamma} \in \mathbb{R}^R$ we further define
\begin{equation*}
	S_\infty(\bm{\gamma}) \coloneqq \int_{\mathcal{B}_1 \times \mathcal{B}_2}e \left( \bm{\gamma} \cdot \bm{F}(\bm{u},\bm{v}) \right) d\bm{u} d\bm{v},
\end{equation*}
and we define the truncated singular integral for $P \geq 1$, $\Delta >0$ as follows
\begin{equation*}
	\mathfrak{I}(P) \coloneqq \int_{\nnorm{\bm{\gamma}}_\infty \leq P^\Delta} S_\infty(\bm{\gamma}) d \bm{\gamma}.
\end{equation*}
From now on we assume that our parameter $\Delta>0$ satisfies
\begin{equation} \label{eq.delta_assumption}
	(bd_1+d_2)^{-1}>\Delta(2R+3)+\delta
\end{equation}
for some $\delta > 0$. Since $\mathscr{C} > R(bd_1+d_2)$ we are always able to choose such $\Delta$ in terms of $\mathscr{C}$. 
Further as in \cite{schindler_bihomogeneous} we now define some slightly modified major arcs $\mathfrak{M}'(\Delta)$ as follows
\begin{equation*}
	\mathfrak{M}'(\Delta) \coloneqq \bigcup_{1 \leq q \leq P^\Delta} \bigcup_{\substack{0 \leq a_i < q \\ (a_1, \hdots, a_R,q) = 1}}\mathfrak{M}_{\bm{a},q}'(\Delta) ,
\end{equation*}
where $\mathfrak{M}_{\bm{a},q}'(\Delta) = \left\{ \bm{\alpha} \in [0,1]^R \colon  \nnorm{\bm{\alpha}- \frac{\bm{a}}{q}}_\infty < P_1^{-d_1}P_2^{-d_2} P^\Delta \right\}$. The sets $\mathfrak{M}_{\bm{a},q}'$ are disjoint for our choice of $\Delta$; for if there is some 
\begin{equation*}
	\bm{\alpha} \in \mathfrak{M}_{\bm{a},q}'(\Delta) \cap \mathfrak{M}_{\bm{\tilde{a}},\tilde{q}}'(\Delta),
\end{equation*}
where $\mathfrak{M}_{\bm{\tilde{a}},\tilde{q}}'(\Delta) \neq \mathfrak{M}_{\bm{a},q}'(\Delta)$ then there is some $i \in \{1, \hdots, R \}$ such that
\begin{equation*}
	P^{-2\Delta} \leq \frac{1}{q \tilde{q}} \leq \norm{\frac{a_i}{q}- \frac{\tilde{a}_i}{\tilde{q}}} \leq 2P^{\Delta-1},
\end{equation*}
which is impossible for large $P$, since by our assumption \eqref{eq.delta_assumption} we have $3 \Delta -1 < 0$. Further we note that clearly $\mathfrak{M}'(\Delta) \supseteq \mathfrak{M}(\Delta)$ whence $\mathfrak{m}'(\Delta) \subseteq \mathfrak{m}(\Delta)$ and so the conclusions of Proposition \ref{prop.minor_arcs_estimate} hold with $\mathfrak{m}(\Delta)$ replaced by $\mathfrak{m}'(\Delta)$.

The next result expands the exponential sum $S(\bm{\alpha})$ when $\bm{\alpha}$ can be well-approximated by a rational number. In particular for our applications it is important to obtain an error term in which the constant does not depend on $\bm{\beta}$, whence we cannot just use Lemma 5.3 in~\cite{schindler_bihomogeneous} as it is stated there.

\begin{lemma} \label{lem.approx_exp_sum_major_arc}
	Let $\Delta>0$ satisfy \eqref{eq.delta_assumption}, let $\bm{\alpha} \in \mathfrak{M}'_{\bm{a},q}(\Delta)$ where $q \leq P^\Delta$, and write $\bm{\alpha} = \bm{a}/q + \bm{\beta}$ such that $1 \leq a_i < q$ and $(a_1, \hdots, a_R,q) = 1$. If $P_1 \geq P_2 > 1$ then
	\begin{equation} \label{eq.approx_S(alpha)_major_arcs}
		S(\bm{\alpha}) = P_1^{n_1}P_2^{n_2} S_{\bm{a},q} S_\infty(P \bm{\beta}) + O\left(qP_1^{n_1}P_2^{n_2-1}\left(1+ P\nnorm{\bm{\beta}}_\infty \right) \right),
	\end{equation}
	where the implied constant in the error term does not depend on $q$ or on $\bm{\beta}$.
\end{lemma}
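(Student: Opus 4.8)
The plan is to split the summation over $(\bm{x},\bm{y})$ in $S(\bm{\alpha})$ into residue classes modulo $q$ and to approximate the resulting incomplete sums by integrals. Writing $\bm{x} = \bm{u} + q\bm{x}'$ and $\bm{y} = \bm{v} + q\bm{y}'$, where $\bm{u},\bm{v}$ run over a complete set of residues modulo $q$ and $\bm{x}',\bm{y}'$ are integer vectors, and using that $\bm{F}$ has integer coefficients so that $\bm{F}(\bm{x},\bm{y}) \equiv \bm{F}(\bm{u},\bm{v}) \pmod q$, one obtains the factorisation
\[
e\!\left( \bm{\alpha}\cdot\bm{F}(\bm{x},\bm{y}) \right) = e\!\left( \tfrac{\bm{a}}{q}\cdot\bm{F}(\bm{u},\bm{v}) \right) e\!\left( \bm{\beta}\cdot\bm{F}(\bm{x},\bm{y}) \right),
\]
so that
\[
S(\bm{\alpha}) = \sum_{\bm{u},\bm{v} \bmod q} e\!\left( \tfrac{\bm{a}}{q}\cdot\bm{F}(\bm{u},\bm{v}) \right) \Sigma(\bm{u},\bm{v}), \qquad \Sigma(\bm{u},\bm{v}) := \sum_{\substack{\bm{x}\equiv\bm{u},\, \bm{y}\equiv\bm{v}\ (q) \\ \bm{x}\in P_1\mathcal{B}_1,\ \bm{y}\in P_2\mathcal{B}_2}} e\!\left( \bm{\beta}\cdot\bm{F}(\bm{x},\bm{y}) \right).
\]

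Next I would isolate the main term. The sum $\Sigma(\bm{u},\bm{v})$ runs over a translate of the sublattice $q\mathbb{Z}^{n_1+n_2}$, which has density $q^{-n_1-n_2}$, so I expect $\Sigma(\bm{u},\bm{v})$ to be close to $q^{-n_1-n_2}\int_{P_1\mathcal{B}_1\times P_2\mathcal{B}_2} e(\bm{\beta}\cdot\bm{F}(\bm{x},\bm{y}))\,d\bm{x}\,d\bm{y}$, uniformly in $(\bm{u},\bm{v})$. Substituting $\bm{x}=P_1\bm{u}'$, $\bm{y}=P_2\bm{v}'$ and using the bihomogeneity $\bm{F}(P_1\bm{u}',P_2\bm{v}') = P_1^{d_1}P_2^{d_2}\bm{F}(\bm{u}',\bm{v}') = P\,\bm{F}(\bm{u}',\bm{v}')$, this integral equals $P_1^{n_1}P_2^{n_2} S_\infty(P\bm{\beta})$. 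Since $\sum_{\bm{u},\bm{v}\bmod q} e(\tfrac{\bm{a}}{q}\cdot\bm{F}(\bm{u},\bm{v})) = q^{n_1+n_2}S_{\bm{a},q}$, the main term assembles exactly to $P_1^{n_1}P_2^{n_2}S_{\bm{a},q}S_\infty(P\bm{\beta})$, and the total error is at most $q^{n_1+n_2}\max_{\bm{u},\bm{v}}\norm{E(\bm{u},\bm{v})}$, where $E(\bm{u},\bm{v}) = \Sigma(\bm{u},\bm{v}) - q^{-n_1-n_2}\int_{P_1\mathcal{B}_1\times P_2\mathcal{B}_2}e(\bm{\beta}\cdot\bm{F}(\bm{x},\bm{y}))\,d\bm{x}\,d\bm{y}$.

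It then remains to bound $E(\bm{u},\bm{v})$. Rescaling by $q$ turns $\Sigma(\bm{u},\bm{v})$ into $\sum_{(\bm{x}',\bm{y}')\in\mathbb{Z}^{n_1+n_2}\cap\mathcal{R}'} \widetilde h(\bm{x}',\bm{y}')$ with $\widetilde h(\bm{x}',\bm{y}') := e(\bm{\beta}\cdot\bm{F}(\bm{u}+q\bm{x}',\bm{v}+q\bm{y}'))$, where $\mathcal{R}'\subset\mathbb{R}^{n_1+n_2}$ is a box with side lengths $\le P_1/q$ in the $\bm{x}'$-directions and $\le P_2/q$ in the $\bm{y}'$-directions, and it turns $q^{-n_1-n_2}\int(\cdots)$ into $\int_{\mathcal{R}'}\widetilde h$. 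I would then compare $\widetilde h$ at each lattice point $\bm{m}$ with its integral over the unit cube $\bm{m}+[0,1)^{n_1+n_2}$: this produces an interior error $\ll \vol(\mathcal{R}')\sup\norm{\nabla\widetilde h}$ together with a boundary error $\ll \vol(\mathcal{R}''\triangle\mathcal{R}')$, where $\mathcal{R}''$ is the union of these unit cubes and we use $\norm{\widetilde h}=1$. Because $\partial F_i/\partial x_j$ has bidegree $(d_1-1,d_2)$ and $\partial F_i/\partial y_j$ has bidegree $(d_1,d_2-1)$, on the relevant range one has $\norm{\partial\widetilde h/\partial x'_j} \ll q\nnorm{\bm{\beta}}_\infty P_1^{d_1-1}P_2^{d_2} = q\nnorm{\bm{\beta}}_\infty P/P_1$ and $\norm{\partial\widetilde h/\partial y'_j}\ll q\nnorm{\bm{\beta}}_\infty P/P_2$, with implied constants depending only on $\bm{F},n_i,d_i$. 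The hypothesis \eqref{eq.delta_assumption} forces $q\le P^\Delta\ll P_2\le P_1$, whence $\vol(\mathcal{R}')\ll q^{-n_1-n_2}P_1^{n_1}P_2^{n_2}$ and $\vol(\mathcal{R}''\triangle\mathcal{R}')\ll q^{1-n_1-n_2}P_1^{n_1}P_2^{n_2-1}$, the latter because the largest codimension-one face of $\mathcal{R}'$ is obtained by collapsing one of the short $\bm{y}'$-directions (here the assumption $P_1\ge P_2$ is crucial). Hence $\norm{E(\bm{u},\bm{v})}\ll q^{1-n_1-n_2}P_1^{n_1}P_2^{n_2-1}(1+P\nnorm{\bm{\beta}}_\infty)$, and multiplying by $q^{n_1+n_2}$ yields the claimed bound $O(qP_1^{n_1}P_2^{n_2-1}(1+P\nnorm{\bm{\beta}}_\infty))$, with implied constant independent of $q$ and $\bm{\beta}$.

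The main obstacle is precisely this last piece of bookkeeping: one has to ensure the error is carried by $P_2^{n_2-1}$ (and not $P_1^{n_1-1}$), which is exactly what makes $P_1\ge P_2$ indispensable — it enters both in the derivative estimate, where $P/P_2\ge P/P_1$, and in the boundary term, where the dominant face of $\mathcal{R}'$ collapses a $\bm{y}'$-direction of length $\le P_2/q$. The remaining steps are routine, and the uniformity of the implied constant in $q$ and $\bm{\beta}$ is automatic, since at every stage the constants depend only on $n_1,n_2,d_1,d_2$ and the coefficients of $\bm{F}$.
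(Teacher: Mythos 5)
Your proposal is correct and follows essentially the same route as the paper: split $S(\bm{\alpha})$ into residue classes modulo $q$, pull out the complete sum $S_{\bm{a},q}$, and compare the remaining lattice sum with the corresponding integral, using $q \leq P^{\Delta} \leq P_2 \leq P_1$ so that both the phase-variation term ($\ll q\nnorm{\bm{\beta}}_\infty P_1^{d_1}P_2^{d_2-1}$ per unit step) and the boundary term are carried by $P_2^{n_2-1}$ rather than $P_1^{n_1-1}$. The only cosmetic difference is that you control the variation of the phase via its gradient, whereas the paper uses a finite-difference bound on $F_i$ over steps of bounded size in the rescaled variables; the resulting error terms and their uniformity in $q$ and $\bm{\beta}$ are identical.
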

\begin{proof}
	In the sum for $S(\bm{\alpha})$ we begin by writing $\bm{x} = \bm{z}^{(1)} + q \bm{x}'$ and $\bm{y} = \bm{z}^{(2)} + q \bm{y}'$ where $0 \leq z_i^{(1)} < q$ and $0 \leq z_j^{(2)} < q$ for all $1 \leq i \leq n_1$ and for all $1 \leq j \leq n_2$. A simple calculation now shows
	\begin{align} 
		S(\bm{\alpha}) &= \sum_{\bm{x} \in P_1 \mathcal{B}_1} \sum_{\bm{y} \in P_2 \mathcal{B}_2} e \left( \bm{\alpha} \cdot \bm{F} (\bm{x}, \bm{y}) \right) \nonumber  \\
		&= \sum_{\bm{z}^{(1)}, \bm{z}^{(2)} \, \mathrm{mod} \, q} 
		e \left( \frac{\bm{a}}{q} \cdot \bm{F} (\bm{z}^{(1)}, \bm{z}^{(2)}) \right) \tilde{S}(\bm{z}^{(1)}, \bm{z}^{(2)}) \label{eq.summing_things}
	\end{align}
	where 
	\begin{equation*}
		\tilde{S}(\bm{z}^{(1)}, \bm{z}^{(2)}) = \sum_{\bm{x}', \bm{y}'} e \left( \bm{\beta} \cdot \bm{F}(q\bm{x}'+\bm{z}^{(1)},q\bm{y}'+\bm{z}^{(2)} ) \right),
	\end{equation*}
	where $\bm{x}', \bm{y}'$ in the sum runs through integer tuples such that $q\bm{x}'+\bm{z}^{(1)} \in P_1 \mathcal{B}_1$ and  $q\bm{y}'+\bm{z}^{(2)} \in P_2 \mathcal{B}_2$ is satisfied. Now consider $\bm{x}', \bm{x}''$ and $\bm{y}', \bm{y}''$ such that
	\begin{equation*}
		\nnorm{\bm{x}'-\bm{x}''}_\infty, \nnorm{\bm{y}'-\bm{y}''}_\infty \leq 2.
	\end{equation*}
	Then for all $i = 1, \hdots, R$ we have
	\begin{multline*}
		\norm{F_i(q \bm{x}' + \bm{z}^{(1)}, q\bm{y}' + \bm{z}^{(2)}) - F_i(q \bm{x}'' + \bm{z}^{(1)}, q\bm{y}'' + \bm{z}^{(2)}) } \\
		\ll qP_1^{d_1-1} P_2^{d_2} + qP_1^{d_1} P_2^{d_2-1} \ll qP_1^{d_1} P_2^{d_2-1},
	\end{multline*}
	where we used $P_1 \geq P_2 > 1$ for the last estimate. We note that the implied constant here does not depend on $q$. We now use this to replace the sum in $\tilde{S}$ by an integral to obtain
	\begin{multline*}
		\tilde{S}(\bm{z}^{(1)}, \bm{z}^{(2)}) = \int_{q \tilde{\bm{v}} \in P_1 \mathcal{B}_1} \int_{q \tilde{\bm{w}} \in P_2 \mathcal{B}_2} e \left( \sum_{i=1}^R \beta_i F_i(q \tilde{\bm{v}}, q\tilde{\bm{w}}) \right) d \tilde{\bm{v}} d\tilde{\bm{w}} \\
		+ O \left( \nnorm{\bm{\beta}}_\infty qP_1^{d_1} P_2^{d_2-1} \left( \frac{P_1}{q} \right)^{n_1} \left( \frac{P_2}{q} \right)^{n_2} + \left( \frac{P_1}{q} \right)^{n_1} \left( \frac{P_2}{q} \right)^{n_2-1}  \right),
	\end{multline*}
	where we used that $q \leq P_2$, which is implied by our assumptions, but we mention here for the convenience of the reader.
	In the integral above we perform a substitution $\bm{v} = qP_1^{-1}\tilde{\bm{v}}$ and $\bm{w} = qP_2^{-1}\tilde{\bm{w}}$ to get
	\begin{equation*}
		\tilde{S}(\bm{z}^{(1)}, \bm{z}^{(2)}) = P_1^{n_1} P_2^{n_2} q^{-n_1-n_2} \mathfrak{I} (P \bm{\beta})  + q^{-n_1-n_2} O \left( qP_1^{n_1}P_2^{n_2-1}\left(1+ P\nnorm{\bm{\beta}}_\infty \right) \right),
	\end{equation*}
	where the implied constant does not depend on $\bm{\beta}$ or $q$. Substituting this into \eqref{eq.summing_things} gives the result.
\end{proof}
From the Lemma und using that the sets $\mathfrak{M}_{\bm{a},q}'$ are disjoint we  deduce
\begin{multline} \label{eq.needed_once}
	\int_{\mathfrak{M}'(\Delta)} S(\bm{\alpha})d \bm{\alpha} = P_1^{n_1}P_2^{n_2} \sum_{1 \leq q \leq P^\Delta} \sum_{\bm{a}} S_{\bm{a},q} \int_{\norm{\bm{\beta}}} S_\infty (P \bm{\beta}) d \bm{\beta} \\
	+ O\left(P_1^{n_1}P_2^{n_2} P^{2 \Delta} P_2^{-1} \mathrm{meas}\left(\mathfrak{M}'(\Delta)\right) \right),
\end{multline}
where we used $q \leq P^\Delta$ and $P \nnorm{\bm{\beta}}_\infty \leq P^\Delta$ for the error term. Now we can bound the measure of the major arcs by
\begin{equation*}
	\mathrm{meas}(\mathfrak{M}'(\Delta)) \ll \sum_{q \leq P^\Delta} q^R P^{-R+\Delta R} \ll P^{-R+ \Delta(2R+1)}.
\end{equation*}
Using this and making the substitution $\bm{\gamma} = P \bm{\beta}$ in the integral in \eqref{eq.needed_once} we find
\begin{equation} \label{eq.integral_major_arcs}
	\int_{\mathfrak{M}'(\Delta)} S(\bm{\alpha})d \bm{\alpha} = P_1^{n_1}P_2^{n_2} P^{-R} \mathfrak{S}(P) \mathfrak{I}(P) \\
	+ O \left( P_1^{n_1} P_2^{n_2} P^{-R+\Delta(2R+3)-1/(bd_1+d_2)} \right).
\end{equation}
It becomes transparent why the assumption~\eqref{eq.delta_assumption} is in place, because then the error term in~\eqref{eq.integral_major_arcs} is bounded by $O(P_1^{n_1}P_2^{n_2}P^{-R-\delta})$ and thus is of smaller order than the main term.

We now focus on the singular series $\mathfrak{S}(P)$ and the singular integral $\mathfrak{I}(P)$ in the next two Lemmas.
\begin{lemma} \label{lem.singular_series}
	Let $\varepsilon > 0$ and assume that the bound \eqref{eq.aux_ineq} holds for some $C \geq 1$, $\mathscr{C} > 1+b\varepsilon$, for all $\bm{\alpha}, \bm{\beta} \in \mathbb{R}^R$ and all real $P_1 \geq P_2 > 1$. 
	Then we have the following:
	\begin{itemize}
		\item[(i)] For all $\varepsilon' > 0$ such that $\varepsilon' = O_{\mathscr{C}}(\varepsilon)$ we have
		\begin{equation*} \label{eq.aux_ineq_consequence}
			\min\left\{ \norm{S_{\bm{a},q}}, \norm{S_{\bm{a}',q'}} \right\} \ll_C (q'+q)^\varepsilon \nnorm{\frac{\bm{a}}{q} - \frac{\bm{a}'}{q'}}_\infty^{\frac{\mathscr{C}-\varepsilon'}{\tilde{d}+1}}
		\end{equation*}
		for all $q,q' \in \mathbb{N}$ and all $\bm{a} \in \{1, \hdots, q \}^R$ and $\bm{a}' \in \{1, \hdots, q' \}^R$ with $\frac{\bm{a}}{q} \neq \frac{\bm{a}'}{q'}$.
		\item[(ii)] If $\mathscr{C} > \varepsilon'$ then for all $t \in \mathbb{R}_{>0}$ and $q_0 \in \mathbb{N}$ we have
		\begin{equation*}
			\# \left\{ \frac{\bm{a}}{q} \in [0,1]^R \cap \mathbb{Q}^R \colon q \leq q_0, \norm{S_{\bm{a},q}} \geq t \right\} \ll_C (q_0^{-\varepsilon} t)^{-\frac{(\tilde{d}+1)R}{\mathscr{C}-\varepsilon'}},
		\end{equation*}
		where the fractions in the set above are in lowest terms.
		\item[(iii)] Assume that the forms $F_i(\bm{x},\bm{y})$ are linearly independent. Then for all $q \in \mathbb{N}$ and $\bm{a} \in \mathbb{Z}^R$ with $(a_1, \hdots, a_R, q) = 1$ there exists some $\nu >0$ depending at most on $d_i$ and $R$  such that
		\begin{equation*}
			\norm{S_{\bm{a},q}} \ll q^{-\nu}.
		\end{equation*}
		\item[(iv)] Assume $\mathscr{C} >(\tilde{d}+1)R$ and assume the forms $F_i(\bm{x},\bm{y})$ are linearly independent. Then the \emph{singular series} \begin{equation} \label{eq.def_singular_series} \mathfrak{S} = \sum_{q=1}^\infty \sum_{\bm{a} \, \mathrm{ mod } \, q} S_{\bm{a},q}\end{equation}  exists and converges absolutely, with
		\begin{equation*}
			\norm{\mathfrak{S}(P) - \mathfrak{S}} \ll_{C, \mathscr{C}} P^{-\Delta \delta_1},
		\end{equation*}
		for some $\delta_1 > 0$ depending only on $\mathscr{C}, d_i$ and $R$.
	\end{itemize}
\end{lemma}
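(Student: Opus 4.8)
The plan is to prove the four parts in turn, each relying on the previous: (i) directly from the auxiliary inequality \eqref{eq.aux_ineq}; (ii) as a lattice-point packing consequence of (i); (iii) from the minor-arc bound of Lemma~\ref{lem.sup_bound}; and (iv) by combining (ii) and (iii) in a dyadic decomposition over the modulus. The engine behind (i) and (iii) is the elementary observation that $e\!\bigl(\tfrac{\bm a}{q}\cdot\bm F(\bm x,\bm y)\bigr)$ is periodic modulo $q$ in each of $x_1,\dots,x_{n_1},y_1,\dots,y_{n_2}$, since every monomial of each $F_i$ involves both an $x$-variable and a $y$-variable, so $F_i(\bm x+q\bm x',\bm y+q\bm y')\equiv F_i(\bm x,\bm y)\bmod q$. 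Consequently, with half-open unit boxes $\mathcal B_1,\mathcal B_2$ and $P_1=P_2=N$ for a positive common multiple $N$ of $q$ and $q'$, one has the exact identities $S(\bm a/q)=N^{n_1+n_2}S_{\bm a,q}$ and similarly for $\bm a'/q'$. For (i) I would feed $\bm\alpha=\bm a/q$, $\bm\beta=\bm a'/q'-\bm a/q\neq\bm 0$ and $N=qq'$ into \eqref{eq.aux_ineq}; since $(qq')^{-1}\le\nnorm{\bm\beta}_\infty\le\tfrac12$ and $2\le\tilde{d}+2=d_1+d_2$, a short check shows the right-hand maximum in \eqref{eq.aux_ineq} is realised by the term $\nnorm{\bm\beta}_\infty^{1/(\tilde{d}+1)}$, giving $\min\{|S_{\bm a,q}|,|S_{\bm a',q'}|\}\le C(qq')^{\varepsilon}\nnorm{\bm\beta}_\infty^{\mathscr C/(\tilde{d}+1)}$, whence (i) follows after bounding $(qq')^{\varepsilon}\le(q+q')^{\varepsilon}$ (redefining $\varepsilon$) and using $\nnorm{\bm\beta}_\infty\le1$. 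One can instead route this through Lemma~\ref{lem.approx_exp_sum_major_arc}, which is presumably where the $\varepsilon'$ loss in the statement comes from.

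For (ii), solving the inequality in (i) shows that any two distinct reduced fractions $\bm a/q,\bm a'/q'$ with $q,q'\le q_0$ and $\min\{|S_{\bm a,q}|,|S_{\bm a',q'}|\}\ge t$ are $\rho$-separated in the torus $(\R/\Z)^R$ with $\rho\gg_C(q_0^{-\varepsilon}t)^{(\tilde{d}+1)/(\mathscr C-\varepsilon')}$ — here $\mathscr C>\varepsilon'$ guarantees a finite exponent — and a $\rho$-separated set in the unit torus has $\ll_R\rho^{-R}$ points, which is the claimed bound. For (iii), the case $q=1$ is immediate since $S_{\bm 0,1}=1$, so take $q\ge2$ and use periodicity with $P_1=P_2=q$, hence $b=1$ and $P=q^{d_1+d_2}$. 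Choosing $\Delta=\tfrac1{2(d_1+d_2)}$ — which lies in the admissible range $0<\Delta\le R(\tilde{d}+1)(d_1+d_2)^{-1}$ of Lemma~\ref{lem.sup_bound}, as $R(\tilde{d}+1)\ge1$ — one verifies from $q\ge2$ and $d_1+d_2\ge2$ that the reduced fraction $\bm a/q$ is not contained in any major arc and so lies in $\mathfrak m(\Delta)$; Lemma~\ref{lem.sup_bound} then yields $|S_{\bm a,q}|\ll q^{-(d_1+d_2)\Delta\delta_0+(d_1+d_2)\varepsilon}$. By linear independence of the $F_i$ we have the $n_i$-free lower bound $\delta_0\ge\bigl((\tilde{d}+1)2^{\tilde{d}}R\bigr)^{-1}$ established just after \eqref{eq.delta_0_definition}, so taking $\varepsilon$ small enough gives (iii) with, say, $\nu=\tfrac14\bigl((\tilde{d}+1)2^{\tilde{d}}R\bigr)^{-1}$.

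For (iv) it suffices to prove $\sum_{Q<q\le2Q}\sum_{(\bm a,q)=1}|S_{\bm a,q}|\ll_{C,\mathscr C}Q^{-\eta}$ for some fixed $\eta>0$: summing this over dyadic $Q=2^j$ gives absolute convergence of $\mathfrak S$, and restricting to $Q>P^\Delta$ gives $|\mathfrak S(P)-\mathfrak S|\ll P^{-\Delta\eta}$. Writing the dyadic sum as $\int_0^\infty\#\{\text{reduced }\bm a/q:Q<q\le2Q,\ |S_{\bm a,q}|>t\}\,dt$, part (iii) makes the integrand vanish once $t\gg Q^{-\nu}$, while part (ii) with $q_0=2Q$ bounds it by $\ll_C(Q^{-\varepsilon}t)^{-\kappa}$ with $\kappa=(\tilde{d}+1)R(\mathscr C-\varepsilon')^{-1}$. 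This is exactly where the hypothesis $\mathscr C>(\tilde{d}+1)R$ is used: it permits a choice of $\varepsilon'$ with $\kappa<1$, so that $\int_0^{CQ^{-\nu}}(Q^{-\varepsilon}t)^{-\kappa}\,dt\ll_\kappa Q^{\varepsilon\kappa}\,(Q^{-\nu})^{1-\kappa}=Q^{\varepsilon\kappa-\nu(1-\kappa)}$, which is $O(Q^{-\eta})$ with $\eta>0$ once $\varepsilon$ is small; note the integral converges at $t=0$ precisely because $\kappa<1$. The main obstacle, modest as it is, is this last balancing act — one needs (ii) to bound how often $|S_{\bm a,q}|$ is large and (iii) to cap its largest values, and the borderline condition $\mathscr C>(\tilde{d}+1)R$ is what makes the resulting level-set integral converge — while in (i) the only real subtlety is that, since we cannot send $N\to\infty$ without trivialising the left side of \eqref{eq.aux_ineq}, we are forced to work with the bounded modulus $N=qq'$ and check the three competing terms there by hand.
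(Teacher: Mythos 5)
Your proposal is correct and, in parts (ii)--(iv), follows the paper's own proof almost exactly: (ii) is the same separation-plus-packing argument, (iii) is the same reduction to the minor-arc bound of Lemma~\ref{lem.sup_bound} (your choice $\Delta = \tfrac{1}{2(d_1+d_2)}$ is admissible and the verification that a reduced $\bm{a}/q$ with $q\geq 2$ avoids $\mathfrak{M}(\Delta)$ goes through), and (iv) combines (ii) and (iii) just as the paper does, except that you organise the dyadic-in-$q$ block by a layer-cake integral in $t$ rather than a dyadic decomposition in the size of $\norm{S_{\bm{a},q}}$; the role of the hypothesis $\mathscr{C}>(\tilde{d}+1)R$ (making your $\kappa<1$, resp.\ the paper's geometric series converge) is identical. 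The genuine difference is in (i): the paper approximates $S(\bm{a}/q)$ by $S_{\bm{a},q}$ via Lemma~\ref{lem.approx_exp_sum_major_arc} with the box $[0,1]^{n_i}$, and must then take $P_1=P_2=(q+q')\nnorm{\bm{a}/q-\bm{a}'/q'}_\infty^{-(1+\mathscr{C})/(\tilde{d}+1)}$ so that the discretisation error $O((q+q')P_2^{-1})$ is dominated, the factor $P_1^{\varepsilon}$ then being the source of the $\varepsilon'$ loss; you instead exploit the exact identity $S(\bm{a}/q)=N^{n_1+n_2}S_{\bm{a},q}$ for half-open unit boxes with $N=qq'$ and check directly that the term $\nnorm{\bm{\beta}}_\infty^{1/(\tilde{d}+1)}$ realises the maximum, which avoids any error term and in fact gives (i) without the $\varepsilon'$ loss (the $(qq')^{\varepsilon}\leq(q+q')^{2\varepsilon}$ factor is harmless under the paper's convention on $\varepsilon$). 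Two small remarks: the half-open box is essential to your route, since with a closed box the boundary contribution is of relative size $N^{-1}$, which is not dominated by $\nnorm{\bm{\beta}}_\infty^{\mathscr{C}/(\tilde{d}+1)}$ once $\mathscr{C}>\tilde{d}+1$ --- this is exactly why the paper instead sends $P_1,P_2$ to a large size depending on $\nnorm{\bm{\beta}}_\infty$; and your claim $\nnorm{\bm{\beta}}_\infty\leq\tfrac12$ is not true in general (the entries of $\bm{a}/q$ lie in $(0,1]$), but your argument only uses $(qq')^{-1}\leq\nnorm{\bm{\beta}}_\infty\leq 1$, so nothing breaks.
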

\begin{proof}[Proof of \emph{(i)}]
	Take $\mathcal{B}_i = [0,1]^{n_i}$ so that $S_\infty(\bm{0}) = 1$. Therefore \eqref{eq.approx_S(alpha)_major_arcs} implies that 
	\begin{equation*}
		\frac{S\left( \frac{\bm{a}}{q} \right)}{P_1^{n_1}P_2^{n_2}} = S_{\bm{a},q} + O \left( q P_2^{-1} \right) \quad \text{and} \quad \frac{S\left( \frac{\bm{a}'}{q'} \right)}{P_1^{n_1}P_2^{n_2}} = S_{\bm{a}',q'} + O \left( q' P_2^{-1} \right).
	\end{equation*}
	Using this and the bound \eqref{eq.aux_ineq} we obtain
	\begin{multline} \label{eq.inequality_1}
		\min \left\{ \norm{S_{\bm{a},q}}, \norm{S_{\bm{a}',q'}} \right\} \leq  CP_1^\varepsilon P^{-\mathscr{C}} \nnorm{\frac{\bm{a}}{q} - \frac{\bm{a}'}{q'}}_\infty^{-\mathscr{C}} + \\
		CP_1^\varepsilon \nnorm{\frac{\bm{a}}{q} - \frac{\bm{a}'}{q'}}_\infty^{\frac{\mathscr{C}}{\tilde{d}+1}} + O \left( (q'+q)P_2^{-1} \right),
	\end{multline}
	where we note that $P_1^\varepsilon P_2^{-\mathscr{C}} = O(P_2^{-1})$ due to our assumptions on $\mathscr{C}$.
	Now set 
	\begin{equation*}
		P_1 = P_2 = (q+q') \nnorm{\frac{\bm{a}}{q} - \frac{\bm{a}'}{q'}}_\infty^{-\frac{1+\mathscr{C}}{\tilde{d}+1}}.
	\end{equation*}
	Note $(q+q') \geq 1$ and $ \nnorm{\frac{\bm{a}}{q} - \frac{\bm{a}'}{q'}}_\infty \leq 1$ so that this gives $P_i \geq 1$. Substituting these choices into \eqref{eq.inequality_1} we get
	\begin{multline*}
		\min \left\{ \norm{S_{\bm{a},q}}, \norm{S_{\bm{a}',q'}} \right\} \leq
		  P_1^{ \varepsilon} (q+q')^{-\mathscr{C}(d_1+d_2)} \nnorm{\frac{\bm{a}}{q} - \frac{\bm{a}'}{q'}}_\infty^{\frac{\mathscr{C}^2+\mathscr{C}}{\tilde{d}+1}(d_1+d_2)-\mathscr{C}} + \\
		  CP_1^{\varepsilon} \nnorm{\frac{\bm{a}}{q} - \frac{\bm{a}'}{q'}}_\infty^{\frac{\mathscr{C}}{\tilde{d}+1}} + O \left(\nnorm{\frac{\bm{a}}{q} - \frac{\bm{a}'}{q'}}_\infty^{\frac{1+\mathscr{C}}{(\tilde{d}+1)}} \right).
	\end{multline*} 
	Noting again that $(q+q') \geq 1$, $ \nnorm{\frac{\bm{a}}{q} - \frac{\bm{a}'}{q'}}_\infty \leq 1$ and also that $\frac{\mathscr{C}^2+\mathscr{C}}{\tilde{d}+1}(d_1+d_2)-\mathscr{C} \geq \frac{\mathscr{C}}{\tilde{d}+1}$ we see that the second term on the right hand side above dominates the expression. Hence we finally obtain
	\begin{equation*}
		\min \left\{ \norm{S_{\bm{a},q}}, \norm{S_{\bm{a}',q'}} \right\} \ll_C P_1^{ \varepsilon} \nnorm{\frac{\bm{a}}{q} - \frac{\bm{a}'}{q'}}_\infty^{\frac{\mathscr{C}}{\tilde{d}+1}} = (q'+q)^\varepsilon \nnorm{\frac{\bm{a}}{q} - \frac{\bm{a}'}{q'}}_\infty^{\frac{\mathscr{C}-\varepsilon'}{\tilde{d}+1}},
	\end{equation*}
	for some $\varepsilon' = O_{\mathscr{C}}(\varepsilon)$.
	\end{proof}
	
	\begin{proof}[Proof of \emph{(ii)}]
	This now follows almost directly from (i). The points in the set	 
	\begin{equation*}
		\left\{ \frac{\bm{a}}{q} \in [0,1]^R \cap \mathbb{Q}^R \colon q \leq q_0, \norm{S_{\bm{a},q}} \geq t \right\}
	\end{equation*}
	are separated by gaps of size $\gg_C (q_0^{-\varepsilon} t )^{\frac{\tilde{d}+1}{\mathscr{C}-\varepsilon'}}$. Hence at most $O_{C}((q_0^{-\varepsilon} t )^{-\frac{\tilde{d}+1}{\mathscr{C}-\varepsilon'}})$ fit in the box $[0,1]^R$ so the result follows.
	\end{proof}
	
	\begin{proof}[Proof of \emph{(iii)}]
	Setting $P_1=P_2=q$ and $\bm{\alpha} = \bm{a}/ q$ we find $S_{\bm{a},q} = q^{-n_1-n_2} S(\bm{\alpha})$. Let $\delta_0$ be defined as in \eqref{eq.delta_0_definition}. We can define $\Delta$ by $(d_1+d_2) \Delta = 1- \varepsilon''$ for some $\varepsilon'' \in (0,1)$. We claim that $\bm{a}/q$ does not lie in the major arcs $\mathfrak{M}(\Delta)$ if $(a_1, \hdots, a_r, q) = 1$. For if, then there exist $q', \bm{a}'$ such that 
	\begin{equation*}
		1 \leq q' \leq q^{(d_1+d_2) \Delta},
	\end{equation*}	
	and
	\begin{equation*}
		2 \norm{q'a_i - q a_i'} \leq q^{1-d_1-d_2}q^{(d_1+d_2) \Delta} < 1,
	\end{equation*}
	which is clearly impossible. The bound \eqref{eq.sup_bound} applied to our situation gives
	\begin{equation*}
		\norm{S_{\bm{a},q}} \ll q^{-R \delta_0(1-\varepsilon'')+\varepsilon}.
	\end{equation*}
	As the forms $F_i$ are linearly independent we know that $\delta_0 \geq \frac{1}{(\tilde{d}+1)2^{\tilde{d}}R}$. Thus, choosing some small enough $\varepsilon$ delivers the result.
	\end{proof}
	
	\begin{proof}[Proof of \emph{(iv)}]
	For $Q > 0$ let
	\begin{equation*}
		s(Q) = \sum_{\substack{\bm{a}/q \in [0,1)^R \\ Q < q \leq 2Q}} \norm{S_{\bm{a},q}},
	\end{equation*}	
	where $\sum_{\bm{a}/q \in [0,1)^R}$ is shorthand for the sum  $\sum_{q=1}^\infty \sum_{\substack{\nnorm{\bm{a}}_\infty \leq q}}$ such that $(a_1, \hdots, a_R,q) = 1$. We claim that $s(Q) \ll_{C, \mathscr{C}} Q^{-\delta_1}$ for some $\delta_1 > 0$. To see this, let $\ell \in \mathbb{Z}$. Then
	\begin{multline} \label{eq.s(Q)_first_est}
		s(Q) = \sum_{\substack{\bm{a}/q \in [0,1)^R \\ Q < q \leq 2Q \\ \norm{S_{\bm{a},q}} \geq 2^{-\ell}}} \norm{S_{\bm{a},q}} + \sum_{i = \ell}^\infty \sum_{\substack{\bm{a}/q \in [0,1)^R \\ Q < q \leq 2Q \\ 2^{-i} > \norm{S_{\bm{a},q}} \geq 2^{-i-1}}} \norm{S_{\bm{a},q}} \\
		\leq \# \left\{ \frac{\bm{a}}{q} \in [0,1)^R \cap \mathbb{Q}^R \colon q \leq 2Q, \norm{S_{\bm{a},q}} \geq 2^{-\ell} \right\} \cdot \sup_{q > Q} \norm{S_{\bm{a},q}} \\
		+ \sum_{i = \ell}^\infty \# \left\{ \frac{\bm{a}}{q} \in [0,1)^R \cap \mathbb{Q}^R \colon q \leq 2Q, \norm{S_{\bm{a},q}} \geq 2^{-i-1} \right\} \cdot 2^{-i}.
	\end{multline}
	Now from (ii) we know 
	\begin{equation*}
		\# \left\{ \frac{\bm{a}}{q} \in [0,1)^R \cap \mathbb{Q}^R \colon q \leq 2Q, \norm{S_{\bm{a},q}} \geq t \right\} \ll_C (Q^{-\varepsilon}t)^{-\frac{(\tilde{d}+1)R}{\mathscr{C}-\varepsilon'}},
	\end{equation*}
	and from (iii) we know, since $F_i$ are linearly independent there is some $\nu >0$ such that
	\begin{equation*}
		\sup_{q > Q} \norm{S_{\bm{a},q}} \ll Q^{-\nu}.
	\end{equation*}
	Using these estimates in \eqref{eq.s(Q)_first_est} we  get
	\begin{equation*}
		s(Q) \ll_C Q^{O_{\mathscr{C}}(\varepsilon) - \nu} 2^{\ell \frac{(\tilde{d}+1)R}{\mathscr{C}-\varepsilon'}} + Q^{O_{\mathscr{C}}(\varepsilon)} \sum_{i = \ell}^\infty 2^{(i+1) \left( \frac{(\tilde{d}+1)R}{\mathscr{C}-\varepsilon''} \right) - i}.
	\end{equation*}
	Since we assumed $\mathscr{C} > (\tilde{d}+1)R$ and since $\varepsilon'$ is small in terms of $\mathscr{C}$ we may also assume $\mathscr{C} > (\tilde{d}+1)R + \varepsilon'$. Therefore, summing the geometric expression gives
	\begin{equation*}
		s(Q) \ll_{C,\mathscr{C}} Q^{O_{\mathscr{C}}(\varepsilon)} 2^{\ell \frac{(\tilde{d}+1)R}{\mathscr{C}-\varepsilon'}} \left( Q^{-\nu}+ 2^{-\ell} \right).
	\end{equation*}
	Now choose $\ell = \lfloor \log_2 Q^{\nu} \rfloor$ to get
	\begin{equation*}
		s(Q) \ll Q^{\nu \frac{(\tilde{d}+1)R-\mathscr{C}}{ \mathscr{C}} + O_{\mathscr{C}}(\varepsilon)}.
	\end{equation*}
Letting $\varepsilon$ be small enough in terms of $\mathscr{C}, d_i$, $R$ we get some $\delta_1 > 0$ depending on $\mathscr{C}, d_i$ and $R$ such that
	\begin{equation*}
		s(Q) \ll Q^{-\delta_1},
	\end{equation*}
	which proves the claim.
	Finally using this and splitting the sum into dyadic intervals we find
	\begin{equation*}
		\norm{\mathfrak{S}(P)-\mathfrak{S}} \leq \sum_{\substack{\bm{a}/q \in [0,1)^R \\ q > P^\Delta}} \norm{S_{\bm{a},q}} = \sum_{k=0}^\infty\sum_{\substack{Q = 2^k P^\Delta}} s(Q) \ll \sum_{k=0}^\infty \left( 2^k P^\Delta \right)^{-\delta_1},
	\end{equation*}
	which proves (iv).
	\end{proof}
	
	The next Lemma handles the singular integral.
	
\begin{lemma} \label{lem.singular_integral}
	Let $\varepsilon > 0$ and assume that the bound \eqref{eq.aux_ineq} holds for some $C \geq 1$, $\mathscr{C} > 1+b\varepsilon$ and for all $\bm{\alpha}, \bm{\beta} \in \mathbb{R}^R$ and all real $P_1 \geq P_2 > 1$. Then:
	\begin{itemize}
		\item[(i)] For all $\bm{\gamma} \in \mathbb{R}^R$ we have
		\begin{equation*}
			S_\infty(\bm{\gamma}) \ll_C \nnorm{\bm{\gamma}}_\infty^{-\mathscr{C} + \varepsilon'},
		\end{equation*} 
		for some $\varepsilon >0$ such that $\varepsilon' = O_{\mathscr{C}}(\varepsilon)$. 
		\item[(ii)] Assume that $\mathscr{C}-\varepsilon' > R$. Then for all $P_1, P_2 > 1$ we have
		\begin{equation*}
			\norm{\mathfrak{I}(P) - \mathfrak{I}} \ll_{\mathscr{C},C,\varepsilon'} P^{- \Delta (\mathscr{C}-\varepsilon'-R)},
		\end{equation*}
		where $\mathfrak{I}$ is the \emph{singular integral}
		\begin{equation} \label{eq.def_singular_integral}
			\mathfrak{I} = \int_{\bm{\gamma} \in \mathbb{R}^R} S_\infty(\bm{\gamma}) d \bm{\gamma}.
		\end{equation}
		In particular we see that $\mathfrak{I}$ exists and converges absolutely.
	\end{itemize}
\end{lemma}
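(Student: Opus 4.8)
The plan is to establish (i) first and then obtain (ii) by integrating the pointwise bound from (i). For (i) one may assume $\nnorm{\bm{\gamma}}_\infty$ is large, since otherwise the trivial estimate $\norm{S_\infty(\bm{\gamma})} \le \vol(\mathcal{B}_1 \times \mathcal{B}_2) \le 1$ already gives the claim; and if $\vol(\mathcal{B}_1 \times \mathcal{B}_2) = 0$ then $S_\infty \equiv 0$, so one may also assume $\vol(\mathcal{B}_i) > 0$. The idea is to interpret $S_\infty(\bm{\gamma})$ as a normalised exponential sum and feed it into the auxiliary inequality \eqref{eq.aux_ineq}. I set $T = \nnorm{\bm{\gamma}}_\infty^{c}$ for a constant $c = c(\mathscr{C}, d_i)$ to be chosen large, take $P_1 = P_2 = T$ (so $P = T^{d_1+d_2}$ and $b = 1$), and put $\bm{\beta}_0 = \bm{\gamma}/P$, so that $P\bm{\beta}_0 = \bm{\gamma}$ and $\nnorm{\bm{\beta}_0}_\infty = \nnorm{\bm{\gamma}}_\infty^{1-c(d_1+d_2)}$ is tiny. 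Lemma~\ref{lem.approx_exp_sum_major_arc} applied with $q = 1$ and $\bm{a} = \bm{0}$ (its hypotheses being met once $c$ is large enough in terms of $\Delta$ and $d_i$) then gives
\[
S_\infty(\bm{\gamma}) = \frac{S(\bm{\beta}_0)}{T^{n_1+n_2}} + O\!\left( \frac{1 + \nnorm{\bm{\gamma}}_\infty}{T} \right) = \frac{S(\bm{\beta}_0)}{T^{n_1+n_2}} + O\!\left( \nnorm{\bm{\gamma}}_\infty^{1-c} \right),
\]
so it remains to bound $\norm{S(\bm{\beta}_0)}/T^{n_1+n_2}$.

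To this end I apply \eqref{eq.aux_ineq} with the pair of points $\bm{0}$ and $\bm{\beta}_0$ and with $P_1 = P_2 = T$. Using $d_1 + d_2 = \tilde{d}+2$ and $c \ge 1$, a short computation shows that each of $T^{-1}$, $T^{-d_1-d_2}\nnorm{\bm{\beta}_0}_\infty^{-1}$ and $\nnorm{\bm{\beta}_0}_\infty^{1/(\tilde{d}+1)}$ is at most $\nnorm{\bm{\gamma}}_\infty^{-1}$, the middle term being equal to it, so the right-hand side of \eqref{eq.aux_ineq} is $\ll_C \nnorm{\bm{\gamma}}_\infty^{-\mathscr{C}}$. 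On the left-hand side $S(\bm{0}) = \#(T\mathcal{B}_1 \cap \Z^{n_1}) \cdot \#(T\mathcal{B}_2 \cap \Z^{n_2}) \gg_{\mathcal{B}} T^{n_1+n_2}$, because $\vol(\mathcal{B}_i) > 0$ and $T$ is large; hence if the minimum in \eqref{eq.aux_ineq} were attained at $S(\bm{0})$ we would deduce $\nnorm{\bm{\gamma}}_\infty^{\mathscr{C} - c\varepsilon} \ll_{C,\mathcal{B}} 1$, which fails once $\nnorm{\bm{\gamma}}_\infty$ is large (after taking $\varepsilon$ sufficiently small). Therefore the minimum is $\norm{S(\bm{\beta}_0)}$, whence $\norm{S(\bm{\beta}_0)}/T^{n_1+n_2} \ll_C T^{\varepsilon}\nnorm{\bm{\gamma}}_\infty^{-\mathscr{C}} = \nnorm{\bm{\gamma}}_\infty^{c\varepsilon - \mathscr{C}}$. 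Combining this with the displayed identity and choosing $c$ large enough that $1 - c \le -\mathscr{C}$ yields $S_\infty(\bm{\gamma}) \ll_C \nnorm{\bm{\gamma}}_\infty^{-\mathscr{C} + \varepsilon'}$ with $\varepsilon' = c\varepsilon = O_{\mathscr{C}, d_i}(\varepsilon)$, which is (i).

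Part (ii) is then immediate. By (i),
\[
\norm{\mathfrak{I}(P) - \mathfrak{I}} \le \int_{\nnorm{\bm{\gamma}}_\infty > P^\Delta} \norm{S_\infty(\bm{\gamma})} \, d\bm{\gamma} \ll_C \int_{\nnorm{\bm{\gamma}}_\infty > P^\Delta} \nnorm{\bm{\gamma}}_\infty^{-\mathscr{C} + \varepsilon'} \, d\bm{\gamma},
\]
and splitting the region of integration into dyadic shells $2^j P^\Delta < \nnorm{\bm{\gamma}}_\infty \le 2^{j+1}P^\Delta$, each of measure $\ll (2^j P^\Delta)^R$, bounds the right-hand side by $\ll \sum_{j \ge 0} (2^j P^\Delta)^{R - \mathscr{C} + \varepsilon'} \ll P^{-\Delta(\mathscr{C} - \varepsilon' - R)}$, the geometric series converging precisely because $\mathscr{C} - \varepsilon' > R$. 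The same dyadic estimate over $\nnorm{\bm{\gamma}}_\infty > 1$, together with the trivial bound $\norm{S_\infty} \le 1$ on the bounded region, shows $\int_{\R^R}\norm{S_\infty(\bm{\gamma})}\,d\bm{\gamma} < \infty$, so $\mathfrak{I}$ converges absolutely.

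The crux is (i). The only delicate point is the choice of the auxiliary scale $T = \nnorm{\bm{\gamma}}_\infty^{c}$: the exponent $c$ must be taken large enough to simultaneously (a) make the approximation error $O(\nnorm{\bm{\gamma}}_\infty^{1-c})$ negligible against $\nnorm{\bm{\gamma}}_\infty^{-\mathscr{C}}$, (b) satisfy the hypotheses of Lemma~\ref{lem.approx_exp_sum_major_arc}, and (c) keep $T^{-d_1-d_2}\nnorm{\bm{\beta}_0}_\infty^{-1}$ as the dominant of the three terms on the right of \eqref{eq.aux_ineq}; after that, the one genuinely new observation is that the minimum in \eqref{eq.aux_ineq} cannot be carried by $\norm{S(\bm{0})}$, which is forced to be large — and this is exactly where positivity of $\vol(\mathcal{B}_i)$ enters.
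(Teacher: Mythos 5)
Your proof is correct and follows essentially the same route as the paper: approximate $S_\infty(\bm{\gamma})$ by $S(\bm{\gamma}/P)/(P_1^{n_1}P_2^{n_2})$ via Lemma~\ref{lem.approx_exp_sum_major_arc} with $q=1$, $\bm{a}=\bm{0}$, apply the auxiliary inequality \eqref{eq.aux_ineq} at the pair $(\bm{0},\bm{\gamma}/P)$ with $P_1=P_2$ a fixed power of $\nnorm{\bm{\gamma}}_\infty$, and then integrate the resulting pointwise bound to get (ii). The only real difference is how the minimum in \eqref{eq.aux_ineq} is resolved: the paper simply notes $\norm{S(\bm{\beta})}\le S(\bm{0})$ by the triangle inequality, whereas you argue by contradiction from $S(\bm{0})\gg_{\mathcal{B}}T^{n_1+n_2}$ --- valid, but it forces the case split on $\vol(\mathcal{B}_i)$ and makes the implied constant depend on the boxes, which the simpler observation avoids.
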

\begin{proof}[Proof of \emph{(i)}]
	It is easy to see that for all $\bm{\beta} \in \mathbb{R}^R$ we have $\norm{S(\bm{\beta})} \leq \norm{S(\bm{0})}$. Thus applying \eqref{eq.aux_ineq} with $\bm{\alpha} = \bm{0}$ and $\bm{\beta} = P^{-1}\bm{\gamma}$ we get
	\begin{equation} \label{eq.used_1}
		\norm{S(P^{-1} \bm{\gamma})} \leq CP_1^{n_1}P_2^{n_2} P_1^\varepsilon \max \left\{P_2^{-1} \nnorm{\bm{\gamma}}_\infty^{-1}, P^{-\frac{1}{\tilde{d}+1}} \nnorm{\bm{\gamma}}^{\frac{1}{\tilde{d}+1}} \right\}^{\mathscr{C}}.
	\end{equation}
	Now from \eqref{eq.approx_S(alpha)_major_arcs} with $\bm{a} = \bm{0}$ and $\bm{\beta} = P^{-1} \bm{\gamma}$ we have
	\begin{equation} \label{eq.used_2}
		S(P^{-1} \bm{\gamma}) = P_1^{n_1}P_2^{n_2} S_\infty(\bm{\gamma}) + O \left( P_1^{n_1}P_2^{n_2-1}(1+\nnorm{\bm{\gamma}}_\infty) \right),
	\end{equation}
	where we used as in the proof of part (i) Lemma~\ref{lem.singular_series} that $P_1^\varepsilon P_2^{-\mathscr{C}} \leq P_2^{-1}$ due to our assumptions on $\mathscr{C}$.
	Combining \eqref{eq.used_1} and \eqref{eq.used_2} we obtain
	\begin{equation*}
		S_\infty(\bm{\gamma}) \ll_C P_1^\varepsilon \max \left\{ \nnorm{\bm{\gamma}}_\infty^{-1}, P^{-\frac{1}{\tilde{d}+1}} \nnorm{\bm{\gamma}}_\infty^{\frac{1}{\tilde{d}+1}} \right\}^{\mathscr{C}} + P_2^{-1} + \nnorm{\bm{\gamma}}_\infty P_2^{-1}.
	\end{equation*}
	Taking $P_1=P_2 = \max\{1, \nnorm{\bm{\gamma}}_\infty^{1+\mathscr{C}}\}$ gives the result.
\end{proof}
\begin{proof}[Proof of \emph{(ii)}]
For this simply note that by part (i) we get
\begin{equation*}
	\norm{\mathfrak{I}(P) - \mathfrak{I}} = \int_{\nnorm{\bm{\gamma}}_\infty \geq P^\Delta} S_\infty(\bm{\gamma}) d \bm{\gamma} \ll_{\mathscr{C},C,\varepsilon'} \int_{\nnorm{\bm{\gamma}}_\infty \geq P^\Delta} \nnorm{\bm{\gamma}}_\infty^{-\mathscr{C}-\varepsilon'} d\bm{\gamma} \ll P^{-\Delta(\mathscr{C}-\varepsilon'-R)},
\end{equation*}
where the last estimate follows since we assumed $\mathscr{C}-\varepsilon' > R$.
\end{proof}
Before we finish the proof of the main result we state two different expressions for the singular series and the singular integral that will be useful later on. If $\mathscr{C} > R(d_1+d_2)$ then $\mathfrak{I}$ and $\mathfrak{S}$ converge absolutely, as was shown in the previous two Lemmas. Therefore, as in \S7 of~\cite{birch_forms}, by regarding the bihomogeneous forms under investigation simply as homogeneous forms we may express the singular series as an absolutely convergent product
\begin{equation} \label{eq.sing_series_p_adic_expression}
\mathfrak{S} = \prod_p \mathfrak{S}_p,
\end{equation}
where 
\[
\mathfrak{S}_p = \lim_{k \rightarrow \infty} \frac{1}{p^{k(n_1+n_2-R)}} \# \left\{ (\bm{u}, \bm{v}) \in \{1, \hdots, p^k \}^{n_1+n_2} \colon F_i(\bm{u}, \bm{v}) \equiv 0 \; (\mathrm{mod} \, p), i = 1, \hdots, R \right\}.
\]
Lemma 2.6 in~\cite{myerson_quadratic} further shows that we can write the singular integral as
\begin{equation} \label{eq.sing_series_real_density}
\mathfrak{I} = \lim_{P \rightarrow \infty} \frac{1}{P^{n_1+n_2-(d_1+d_2)R}} \mu \big\{ (\bm{t}_1, \bm{t}_2)/P \in \mathcal{B}_1 \times \mathcal{B}_2 \colon \\
 \norm{F_i(\bm{t}_1, \bm{t}_2)} \leq 1/2, \; i = 1, \hdots, R \big\},
\end{equation}
where $\mu( \cdot )$ denotes the Lebesgue measure. We may therefore interpret the quantities $\mathfrak{I}$ and $\mathfrak{S}_p$ as the real and $p$-adic \emph{densities}, respectively, of the system of equations $F_1(\bm{x}, \bm{y}) = \cdots = F_R(\bm{x}, \bm{y}) = 0$. 
\subsection{Proofs of Proposition~\ref{prop.main_prop} and Theorem~\ref{thm.n_aux_imply_result}}
\begin{proof}[Proof of Proposition \ref{prop.main_prop}]
	From Proposition \ref{prop.minor_arcs_estimate}, the estimate \eqref{eq.integral_major_arcs}, Lemma \ref{lem.singular_series} and Lemma \ref{lem.singular_integral}, for any $\varepsilon > 0$ we find
	\begin{equation*}
		\frac{N(P_1,P_2)}{P_1^{n_1}P_2^{n_2}P^{-R}} - \mathfrak{S} \mathfrak{I}  \ll 
		 P^{-\Delta \delta_1} + P^{-\Delta \delta_0(1-(d_1+d_2)R/\mathscr{C} ) +  \varepsilon } +  P^{(2R+3) \Delta - 1/(bd_1+d_2)} +  P^{-\Delta(\mathscr{C}-\varepsilon'-R)}.
	\end{equation*}
	for some $\delta_1 > 0$ and some $1> \varepsilon' >0$.
	Recall we assumed $\mathscr{C} > (bd_1+d_2)R$ and assuming the forms $F_i$ are linearly independent we also have $\delta_0 \geq \frac{1}{(\tilde{d}+1)2^{\tilde{d}}R}$. Therefore choosing suitably small $\Delta > 0$ there exists some $\delta > 0$ such that 
	\begin{equation*}
		\frac{N(P_1,P_2)}{P_1^{n_1}P_2^{n_2}P^{-R}} - \mathfrak{S} \mathfrak{I}  \ll P^{-\delta}
	\end{equation*}
	as desired. Finally, since we assume that the equations $F_i$ define a complete intersection, it is a standard fact to see that $\mathfrak{S}$ is positive if there exists a non-singular $p$-adic zero for all primes $P$, and similarly $\mathfrak{I}$ is positive if there exists a non-singular real zero within $\mathcal{B}_1 \times \mathcal{B}_2$. A detailed argument of this fact using a version of Hensel's Lemma for $\mathfrak{S}$ and the implicit function theorem for $\mathfrak{I}$ can be found for example in \S4 of~\cite{myerson_quadratic}. 
	\end{proof}
We finish this section by deducing the technical main theorem, namely Theorem~\ref{thm.n_aux_imply_result}.
\begin{proof}[Proof of Theorem~\ref{thm.n_aux_imply_result}]
	Assume the estimate in~\eqref{eq.N_aux_general_condition} holds for some constant $C_0>0$. From Proposition~\ref{prop.auxiliary_ineq_from_counting_function} it thus follows that the auxiliary inequality~\eqref{eq.aux_ineq} holds with a constant $C > 0$ depending on $C_0$, $d_i$, $n_i$, $\mu$ and $M$, where all of these quantities follow the same notation as in Section~\ref{sec.weyl_differencing_etc}. Therefore the assumptions of Proposition~\ref{prop.main_prop} so we can apply it to obtain the desired conclusions.
\end{proof}

\section{Systems of bilinear forms} \label{sec.bilinear_proof}
In this section we assume $d_1=d_2=1$. Then we can write our system as
\begin{equation*}
	F_i(\bm{x},\bm{y}) = \bm{y}^T A_i \bm{x},
\end{equation*}
where $A_i$ are $n_2 \times n_1$-dimensional matrices with integer entries. For $\bm{\beta} \in \mathbb{R}^R$ we now have 
\begin{equation*}
	\bm{\beta} \cdot \bm{F} = \bm{y}^T A_{\bm{\beta}} \bm{x},
\end{equation*}
where $A_{\bm{\beta}} = \sum_i \beta_i A_i$. Recall that we put 
\begin{equation*}
	\sigma_{\mathbb{R}}^{(1)} = \max_{\bm{\beta} \in \mathbb{R}^R \setminus \{0 \} } \dim \ker ( A_{\bm{\beta}}) \quad \text{and} \quad \sigma_{\mathbb{R}}^{(2)} = \max_{\bm{\beta} \in \mathbb{R}^R \setminus \{0 \} } \dim \ker ( A_{\bm{\beta}}^T).
\end{equation*}
Since the row rank of a matrix is equal to its column rank we can also define
\begin{equation*}
	\rho_{\mathbb{R}} \coloneqq \min_{\bm{\beta} \in \mathbb{R}^R \setminus \{ 0\}} \mathrm{rank} (A_{\bm{\beta}}) = \min_{\bm{\beta} \in \mathbb{R}^R \setminus \{ 0\}} \mathrm{rank} (A_{\bm{\beta}}^T).
\end{equation*}
Due to the rank-nullity theorem the conditions
\[
n_i-\sigma_\R^{(i)} > (2b+2)R
\]
for $i=1,2$ are equivalent to 
\begin{equation*} \label{eq.condition_rho}
	\rho_{\mathbb{R}} > (2b+2)R.
\end{equation*}

\begin{lemma} \label{lem.bilinear_smooth_complete_assumptions}
	Assume that $\mathbb{V}(F_1, \hdots, F_R) \subset \mathbb{P}_{\mathbb{C}}^{n_1-1} \times \mathbb{P}_{\mathbb{C}}^{n_2-1}$ is a smooth complete intersection. Let $b \geq 1$ be a real number. Assume further
	\begin{equation} \label{eq.n_i_assumption_bilinear_lemma}
		\min\{n_1,n_2\} > (2b+2)R, \quad \text{and} \quad n_1+n_2 > (4b+5)R.
	\end{equation}
	 Then we have
	\begin{equation} \label{eq.n_i_assumption_lemma}
		n_i - \sigma_{\mathbb{R}}^{(i)} > (2b+2)R
	\end{equation} 
	for $i = 1,2$.
\end{lemma}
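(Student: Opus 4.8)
The plan is to reduce the assertion to the single inequality $\rho_{\mathbb{R}} > (2b+2)R$, i.e.\ \eqref{eq.condition_rho}, which as recorded just above the statement is equivalent to the two inequalities \eqref{eq.n_i_assumption_lemma} via the rank--nullity theorem (here one uses that the rank of the integral matrix $A_{\bm\beta}$ is the same whether computed over $\mathbb{R}$ or over $\mathbb{C}$, so that $\sigma_{\mathbb{R}}^{(i)} = n_i - \rho_{\mathbb{R}}$). Thus it suffices to show $\mathrm{rank}(A_{\bm\beta}) > (2b+2)R$ for every $\bm\beta \in \mathbb{R}^R\setminus\{\bm 0\}$.

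Fix such a $\bm\beta$ and write $\bm\beta\cdot\bm F = \bm y^T A_{\bm\beta}\bm x$. Since $\partial(\bm\beta\cdot\bm F)/\partial x_j = (A_{\bm\beta}^T\bm y)_j$ and $\partial(\bm\beta\cdot\bm F)/\partial y_i = (A_{\bm\beta}\bm x)_i$, the description of the singular locus used in the proof of Lemma~\ref{lem.sing_bf_small} shows that
\begin{equation*}
	\mathrm{Sing}\,\mathbb{V}(\bm\beta\cdot\bm F) = \mathbb{P}(\ker A_{\bm\beta}) \times \mathbb{P}(\ker A_{\bm\beta}^T) \subseteq \mathbb{P}_{\mathbb{C}}^{n_1-1}\times\mathbb{P}_{\mathbb{C}}^{n_2-1}.
\end{equation*}
Now I would distinguish two cases. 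If $A_{\bm\beta}$ has maximal rank, i.e.\ $\mathrm{rank}(A_{\bm\beta}) = \min\{n_1,n_2\}$, then $\mathrm{rank}(A_{\bm\beta}) = \min\{n_1,n_2\} > (2b+2)R$ directly from the first inequality in \eqref{eq.n_i_assumption_bilinear_lemma}. Otherwise $\mathrm{rank}(A_{\bm\beta}) < \min\{n_1,n_2\}$, so both $\ker A_{\bm\beta}$ and $\ker A_{\bm\beta}^T$ are nonzero, hence both factors above are nonempty projective spaces and
\begin{equation*}
	\dim\mathrm{Sing}\,\mathbb{V}(\bm\beta\cdot\bm F) = \bigl(n_1 - \mathrm{rank}(A_{\bm\beta}) - 1\bigr) + \bigl(n_2 - \mathrm{rank}(A_{\bm\beta}) - 1\bigr) = n_1 + n_2 - 2\,\mathrm{rank}(A_{\bm\beta}) - 2 .
\end{equation*}
Since $\mathbb{V}(\bm F)$ is a smooth complete intersection of bidegree $(1,1)$, Lemma~\ref{lem.sing_bf_small} gives $\dim\mathrm{Sing}\,\mathbb{V}(\bm\beta\cdot\bm F) \leq R-2$, and therefore $\mathrm{rank}(A_{\bm\beta}) \geq (n_1+n_2-R)/2$. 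The second inequality in \eqref{eq.n_i_assumption_bilinear_lemma}, namely $n_1+n_2 > (4b+5)R$, is precisely equivalent to $(n_1+n_2-R)/2 > (2b+2)R$, which settles this case.

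In either case $\mathrm{rank}(A_{\bm\beta}) > (2b+2)R$, and taking the minimum over $\bm\beta \neq \bm 0$ yields $\rho_{\mathbb{R}} > (2b+2)R$, as wanted. I do not expect a genuine obstacle here: the whole argument is an application of Lemma~\ref{lem.sing_bf_small} to the explicit singular locus of a bilinear form. The one point requiring a little care is the bookkeeping with empty sets and the convention $\dim\emptyset = -1$, which is exactly why the full-rank case — where $\mathrm{Sing}\,\mathbb{V}(\bm\beta\cdot\bm F)$ is empty and the additive dimension formula for the product would misbehave — is peeled off first and handled directly from the hypothesis $\min\{n_1,n_2\} > (2b+2)R$.
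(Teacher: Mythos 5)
Your proof is correct and follows essentially the same route as the paper: identify $\mathrm{Sing}\,\mathbb{V}(\bm\beta\cdot\bm F)$ as the product of the projectivised kernels of $A_{\bm\beta}$ and $A_{\bm\beta}^T$, bound its dimension by $R-2$ via Lemma~\ref{lem.sing_bf_small}, and combine with the two numerical hypotheses, peeling off the degenerate case (your full-rank case, the paper's $\sigma_{\mathbb{R}}^{(2)}=0$ case) where the product formula would misbehave. The only difference is cosmetic: the paper fixes the $\bm\beta$ realising $\rho_{\mathbb{R}}$ after a WLOG $n_1\geq n_2$, whereas you argue for each $\bm\beta$ separately, which amounts to the same thing.
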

\begin{proof}
	Without loss of generality assume $n_1 \geq n_2$. Pick $\bm{\beta} \in \mathbb{R}^R\setminus \{\bm{0}\}$ such that $\mathrm{rank} (A_{\bm{\beta}}) = \rho_{\mathbb{R}}$. In particular then 
	\begin{equation*}
		\dim \ker (A_{\bm{\beta}}) = \sigma_{\mathbb{R}}^{(1)}, \quad \text{and} \quad \dim \ker (A_{\bm{\beta}}^T) = \sigma_{\mathbb{R}}^{(2)}.
	\end{equation*}
	We proceed in distinguishing two cases. Firstly, if $\sigma_{\mathbb{R}}^{(2)} = 0$ then~\eqref{eq.n_i_assumption_lemma} follows for $i = 2$ by the assumption~\eqref{eq.n_i_assumption_bilinear_lemma}. Further by comparing row rank and column rank of $A_{\bm{\beta}}$ in this case we must then have $\sigma_{\mathbb{R}}^{(1)} \leq n_1-n_2$, and therefore
	\begin{equation*}
		n_1 - \sigma_{\mathbb{R}}^{(1)} \geq n_2 > (2b+2)R,
	\end{equation*}
	so~\eqref{eq.n_i_assumption_lemma} follows for $i=1$.
	
	Now we turn to the case $\sigma_{\mathbb{R}}^{(2)} > 0$. Then also $\sigma_{\mathbb{R}}^{(1)} > 0$. The singular locus of the variety $ \mathbb{V}(\bm{\beta} \cdot \bm{F}) \subset \mathbb{P}_{\mathbb{C}}^{n_1-1} \times \mathbb{P}_{\mathbb{C}}^{n_2-1}$ is given by
	\begin{equation*}
		\mathrm{Sing} \mathbb{V}(\bm{\beta} \cdot \bm{F}) = \mathbb{V}(\bm{y}^T A_{\bm{\beta}}) \cap \mathbb{V} (A_{\bm{\beta}} \bm{x}).
	\end{equation*}
	Therefore we have 
	\begin{equation*}
		\dim \mathrm{Sing} \mathbb{V}(\bm{\beta} \cdot \bm{F}) = \sigma_{\mathbb{R}}^{(1)} + \sigma_{\mathbb{R}}^{(2)}-2.
	\end{equation*}
 	Since we assumed $\mathbb{V}(\bm{F})$ to be a smooth complete intersection we can apply Lemma~\ref{lem.sing_bf_small} to get $\dim \mathrm{Sing} \mathbb{V}(\bm{\beta} \cdot \bm{F}) \leq R-2$. Therefore we find
 	\begin{equation*}
 		\sigma_{\mathbb{R}}^{(1)} + \sigma_{\mathbb{R}}^{(2)} \leq R.
 	\end{equation*}
 	From our previous remarks we know that showing~\eqref{eq.n_i_assumption_lemma} is equivalent to showing $\rho_{\mathbb{R}} > (2b+2)R$. But now
 	\begin{equation*}
 		\rho_{\mathbb{R}} = \frac{1}{2} \left(n_1+n_2 - \sigma_{\mathbb{R}}^{(1)} - \sigma_{\mathbb{R}}^{(2)} \right) \geq \frac{1}{2} (n_1+n_2 - R) > (2b+2)R,
 	\end{equation*}
 	where the last inequality followed from the assumption~\eqref{eq.n_i_assumption_bilinear_lemma}. Therefore~\eqref{eq.n_i_assumption_lemma} follows as desired.	
\end{proof}

\begin{proof}[Proof of Theorem~\ref{thm.bilinear}]
Recall the notation $b = \frac{\log P_1}{\log P_2}$. By virtue of Theorem~\ref{thm.n_aux_imply_result} it suffices to show that assuming
\begin{equation*}
	n_i - \sigma_{\mathbb{R}}^{(i)} > (2b+2)R
\end{equation*}
for $i=1,2$ implies~\eqref{eq.N_aux_general_condition}. We will show~\eqref{eq.N_aux_general_condition} for $i=1$, the other case follows analogously. Let $\mathscr{C}  = \frac{n_2-\sigma_{\mathbb{R}}^{(2)}}{2}$ and we note that we have $\mathscr{C} > (bd_1+d_2)R = (b+1)R$ precisely when $n_2 - \sigma_{\mathbb{R}}^{(2)} > (2b+2)R$ holds. Therefore it suffices to show that
\begin{equation} \label{eq.N_1_small_sigma_2}
	N_1^{\mathrm{aux}}(\bm{\beta},B) \ll B^{\sigma_{\mathbb{R}}^{(2)}}.
\end{equation}
for all $\bm{\beta} \in \mathbb{R}^R \setminus \{ \bm{0}\}$ with the implied constant not depending on $\bm{\beta}$. In our case we have 
\begin{equation*}
\bm{\Gamma}(\bm{u}) = \bm{u}^T A(\bm{\beta}),
\end{equation*}
where $\bm{u} \in \mathbb{Z}^{n_2}$. Therefore $N_1^{\mathrm{aux}}(\bm{\beta},B)$ counts vectors $\bm{u} \in \mathbb{Z}^{n_2}$ such that
\begin{equation*}
	\nnorm{\bm{u}}_\infty \leq B \quad \text{and} \quad \nnorm{\bm{u}^T A(\bm{\beta})}_\infty \leq \nnorm{A(\bm{\beta})}_\infty = \nnorm{\bm{\beta} \cdot \bm{F}}_\infty.
\end{equation*}
In particular, all of the vectors $\bm{u} \in \mathbb{Z}^{n_2}$, which are counted by $N_1^{\mathrm{aux}}(\bm{\beta},B)$ are contained in the ellipsoid
\begin{equation*}
	E_{\bm{\beta}} \coloneqq \left\{ \bm{t} \in \mathbb{R}^{n_2} \colon \bm{t}^T A_{\bm{\beta}} A_{\bm{\beta}}^T \bm{t} < n_2\nnorm{\bm{\beta} \cdot \bm{F}}_\infty^2 \right\}.
\end{equation*}
The principal radii of $E_{\bm{\beta}}$ are given by $\norm{\lambda_i}^{-1} n_2^{1/2} \nnorm{\bm{\beta} \cdot \bm{F}}_\infty$ for $i = 1, \hdots, n_2$, where $\lambda_i$ run through the $n_2$ singular values of $A_{\bm{\beta}}$ and are listed in increasing order of absolute value. Thus we find
\begin{equation*}
	N_1^{\mathrm{aux}}(\bm{\beta},B) \ll \prod_{i = 1}^{n_2} \min \left\{ \norm{\lambda_i}^{-1} \nnorm{\bm{\beta} \cdot \bm{F}}_\infty +1,B \right\}.
\end{equation*}
If $\left\vert\lambda_{\sigma_{\mathbb{R}}^{(2)}+1} \right\vert \gg \nnorm{\bm{\beta} \cdot \bm{F}}_\infty$ holds then~\eqref{eq.N_1_small_sigma_2} would follow. So suppose for a contradiction that there exists a sequence $(\bm{\beta}^{(i)})$ such that $\norm{\lambda_{\sigma_{\mathbb{R}}^{(2)}+1}} = o\left(\nnorm{\bm{\beta}^{(i)} \cdot \bm{F}}_\infty\right)$. Let $\bm{\beta}$ be the limit of a subsequence of $\bm{\beta}^{(i)}/\nnorm{\bm{\beta}^{(i)}}$, which must exist by the Bolzano--Weierstrass theorem. For this $\bm{\beta}$ we must then have $\lambda_{\sigma_{\mathbb{R}}^{(2)}+1} = 0$. Since the singular values were listed in order of increasing absolute value it follows that
\begin{equation*}
	\lambda_1 = \cdots = \lambda_{\sigma_{\mathbb{R}}^{(2)}+1} = 0,
\end{equation*}
and so $\dim \ker A_{\bm{\beta}}^T = \sigma_{\mathbb{R}}^{(2)}+1$. This contradicts the maximality of $\sigma_{\mathbb{R}}^{(2)}+1$.

The second part of the theorem is now a direct consequence of Lemma~\ref{lem.bilinear_smooth_complete_assumptions}.
\end{proof}

\section{Systems of forms of bidegree $(2,1)$} \label{sec.proof.2-1}
We consider a system $\bm{F}(\bm{x},\bm{y})$ of homogeneous equations of bidegree $(2,1)$, where $\bm{x} = (x_1, \hdots, x_{n_1})$ and $\bm{y} = (y_1, \hdots, y_{n_2})$.  We will first assume $n_1=n_2 = n$, say, and then deduce Theorem~\ref{thm.2,1_different_dimensions} afterwards. Therefore the initial main goal is to establish the following.

\begin{proposition} \label{thm.2,1}
	Let $F_1(\bm{x},\bm{y}), \hdots, F_R(\bm{x},\bm{y})$ be bihomogeneous forms of bidegree $(2,1)$ such that the biprojective variety $\mathbb{V}(F_1, \hdots, F_R) \subset \mathbb{P}^{n-1}_{\mathbb{Q}} \times \mathbb{P}^{n-1}_{\mathbb{Q}}$ is a complete intersection.  Write $b = \max \{\log P_1/\log P_2, 1 \}$ and $u = \max \{\log P_2/\log P_1, 1 \}$
	Assume that 
	\begin{equation} \label{eq.assumption_n_i_(2,1)}
		n - s_{\mathbb{R}}^{(i)} > (8b+4u)R
	\end{equation}
	holds for $i=1,2$, where $s_{\mathbb{R}}^{(i)}$ are as defined in~\eqref{eq.defn_s_1} and~\eqref{eq.defn_s_2}. Then there exists some $\delta > 0$ depending at most on $\bm{F}$, $R$, $n$, $b$ and $u$ such that we have
	\begin{equation*}
		N(P_1,P_2) = \sigma P_1^{n-2R} P_2^{n-R} + O(P_1^{n-2R} P_2^{n-R} \min\{ P_1, P_2\}^{-\delta})
	\end{equation*}
 where $\sigma > 0$ if the system $\bm{F}(\bm{x},\bm{y}) = \bm{0}$ has a smooth $p$-adic zero for all primes $p$ and a smooth real zero in $\mathcal{B}_1 \times \mathcal{B}_2$.
	
	If we assume that $\mathbb{V}(F_1, \hdots, F_R) \subset \mathbb{P}^{n-1}_{\mathbb{Q}} \times \mathbb{P}_{\mathbb{Q}}^{n-1}$ is smooth, then the same conclusions hold if we assume
	\begin{equation*}
		n > (16b+8u+1)R
	\end{equation*}
	instead of \eqref{eq.assumption_n_i_(2,1)}.
\end{proposition}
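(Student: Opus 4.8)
The plan is to derive Proposition~\ref{thm.2,1} from the technical Theorem~\ref{thm.n_aux_imply_result}. Since $b,u\geq 1$ and $s_{\mathbb{R}}^{(1)},s_{\mathbb{R}}^{(2)}\geq 0$, the hypothesis $n-s_{\mathbb{R}}^{(i)}>(8b+4u)R$ already forces $n>(8b+4u)R\geq 12R>3R$, so the requirement $n_1,n_2>(d_1+d_2)R$ of Theorem~\ref{thm.n_aux_imply_result} is satisfied. With $(d_1,d_2)=(2,1)$ we have $\tilde d=1$ and $2^{\tilde d+1}=4$, so it suffices to produce $C_0\geq 1$ and a \emph{single} $\mathscr{C}>(2b+u)R$ with
\[
N_i^{\mathrm{aux}}(\bm{\beta};B)\leq C_0\,B^{\,2n-4\mathscr{C}}\qquad\text{for }i=1,2,
\]
uniformly in $\bm{\beta}\in\mathbb{R}^R\setminus\{\bm{0}\}$ and $B>0$. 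Writing $F_i(\bm{x},\bm{y})=\bm{x}^T H_i(\bm{y})\bm{x}$, a direct computation gives $\Gamma_{\bm{\beta}\cdot\bm{F}}(\bm{a},\bm{e}_\ell,\bm{b})=2\,(H_{\bm{\beta}}(\bm{b})\bm{a})_\ell$, so that $N_1^{\mathrm{aux}}(\bm{\beta};B)$ counts integer $(\bm{x},\bm{y})\in(-B,B)^{2n}$ with $\|H_{\bm{\beta}}(\bm{y})\bm{x}\|_\infty\ll\|\bm{\beta}\cdot\bm{F}\|_\infty B$, while $N_2^{\mathrm{aux}}(\bm{\beta};B)$ counts integer $(\bm{x}^{(1)},\bm{x}^{(2)})\in(-B,B)^{2n}$ with $|(\bm{x}^{(1)})^T H_{\bm{\beta}}(\bm{e}_\ell)\bm{x}^{(2)}|\ll\|\bm{\beta}\cdot\bm{F}\|_\infty B$ for $\ell=1,\dots,n$.

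Next I would bound these counts geometrically. The points counted by $N_1^{\mathrm{aux}}(\bm{\beta};B)$ lie in a box-sized neighbourhood of the affine cone over $\mathbb{V}(H_{\bm{\beta}}(\bm{y})\bm{x})\subset\mathbb{P}_{\mathbb{C}}^{n-1}\times\mathbb{P}_{\mathbb{C}}^{n-1}$, so a geometry-of-numbers estimate should yield $N_1^{\mathrm{aux}}(\bm{\beta};B)\ll_{\varepsilon} B^{\,\dim\mathbb{V}(H_{\bm{\beta}}(\bm{y})\bm{x})+2+\varepsilon}$, which by the definition~\eqref{eq.defn_s_2} of $s_{\mathbb{R}}^{(2)}$ is $\ll_{\varepsilon} B^{\,2s_{\mathbb{R}}^{(2)}+1+\varepsilon}$. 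Similarly the points counted by $N_2^{\mathrm{aux}}(\bm{\beta};B)$ lie near the cone over $\mathbb{V}(\bm{z}^T H_{\bm{\beta}}(\bm{e}_\ell)\bm{x})_{\ell}$, and the dimension estimate established inside the proof of Lemma~\ref{lem.dimensions_varieties_not_too_big} (applied with $A_\ell=H_{\bm{\beta}}(\bm{e}_\ell)$, in the form $\dim V_3\leq\dim V_1+n-1$) together with~\eqref{eq.defn_s_1} gives $\dim\mathbb{V}(\bm{z}^T H_{\bm{\beta}}(\bm{e}_\ell)\bm{x})_{\ell}\leq (s_{\mathbb{R}}^{(1)}-1)+n-1$, hence $N_2^{\mathrm{aux}}(\bm{\beta};B)\ll_{\varepsilon} B^{\,n+s_{\mathbb{R}}^{(1)}+\varepsilon}$. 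The forms $F_i$ are automatically linearly independent (being a complete intersection of $R$ forms), so the implied constants can be taken independent of $\bm{\beta}$ as discussed below. Because $n-s_{\mathbb{R}}^{(i)}>4(2b+u)R$ for $i=1,2$, the range $(2b+u)R<\mathscr{C}<\min\{\tfrac{n-s_{\mathbb{R}}^{(2)}}{2}-\tfrac14,\ \tfrac{n-s_{\mathbb{R}}^{(1)}}{4}\}$ is non-empty; choosing $\mathscr{C}$ there and then $\varepsilon$ small enough forces $2n-4\mathscr{C}$ to exceed both $2s_{\mathbb{R}}^{(2)}+1+\varepsilon$ and $n+s_{\mathbb{R}}^{(1)}+\varepsilon$, which is exactly the required bound. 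Theorem~\ref{thm.n_aux_imply_result} then delivers the asymptotic, positivity of $\sigma$ included.

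The main obstacle will be the geometry-of-numbers estimate itself: bounding the number of integer points of the box $(-B,B)^{2n}$ in the natural neighbourhood of such a bihomogeneous variety by $B^{\dim+\varepsilon}$ \emph{with an implied constant uniform in $\bm{\beta}$}. I would fibre over one vector variable — say $\bm{y}$ for $N_1^{\mathrm{aux}}$, so that the remaining condition on $\bm{x}$ is linear — stratify the $\bm{y}$-box by the rank $r$ of $H_{\bm{\beta}}(\bm{y})$, count the admissible $\bm{x}$ on each stratum by successive minima (obtaining $\ll B^{\,n-r}$ up to logarithms), bound the number of integer points of the rank-$\leq r$ stratum by $B^{\dim+\varepsilon}$, and sum over $r$, using precisely the determinantal rank-stratification bookkeeping from the proof of Lemma~\ref{lem.dimensions_varieties_not_too_big} to recover $B^{\dim\mathbb{V}(\cdots)+2+\varepsilon}$; the case of $N_2^{\mathrm{aux}}$ is analogous, fibring over $\bm{x}^{(1)}$. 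Uniformity in $\bm{\beta}$ follows after normalising $\bm{\beta}$ to the unit sphere: the stratum dimensions are upper semicontinuous in $\bm{\beta}$ and the successive-minima constants depend only on the bounded size of the coefficients of $\bm{\beta}\cdot\bm{F}$, so compactness yields one $C_0$.

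Finally I would treat the smooth case, where one must show that $n>(16b+8u+1)R$ already implies $n-s_{\mathbb{R}}^{(i)}>(8b+4u)R$ for $i=1,2$. Since $\mathrm{Sing}\,\mathbb{V}(\bm{\beta}\cdot\bm{F})=\mathbb{V}(H_{\bm{\beta}}(\bm{y})\bm{x})\cap\bigl(\mathbb{V}(\bm{x}^T H_{\bm{\beta}}(\bm{e}_\ell)\bm{x})_\ell\times\mathbb{P}_{\mathbb{C}}^{n-1}\bigr)$ and the $n$ equations $(H_{\bm{\beta}}(\bm{y})\bm{x})_j$ are of bidegree $(1,1)$, applying Corollary~\ref{cor.geometry_intersections} $n$ times gives $\dim\mathrm{Sing}\,\mathbb{V}(\bm{\beta}\cdot\bm{F})\geq\dim\mathbb{V}(\bm{x}^T H_{\bm{\beta}}(\bm{e}_\ell)\bm{x})_\ell-1$; combining with $\dim\mathrm{Sing}\,\mathbb{V}(\bm{\beta}\cdot\bm{F})\leq R-2$ from Lemma~\ref{lem.sing_bf_small} yields $\dim\mathbb{V}(\bm{x}^T H_{\bm{\beta}}(\bm{e}_\ell)\bm{x})_\ell\leq R-1$, i.e.\ $s_{\mathbb{R}}^{(1)}\leq R$. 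Feeding this into Lemma~\ref{lem.dimensions_varieties_not_too_big} (now in the form $\dim V_2\leq\dim V_1+n-1$) bounds $\dim\mathbb{V}(H_{\bm{\beta}}(\bm{y})\bm{x})\leq (R-1)+n-1$, so $s_{\mathbb{R}}^{(2)}\leq\lfloor\tfrac{n+R-2}{2}\rfloor+1\leq\tfrac{n+R}{2}$. Then $n-s_{\mathbb{R}}^{(1)}\geq n-R>(16b+8u)R>(8b+4u)R$ and $n-s_{\mathbb{R}}^{(2)}\geq\tfrac{n-R}{2}>(8b+4u)R$, so the first part of the Proposition applies and the proof is complete.
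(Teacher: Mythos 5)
Your top-level reduction is the same as the paper's (verify the hypothesis \eqref{eq.N_aux_general_condition} of Theorem~\ref{thm.n_aux_imply_result} with $2^{\tilde d+1}=4$ and some $\mathscr{C}>(2b+u)R$, then apply that theorem), and your smooth-case bookkeeping is essentially sound: $s_{\mathbb{R}}^{(1)}\leq R$ via Lemma~\ref{lem.sing_bf_small} and Corollary~\ref{cor.geometry_intersections}, and $s_{\mathbb{R}}^{(2)}\leq\frac{n+R}{2}$ via Lemma~\ref{lem.dimensions_varieties_not_too_big} is a legitimate (and arguably slicker) alternative to the paper's Lemma~\ref{lem.sigma_2_bounds}. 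The gap is in the only hard part: the uniform-in-$\bm{\beta}$ bounds on $N_1^{\mathrm{aux}}$ and $N_2^{\mathrm{aux}}$, which you assert via a ``points in a box near a variety are $\ll B^{\dim+\varepsilon}$'' principle and a rank-stratification sketch. No such black box exists, and your sketch breaks at its central step: on a stratum where $\mathrm{rank}\,H_{\bm{\beta}}(\bm{y})=r$, the number of admissible $\bm{x}$ is \emph{not} $\ll B^{n-r}$ --- it is governed by the sizes of the nonzero singular values (small but nonzero singular values produce far more solutions), which is exactly why the paper stratifies dyadically by singular values (the sets $K_k$ of Definition~\ref{def.k_k}) rather than by rank. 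Moreover, ``bound the number of integer points of the rank-$\leq r$ stratum by $B^{\dim+\varepsilon}$'' is itself a counting problem of the same nature and difficulty as the original (points where many minors are small), not something you can cite; and uniformity in $\bm{\beta}$ does not follow from semicontinuity of stratum dimensions. The paper achieves uniformity only through the specific dichotomies of Lemma~\ref{lem.aux_small_or_bilinear_big}, Lemma~\ref{lem.counting_K_k} and Corollary~\ref{cor.N_2_small_or_bilinear_vanishes} (``either the count is small or a bilinear form nearly vanishes on subspaces of prescribed dimensions''), whose approximate-vanishing conclusion is what survives the compactness limit and contradicts the definitions \eqref{eq.defn_s_1}, \eqref{eq.defn_s_2}; merely knowing the count is large for a sequence $\bm{\beta}_N$ gives you nothing about the limit $\bm{\beta}$.

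A concrete warning sign is that your claimed bounds do not match what the paper (or Rydin Myerson's method) actually proves: the paper shows $N_1^{\mathrm{aux}}(\bm{\beta};B)\ll_\varepsilon B^{n+s_{\mathbb{R}}^{(1)}+\varepsilon}$ (Proposition~\ref{prop.aux_counting_2-1_small}) and $N_2^{\mathrm{aux}}(\bm{\beta};B)\ll B^{n+s_{\mathbb{R}}^{(2)}}(\log B)^n$ (Proposition~\ref{prop.N_2aux_small}), i.e.\ the pairing of the two counts with $s_{\mathbb{R}}^{(1)}$ and $s_{\mathbb{R}}^{(2)}$ is the opposite of yours. Your claimed estimate $N_2^{\mathrm{aux}}\ll B^{n+s_{\mathbb{R}}^{(1)}+\varepsilon}$ would, when $s_{\mathbb{R}}^{(1)}$ is small, be substantially \emph{stronger} than anything established in the paper (since $s_{\mathbb{R}}^{(2)}\geq\frac{n-1}{2}$ always), so it certainly cannot be obtained by the soft fibration argument you describe; it would need a proof at least as involved as the paper's Lemmas~\ref{lem.fixed_matrix_eigenvalues_counting}--\ref{lem.bilinear_small}. (Your exponent bookkeeping and the choice of $\mathscr{C}$ would be fine \emph{if} your bounds held, but as it stands the heart of the proposition is assumed rather than proved.)
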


For $r = 1, \hdots, R$ we can write each form $F_r(\bm{x}, \bm{y})$ as
\begin{equation*}
	F_r(\bm{x},\bm{y}) = \sum_{i,j,k} F^{(r)}_{ijk} x_i x_j y_k,
\end{equation*}
where the coefficients $F^{(r)}_{ijk}$ are symmetric in $i$ and $j$. In particular, for any $r = 1, \hdots, R$ we have an $n \times n$ matrix given by $H_r(\bm{y}) = (\sum_k F^{(r)}_{ijk} y_k)_{ij}$ whose entries are linear homogeneous polynomials in $\bm{y}$. We may thus also write each equation in the form
\[
F_r(\bm{x},\bm{y}) = \bm{x}^T H_r(\bm{y}) \bm{x}.
\]
The strategy of the proof of Proposition~\ref{thm.2,1} is the same as in the bilinear case, however this time more techincal arguments are required. We need to obtain a good upper bound for the counting functions $N_i^{\mathrm{aux}}(\bm{\beta}; B)$ so that we can apply Theorem~\ref{thm.n_aux_imply_result}. For $\bm{\beta} \in \mathbb{R}^R$ we consider $\bm{\beta} \cdot \bm{F}$, which we can rewrite in our case as
\begin{equation*}
	\bm{\beta} \cdot \bm{F}(\bm{x}, \bm{y}) = \bm{x}^T H_{\bm{\beta}}(\bm{y}) \bm{x}
\end{equation*}
where $H_{\bm{\beta}}(\bm{y}) = \sum_{i=1}^R \beta_i H_i(\bm{y})$ is a symmetric $n \times n$ matrix whose entries are linear and homogeneous in $\bm{y}$. The associated multilinear form $\Gamma_{\bm{\beta} \cdot \bm{F}}(\bm{x}^{(1)}, \bm{x}^{(2)}, \bm{y})$ is thus given by
\begin{equation*}
	\Gamma_{\bm{\beta} \cdot \bm{F}}(\bm{x}^{(1)}, \bm{x}^{(2)}, \bm{y}) =  2 \left(\bm{x}^{(1)}\right)^T H_{\bm{\beta}}(\bm{y}) \bm{x}^{(2)}.
\end{equation*} 
Recall $N_1^{\mathrm{aux}}(\bm{\beta}, B)$ counts integral tuples $\bm{x}, \bm{y} \in \Z^n$ satisfying $\nnorm{\bm{x}}_\infty, \nnorm{\bm{y}}_\infty \leq B$ and 
\begin{equation*}
	\nnorm{\left(\Gamma_{\bm{\beta} \cdot \bm{F}}(\bm{x}, \bm{e}_1, \bm{y}), \hdots, \Gamma_{\bm{\beta} \cdot \bm{F}}(\bm{x}, \bm{e}_n, \bm{y}) \right)^T}_\infty = 2\nnorm{ H_{\bm{\beta}}( \bm{y}) \bm{x}}_\infty \leq \nnorm{\bm{\beta} \cdot \bm{F}}_\infty B.
\end{equation*}
Now $N_2^{\mathrm{aux}}(\bm{\beta}, B)$ counts integral tuples $\bm{x}^{(1)}$, $\bm{x}^{(2)}$ with $\nnorm{\bm{x}^{(1)}}_\infty, \nnorm{\bm{x}^{(2)}}_\infty \leq B$ and 
\begin{align*}
	\nnorm{\left(\Gamma_{\bm{\beta} \cdot \bm{F}}(\bm{x}^{(1)}, \bm{x}^{(2)}, \bm{e}_1), \hdots, \Gamma_{\bm{\beta} \cdot \bm{F}}(\bm{x}^{(1)}, \bm{x}^{(2)}, \bm{e}_n) \right)^T}_\infty 
	\leq \nnorm{\bm{\beta} \cdot \bm{F}}_\infty B.
\end{align*}
We may rewrite this as saying that
\[
\nnorm{\bm{x}^{(1)} H_{\bm{\beta}}(\bm{e}_\ell) \bm{x}^{(2)} } \leq \nnorm{\bm{\beta} \cdot \bm{F}}_\infty B
\]
is satisfied for $\ell = 1, \hdots, n$.
As in the proof of Theorem~\ref{thm.bilinear} using Proposition \ref{prop.auxiliary_ineq_from_counting_function} and Proposition \ref{prop.main_prop} we find that for the proof of Theorem~\ref{thm.2,1} it is enough to show that there exists a positive constant $C_0$ such that for all $B \geq 1$  and all $\bm{\beta} \in \mathbb{R}^r \setminus \{ 0 \}$ we have
\begin{equation*} \label{eq.aux_condition_2-1}
	N_i^{\mathrm{aux}}(\bm{\beta}; B) \leq C_0 B^{2n - 4 \mathscr{C}}
\end{equation*}
for $i=1,2$, where $\mathscr{C} > (2b+u)R$. 
The remainder of this section establishes these upper bounds.
\subsection{The first auxiliary counting function}
This is the easier case and the problem of finding a suitable upper bound for $N_1^{\mathrm{aux}}(\bm{\beta}; B)$ is essentially handled in~\cite{myerson_cubic}.

\begin{lemma}[Corollary 5.2 of \cite{myerson_cubic}] \label{lem.aux_small_or_bilinear_big}
	Let $H_{\bm{\beta}}(\bm{y})$ and $N_1^{\mathrm{aux}}(\bm{\beta}; B)$ be as above. Let $B,C \geq 1$, let $\bm{\beta} \in \mathbb{R}^R \setminus \{ 0 \}$ and let $\sigma \in \{0, \hdots, n-1 \}$. Then we either obtain the bound
	\begin{equation*} \label{eq.aux_counting_2-1_bounded}
		N_1^{\mathrm{aux}}(\bm{\beta}; B) \ll_{C,n} B^{n+\sigma} (\log B)^n
	\end{equation*}
	or there exist non-trivial linear subspaces $U,V \subseteq \mathbb{R}^n$ with $\dim U + \dim V = n+\sigma+1$ such that for all $\bm{v} \in V$ and $\bm{u}_1, \bm{u}_2 \in U$ we have
	\begin{equation*} \label{eq.bilinear_form_very_singular}
		\frac{\norm{\bm{u}_1^T H_{\bm{\beta}}(\bm{v}) \bm{u}_2 }}{\nnorm{\bm{\beta} \cdot \bm{F}}_\infty} \ll_n C^{-1} \nnorm{\bm{u}_1}_\infty \nnorm{\bm{v}}_\infty \nnorm{\bm{u}_2}_\infty. 
	\end{equation*}
\end{lemma}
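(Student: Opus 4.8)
The statement is \cite[Corollary~5.2]{myerson_cubic} transcribed into the notation of the present paper and applied to the trilinear form $\Gamma_{\bm{\beta}\cdot\bm{F}}(\bm{u}_1,\bm{u}_2,\bm{v})=2\,\bm{u}_1^{T}H_{\bm{\beta}}(\bm{v})\bm{u}_2$; I will describe the argument one would run to prove it. The starting observation is that, since $\tilde{d}=1$ here, $N_1^{\mathrm{aux}}(\bm{\beta};B)$ counts integral $(\bm{x},\bm{y})$ with $\nnorm{\bm{x}}_\infty,\nnorm{\bm{y}}_\infty< B$ for which the vector $H_{\bm{\beta}}(\bm{y})\bm{x}$ --- which is \emph{bilinear} in $(\bm{x},\bm{y})$ because the entries of $H_{\bm{\beta}}$ are linear forms in $\bm{y}$ --- satisfies $\nnorm{H_{\bm{\beta}}(\bm{y})\bm{x}}_\infty\ll\nnorm{\bm{\beta}\cdot\bm{F}}_\infty B$, i.e. is smaller by a factor $\asymp B$ than its generic size $\asymp\nnorm{\bm{\beta}\cdot\bm{F}}_\infty B^{2}$. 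So the task is the familiar one of showing that a trilinear form (with the middle slot run over $\bm{e}_1,\dots,\bm{e}_n$) cannot take abnormally small values on many lattice points of a box unless it is genuinely degenerate on a pair of large subspaces.

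The plan is as follows. First I would bound the fibres: for fixed $\bm{y}$, the set $\mathcal{X}(\bm{y})=\{\bm{x}\in\mathbb{Z}^{n}:\nnorm{\bm{x}}_\infty<B,\ \nnorm{H_{\bm{\beta}}(\bm{y})\bm{x}}_\infty\ll\nnorm{\bm{\beta}\cdot\bm{F}}_\infty B\}$ consists of the lattice points of a symmetric convex body, so by Davenport's lemma its cardinality is governed by the number of successive minima of that body that are $\ll 1$, equivalently by the quantitative degeneracy of $H_{\bm{\beta}}(\bm{y})$. A dyadic pigeonhole, over $\bm{y}$ in its box and over the common size $t$ of the fibres, then shows that if $N_1^{\mathrm{aux}}(\bm{\beta};B)$ exceeds a suitable multiple of $B^{n+\sigma}(\log B)^{n}$ there is a set $\mathcal{Y}$ of roughly $B^{n+\sigma}/t$ vectors $\bm{y}$, each with $\#\mathcal{X}(\bm{y})\gg t$. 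Letting $V$ be the span of $\mathcal{Y}$ and $U$ a common subspace extracted from the spans of the $\mathcal{X}(\bm{y})$, and choosing the split between $\log_B\#\mathcal{Y}$ and $\log_B t$ optimally, one arranges $\dim U+\dim V\geq n+\sigma+1$; it is precisely the fact that $\#\mathcal{Y}\cdot t$ is just above $B^{n+\sigma}$ that forces the exponent to be $n+\sigma+1$ rather than $n+\sigma$.

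The main step --- and the one where the real work lies, carried out in \cite{myerson_cubic} --- is to upgrade ``$\Gamma_{\bm{\beta}\cdot\bm{F}}$ is small on a set of short lattice points spanning $U\times U\times V$'' to ``$\norm{\bm{u}_1^{T}H_{\bm{\beta}}(\bm{v})\bm{u}_2}\ll_n C^{-1}\nnorm{\bm{u}_1}_\infty\nnorm{\bm{v}}_\infty\nnorm{\bm{u}_2}_\infty\nnorm{\bm{\beta}\cdot\bm{F}}_\infty$ for \emph{all} $\bm{u}_1,\bm{u}_2\in U$ and $\bm{v}\in V$''. Here one picks well-conditioned bases of $U$ and $V$ out of the spanning lattice points (so that expressing a general vector in such a basis costs only a controlled, polynomial-in-$B$ factor), expands $\bm{u}_1,\bm{u}_2,\bm{v}$ in these bases, and uses multilinearity together with the defining inequality $\norm{\bm{x}^{T}H_{\bm{\beta}}(\bm{y})\bm{e}_\ell}\ll\nnorm{\bm{\beta}\cdot\bm{F}}_\infty B$ (valid for every $\ell$ and every $\bm{x}\in\mathcal{X}(\bm{y})$, $\bm{y}\in\mathcal{Y}$) to spread the bound across all of $U\times U\times V$; the gain $C^{-1}$ is produced by the surplus of solutions, since a larger $N_1^{\mathrm{aux}}$ than the threshold lets one take $t$ a factor $\approx C$ larger, which feeds through the dimension bookkeeping as a $C^{-1}$ improvement in the final normalisation. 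The delicate points I expect to be (i) keeping the heights of the chosen bases polynomially bounded, so the conclusion has the clean homogeneous shape above, and (ii) making the dimension count come out to exactly $n+\sigma+1$; both are handled in \cite{myerson_cubic}, whose Corollary~5.2 we quote.
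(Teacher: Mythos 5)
Insofar as your proposal consists of quoting Corollary~5.2 of \cite{myerson_cubic}, it coincides with what the paper does: the paper offers no proof of this lemma at all, but imports it verbatim, the point being that $H_{\bm{\beta}}(\bm{y})$ is a symmetric $n\times n$ matrix with entries linear in $\bm{y}$, so the counting problem for $N_1^{\mathrm{aux}}$ is literally the one treated in the cubic setting of that reference. Had you stopped there, there would be nothing to flag.

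The proof sketch you attach, however, is not the argument of the cited work, and as a standalone argument it has genuine gaps at exactly the two places you defer to the reference. First, from ``many $\bm{y}$, each with many solutions $\bm{x}$'' you propose to take $V$ to be the span of the good $\bm{y}$ and $U$ ``a common subspace extracted from the spans of the $\mathcal{X}(\bm{y})$''; but the sets $\mathcal{X}(\bm{y})$ vary with $\bm{y}$ and need not share any large common subspace, and no mechanism is given that produces a \emph{single} pair $(U,V)$ with $\dim U+\dim V=n+\sigma+1$ on which the form is uniformly small. Second, the upgrade from smallness of $\Gamma_{\bm{\beta}\cdot\bm{F}}$ at lattice points to the bound $\ll_n C^{-1}\nnorm{\bm{u}_1}_\infty\nnorm{\bm{v}}_\infty\nnorm{\bm{u}_2}_\infty$ on all of $U\times U\times V$ via ``well-conditioned bases'' is precisely what fails: bases made of lattice points of height up to $B$ cost powers of $B$ when expanding an arbitrary vector, which swallows the gain, and the ``surplus of solutions'' bookkeeping does not manufacture the factor $C^{-1}$. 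The actual route (which this paper reproduces in full for the harder function $N_2^{\mathrm{aux}}$, see Lemma~\ref{lem.N_2_aux_in_terms_K_k}, Lemma~\ref{lem.counting_K_k}, Lemma~\ref{lem.bilinear_small} and Corollary~\ref{cor.N_2_small_or_bilinear_vanishes}) is different: one decomposes the box dyadically according to the singular values of $H_{\bm{\beta}}(\cdot)$ (the sets $K_k$), bounds each fibre by an ellipsoid count, and in the non-small case invokes the alternatives of Lemma~\ref{lem.counting_K_k}, in which the first subspace arises from smallness of the Jacobian of the $(b+1)\times(b+1)$ minors measured against the $b\times b$ minors, while Lemma~\ref{lem.bilinear_small} builds the complementary subspaces from adjugate-type vectors formed out of minors of the matrix at one well-chosen point; the factor $C^{-1}$ enters through the singular-value gap $E_{b+1}<C^{-1}E_b$ and the Jacobian bound, and the dimension count $\dim U+\dim V=n+\sigma+1$ comes from $\dim X=\sigma+b+1$ against $\dim Y=n-b$, not from the size of the solution set. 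So either cite the corollary and stop, or prove it along the singular-value/minor route; the sketch as written would not close.
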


	Recall the quantity
	\begin{equation*}
		s^{(1)}_{\mathbb{R}} \coloneqq 1 + \max_{\bm{\beta} \in \mathbb{R}^R \setminus \{ 0 \} } \dim \mathbb{V}(\bm{x}^T H_{\bm{\beta}} (\bm{e}_\ell) \bm{x})_{\ell = 1, \hdots, n_2},
	\end{equation*} 
	where we regard $\mathbb{V}(\bm{x}^T H_{\bm{\beta}} (\bm{e}_\ell) \bm{x})_{\ell = 1, \hdots, n_2} \subset \mathbb{P}_{\mathbb{C}}^{n_1-1}$ as a projective variety. Note that for this definition we do not necessarily require $n_1 = n_2$.

\begin{proposition} \label{prop.aux_counting_2-1_small}
	Let $\varepsilon > 0$. For all $B \geq 1$, $\bm{\beta} \in \mathbb{R}^R \setminus \{ 0 \}$ we have 
	\begin{equation} \label{eq.estimate_for_naux}
		N_1^{\mathrm{aux}}(\bm{\beta}; B) \ll_\varepsilon B^{n+s^{(1)}_{\mathbb{R}} + \varepsilon}.
	\end{equation}
\end{proposition}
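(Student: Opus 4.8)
The plan is to invoke Lemma~\ref{lem.aux_small_or_bilinear_big} with $\sigma=s^{(1)}_{\mathbb{R}}$ and a constant $C\geq 1$ which we fix at the end. This choice of $\sigma$ is admissible: since $\mathbb{V}(\bm{F})$ is a complete intersection the forms $F_i$ are linearly independent, so $\bm{\beta}\cdot\bm{F}\neq 0$ and hence $H_{\bm{\beta}}(\bm{y})\not\equiv\bm{0}$, which forces $\mathbb{V}(\bm{x}^T H_{\bm{\beta}}(\bm{e}_\ell)\bm{x})_{\ell}$ to be a proper subvariety of $\mathbb{P}^{n-1}_{\mathbb{C}}$, so that $s^{(1)}_{\mathbb{R}}\in\{0,\dots,n-1\}$. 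If, for our fixed $C$, the first alternative of Lemma~\ref{lem.aux_small_or_bilinear_big} holds, then
\[
N_1^{\mathrm{aux}}(\bm{\beta};B)\ll_{C,n}B^{n+s^{(1)}_{\mathbb{R}}}(\log B)^n\ll_{\varepsilon}B^{n+s^{(1)}_{\mathbb{R}}+\varepsilon},
\]
which is the claimed bound. Hence everything reduces to producing a constant $C_0$, depending only on $n$, $R$ and $\bm{F}$, such that for every $C\geq C_0$ and every $\bm{\beta}\in\mathbb{R}^R\setminus\{\bm{0}\}$ the \emph{second} alternative is impossible; the proposition then follows by taking $C=C_0$. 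Obtaining such a $C_0$ uniformly in $\bm{\beta}$ is the main point of the proof.

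Suppose no such $C_0$ exists. Then there are $C_m\to\infty$ and $\bm{\beta}_m\in\mathbb{R}^R\setminus\{\bm{0}\}$ for which the second alternative holds, giving subspaces $U_m,V_m\subseteq\mathbb{R}^n$ with $\dim U_m+\dim V_m=n+s^{(1)}_{\mathbb{R}}+1$ and
\[
\norm{\bm{u}_1^T H_{\bm{\beta}_m}(\bm{v})\bm{u}_2}\ll_n C_m^{-1}\nnorm{\bm{\beta}_m\cdot\bm{F}}_\infty\nnorm{\bm{u}_1}_\infty\nnorm{\bm{v}}_\infty\nnorm{\bm{u}_2}_\infty
\]
for all $\bm{u}_1,\bm{u}_2\in U_m$ and $\bm{v}\in V_m$. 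This inequality is unchanged under rescaling $\bm{\beta}_m$, so we may assume $\nnorm{\bm{\beta}_m}_\infty=1$, whence $\nnorm{\bm{\beta}_m\cdot\bm{F}}_\infty$ stays bounded. Passing to a subsequence we may assume $(\dim U_m,\dim V_m)$ is a fixed pair $(a,b)$ with $a+b=n+s^{(1)}_{\mathbb{R}}+1$, that $\bm{\beta}_m\to\bm{\beta}^\ast$ with $\nnorm{\bm{\beta}^\ast}_\infty=1$ by Bolzano--Weierstrass, and that $U_m\to U^\ast$ and $V_m\to V^\ast$ in the compact Grassmannians $\mathrm{Gr}(a,n)$ and $\mathrm{Gr}(b,n)$. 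For fixed $\bm{u}_1,\bm{u}_2\in U^\ast$ and $\bm{v}\in V^\ast$, choosing approximants $\bm{u}_i^{(m)}\in U_m$ and $\bm{v}^{(m)}\in V_m$ with $\bm{u}_i^{(m)}\to\bm{u}_i$, $\bm{v}^{(m)}\to\bm{v}$, the right-hand side above tends to $0$ while the left-hand side tends to $\norm{\bm{u}_1^T H_{\bm{\beta}^\ast}(\bm{v})\bm{u}_2}$; therefore
\[
\bm{u}_1^T H_{\bm{\beta}^\ast}(\bm{v})\bm{u}_2=0\qquad\text{for all }\bm{u}_1,\bm{u}_2\in U^\ast,\ \bm{v}\in V^\ast,
\]
and the same holds over $\mathbb{C}$ after complexifying $U^\ast$ and $V^\ast$, by multilinearity.

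Finally I turn this into a contradiction with the definition of $s^{(1)}_{\mathbb{R}}$. For $\bm{x}\in U^\ast\otimes\mathbb{C}$ set $L_{\bm{x}}:=\bigl(\bm{x}^T H_{\bm{\beta}^\ast}(\bm{e}_1)\bm{x},\dots,\bm{x}^T H_{\bm{\beta}^\ast}(\bm{e}_n)\bm{x}\bigr)\in\mathbb{C}^n$; since $\bm{x}^T H_{\bm{\beta}^\ast}(\bm{y})\bm{x}$ is linear in $\bm{y}$, the identity above gives $\langle L_{\bm{x}},\bm{v}\rangle=0$ for all $\bm{v}\in V^\ast\otimes\mathbb{C}$, so $L_{\bm{x}}$ lies in a fixed subspace of dimension $n-b$. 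Reading off the condition $L_{\bm{x}}=\bm{0}$ through a basis of that subspace, the intersection $\mathbb{V}(\bm{x}^T H_{\bm{\beta}^\ast}(\bm{e}_\ell)\bm{x})_{\ell=1,\dots,n}\cap\mathbb{P}(U^\ast\otimes\mathbb{C})$ is cut out inside $\mathbb{P}(U^\ast\otimes\mathbb{C})\cong\mathbb{P}^{a-1}_{\mathbb{C}}$ by at most $n-b$ quadratic forms, so iterating Lemma~\ref{lem.geometry_intersections} (equivalently, the projective dimension theorem) shows this intersection is non-empty with every component of dimension at least $(a-1)-(n-b)=a+b-n-1=s^{(1)}_{\mathbb{R}}$. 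As it is contained in $\mathbb{V}(\bm{x}^T H_{\bm{\beta}^\ast}(\bm{e}_\ell)\bm{x})_{\ell}\subseteq\mathbb{P}^{n-1}_{\mathbb{C}}$ and $\bm{\beta}^\ast\neq\bm{0}$, we conclude
\[
s^{(1)}_{\mathbb{R}}\leq\dim\mathbb{V}(\bm{x}^T H_{\bm{\beta}^\ast}(\bm{e}_\ell)\bm{x})_{\ell}\leq s^{(1)}_{\mathbb{R}}-1,
\]
which is absurd. Thus the desired $C_0$ exists and the proof is complete; the only delicate step is the uniformity of $C_0$ in $\bm{\beta}$, handled by the compactness argument above, everything else being routine once Lemma~\ref{lem.aux_small_or_bilinear_big} is granted.
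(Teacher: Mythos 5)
Your proposal is correct and follows essentially the same route as the paper: the dichotomy of Lemma~\ref{lem.aux_small_or_bilinear_big} with $\sigma=s^{(1)}_{\mathbb{R}}$, a normalisation-and-limit argument in $\bm{\beta}$ producing subspaces on which $\bm{u}_1^T H_{\bm{\beta}}(\bm{v})\bm{u}_2$ vanishes identically, and then a dimension count forcing $s^{(1)}_{\mathbb{R}}\leq s^{(1)}_{\mathbb{R}}-1$. Your explicit Grassmannian compactness step and the phrasing of the final count via the annihilator of $V^\ast$ (rather than extending a basis of $V$ and cutting $W\subseteq[U]$ by $k$ quadrics) are only cosmetic variants of the paper's argument.
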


\begin{proof}
	Assume for a contradiction that the estimate in \eqref{eq.estimate_for_naux} does not hold. In this case Lemma \ref{lem.aux_small_or_bilinear_big} gives that for each $N \in \mathbb{N}$ there exist $\bm{\beta}_N \in \mathbb{R}^R$  and there are non-trivial linear subspaces $U_N, V_N \subseteq \mathbb{R}^n$ with $\dim U_N + \dim V_N = n+s^{(1)}_{\mathbb{R}}+1$ such that for all $\bm{v} \in V_N$ and $\bm{u}_1, \bm{u}_2 \in U_N$ we have
	\begin{equation*}
		\frac{\norm{\bm{u}_1^T H_{\bm{\beta}_N}(\bm{v}) \bm{u}_2 }}{\nnorm{\bm{\beta}_N \cdot \bm{F}}_\infty} \ll_n N^{-1} \nnorm{\bm{u}_1}_\infty \nnorm{\bm{v}}_\infty \nnorm{\bm{u}_2}_\infty. 
	\end{equation*}
If we change $\bm{\beta}_N$ by a scalar then $2\frac{\norm{ H_{\bm{\beta}_N}(\bm{y})}}{\nnorm{\bm{\beta}_N \cdot \bm{F}}_\infty}$ remains unchanged for any $\bm{y} \in \mathbb{R}^n$. Therefore we may without loss of generality assume $\nnorm{\bm{\beta}_N}_\infty = 1$. Thus there exists a convergent subsequence of $(\bm{\beta}_N)$ whose limit we will denote by $\bm{\beta}$. Hence we find subspaces $U,V \subseteq \mathbb{R}^n$ with $\dim U+ \dim V = n+s^{(1)}_{\mathbb{R}}+1$ such that for all $\bm{v} \in V$ and $\bm{u}_1, \bm{u}_2 \in U$ we have
	\begin{equation*} 
		\bm{u}_1^T H_{\bm{\beta}}(\bm{v}) \bm{u}_2 =0. 
	\end{equation*}
	Let $k$ denote the nonnegative integer such that
	\begin{equation*}
		\dim V = n-k, \quad \text{and} \quad \dim U = s^{(1)}_{\mathbb{R}} + k +1
	\end{equation*}
	holds.
	Consider now a basis $\bm{v}_{k+1}, \hdots, \bm{v}_n$ of $V$ that we extend to a basis $\bm{v}_{1}, \hdots, \bm{v}_n$ of $\mathbb{R}^n$. Write also $[U] \subseteq \mathbb{P}_{\mathbb{C}}^{n-1}$ for the projectivisation of $U$. Define $W \subseteq [U]$ to be the projective variety defined by the equations
	\begin{equation*} \label{eq.proj_var_equation}
		\bm{u}^T H_{\bm{\beta}}(\bm{v}_i) \bm{u} = 0, \quad \text{for $i = 1, \hdots, k$}
	\end{equation*} 
	We find $\dim W \geq \dim [U] - k = s^{(1)}_{\mathbb{R}}$. Since $W \subseteq [U]$ and by the definition of $W$, noting that the entries of $H_{\bm{\beta}}(\bm{y})$ are linear in $\bm{y}$ we get that if $\bm{u} \in W$ then
	\begin{equation*}
		\bm{u}^T H_{\bm{\beta}}(\bm{y}) \bm{u} = 0 \quad \text{for all $\bm{y} \in \mathbb{R}^n$}.
	\end{equation*}
	In particular it follows that $W \subseteq \mathbb{V}(\bm{x}^T H_{\bm{\beta}} (\bm{e}_\ell) \bm{x})_{\ell = 1, \hdots, n} \subset \mathbb{P}_{\mathbb{C}}^{n-1}$ and thus
	\begin{equation*}
		s^{(1)}_{\mathbb{R}} -1 \geq \dim W \geq s^{(1)}_{\mathbb{R}},
	\end{equation*}
	which is clearly a contradiction.
\end{proof}

Now that we found an upper bound in terms of the geometry of $\mathbb{V}(\bm{F})$ the next Lemma shows that if $\bm{F}$ defines a non-singular variety then $s_\R^{(1)}$ is not too large. For the next Lemma we will not assume $n_1 = n_2$ as we will require it later in the slightly more general context when this assumption is not necessarily satisfied.
\begin{lemma} \label{lem.sigma_2-1_small}
	Let $s^{(1)}_{\mathbb{R}}$ be defined as above and assume that $\bm{F}$ is a system of bihomogenous equations of bidegree $(2,1)$ that defines a smooth complete intersection $ \mathbb{V}(\bm{F}) \subset \mathbb{P}^{n_1-1}_{\mathbb{C}} \times \mathbb{P}^{n_2-1}_{\mathbb{C}}$. Then 
	\begin{equation*}
		s^{(1)}_{\mathbb{R}} \leq \max\{ 0,R + n_1-n_2\}.
	\end{equation*}
\end{lemma}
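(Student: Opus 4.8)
The plan is, for a fixed $\bm{\beta} \in \mathbb{R}^R \setminus \{\bm{0}\}$, to bound $d := \dim W_{\bm{\beta}}$, where $W_{\bm{\beta}} := \mathbb{V}(\bm{x}^T H_{\bm{\beta}}(\bm{e}_\ell)\bm{x})_{\ell = 1,\dots,n_2} \subseteq \mathbb{P}^{n_1-1}_{\mathbb{C}}$, and then take the maximum over $\bm{\beta}$. Since $W_{\bm{\beta}} \subseteq \mathbb{P}^{n_1-1}_{\mathbb{C}}$ and $R \geq 1$ we always have $d \leq n_1 - 1 \leq R + n_1 - n_2 - 1$ whenever $d \leq n_1 - n_2$, so the only case requiring work is $d \geq n_1 - n_2 + 1$, in which in particular $n_2 \geq 2$. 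We aim to show $d \leq R + n_1 - n_2 - 1$ in this case; granting this, $\max_{\bm{\beta}} \dim W_{\bm{\beta}} \leq \max\{-1,\, R + n_1 - n_2 - 1\}$ (the $-1$ accounting for $W_{\bm{\beta}} = \emptyset$), whence $s^{(1)}_{\mathbb{R}} = 1 + \max_{\bm{\beta}} \dim W_{\bm{\beta}} \leq \max\{0,\, R + n_1 - n_2\}$.

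The key device is the incidence variety
\[
\widetilde{Z}_{\bm{\beta}} := \left\{ ([\bm{x}],[\bm{y}]) \in \mathbb{P}^{n_1-1}_{\mathbb{C}} \times \mathbb{P}^{n_2-1}_{\mathbb{C}} : [\bm{x}] \in W_{\bm{\beta}}, \ H_{\bm{\beta}}(\bm{y})\bm{x} = \bm{0} \right\}.
\]
Writing $G = \bm{\beta}\cdot\bm{F} = \bm{x}^T H_{\bm{\beta}}(\bm{y})\bm{x}$, one computes $\partial G/\partial x_j = 2(H_{\bm{\beta}}(\bm{y})\bm{x})_j$ (using symmetry of $H_{\bm{\beta}}(\bm{y})$) and $\partial G/\partial y_k = \bm{x}^T H_{\bm{\beta}}(\bm{e}_k)\bm{x}$ (since $H_{\bm{\beta}}$ is linear in $\bm{y}$). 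On $\widetilde{Z}_{\bm{\beta}}$ all of these vanish by construction, and so does $G(\bm{x},\bm{y}) = \bm{x}^T(H_{\bm{\beta}}(\bm{y})\bm{x})$; hence $\widetilde{Z}_{\bm{\beta}} \subseteq \mathrm{Sing}\,\mathbb{V}(\bm{\beta}\cdot\bm{F})$, and Lemma~\ref{lem.sing_bf_small} gives $\dim \widetilde{Z}_{\bm{\beta}} \leq R - 2$.

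For the matching lower bound I would build a large subvariety of $\widetilde{Z}_{\bm{\beta}}$ by slicing. Choose an irreducible component $C$ of $W_{\bm{\beta}}$ with $\dim C = d$, so that $C \times \mathbb{P}^{n_2-1}_{\mathbb{C}}$ is irreducible of dimension $d + n_2 - 1$. Intersect it successively with the $n_1$ divisors $\mathbb{V}((H_{\bm{\beta}}(\bm{y})\bm{x})_j)$, $j = 1, \dots, n_1$: each component $(H_{\bm{\beta}}(\bm{y})\bm{x})_j$ is either identically zero, in which case that intersection does nothing, or of bidegree exactly $(1,1)$, in which case Corollary~\ref{cor.geometry_intersections} applies; passing to a top-dimensional component at each stage, the dimension drops by at most one per step and the intersection stays nonempty as long as its dimension is $\geq 1$. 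Since $d \geq n_1 - n_2 + 1$, the running dimension before step $j+1$ is at least $d + n_2 - 1 - j \geq d + n_2 - n_1 \geq 1$ for every $j \in \{0,\dots,n_1-1\}$, so all $n_1$ steps are legitimate, and we arrive at a nonempty $V_{n_1} \subseteq C \times \mathbb{P}^{n_2-1}_{\mathbb{C}}$, contained in every $\mathbb{V}((H_{\bm{\beta}}(\bm{y})\bm{x})_j)$, hence $V_{n_1} \subseteq \widetilde{Z}_{\bm{\beta}}$, with $\dim V_{n_1} \geq d + n_2 - 1 - n_1$. Combining with $\dim \widetilde{Z}_{\bm{\beta}} \leq R - 2$ yields $d \leq R + n_1 - n_2 - 1$, completing the argument.

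The conceptual heart — the containment $\widetilde{Z}_{\bm{\beta}} \subseteq \mathrm{Sing}\,\mathbb{V}(\bm{\beta}\cdot\bm{F})$ together with Lemma~\ref{lem.sing_bf_small} — is short; the main thing to watch is the bookkeeping in the slicing step, namely verifying that the intersection process never collapses to dimension $0$ before all $n_1$ hypersurfaces have been used (this is exactly what forces the preliminary case split at $d = n_1 - n_2$), and that identically-vanishing rows of $H_{\bm{\beta}}(\bm{y})$ cause no trouble. One also needs to be careful about the degenerate small cases ($n_2 = 1$, or $W_{\bm{\beta}} = \emptyset$ for all $\bm{\beta}$), but these are all subsumed under the trivial bound.
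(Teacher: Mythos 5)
Your proof is correct and follows essentially the same route as the paper's: both identify $\mathrm{Sing}\,\mathbb{V}(\bm{\beta}\cdot\bm{F})$ with the incidence set $\left(\mathbb{V}(\bm{x}^T H_{\bm{\beta}}(\bm{e}_\ell)\bm{x})_{\ell}\times\mathbb{P}^{n_2-1}\right)\cap\mathbb{V}(H_{\bm{\beta}}(\bm{y})\bm{x})$, bound its dimension above by $R-2$ via Lemma~\ref{lem.sing_bf_small}, and below by $(s^{(1)}_{\mathbb{R}}-1)+n_2-1-n_1$ by slicing with the $n_1$ bilinear forms via Lemma~\ref{lem.geometry_intersections}/Corollary~\ref{cor.geometry_intersections}, your case split and nonemptiness bookkeeping merely making explicit what the paper handles through its trivial-empty-case remark. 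The only (cosmetic) slip is the chain $d\le n_1-1\le R+n_1-n_2-1$ in your easy case, which need not hold as written; what you need and in fact have there is $d\le n_1-n_2\le R+n_1-n_2-1$, using $R\ge 1$.
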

\begin{proof}
Consider $\bm{\beta} \in \mathbb{R}^R \setminus \{ 0 \}$ such that $\dim \mathbb{V}(\bm{x}^T H_{\bm{\beta}} (\bm{e}_\ell) \bm{x})_{\ell = 1, \hdots, n_2} = s^{(1)}_{\mathbb{R}} -1$. In the case when $\mathbb{V}(\bm{x}^T H_{\bm{\beta}} (\bm{e}_\ell) \bm{x})_{\ell = 1, \hdots, n_2} = \emptyset$ then the statement in the lemma is trivially true. Hence we may assume that this is not the case. The singular locus of $\mathbb{V}(\bm{\beta} \cdot \bm{F}) \subseteq \mathbb{P}_{\mathbb{C}}^{n_1-1} \times \mathbb{P}_{\mathbb{C}}^{n_2-1}$ is given by
	\begin{equation*}
		\mathrm{Sing} \mathbb{V} (\bm{\beta} \cdot \bm{F} ) = \left( \mathbb{V}(\bm{x}^T H_{\bm{\beta}} (\bm{e}_\ell) \bm{x})_{\ell = 1, \hdots, n_2} \times \mathbb{P}_{\mathbb{C}}^{n_2-1} \right) \cap \mathbb{V}( H_{\bm{\beta}} (\bm{y}) \bm{x}).
	\end{equation*}
	From Lemma~\ref{lem.sing_bf_small} we obtain 
	\begin{equation*}
		\dim \mathrm{Sing} \mathbb{V}(\bm{\beta} \cdot \bm{F} ) \leq R-2.
	\end{equation*}
	Further, since $\mathbb{V}( H_{\bm{\beta}} (\bm{y}) \bm{x})$ is a system of $n_1$ bilinear equations, Lemma~\ref{lem.geometry_intersections} gives
	\[
	\dim \mathrm{Sing} \mathbb{V}(\bm{\beta} \cdot \bm{F} ) \geq s_{\mathbb{R}}^{(1)} - 1 + n_2-1 -n_1.
	\]
	Combining the previous two inequalities yields
	\[
	s_{\mathbb{R}}^{(1)} \leq R + n_1-n_2,
	\]
	as desired. 
\end{proof}
We remark here that the proof of Lemma~\ref{lem.sigma_2-1_small} shows that if $\mathbb{V}(\bm{F})$ defines a smooth complete intersection and if $s^{(1)}_{\mathbb{R}} >0$ then $n_2 < n_1+R$.

\subsection{The second auxiliary counting function}

Define $\widetilde{H}_{\bm{\beta}}(\bm{x}^{(1)})$ to be the $n \times n$ matrix with the rows given by $(\bm{x}^{(1)})^T H_{\bm{\beta}}(\bm{e}_\ell)/\nnorm{\bm{\beta} \cdot \bm{F}}_\infty$ for $\ell = 1, \hdots, n$. Using this notation $N_2^{\mathrm{aux}}(\bm{\beta}, B)$ counts the number of integer tuples $\bm{x}^{(1)}$, $\bm{x}^{(2)}$ such that $\nnorm{\bm{x}^{(1)}}_\infty, \nnorm{\bm{x}^{(2)}}_\infty \leq B$ and
\begin{equation*}
	\nnorm{\widetilde{H}_{\bm{\beta}}(\bm{x}^{(1)}) \bm{x}^{(2)}}_\infty \leq B,
\end{equation*}
is satisfied.
 The entries of $\widetilde{H}_{\bm{\beta}}(\bm{x}^{(1)})$ are homogeneous linear polynomials in $\bm{x}^{(1)}$ whose coefficients do not exceed absolute value $1$. 

 Let $A$ be a real $m \times n$ matrix. Then $A^TA$ is a symmetric and positive definite  $n \times n$ matrix, with eigenvalues $\lambda_1^2, \hdots, \lambda_n^2$. The nonnegative real numbers $\{ \lambda_i \}$ are the \emph{singular values} of $A$.
 
 \noindent \textbf{Notation.} 
 Given a matrix $M = (m_{ij})$ we define $\nnorm{M}_\infty \coloneqq \max_{i,j} |m_{ij}|$.
 For simplicity we will from now on write $\bm{x}$ instead of $\bm{x}^{(1)}$ and $\bm{y}$ instead of $\bm{x}^{(2)}$. For $\bm{x} \in \mathbb{R}^n$ let $\lambda_{\bm{\beta}, 1}(\bm{x}), \hdots, \lambda_{\bm{\beta},n}(\bm{x})$ denote the singular values of the real $n \times n$ matrix $\widetilde{H}_{\bm{\beta}}(\bm{x})$ in descending order, counted with multiplicity. Note that $\lambda_{\bm{\beta}, i}(\bm{x})$ are real and nonnegative. Also note
 \begin{equation*}
 	\lambda_{\bm{\beta},1}^2(\bm{x}) \leq n \nnorm{\widetilde{H}_{\bm{\beta}}(\bm{x})^T \widetilde{H}_{\bm{\beta}}(\bm{x})}_\infty \leq n^2 \nnorm{\widetilde{H}_{\bm{\beta}}(\bm{x})}_\infty^2 \leq n^4 \nnorm{\bm{x}}_\infty^2.
 \end{equation*}
 Taking square roots we find the following useful estimates
 \begin{equation} \label{eq.useful_estimate_sing_values}
 	\lambda_{\bm{\beta},1} (\bm{x})  \leq n \nnorm{\widetilde{H}_{\bm{\beta}}(\bm{x})}_\infty \leq n^2 \nnorm{\bm{x}}_\infty
 \end{equation}
 Let $i \in \{1, \hdots, n\}$ and write $\bm{D}^{(\bm{\beta}, i)}(\bm{x})$ for the vector with $\binom{n}{i}^2$ entries being the $i \times i$ minors of $\widetilde{H}_{\bm{\beta}}(\bm{x})$. Note that the entries are homogeneous polynomials in $\bm{x}$ of degree $i$.
 
 Finally write $J_{\bm{D}^{(\bm{\beta}, i)}}(\bm{x})$ for the Jacobian matrix of $\bm{D}^{(\bm{\beta}, i)}(\bm{x})$. That is, $J_{\bm{D}^{(\bm{\beta}, i)}}(\bm{x})$ is the $\binom{n}{i}^2 \times n$ matrix given by
 \begin{equation*}
 	(J_{\bm{D}^{(\bm{\beta}, i)}}(\bm{x}))_{jk} =  \frac{\partial D_j^{(\bm{\beta}, i)}}{\partial x_k}.
 \end{equation*}
 
 \begin{definition} \label{def.k_k}
 	Let $k \in \{0, \hdots, n\}$ and let $E_1, \hdots, E_{k+1} \in \mathbb{R}$ be such that $E_1 \geq \hdots \geq E_{k+1} \geq 1$ holds. We define $K_k(E_1, \hdots, E_{k+1}) \subseteq \mathbb{R}^n$ to be the set containing $\bm{x} \in \mathbb{R}^n$ such that the following three conditions are satisfied:
 	\begin{enumerate}[(i)]
 		\item \label{cond.K_k_1} $\nnorm{\bm{x}}_\infty \leq B$,
 		\item \label{cond.K_k_2} $\frac{1}{2}E_i < \lambda_{\bm{\beta},i}(\bm{x}) \leq E_i$ if $1 \leq i \leq k$, and
 		\item \label{cond.K_k_3} $\lambda_{\bm{\beta},i}(\bm{x}) \leq E_{k+1}$ if $k +1 \leq i \leq n$.
 	\end{enumerate}
 \end{definition}
 
\begin{lemma} \label{lem.fixed_matrix_eigenvalues_counting}
	Let $\widetilde{H}$  be an $n \times n $ matrix with real entries, and denote its singular values in descending order by $\lambda_1, \hdots, \lambda_n$. Let $C,B \geq 1$ and assume $\lambda_1 \leq CB$. Write $N_{\widetilde{H}}(B)$ for the number of integral vectors $\bm{y} \in \mathbb{Z}^n$ such that
	\begin{equation*}
		\nnorm{\bm{y}}_\infty \leq B, \quad \text{and} \quad \nnorm{\widetilde{H}\bm{y}}_\infty \leq B
	\end{equation*}
	holds. Then
	\begin{equation*}
		N_{\widetilde{H}}(B) \ll_{C,n} \min_{1 \leq i \leq n} \frac{B^n}{1+ \lambda_1 \cdots \lambda_i}.
	\end{equation*}
\end{lemma}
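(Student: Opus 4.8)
The plan is to count lattice points in the box $[-B,B]^n$ that are also mapped into $[-B,B]^n$ by $\widetilde H$, by working in an orthonormal basis adapted to the singular value decomposition of $\widetilde H$. Write $\widetilde H = U \Sigma V^T$ with $U, V \in O(n)$ and $\Sigma = \diag(\lambda_1, \ldots, \lambda_n)$, the $\lambda_i$ in descending order. The two conditions $\nnorm{\bm y}_\infty \leq B$ and $\nnorm{\widetilde H \bm y}_\infty \leq B$ imply $\nnorm{\bm y}_2 \leq \sqrt n B$ and $\nnorm{\widetilde H \bm y}_2 \leq \sqrt n B$, hence, setting $\bm z = V^T \bm y$, we get $\nnorm{\bm z}_2 \leq \sqrt n B$ and $\sum_i \lambda_i^2 z_i^2 \leq n B^2$, so in particular $\lambda_i |z_i| \leq \sqrt n B$ for each $i$. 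Thus every admissible $\bm y$ lies in the ellipsoid-like region $\mathcal E = \{ \bm z : |z_i| \leq \sqrt n B \text{ and } \lambda_i |z_i| \leq \sqrt n B \text{ for all } i\}$ rotated back by $V$; equivalently $\bm y$ ranges over $\mathbb Z^n \cap V \mathcal E$. Since $V \mathcal E$ is a convex symmetric body, the number of lattice points in it is $\ll_n 1 + \vol(V\mathcal E) + (\text{lower-dimensional terms})$; but I prefer the cleaner route of bounding the lattice point count directly by a product of one-dimensional counts after choosing the right sublattice, as follows.

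Fix $i \in \{1, \ldots, n\}$. The image lattice $V^T \mathbb Z^n$ is a unimodular lattice $\Lambda$, and the admissible $\bm z$ lie in a box $R_i$ whose first $i$ coordinates are constrained to $|z_j| \leq \sqrt n B / \max\{1,\lambda_j\} \leq \sqrt n B/\lambda_j$ (using $\lambda_j \geq \lambda_i$ is not needed; we just use $\lambda_j |z_j| \le \sqrt n B$) and whose remaining coordinates are constrained to $|z_j| \leq \sqrt n B$. The assumption $\lambda_1 \leq CB$ — hence all $\lambda_j \leq CB$ — guarantees each side length is $\gg_{C,n} 1$, i.e. none of the constraining intervals is empty of scale, so the standard estimate ``number of lattice points of a unimodular lattice in a box $\prod_j [-\ell_j, \ell_j]$ with all $\ell_j \gg 1$ is $\ll_n \prod_j \ell_j$'' applies. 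This gives
\[
N_{\widetilde H}(B) \ll_{C,n} \prod_{j=1}^{i} \frac{B}{\max\{1,\lambda_j\}} \cdot \prod_{j=i+1}^{n} B \ll_{C,n} \frac{B^n}{\prod_{j=1}^i \max\{1,\lambda_j\}} \ll_{C,n} \frac{B^n}{1 + \lambda_1\cdots\lambda_i},
\]
where the last step uses that $\prod_{j=1}^i \max\{1,\lambda_j\} \gg_n 1 + \lambda_1\cdots\lambda_i$ (if some $\lambda_j < 1$ then, since the $\lambda_j$ are decreasing, $\lambda_j\cdots\lambda_i < 1$ and the product $\lambda_1\cdots\lambda_i$ is itself $\ll \lambda_1\cdots\lambda_{j-1} = \prod_{j'<j}\max\{1,\lambda_{j'}\}$, while if all $\lambda_j \geq 1$ the two sides agree up to the $+1$). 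Taking the minimum over $i \in \{1,\ldots,n\}$ yields the claim.

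The only genuinely delicate point is ensuring the lattice-points-in-a-box estimate is uniform: a box in a unimodular lattice of volume $\prod \ell_j$ can contain many more than $\prod \ell_j$ points if some $\ell_j$ is tiny, which is precisely why the hypothesis $\lambda_1 \leq CB$ is imposed — it forces every side length $B/\max\{1,\lambda_j\}$ to be at least $1/C$, after which the elementary bound on lattice points in a box with all sides $\gg 1$ applies with an implied constant depending only on $C$ and $n$. I expect this uniformity bookkeeping, rather than any deep idea, to be the main thing to get right; the rest is the singular value decomposition together with the monotonicity of the $\lambda_j$. One should also double check the reduction to the Euclidean norm picks up only harmless factors of $\sqrt n$, which get absorbed into the $\ll_{C,n}$.
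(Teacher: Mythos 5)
Your argument is correct and is essentially the paper's own proof in slightly different clothing: the paper bounds $N_{\widetilde H}(B)$ by counting integer points in the ellipsoid $\{\bm t \in \mathbb{R}^n : \bm t^T\widetilde H^T\widetilde H\bm t\le nB^2\}$ intersected with $[-B,B]^n$, whose principal radii are $\lambda_i^{-1}\sqrt n B$, which is exactly your rotated-box count in the singular value basis, and the subsequent use of $\lambda_1\le CB$ together with the monotonicity of the $\lambda_i$ is the same. One point to repair in the write-up: the ``standard estimate'' you invoke is false for general unimodular lattices even when all side lengths are $\geq 1$ (the lattice spanned by $(\epsilon,0)$ and $(0,1/\epsilon)$ is unimodular, yet $[-1,1]^2$ contains about $2/\epsilon$ of its points), so unimodularity alone is not the right hypothesis. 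What you actually need, and do have, is that $V^T\mathbb{Z}^n$ is a rotated copy of $\mathbb{Z}^n$, so distinct lattice points are at Euclidean distance at least $1$; a packing argument then gives $\#\left(V^T\mathbb{Z}^n\cap\prod_j[-\ell_j,\ell_j]\right)\ll_n\prod_j(\ell_j+1)\ll_{C,n}\prod_j\ell_j$ once every $\ell_j\gg_C 1$, which your hypothesis $\lambda_1\le CB$ guarantees; with that justification substituted, the rest of your argument (including the elementary bound $\prod_{j\le i}\max\{1,\lambda_j\}\geq\max\{1,\lambda_1\cdots\lambda_i\}\gg 1+\lambda_1\cdots\lambda_i$) goes through.
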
 
\begin{proof}
	Consider the ellipsoid
	\begin{equation*}
		\mathcal{E} \coloneqq \{ \bm{t} \in \mathbb{R}^n \colon \bm{t}^T \widetilde{H}^T \widetilde{H} \bm{t} \leq nB^2 \}.
	\end{equation*}
	Note that any $\bm{y} \in \mathbb{Z}^n$ counted by $N_{\widetilde{H}}(B)$ is contained in $\mathcal{E}\cap [-B,B]^n$. Now recall that $\widetilde{H}^T \widetilde{H}$ is a symmetric matrix with eigenvalues $\lambda_1^2, \hdots, \lambda_n^2$. Therefore the principal radii of the ellipsoid $\mathcal{E}$ are given by $\lambda_i^{-1} \sqrt{n} B$. Hence we find
	\begin{equation} \label{eq.N_H(B)_first_estimate}
		N_{\widetilde{H}}(B) \ll_n \prod_{i=1}^n \min \{1+ \lambda_i^{-1} \sqrt{n}B, B\}
	\end{equation}
	By assumption we have $\lambda_i \leq CB$ and so the quantity on the right hand side of \eqref{eq.N_H(B)_first_estimate} is bounded above by
	\begin{equation*}
		\prod_{i=1}^n \min \{2C \lambda_i^{-1} \sqrt{n} B, B\},
	\end{equation*}
	and thus
	\begin{equation*}
		N_{\widetilde{H}}(B) \ll_{C,n} B^n \prod_{i=1}^n \min \{ \lambda_i^{-1}, 1\}.
	\end{equation*}
	Since $\lambda_1 \geq \cdots \geq \lambda_n$ the result now follows.
\end{proof}

\begin{lemma} \label{lem.N_2_aux_in_terms_K_k}
	Given $B \geq 1$ one of the following three possibilities must be true. Either we have
		 \begin{equation} \label{eq.alt_1}
		 \frac{N_2^{\mathrm{aux}}(\bm{\beta}, B)}{B^n (\log B)^n} \ll_n \# (\mathbb{Z}^n \cap K_0(1)),
		 \end{equation}
		 or  there exist nonnegative integers $e_1, \hdots, e_k$ for some $k \in \{1, \hdots, n-1\}$ such that $\log B \gg_n e_1 \geq \hdots \geq e_k$ and
		 \begin{equation} \label{eq.alt_2}
		 \frac{2^{e_1 + \cdots + e_k} N_2^{\mathrm{aux}}(\bm{\beta}, B)}{B^n (\log B)^n} \ll_n \# (\mathbb{Z}^n \cap K_k (2^{e_1}, \hdots, 2^{e_k}, 1) ),
		 \end{equation}
		or there exist nonnegative integers $e_1, \hdots, e_n$ such that $\log B \gg_n e_1 \geq \hdots \geq e_n$ and
		 \begin{equation} \label{eq.alt_3}
		 \frac{2^{e_1 + \cdots + e_n} N_2^{\mathrm{aux}}(\bm{\beta}, B)}{B^n (\log B)^n} \ll_n \# (\mathbb{Z}^n \cap K_{n-1} (2^{e_1}, \hdots, 2^{e_n}) ).
		 \end{equation}
\end{lemma}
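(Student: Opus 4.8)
The goal is a dyadic decomposition of $N_2^{\mathrm{aux}}(\bm\beta,B)$ according to the sizes of the singular values $\lambda_{\bm\beta,1}(\bm x)\ge\cdots\ge\lambda_{\bm\beta,n}(\bm x)$ of $\widetilde H_{\bm\beta}(\bm x)$. First I would recall that, by definition, $N_2^{\mathrm{aux}}(\bm\beta,B)$ counts pairs $(\bm x,\bm y)\in(\mathbb Z^n)^2$ with $\nnorm{\bm x}_\infty,\nnorm{\bm y}_\infty\le B$ and $\nnorm{\widetilde H_{\bm\beta}(\bm x)\bm y}_\infty\le B$. The strategy is to fix $\bm x$, apply Lemma~\ref{lem.fixed_matrix_eigenvalues_counting} to the inner count over $\bm y$ (the hypothesis $\lambda_{\bm\beta,1}(\bm x)\le n^2\nnorm{\bm x}_\infty\le n^2 B$ needed there is exactly \eqref{eq.useful_estimate_sing_values}, so we take $C=n^2$), and then sum over $\bm x$, organising the sum by the dyadic ranges into which the $\lambda_{\bm\beta,i}(\bm x)$ fall.

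**Key steps.** (1) By Lemma~\ref{lem.fixed_matrix_eigenvalues_counting}, for each fixed $\bm x$ with $\nnorm{\bm x}_\infty\le B$ the number of admissible $\bm y$ is $\ll_n \min_{1\le i\le n} B^n/(1+\lambda_{\bm\beta,1}(\bm x)\cdots\lambda_{\bm\beta,i}(\bm x))$. (2) For each $i$ write $\lambda_{\bm\beta,i}(\bm x)\in(2^{e_i-1},2^{e_i}]$ for a nonnegative integer $e_i$ when $\lambda_{\bm\beta,i}(\bm x)>1$, and note that by \eqref{eq.useful_estimate_sing_values} one has $2^{e_i}\le 2n^2 B$, hence $e_i\ll_n \log B$; the $e_i$ are automatically non-increasing in $i$ since the $\lambda_{\bm\beta,i}$ are. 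Let $k$ be the largest index with $\lambda_{\bm\beta,k}(\bm x)>1$ (set $k=0$ if there is none). Then $\bm x$ lies in $K_k(2^{e_1},\dots,2^{e_k},1)$ (or $K_0(1)$ if $k=0$), and the choice $i=k$ in step~(1) gives an inner count $\ll_n B^n/2^{e_1+\cdots+e_k}$. (3) Sum over $\bm x$: the number of $\bm x$ contributing to a fixed tuple $(k;e_1,\dots,e_k)$ is $\le\#(\mathbb Z^n\cap K_k(2^{e_1},\dots,2^{e_k},1))$, so that block contributes $\ll_n (B^n/2^{e_1+\cdots+e_k})\,\#(\mathbb Z^n\cap K_k(2^{e_1},\dots,2^{e_k},1))$. (4) The number of admissible tuples $(e_1,\dots,e_k)$ with $0\le k\le n$ and $e_i\ll_n\log B$ is $\ll_n(\log B)^n$; hence one of these blocks is at least a $\gg_n(\log B)^{-n}$ fraction of $N_2^{\mathrm{aux}}(\bm\beta,B)$, which upon rearranging is precisely one of \eqref{eq.alt_1}, \eqref{eq.alt_2}, or \eqref{eq.alt_3}. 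The case $k=n$ should be folded into \eqref{eq.alt_3}: if $\lambda_{\bm\beta,n}(\bm x)>1$ then a fortiori $\bm x\in K_{n-1}(2^{e_1},\dots,2^{e_n})$ with $e_n\ge 1$, and the bound from step~(1) with $i=n-1$ (discarding the $\lambda_{\bm\beta,n}$ factor, which only helps) still yields inner count $\ll_n B^n/2^{e_1+\cdots+e_{n-1}}$; to match the stated $2^{e_1+\cdots+e_n}$ on the left of \eqref{eq.alt_3} one instead keeps all $n$ factors, using $i=n$ in step~(1), giving inner count $\ll_n B^n/2^{e_1+\cdots+e_n}$, and membership in $K_{n-1}(2^{e_1},\dots,2^{e_n})$ follows from conditions (ii)–(iii) of Definition~\ref{def.k_k} with the last two singular values both bounded by $E_n=2^{e_n}$.

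**Main obstacle.** The only genuinely delicate bookkeeping point is reconciling the index in Lemma~\ref{lem.fixed_matrix_eigenvalues_counting} (which ranges over $i=1,\dots,n$) with the cutoff $k$ defined by $\lambda_{\bm\beta,k}(\bm x)>1\ge\lambda_{\bm\beta,k+1}(\bm x)$: one must check that choosing the index equal to $k$ (resp.\ $n$ in the last alternative) is legitimate and produces exactly the weight $2^{e_1+\cdots+e_k}$ (resp.\ $2^{e_1+\cdots+e_n}$) appearing on the left-hand sides of \eqref{eq.alt_2} and \eqref{eq.alt_3}, and that truncating at $E_{k+1}=1$ in Definition~\ref{def.k_k}\eqref{cond.K_k_3} is compatible with the inequality $\lambda_{\bm\beta,i}(\bm x)>1$ failing for $i>k$. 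Everything else is a routine pigeonhole over $\ll_n(\log B)^n$ dyadic boxes, with the factor $(\log B)^n$ in the denominators on the right-hand sides accounting for that pigeonhole loss.
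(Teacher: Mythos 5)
Your proposal is correct and follows essentially the same route as the paper: partition $[-B,B]^n$ into the dyadic singular-value classes $K_0(1)$ and $K_k(2^{e_1},\dots,2^{e_k},1)$, bound the inner count over $\bm{y}$ by Lemma~\ref{lem.fixed_matrix_eigenvalues_counting} with $C=n^2$ via \eqref{eq.useful_estimate_sing_values} (trivial bound $B^n$ on the $K_0(1)$ class), and pigeonhole over the $\ll_n(\log B)^n$ classes. Your handling of the boundary case $k=n$ by checking membership in $K_{n-1}(2^{e_1},\dots,2^{e_n})$ directly is the same observation the paper packages as the inclusion $K_n(2^{e_1},\dots,2^{e_n},1)\subseteq K_{n-1}(2^{e_1},\dots,2^{e_n})$.
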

\begin{proof}
	If $k=n$ then condition \eqref{cond.K_k_3} in Definition~\ref{def.k_k} is always trivially satisfied and thus
	\begin{equation*}
		K_n(2^{e_1}, \hdots, 2^{e_n}, 1) \subseteq K_{n-1} (2^{e_1}, \hdots, 2^{e_n}).
	\end{equation*}
	In particular, \eqref{eq.alt_3} follows from \eqref{eq.alt_2} with $k=n$. We are left showing that either \eqref{eq.alt_1} holds or  there exist nonnegative integers $e_1, \hdots, e_k$ for some $k \in \{1, \hdots, n\}$ such that $\log B \gg_n e_1 \geq \hdots \geq e_k$ and \eqref{eq.alt_2} holds. 
	
	Note that the box $[-B,B]^n$ is the disjoint union of $K_0(1)$ and $K_k(2^{e_1}, \hdots, 2^{e_k},1)$ where $k$ runs over $1, \hdots, n$ and $e_i$ run over  integers $ \log B \gg_n e_1 \geq \hdots \geq e_k$. Given $\bm{x} \in \Z^n$ write
	\begin{equation*}
		N_{\bm{x}}(B) = \# \left\{ \bm{y} \in \mathbb{Z}^n \colon \nnorm{\bm{y}}_\infty \leq B, \; \nnorm{\widetilde{H}_{\bm{\beta}}(\bm{x}) \bm{y}  }_\infty \leq B \right\}.
	\end{equation*} 
	 We thus obtain
	\begin{equation} \label{eq.N2aux_long_sum}
		N_2^{\mathrm{aux}}(\bm{\beta},B) = \sum_{\substack{\bm{x} \in \mathbb{Z}^n \\ \bm{x} \in K_0(1)}}N_{\bm{x}}(B) + \sum_{\substack{1 \leq k \leq n \\1 \leq e_k \leq \hdots \leq e_1 \\ e_1 \ll_n \log B}} \; \sum_{\substack{\bm{x} \in \mathbb{Z}^n \\ \bm{x} \in K_k(2^{e_1}, \hdots, 2^{e_k},1 )}} N_{\bm{x}}(B).
	\end{equation}
	Note that the number of terms of the outer sum of the second term of the right hand side of \eqref{eq.N2aux_long_sum} is bounded by $\ll_n (\log B)^n$. From this it follows that we either have
	\begin{equation} \label{eq.Nx_bound_alt_1}
		\sum_{\substack{\bm{x} \in \mathbb{Z}^n \\ \bm{x} \in K_0(1)}}N_{\bm{x}}(B) \gg_n \frac{N_2^{\mathrm{aux}}(\bm{\beta}, B) }{(\log B)^n}
	\end{equation} 
	or there exists an integer $k \in \{1, \hdots, n \}$ and integers $e_1 \geq \hdots \geq e_k \geq 1$ such that
	\begin{equation} \label{eq.Nx_bound_alt_2}
		\sum_{\substack{\bm{x} \in \mathbb{Z}^n \\ \bm{x} \in K_k(2^{e_1}, \hdots, 2^{e_k},1 )}} N_{\bm{x}}(B) \gg_n \frac{N_2^{\mathrm{aux}}(\bm{\beta}, B) }{(\log B)^n}.
	\end{equation}
	If \eqref{eq.Nx_bound_alt_1} holds then \eqref{eq.alt_1} follows from the trivial bound $N_{\bm{x}}(B) \ll_n B^n$. Assume now \eqref{eq.Nx_bound_alt_2} holds. From \eqref{eq.useful_estimate_sing_values}, for each $\bm{x}$ appearing in the sum of \eqref{eq.Nx_bound_alt_2}  we have the bound
	\begin{equation*}
		\lambda_{\bm{\beta},1}(\bm{x}) \leq n^2B.
	\end{equation*}
	Applying Lemma \ref{lem.fixed_matrix_eigenvalues_counting} with $C = n^2$ and $\widetilde{H} = \widetilde{H}_{\bm{\beta}}(\bm{x})$ we find 
	\begin{equation} \label{eq.estimating_Nx_last_time}
		N_{\bm{x}}(B) \ll_n \frac{B^n}{2^{e_1 + \hdots + e_k}}.
	\end{equation}
	Substituting \eqref{eq.estimating_Nx_last_time} into \eqref{eq.Nx_bound_alt_2} delivers \eqref{eq.alt_2}.
\end{proof}
We now recall two Lemmas from~\cite{myerson_cubic} that are conveniently stated in a form so that they apply to our setting.

\begin{lemma}[Lemma 3.2 in \cite{myerson_cubic}] \label{lem.minors_and_sing_values}
	Let $M$ be a real $m \times n$ matrix with singular values $\lambda_1, \hdots, \lambda_n$ listed with multiplicity in descending order. For $k \leq \min \{m,n \}$ denote by $\bm{D}^{(k)}$ the vector of $k \times k$ minors of $M$. Given such $k$, the following statements are true:
	\begin{enumerate}[(i)]
		\item \label{lem.singular_1} We have \begin{equation*} \label{eq.minors_asymp_sing_values}
			\nnorm{\bm{D}^{(k)}}_\infty \asymp \lambda_1 \cdots \lambda_k
		\end{equation*}
		\item There is a $k$-dimensional subspace $V \subset \mathbb{R}^n$, which can be taken to be a span of standard basis vectors $\bm{e}_i$, such that for all $\bm{v} \in V$ the following holds \begin{equation*} \label{eq.lower_bound_matrix_mult_sing_values}
			\nnorm{M \bm{v}}_\infty \gg_{m,n} \nnorm{\bm{v}}_\infty \lambda_k
		\end{equation*}
		\item Given $C \geq 1$ one of the following alternatives holds. Either there exists a $(n-k+1)$-dimensional subspace $X \subset \mathbb{R}^n$ such that
		\begin{equation*}
			\nnorm{M \bm{X}}_\infty \leq C^{-1} \nnorm{\bm{X}}_\infty \quad \text{for all $\bm{X} \in X$},	
		\end{equation*}
		or there is a $k$-dimensional subspace $V \subset \mathbb{R}^n$ spanned by standard basis vectors such that 
		\begin{equation*}
			\nnorm{M \bm{v}}_\infty \gg_{m,n} C^{-1} \nnorm{\bm{v}}_\infty \quad \text{for all $\bm{v} \in V$}.
		\end{equation*}
	\end{enumerate}
\end{lemma}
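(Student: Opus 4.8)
The plan is to derive all three statements from the singular value decomposition $M = U\Sigma W^{T}$, where $U$ and $W$ are orthogonal ($m\times m$ and $n\times n$ respectively) and $\Sigma$ is the $m\times n$ matrix carrying $\lambda_1\ge\cdots\ge\lambda_n\ge 0$ on its main diagonal (with trailing zero rows if $m<n$). Throughout, all implied constants depend only on $m$ and $n$, which is harmless since $k\le\min\{m,n\}$ forces $\binom{m}{k},\binom{n}{k}$ to be bounded in terms of $m,n$.

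For \eqref{eq.minors_asymp_sing_values} (part (i)) I would pass to the $k$-th compound matrix $\Lambda^{k}M$: its entries are exactly the $k\times k$ minors of $M$, and its singular values are the products $\lambda_{i_1}\cdots\lambda_{i_k}$ with $i_1<\cdots<i_k$, so its operator norm equals $\lambda_1\cdots\lambda_k$. Since every entry of a matrix is bounded in absolute value by its operator norm, $\nnorm{\bm{D}^{(k)}}_\infty\le\lambda_1\cdots\lambda_k$. For the reverse bound, the Frobenius norm of $\Lambda^{k}M$ is at least its largest singular value $\lambda_1\cdots\lambda_k$ and at most $\big(\binom{m}{k}\binom{n}{k}\big)^{1/2}\nnorm{\bm{D}^{(k)}}_\infty$, which yields $\nnorm{\bm{D}^{(k)}}_\infty\gg_{m,n}\lambda_1\cdots\lambda_k$.

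For part (ii), set $Q=W^{T}$ and let $Q_k$ be the $k\times n$ matrix of the first $k$ rows of $Q$; these rows are orthonormal, so $Q_kQ_k^{T}=I_k$ and $\nnorm{Q_k}_{\mathrm{op}}=1$. By the Cauchy--Binet identity, $\sum_{S}(\det Q_S)^{2}=\det(Q_kQ_k^{T})=1$, the sum running over $k$-element subsets $S\subseteq\{1,\dots,n\}$ and $Q_S$ the corresponding $k\times k$ submatrix; hence some $S$ satisfies $|\det Q_S|\ge\binom{n}{k}^{-1/2}$. As $Q_S$ is obtained from $Q_k$ by deleting columns, all of its singular values are $\le 1$, so its least singular value is $\ge|\det Q_S|\gg_{n,k} 1$. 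Taking $V=\Span\{\bm{e}_j:j\in S\}$, for $\bm{v}\in V$ we have $Q_k\bm{v}=Q_S(\bm{v}|_S)$, whence
\[
\nnorm{M\bm{v}}_\infty\ \ge\ m^{-1/2}\nnorm{M\bm{v}}_2\ =\ m^{-1/2}\nnorm{\Sigma Q\bm{v}}_2\ \ge\ m^{-1/2}\lambda_k\nnorm{Q_k\bm{v}}_2\ \gg_{m,n}\ \lambda_k\nnorm{\bm{v}}_2\ \ge\ \lambda_k\nnorm{\bm{v}}_\infty .
\]

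Finally, part (iii) is a dichotomy on the size of $\lambda_k$. If $\lambda_k\le n^{-1/2}C^{-1}$, let $X$ be the span of the right singular vectors of $M$ attached to $\lambda_k,\lambda_{k+1},\dots,\lambda_n$; this is $(n-k+1)$-dimensional and satisfies $\nnorm{M\bm{X}}_\infty\le\nnorm{M\bm{X}}_2\le\lambda_k\nnorm{\bm{X}}_2\le n^{1/2}\lambda_k\nnorm{\bm{X}}_\infty\le C^{-1}\nnorm{\bm{X}}_\infty$. Otherwise $\lambda_k> n^{-1/2}C^{-1}$, and the coordinate subspace $V$ produced in part (ii) gives $\nnorm{M\bm{v}}_\infty\gg_{m,n}\lambda_k\nnorm{\bm{v}}_\infty\gg_{m,n} C^{-1}\nnorm{\bm{v}}_\infty$. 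The main obstacle is step (ii): parts (i) and (iii) are essentially bookkeeping once the SVD is in place, but (ii) requires the quantitative selection of a \emph{coordinate} $k$-subspace on which $M$ expands by a constant multiple of $\lambda_k$, and the Cauchy--Binet volume argument above is the cleanest route I see; the one point to watch is that the determinant lower bound is transferred correctly to $\sigma_{\min}(Q_S)$ and that the constants genuinely depend only on $m,n$.
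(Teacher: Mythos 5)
Your argument is correct: part (i) via the $k$-th compound matrix, part (ii) via Cauchy--Binet applied to the top $k\times n$ block of $W^{T}$ to extract a coordinate $k$-subspace on which the least singular value of the selected $k\times k$ block is $\geq\binom{n}{k}^{-1/2}$, and part (iii) as a dichotomy on the size of $\lambda_k$ all check out, with constants depending only on $m,n$ as required. Note that the paper itself gives no proof of this statement --- it is quoted verbatim as Lemma 3.2 of Rydin Myerson's cubic-forms paper --- and your SVD/compound-matrix route is essentially the same argument used there, so there is nothing to flag beyond confirming that your write-up is complete and self-contained.
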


Next, we are interested in counting the number of integer tuples contained in the sets $K_k(E_1, \hdots, E_{k+1})$. The next Lemma is taken from \cite{myerson_cubic}.

\begin{lemma}[Lemma 4.1 in \cite{myerson_cubic}] \label{lem.counting_K_k}
	Let $B,C \geq 1$, $\sigma \in \{0, \hdots, n-1 \}$ and $k \in \{0, \hdots, n-\sigma-1 \}$. Assume further $CB \geq E_1 \geq \hdots \geq E_{k+1} \geq 1$. Then one of the following alternatives must hold.
	\begin{enumerate}
		\item[(I)\textsubscript{$k$}] \label{alt_I_k} We have the estimate  
		\begin{equation*}
			\# (\mathbb{Z}^n \cap K_k(E_1, \hdots, E_{k+1})) \ll_{C,n} B^\sigma (E_1 \cdots E_{k+1}) E_{k+1}^{n-\sigma-k-1}.
		\end{equation*} 
		\item[(II)\textsubscript{$k$}] \label{alt_II_k} For some integer $b \in \{1, \hdots, k\}$ there exists a $(\sigma + b +1)$-dimensional subspace $X \subset \mathbb{R}^n$ and there exists $\bm{x}^{(0)} \in K_b(E_1, \hdots, E_{b+1})$ such that $E_{b+1} < C^{-1} E_b$ and
		\begin{equation*}
			\nnorm{J_{\bm{D}^{(\bm{\beta}, b+1)}}(\bm{x}^{(0)}) \bm{X} }_\infty \leq C^{-1}\nnorm{ \bm{D}^{(\bm{\beta}, b)}(\bm{x}^{(0)})}_\infty \nnorm{\bm{X}}_\infty \quad \text{for all $\bm{X} \in X$.}
		\end{equation*}
		\item[(III)]  \label{alt_III} There exists a $(\sigma + 1)$-dimensional subspace $X \subset \mathbb{R}^n$ such that
		\begin{equation} \label{eq.H_tilde_small_on_subspace}
			\nnorm{\widetilde{H}_{\bm{\beta}}(\bm{X})}_\infty \leq C^{-1} \nnorm{\bm{X}}_\infty \quad \text{for all $\bm{X} \in X$.}
		\end{equation}
	\end{enumerate}
\end{lemma}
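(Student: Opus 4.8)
The plan is to prove the trichotomy by induction on $k$ with $\sigma$ held fixed, so that the statement for $k-1$ (and the same $\sigma$) is already available when one treats $k$. For the base case $k=0$, the set $K_0(E_1)$ consists of the $\bm{x}$ with $\nnorm{\bm{x}}_\infty \leq B$ all of whose singular values $\lambda_{\bm{\beta},i}(\bm{x})$ are at most $E_1$, which up to an $O_n(1)$ just says $\nnorm{\widetilde{H}_{\bm{\beta}}(\bm{x})}_\infty \ll_n E_1$; since $\bm{x} \mapsto \widetilde{H}_{\bm{\beta}}(\bm{x})$ is linear I would feed the $n^2 \times n$ matrix of this map into part (iii) of Lemma~\ref{lem.minors_and_sing_values} with index $n-\sigma$. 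One branch produces a $(\sigma+1)$-dimensional subspace on which $\widetilde{H}_{\bm{\beta}}$ is uniformly small, which is alternative (III); the other produces an $(n-\sigma)$-dimensional coordinate subspace $V$ with $\nnorm{\widetilde{H}_{\bm{\beta}}(\bm{v})}_\infty \gg_n C^{-1}\nnorm{\bm{v}}_\infty$ on $V$, and then fixing the $\sigma$ coordinates off $V$ ($\ll B^\sigma$ choices) and confining the $V$-component to a box of side $\ll_{C,n} E_1$ by a difference argument gives the count $\ll_{C,n} B^\sigma E_1^{n-\sigma}$, which is alternative (I)$_0$.

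For the inductive step with $k \geq 1$ I would split on the size of the gap $E_k/E_{k+1}$. If $E_{k+1} \geq C^{-1}E_k$ (the \emph{no-gap} case) then $K_k(E_1,\dots,E_{k+1}) \subseteq K_{k-1}(E_1,\dots,E_{k-1},E_k)$: the constraints on $\lambda_{\bm{\beta},i}$ for $i \leq k-1$ coincide, and for $i \geq k$ one has $\lambda_{\bm{\beta},i}(\bm{x}) \leq E_k$. Applying the inductive hypothesis to the larger set, alternative (III) passes through verbatim, alternative (II)$_{k-1}$ is already of the form (II)$_k$ (its admissible index $b$ lies in $\{1,\dots,k-1\}\subseteq\{1,\dots,k\}$), and if (I)$_{k-1}$ holds then, using $E_k \leq C E_{k+1}$ and $n-\sigma-k \geq 1$, the bound $B^\sigma(E_1\cdots E_{k-1}E_k)E_k^{n-\sigma-k} \ll_{C,n} B^\sigma(E_1\cdots E_{k+1})E_{k+1}^{n-\sigma-k-1}$ delivers (I)$_k$.

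The substantial case is $E_{k+1} < C^{-1}E_k$, where I would exploit the $(k+1)$-st gap through the minor maps. On $K_k$, part (i) of Lemma~\ref{lem.minors_and_sing_values} gives $\nnorm{\bm{D}^{(\bm{\beta},k)}(\bm{x})}_\infty \asymp_n E_1\cdots E_k$ and $\nnorm{\bm{D}^{(\bm{\beta},k+1)}(\bm{x})}_\infty \ll_n E_1\cdots E_{k+1}$, the latter being small compared with $\nnorm{\bm{D}^{(\bm{\beta},k)}(\bm{x})}_\infty\cdot E_k$. I would cover $[-B,B]^n$ by cubes of side $\rho \leq B$, choose in each cube meeting $\mathbb{Z}^n \cap K_k$ a base point $\bm{x}^{(0)} \in \mathbb{Z}^n \cap K_k$, and Taylor-expand the degree-$(k+1)$ polynomial map $\bm{D}^{(\bm{\beta},k+1)}$ at $\bm{x}^{(0)}$; since its coefficients are $O_n(1)$ and $\nnorm{\bm{x}^{(0)}}_\infty \leq B$, the terms of degree $\geq 2$ in the increment are $\ll_n B^{k-1}\rho^2$, whence $\nnorm{J_{\bm{D}^{(\bm{\beta},k+1)}}(\bm{x}^{(0)})(\bm{x}-\bm{x}^{(0)})}_\infty \ll_n E_1\cdots E_{k+1} + B^{k-1}\rho^2$ for every integer $\bm{x}$ in that cube. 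Feeding $J_{\bm{D}^{(\bm{\beta},k+1)}}(\bm{x}^{(0)})$ into part (iii) of Lemma~\ref{lem.minors_and_sing_values} with index $n-\sigma-k$, one branch yields a $(\sigma+k+1)$-dimensional subspace on which the Jacobian is small relative to $\nnorm{\bm{D}^{(\bm{\beta},k)}(\bm{x}^{(0)})}_\infty \asymp E_1\cdots E_k$, which is alternative (II)$_k$ with $b=k$ (note $\bm{x}^{(0)} \in K_k$ and $E_{k+1}<C^{-1}E_k$, exactly its hypotheses); the other branch yields an $(n-\sigma-k)$-dimensional coordinate subspace along which the Jacobian is bounded below, so that the difference argument bounds the number of integer points of $K_k$ in each cube, and summing over the $\asymp (B/\rho)^n$ cubes reproduces alternative (I)$_k$ after optimising $\rho$.

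The step I expect to be the main obstacle is precisely this last balancing: one must pick $\rho$ (roughly where $B^{k-1}\rho^2$ meets $E_1\cdots E_{k+1}$, subject to $\rho \leq B$) and the constant fed into Lemma~\ref{lem.minors_and_sing_values}(iii) so that the per-cube lattice-point bound times $(B/\rho)^n$ is exactly $B^\sigma(E_1\cdots E_{k+1})E_{k+1}^{n-\sigma-k-1}$, while the subspace extracted in the other branch has dimension at least $\sigma+k+1$ and the inequality it satisfies becomes, after normalising by $\nnorm{\bm{D}^{(\bm{\beta},k)}(\bm{x}^{(0)})}_\infty$, literally the one in (II)$_k$. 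Controlling the higher-order Taylor terms and the un-normalised sizes of the Jacobian entries through this optimisation is the technical heart of the matter; everything else is bookkeeping.
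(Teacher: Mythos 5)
You have reconstructed a proof from scratch, but it is worth noting first that the paper itself does not prove this lemma: it imports it verbatim as Lemma 4.1 of \cite{myerson_cubic}, and its only original content is the remark that Myerson's proof uses nothing beyond Lemma~\ref{lem.minors_and_sing_values} and the bound \eqref{eq.useful_estimate_sing_values}, which hold for singular values of the (non-symmetric) matrices $\widetilde{H}_{\bm{\beta}}(\bm{x})$ just as for eigenvalues of symmetric ones. Measured against that, your base case $k=0$ and your no-gap inductive step (where $E_{k+1}\geq C^{-1}E_k$, so $K_k(E_1,\hdots,E_{k+1})\subseteq K_{k-1}(E_1,\hdots,E_k)$ and (I)\textsubscript{$k-1$} upgrades to (I)\textsubscript{$k$} using $E_k\leq CE_{k+1}$ and $n-\sigma-k\geq 1$) are sound.

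The genuine gap is in the case $E_{k+1}<C^{-1}E_k$, and it is exactly the ``balancing'' you defer to the end: it cannot be closed as you describe. Your argument bounds, in each cube of side $\rho$, the number of points of $K_k$ by $\ll (\rho+1)^{\sigma+k}\bigl(1+E_{k+1}+B^{k-1}\rho^{2}/(E_1\cdots E_k)\bigr)^{n-\sigma-k}$ (fixing the $\sigma+k$ coordinates off $V$ and confining the $V$-component by the difference argument), and then multiplies by the total number $\asymp (B/\rho)^{n}$ of cubes in the covering of $[-B,B]^n$. Take $n\geq 2$, $\sigma=0$, $k=1$, $E_1\asymp B$, $E_2=1$: alternative (I)\textsubscript{$1$} demands $\ll B$, while the minimum over $1\leq\rho\leq B$ of $(B/\rho)^{n}\,\rho\,\bigl(1+\rho^{2}/B\bigr)^{n-1}$ is $\asymp B^{(n+1)/2}$, attained at $\rho\asymp B^{1/2}$, so you overshoot the target by $B^{(n-1)/2}$ no matter how $\rho$ is chosen. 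The loss comes from summing the per-cube bound over \emph{every} cube: the whole content of (I)\textsubscript{$k$} is that, absent alternatives (II)\textsubscript{$k$} and (III), the set $K_k$ concentrates near a locus of dimension roughly $\sigma+k$, so that only a small proportion of the cubes meet it at all; a single application of Lemma~\ref{lem.minors_and_sing_values}(iii) per cube gives no control on the number of non-empty cubes, and obtaining such control is essentially the statement being proved. So what is missing is not a routine optimisation of $\rho$ and constants but the actual counting mechanism of Myerson's argument in the gap case; without it the inductive step fails.
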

\begin{remark}
	In~\cite{myerson_cubic}, Lemma \ref{lem.counting_K_k} was stated for $\widetilde{H}_{\bm{\beta}}(\bm{x})$ being a symmetric matrix, and $\lambda_{\bm{\beta},i}(\bm{x})$ were taken to be the eigenvalues of $\widetilde{H}_{\bm{\beta}}(\bm{x})$ whose absolute values coincide with its singular values. However, an inspection of the proof shows that only the estimates in Lemma \ref{lem.minors_and_sing_values} as well as \eqref{eq.useful_estimate_sing_values} were used, which are valid for singular values as well as the (absolute values) of the eigenvalues. Therefore the proof remains valid in our setting.
\end{remark}
The next Lemma is similar to Lemma 5.1 in \cite{myerson_cubic}, however we need to account for the fact that $\widetilde{H}_{\bm{\beta}}(\bm{x})$ is not necessarily a symmetric matrix.
\begin{lemma} \label{lem.bilinear_small}
	Let $b \in \{1, \hdots, n-1\}$ and $\bm{x}^{(0)} \in \mathbb{R}^n$ be such that $\bm{D}^{(\bm{\beta},b)}(\bm{x}^{(0)}) \neq 0$. Then there exist subspaces $Y_1, Y_2 \subseteq \mathbb{R}^n$ with $\dim Y_1 = \dim Y_2 = n-b$ such that for all $\bm{Y}_1 \in Y_1$, $\bm{Y}_2 \in Y_2$ and $\bm{t} \in \mathbb{R}^n$ we have
	\begin{equation} \label{eq.estimate_bilinear_form}
		\bm{Y}_1^T \widetilde{H}_{\bm{\beta}}(\bm{t}) \bm{Y}_2 \ll_n \left( \frac{\nnorm{J_{\bm{D}^{(\bm{\beta},b+1)}}(\bm{x}^{(0)}) \bm{t}}_\infty}{\nnorm{\bm{D}^{(\bm{\beta},b)}(\bm{x}^{(0)})}_\infty} + \frac{\lambda_{\bm{\beta}, b+1}(\bm{x}^{(0)}) \cdot \nnorm{\bm{t}}_\infty }{\lambda_{\bm{\beta},b} (\bm{x}^{(0)})}  \right) \nnorm{\bm{Y}_1}_\infty \nnorm{\bm{Y}_2}_\infty
	\end{equation}
	where the implied constant only depends on $n$ but is otherwise independent from $\widetilde{H}_{\bm{\beta}}(\bm{t})$
\end{lemma}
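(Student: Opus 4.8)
The plan is to reduce the bilinear estimate to a statement about a single fixed matrix $\widetilde{H}_{\bm{\beta}}(\bm{e}_\ell)$ for each standard basis vector $\bm{e}_\ell$, and then patch the $\ell$ together using linearity of $\bm{t}\mapsto\widetilde{H}_{\bm{\beta}}(\bm{t})$. Writing $M=\widetilde{H}_{\bm{\beta}}(\bm{x}^{(0)})$, one has $\bm{D}^{(\bm{\beta},b)}(\bm{x}^{(0)})$ equal to the vector of $b\times b$ minors of $M$, so by Lemma~\ref{lem.minors_and_sing_values}\eqref{lem.singular_1}, $\nnorm{\bm{D}^{(\bm{\beta},b)}(\bm{x}^{(0)})}_\infty\asymp_n \lambda_{\bm{\beta},1}(\bm{x}^{(0)})\cdots\lambda_{\bm{\beta},b}(\bm{x}^{(0)})$ and similarly for $\bm{D}^{(\bm{\beta},b+1)}$. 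The first step is to produce the two subspaces $Y_1,Y_2$: apply Lemma~\ref{lem.minors_and_sing_values}(ii) to $M$ to get a $b$-dimensional coordinate subspace on which $M$ (and, for $Y_1$, $M^T$) is bounded below by $\lambda_{\bm{\beta},b}(\bm{x}^{(0)})$, and take $Y_2$ (resp.\ $Y_1$) to be the orthogonal complement of the span of the corresponding right (resp.\ left) singular vectors belonging to $\lambda_{\bm{\beta},1},\dots,\lambda_{\bm{\beta},b}$; each has dimension $n-b$, and on $Y_2$ one has $\nnorm{M\bm{Y}_2}_2\le \lambda_{\bm{\beta},b+1}(\bm{x}^{(0)})\nnorm{\bm{Y}_2}_2$, and dually for $M^T$ on $Y_1$.

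Next I would expand $\bm{Y}_1^T\widetilde{H}_{\bm{\beta}}(\bm{t})\bm{Y}_2$. Write $\bm{t}=\sum_\ell t_\ell\bm{e}_\ell$, so $\widetilde{H}_{\bm{\beta}}(\bm{t})=\sum_\ell t_\ell\widetilde{H}_{\bm{\beta}}(\bm{e}_\ell)$, and it suffices to bound each $\bm{Y}_1^T\widetilde{H}_{\bm{\beta}}(\bm{e}_\ell)\bm{Y}_2$ for $\bm{Y}_1\in Y_1,\bm{Y}_2\in Y_2$ in terms of the entries of the $\ell$-th column block of $J_{\bm{D}^{(\bm{\beta},b+1)}}(\bm{x}^{(0)})$, i.e.\ $\partial \bm{D}^{(\bm{\beta},b+1)}/\partial x_\ell$ evaluated at $\bm{x}^{(0)}$, together with a term involving $\lambda_{\bm{\beta},b+1}(\bm{x}^{(0)})$. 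The key identity to exploit is that differentiating a $(b+1)\times(b+1)$ minor of $\widetilde{H}_{\bm{\beta}}(\bm{x})$ with respect to $x_\ell$ produces (via the cofactor/Jacobi formula for derivatives of determinants) a sum of $b\times b$ minors of $M$ multiplied by entries of $\widetilde{H}_{\bm{\beta}}(\bm{e}_\ell)$. Concretely, one selects a size-$b$ set of rows $I$ and columns $J$ realising $\nnorm{\bm{D}^{(\bm{\beta},b)}(\bm{x}^{(0)})}_\infty$; then the SVD-adapted bases of $Y_1$ and $Y_2$ can be chosen compatibly with $I,J$ so that, modulo the singular directions, $\bm{Y}_1^T\widetilde{H}_{\bm{\beta}}(\bm{e}_\ell)\bm{Y}_2$ is expressed as a ratio of a bordered determinant (a $(b+1)$-minor derivative) over the chosen $b$-minor, which is exactly an entry of $J_{\bm{D}^{(\bm{\beta},b+1)}}(\bm{x}^{(0)})\bm{e}_\ell$ divided by $\nnorm{\bm{D}^{(\bm{\beta},b)}(\bm{x}^{(0)})}_\infty$, up to $O_n(1)$; the correction coming from the fact that $Y_1,Y_2$ are not exactly annihilated by $M,M^T$ but only compressed to size $\lambda_{\bm{\beta},b+1}(\bm{x}^{(0)})$ contributes the second term $\lambda_{\bm{\beta},b+1}(\bm{x}^{(0)})\nnorm{\bm{t}}_\infty/\lambda_{\bm{\beta},b}(\bm{x}^{(0)})$ after dividing by $\lambda_{\bm{\beta},1}(\bm{x}^{(0)})\cdots\lambda_{\bm{\beta},b}(\bm{x}^{(0)})\asymp\nnorm{\bm{D}^{(\bm{\beta},b)}(\bm{x}^{(0)})}_\infty$. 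Summing over $\ell$ and absorbing $\sum_\ell|t_\ell|\ll_n\nnorm{\bm{t}}_\infty$ gives \eqref{eq.estimate_bilinear_form}.

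I expect the main obstacle to be the bookkeeping in the non-symmetric case: in Myerson's Lemma 5.1 the matrix is symmetric, so $Y_1=Y_2$ and the left and right singular vectors coincide, whereas here I must carry two independent bases and check that the cofactor expansion of $\partial/\partial x_\ell$ of a $(b+1)$-minor of $\widetilde{H}_{\bm{\beta}}(\bm{x})$ still collapses cleanly against the chosen compressed subspaces. The safest route is to diagonalise $M$ via its singular value decomposition $M=U\Sigma V^T$, set $Y_1=U(\{0\}^b\times\mathbb{R}^{n-b})$, $Y_2=V(\{0\}^b\times\mathbb{R}^{n-b})$, rewrite everything in the singular bases so that $M$ acts as $\diag(\lambda_{\bm{\beta},1},\dots,\lambda_{\bm{\beta},n})$, and then note that $U^T\widetilde{H}_{\bm{\beta}}(\bm{e}_\ell)V$ has its lower-right $(n-b)\times(n-b)$ block controlled by a Schur-complement / implicit-differentiation argument: the vanishing of $\bm{D}^{(\bm{\beta},b+1)}$-type quantities forces that block to be small unless $J_{\bm{D}^{(\bm{\beta},b+1)}}(\bm{x}^{(0)})\bm{e}_\ell$ is large relative to $\bm{D}^{(\bm{\beta},b)}(\bm{x}^{(0)})$. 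Once the identity is set up in the singular bases, the inequality follows by the triangle inequality and the minors--singular-values dictionary of Lemma~\ref{lem.minors_and_sing_values}, with all implied constants depending only on $n$ since only combinatorial quantities ($b,n$, numbers of minors) enter.
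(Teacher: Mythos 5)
Your overall mechanism is the right one (differentiate the $(b+1)\times(b+1)$ minors, identify the main term as the bilinear form on the near-singular subspaces times $\lambda_{\bm{\beta},1}\cdots\lambda_{\bm{\beta},b}$, and let the small singular values produce the error term $\lambda_{\bm{\beta},b+1}/\lambda_{\bm{\beta},b}$), and your SVD route is a genuine alternative to the paper's construction: the paper instead builds explicit cofactor vectors $\bm{y}_1^{(i)},\bm{y}_2^{(i)}$ whose entries are $b\times b$ minors, obtains exact Laplace-expansion identities, differentiates them in the direction $\bm{t}$, and handles the norm comparison by permuting so the largest $b$-minor sits in the bottom-right corner; with orthonormal singular bases that last step would indeed come for free.

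However, there is a genuine gap in your assembly step. You reduce to the coordinate directions, bounding each $\lvert\bm{Y}_1^T\widetilde{H}_{\bm{\beta}}(\bm{e}_\ell)\bm{Y}_2\rvert$ by $\nnorm{\partial\bm{D}^{(\bm{\beta},b+1)}/\partial x_\ell(\bm{x}^{(0)})}_\infty/\nnorm{\bm{D}^{(\bm{\beta},b)}(\bm{x}^{(0)})}_\infty$ plus the $\lambda$-term, and then ``sum over $\ell$ and absorb $\sum_\ell\lvert t_\ell\rvert\ll_n\nnorm{\bm{t}}_\infty$''. Taking absolute values column by column only yields a numerator of the shape $\sum_\ell\lvert t_\ell\rvert\,\nnorm{J_{\bm{D}^{(\bm{\beta},b+1)}}(\bm{x}^{(0)})\bm{e}_\ell}_\infty$, which is \emph{not} bounded by $\nnorm{J_{\bm{D}^{(\bm{\beta},b+1)}}(\bm{x}^{(0)})\bm{t}}_\infty$: the latter can be small through cancellation between columns while each column is large. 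That cancellation is exactly what the lemma must capture, since in Corollary~\ref{cor.N_2_small_or_bilinear_vanishes} alternative (II)\textsubscript{$k$} only provides a subspace $X$ on which $J_{\bm{D}^{(\bm{\beta},b+1)}}(\bm{x}^{(0)})\bm{X}$ is small relative to $\nnorm{\bm{D}^{(\bm{\beta},b)}(\bm{x}^{(0)})}_\infty$, with no control on individual columns; your weaker inequality would not support that deduction. The repair is to keep everything linear in $\bm{t}$ until the very end: work with the directional derivative $\partial_{\bm{t}}$ of the (rotated) $(b+1)$-minors, whose main term is a bounded linear combination of the entries of $J_{\bm{D}^{(\bm{\beta},b+1)}}(\bm{x}^{(0)})\bm{t}$ (the compound matrices of $U,V$ have bounded entries), and only then take absolute values; the $\sum_\ell\lvert t_\ell\rvert\ll_n\nnorm{\bm{t}}_\infty$ absorption is legitimate solely for the $\lambda_{\bm{\beta},b+1}$-error term. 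This is precisely how the paper proceeds, applying $\partial_{\bm{t}}$ to the exact identity relating $\widetilde{H}_{\bm{\beta}}(\bm{x})\bm{y}_2^{(i)}(\bm{x})$ to $\bm{D}^{(\bm{\beta},b+1)}(\bm{x})$ rather than decomposing $\bm{t}$ into standard basis vectors.
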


\begin{proof}
	Given $\bm{x} \in \mathbb{R}^n$ define $\bm{y}^{(1)}_1(\bm{x}), \hdots, \bm{y}^{(n-b)}_1(\bm{x})$ in the following way. The $j$-th entries are given by
	\begin{equation} \label{eq.def_y_1}
		(y_1^{(i)}(\bm{x}))_j= \begin{cases}
			(-1)^{n-b} \det \left( (\widetilde{H}_{\bm{\beta}}(\bm{x})_{k \ell} )_{\substack{k = n-b+1, \hdots, n \\ \ell = n-b+1, \hdots, n }} \right) \quad &\text{if $j=i$,} \\
			(-1)^j \det \left( (\widetilde{H}_{\bm{\beta}}(\bm{x})_{k \ell} )_{\substack{k = i, n-b+1, \hdots, n; \; k \neq j \\ \ell = n-b+1, \hdots, n }} \right) &\text{if $j > n-b$,} \\
			0 &\text{otherwise},
		\end{cases}
	\end{equation}
	where $k = i, n-b+1, \hdots, n; \; k \neq j$ denotes that we let the index $k$ run over the values $i,n-b+1, \hdots, n$ with $k = j$ omitted. Similarly we define $\bm{y}^{(1)}_2(\bm{x}), \hdots, \bm{y}^{(n-b)}_2(\bm{x})$ by
	\begin{equation*} \label{eq.def_y_2}
		(y_2^{(i)}(\bm{x}))_j = \begin{cases}
			(-1)^{n-b} \det \left( (\widetilde{H}_{\bm{\beta}}(\bm{x})_{k \ell} )_{\substack{k = n-b+1, \hdots, n \\ \ell = n-b+1, \hdots, n }} \right) \quad &\text{if $j=i$,} \\
			(-1)^j \det \left( (\widetilde{H}_{\bm{\beta}}(\bm{x})_{k \ell} )_{\substack{k = n-b+1, \hdots, n \\ \ell = i, n-b+1, \hdots, n; \; \ell \neq j }} \right) &\text{if $j > n-b$,} \\
			0 &\text{otherwise}.
		\end{cases}
	\end{equation*} \label{eq.determinants_laplace_y_1}
	Using the Laplace expansion of a determinant along columns and rows we thus obtain
	\begin{equation}
		(\bm{y}_1^{(i)}(\bm{x})^T  \widetilde{H}_{\bm{\beta}}(\bm{x}))_j = \begin{cases}
			(-1)^{n-b} \det \left( (\widetilde{H}_{\bm{\beta}}(\bm{x})_{k \ell} )_{\substack{k = i,n-b+1, \hdots, n \\ \ell =j, n-b+1, \hdots, n }} \right) \quad &\text{if $j \leq n-b$,} \\
			0 &\text{otherwise,}
		\end{cases}
	\end{equation}
	and 
	\begin{equation} \label{eq.determinants_laplace_y_2}
		(  \widetilde{H}_{\bm{\beta}}(\bm{x})\bm{y}_2^{(i)}(\bm{x}))_j = \begin{cases}
			(-1)^{n-b} \det \left( (\widetilde{H}_{\bm{\beta}}(\bm{x})_{k \ell} )_{\substack{k = j,n-b+1, \hdots, n \\ \ell =i, n-b+1, \hdots, n }} \right) \quad &\text{if $j \leq n-b$,} \\
			0 &\text{otherwise,}
		\end{cases}
	\end{equation}
	respectively. It follows from \eqref{eq.def_y_1} --- \eqref{eq.determinants_laplace_y_2} that there exist matrices $L_1^{(i)}$, $L_2^{(i)}$, $M_1^{(i)}$ and $M_2^{(i)}$ for $i = 1, \hdots, n-b$ with entries only in $\{0, \pm 1\}$ such that we obtain
	\begin{align}
		\bm{y}_1^{(i)}(\bm{x}) &= L_1^{(i)} \bm{D}^{(\bm{\beta}, b)}(\bm{x}), \\
		\bm{y}_2^{(i)}(\bm{x}) &= L_2^{(i)} \bm{D}^{(\bm{\beta}, b)}(\bm{x}), \label{eq.y_2_in_minors} \\
		(\bm{y}_1^{(i)}(\bm{x}))^T \widetilde{H}_{\bm{\beta}}(\bm{x}) &= [ M_1^{(i)}   \bm{D}^{(\bm{\beta}, b+1)}(\bm{x}) ]^T, \quad \text{and} \label{eq.transpose_y_1_H_tilde} \\
		 \widetilde{H}_{\bm{\beta}}(\bm{x}) \bm{y}_2^{(i)}(\bm{x}) &= M_2^{(i)}   \bm{D}^{(\bm{\beta}, b+1)}(\bm{x}). \label{eq.H_y_linear_span_minors}
	\end{align}
	Given $\bm{t} \in \mathbb{R}^n$ we write $\partial_{\bm{t}}$ for the directional derivative given by $\sum t_i \frac{\partial}{\partial x_i}$. Applying $\partial_{\bm{t}}$ to both sides of \eqref{eq.H_y_linear_span_minors} we obtain
	\begin{equation} \label{eq.taking_derivative}
		[\partial_{\bm{t}} \widetilde{H}_{\bm{\beta}}(\bm{x}) ] \bm{y}_2^{(i)}(\bm{x})  + \widetilde{H}_{\bm{\beta}}(\bm{x}) [\partial_{\bm{t}} \bm{y}_2^{(i)}(\bm{x})] = M_2^{(i)} [\partial_{\bm{t}}   \bm{D}^{(\bm{\beta}, b+1)}(\bm{x})]. 
	\end{equation}
	Now note
	\begin{equation} \label{eq.directional_deriv_easy_identity}
		\partial_{\bm{t}} \bm{D}^{(\bm{\beta}, b+1)}(\bm{x}) = J_{\bm{D}^{(\bm{\beta},b+1 )}}(\bm{x}) \bm{t}, \quad \text{and} \quad \partial_{\bm{t}} \widetilde{H}_{\bm{\beta}}(\bm{x}) = \widetilde{H}_{\bm{\beta}}(\bm{t}).
	\end{equation}
	Substituting \eqref{eq.directional_deriv_easy_identity} and \eqref{eq.y_2_in_minors} into \eqref{eq.taking_derivative} yields
	\begin{equation*}
		\widetilde{H}_{\bm{\beta}}(\bm{t}) \bm{y}_2^{(i)}(\bm{x}) = M_2^{(i)} J_{\bm{D}^{(\bm{\beta},b+1 )}}(\bm{x}) \bm{t} - \widetilde{H}_{\bm{\beta}}(\bm{x}) L_2^{(i)} \partial_{\bm{t}} \bm{D}^{(\bm{\beta}, b)}(\bm{x}).
	\end{equation*}
	If we premultiply this by $\bm{y}_1^{(j)}(\bm{x})^T$ and use \eqref{eq.transpose_y_1_H_tilde} then we obtain
	\begin{multline} \label{eq.bilinear_general_x}
		\bm{y}_1^{(j)}(\bm{x})^T \widetilde{H}_{\bm{\beta}}(\bm{t}) \bm{y}_2^{(i)}(\bm{x}) = \bm{y}_1^{(j)}(\bm{x})^T M_2^{(i)} J_{\bm{D}^{(\bm{\beta},b+1 )}}(\bm{x}) \bm{t} \\ - [ M_1^{(j)} \bm{D}^{(\bm{\beta}, b+1)}(\bm{x}) ]^T  [L_2^{(i)} \partial_{\bm{t}} \bm{D}^{(\bm{\beta}, b)}(\bm{x})].
	\end{multline}
	 Lemma~\ref{lem.minors_and_sing_values}~\eqref{lem.singular_1} yields the bounds
	 \begin{equation} \label{eq.bounds_minors_sing_values_1}
	 	\frac{\nnorm{\bm{D}^{(\bm{\beta},b+1)}(\bm{x})}_\infty}{\nnorm{\bm{D}^{(\bm{\beta},b)}(\bm{x})}_\infty} \ll_n \lambda_{\bm{\beta},b+1}(\bm{x}),
	 \end{equation}
	 and
	 \begin{equation} \label{eq.bounds_minors_sing_values_2}
	 	\frac{\nnorm{\partial_{\bm{t}} \bm{D}^{(\bm{\beta},b)}(\bm{x})}_\infty}{\nnorm{\bm{D}^{(\bm{\beta},b)}(\bm{x})}_\infty} \ll_n \frac{\nnorm{\bm{t}}_\infty}{\lambda_{\bm{\beta},b}(\bm{x})}.
	 \end{equation}
	 Now we specify $\bm{x} = \bm{x}^{(0)}$ so by assumption we have $\nnorm{\bm{D}^{(\bm{\beta},b)}(\bm{x}^{(0)})}_\infty > 0$. Thus define
	\begin{equation} \label{eq.Y_k_def}
		\bm{Y}_k^{(i)} = \frac{\bm{y}_k^{(i)}(\bm{x}^{(0)})}{\nnorm{\bm{D}^{(\bm{\beta},b)}(\bm{x}^{(0)})}_\infty}, \quad \text{for $i = 1, \hdots, n-b$ and $k = 1,2$.}
	\end{equation}
	Dividing~\eqref{eq.bilinear_general_x} by $1/\nnorm{\bm{D}^{(\bm{\beta},b)}(\bm{x}^{(0)})}_\infty^2$ and using~\eqref{eq.Y_k_def} as well as the bounds~\eqref{eq.bounds_minors_sing_values_1} and~\eqref{eq.bounds_minors_sing_values_2} gives
	\begin{equation*} \label{eq.blinear_specified_x_0}
		\norm{\bm{Y}_1^{(j)} \widetilde{H}_{\bm{\beta}}(\bm{t}) \bm{Y}_2^{(i)}} \ll_n \frac{\nnorm{J_{\bm{D}^{(\bm{\beta},b+1 )}}(\bm{x}^{(0)}) \bm{t}}_\infty}{\nnorm{\bm{D}^{(\bm{\beta},b)}(\bm{x}^{(0)})}_\infty} + \frac{\lambda_{\bm{\beta},b+1}(\bm{x}^{(0)}) \nnorm{\bm{t}}_\infty}{\lambda_{\bm{\beta},b}(\bm{x}^{(0)})}.
	\end{equation*}
	We claim now that we can take the subspaces $Y_k \subseteq \mathbb{R}^n$ to be defined as the span of $\bm{Y}_k^{(1)}, \hdots, \bm{Y}_k^{(n-b)}$ for $k = 1,2$ respectively, so that the Lemma holds. For this we need to show that~\eqref{eq.estimate_bilinear_form} holds, and also that $\dim Y_1 = \dim Y_2 = n-b$. Therefore it suffices to show the following claim: Given $\bm{\gamma} \in \mathbb{R}^{n-b}$ if we take $\bm{Y}_k = \sum \gamma_i Y_k^{(i)}$ then $\nnorm{\bm{\gamma}}_\infty \ll_n \nnorm{\bm{Y}_k}_\infty$, for $k = 1,2$ respectively.
	
	Assume that the $b \times b$ minor of $\widetilde{H}_{\bm{\beta}}(\bm{x}^{(0)})$ of largest absolute value lies in the bottom right corner of $\widetilde{H}_{\bm{\beta}}(\bm{x}^{(0)})$. In other words, we assume
	\begin{equation} \label{eq.assumption_that_is_wlog}
		\nnorm{\bm{D}^{(\bm{\beta},b)}(\bm{x}^{(0)})}_\infty = \norm{\det \left( (\widetilde{H}_{\bm{\beta}}(\bm{x}^{(0)})_{k \ell} )_{\substack{k = n-b+1, \hdots, n \\ \ell = n-b+1, \hdots, n }} \right)}.
	\end{equation}
	After permuting the rows and columns of $\widetilde{H}_{\bm{\beta}}(\bm{x}^{(0)})$ the identity \eqref{eq.assumption_that_is_wlog} will always be true. The vectors $\bm{Y}_k^{(i)}$ depend on minors of $\widetilde{H}_{\bm{\beta}}(\bm{x}^{(0)})$. Thus we can apply the same permutations to $\widetilde{H}_{\bm{\beta}}(\bm{x}^{(0)})$ that ensure that \eqref{eq.assumption_that_is_wlog} holds to the definition of these vectors. From this we see that we can always reduce the general case to the case where \eqref{eq.assumption_that_is_wlog} holds. 
	
	Now for $k= 1,2$ we define matrices 
	\begin{equation*}
		Q_k = \left( \bm{Y}_k^{(1)} \Big\vert \cdots \Big\vert \bm{Y}_k^{(n-b)} \Big\vert \bm{e}_{n-b+1} \Big\vert \cdots \Big\vert \bm{e}_{n} \right).
	\end{equation*}
	By the definition of $\bm{Y}_k^{(i)}$ we see that $Q_k$ must be of the following form
	\begin{equation*}
		Q_k = \begin{pmatrix}
			I_{n-b} & 0 \\
			\widetilde{Q}_k & I_{b}
		\end{pmatrix},
	\end{equation*}
	for some matrix $\widetilde{Q}_k$. In particular we find $\det Q_k = 1$ and so $\nnorm{Q_k^{-1}}_\infty \ll_n 1$. Given $\bm{Y}_k = \sum \gamma_i Y_k^{(i)}$ we thus find
	\begin{equation*}
		\nnorm{\bm{\gamma}}_\infty = \nnorm{Q_k^{-1} \bm{Y}_k}_\infty \ll_n \nnorm{\bm{Y}_k}_\infty,
	\end{equation*}
	and so the Lemma follows.
	\end{proof}
		The next Corollary is the main technical result from this section, which will allow us to deduce that either $N_2^{\mathrm{aux}}(\bm{\beta},B)$ is small or a suitable singular locus is large.

\begin{corollary} \label{cor.N_2_small_or_bilinear_vanishes}
	Let $B, C \geq 1$ and let $\sigma \in \{ 0, \hdots, n-1\}$. Then one of the following alternatives is true. Either we have the bound
	\begin{equation} \label{eq.corollary_alt_1}
		N_2^{\mathrm{aux}}(\bm{\beta},B) \ll_{C,n} B^{n+\sigma} (\log B)^n,
	\end{equation}
	or there exist subspaces $X, Y_1, Y_2 \subseteq \mathbb{R}^n$ with $\dim X + \dim Y_1 = \dim X + \dim Y_2 = n+\sigma +1$, such that
	\begin{equation} \label{eq.corollary_alt_2}
		\norm{\bm{Y}_1^T \widetilde{H}_{\bm{\beta}}(\bm{X}) \bm{Y}_2} \ll_n C^{-1} \nnorm{\bm{Y}_1}_\infty \nnorm{\bm{X}}_\infty \nnorm{\bm{Y}_2}_\infty 
	\end{equation}
	holds for all $\bm{X} \in X, \bm{Y}_1 \in Y_1, \bm{Y}_2 \in Y_2$.
\end{corollary}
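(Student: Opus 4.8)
The plan is to chain Lemma~\ref{lem.N_2_aux_in_terms_K_k}, which bounds $N_2^{\mathrm{aux}}(\bm{\beta},B)$ by lattice point counts in the sets $K_k(\cdot)$, with Lemma~\ref{lem.counting_K_k}, which analyses those counts, and then to feed alternative (II) of the latter into Lemma~\ref{lem.bilinear_small} to extract the bilinear estimate~\eqref{eq.corollary_alt_2}. First I would observe that we may assume $C$ exceeds a constant depending only on $n$: otherwise, taking $X=\mathbb{R}^n$ and $Y_1=Y_2$ any $(\sigma+1)$-dimensional subspace, the bound~\eqref{eq.useful_estimate_sing_values} makes~\eqref{eq.corollary_alt_2} hold trivially. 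Moreover, by~\eqref{eq.useful_estimate_sing_values} any set $K_k(E_1,\dots,E_{k+1})$ meeting $[-B,B]^n$ then satisfies $E_1\le CB$, so Lemma~\ref{lem.counting_K_k} applies to every set that arises.

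Applying Lemma~\ref{lem.N_2_aux_in_terms_K_k}, one of~\eqref{eq.alt_1},~\eqref{eq.alt_2},~\eqref{eq.alt_3} holds, and in each case there are $k\in\{0,\dots,n-1\}$ and powers of two $CB\ge E_1\ge\cdots\ge E_{k+1}\ge1$ with
\begin{equation*}
	\frac{(E_1\cdots E_k)\,N_2^{\mathrm{aux}}(\bm{\beta},B)}{B^n(\log B)^n}\ \ll_n\ \#\bigl(\mathbb{Z}^n\cap K_k(E_1,\dots,E_{k+1})\bigr),
\end{equation*}
with the conventions that in case~\eqref{eq.alt_1} we have $k=0$, $E_1=1$ and an empty product on the left, and in case~\eqref{eq.alt_3} we have $k=n-1$ with the left product running up to $E_{k+1}=E_n$. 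If $k>n-\sigma-1$, which forces $\sigma\ge1$, I would pass to the larger set $K_{n-\sigma-1}(E_1,\dots,E_{n-\sigma})$ — the inclusion is immediate since the $E_i$ are non-increasing — and apply Lemma~\ref{lem.counting_K_k} with $k$ replaced by $n-\sigma-1$; note $k=0$ always obeys $k\le n-\sigma-1$ as $\sigma\le n-1$.

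Now I would run through the three alternatives of Lemma~\ref{lem.counting_K_k}. In alternative (I) the count is $\ll_{C,n}B^\sigma(E_1\cdots E_{k+1})E_{k+1}^{\,n-\sigma-k-1}$; in every case that arises either $E_{k+1}=1$ or the exponent $n-\sigma-k-1$ vanishes, and the partial product $E_1\cdots E_k$ multiplying $N_2^{\mathrm{aux}}$ dominates $E_1\cdots E_{k+1}$ up to factors that are $\ge1$, so one gets $N_2^{\mathrm{aux}}(\bm{\beta},B)\ll_{C,n}B^{n+\sigma}(\log B)^n$, which is~\eqref{eq.corollary_alt_1}. In alternative (III) there is a $(\sigma+1)$-dimensional subspace $X$ with $\nnorm{\widetilde{H}_{\bm{\beta}}(\bm{X})}_\infty\le C^{-1}\nnorm{\bm{X}}_\infty$ on $X$; taking $Y_1=Y_2=\mathbb{R}^n$ and bounding the bilinear form entrywise gives~\eqref{eq.corollary_alt_2} with $\dim X+\dim Y_i=n+\sigma+1$. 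The substantive case is alternative (II), which yields $b\in\{1,\dots,k\}$, a $(\sigma+b+1)$-dimensional subspace $X$, and a point $\bm{x}^{(0)}\in K_b(E_1,\dots,E_{b+1})$ with $E_{b+1}<C^{-1}E_b$ and $\nnorm{J_{\bm{D}^{(\bm{\beta},b+1)}}(\bm{x}^{(0)})\bm{X}}_\infty\le C^{-1}\nnorm{\bm{D}^{(\bm{\beta},b)}(\bm{x}^{(0)})}_\infty\nnorm{\bm{X}}_\infty$ on $X$. Membership $\bm{x}^{(0)}\in K_b(\cdot)$ forces $\lambda_{\bm{\beta},b}(\bm{x}^{(0)})>\tfrac12E_b>0$, so by the first part of Lemma~\ref{lem.minors_and_sing_values} we have $\bm{D}^{(\bm{\beta},b)}(\bm{x}^{(0)})\ne\bm{0}$, and Lemma~\ref{lem.bilinear_small} produces subspaces $Y_1,Y_2$ of dimension $n-b$ with, for $\bm{X}\in X$ and $\bm{Y}_i\in Y_i$,
\begin{equation*}
	\bm{Y}_1^T\widetilde{H}_{\bm{\beta}}(\bm{X})\bm{Y}_2\ \ll_n\ \Bigl(\tfrac{\nnorm{J_{\bm{D}^{(\bm{\beta},b+1)}}(\bm{x}^{(0)})\bm{X}}_\infty}{\nnorm{\bm{D}^{(\bm{\beta},b)}(\bm{x}^{(0)})}_\infty}+\tfrac{\lambda_{\bm{\beta},b+1}(\bm{x}^{(0)})\nnorm{\bm{X}}_\infty}{\lambda_{\bm{\beta},b}(\bm{x}^{(0)})}\Bigr)\nnorm{\bm{Y}_1}_\infty\nnorm{\bm{Y}_2}_\infty .
\end{equation*}
The first bracketed term is $\le C^{-1}\nnorm{\bm{X}}_\infty$ by the (II) hypothesis, and the second is $<2C^{-1}\nnorm{\bm{X}}_\infty$ since $\lambda_{\bm{\beta},b+1}(\bm{x}^{(0)})\le E_{b+1}<C^{-1}E_b<2C^{-1}\lambda_{\bm{\beta},b}(\bm{x}^{(0)})$; hence~\eqref{eq.corollary_alt_2} holds with $\dim X+\dim Y_i=(\sigma+b+1)+(n-b)=n+\sigma+1$. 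I expect the only real difficulty to be the bookkeeping — keeping the dimension balance $\dim X+\dim Y_i=n+\sigma+1$ consistent across all alternatives and through the reduction $K_k\subseteq K_{n-\sigma-1}$ when $k$ is large relative to $\sigma$ — since Lemmas~\ref{lem.N_2_aux_in_terms_K_k},~\ref{lem.counting_K_k},~\ref{lem.bilinear_small} and~\ref{lem.minors_and_sing_values} already contain all the analytic and geometric substance.
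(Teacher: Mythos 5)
Your proposal is correct and follows essentially the same route as the paper: combine Lemma~\ref{lem.N_2_aux_in_terms_K_k} with Lemma~\ref{lem.counting_K_k}, absorb alternative (I) into the bound~\eqref{eq.corollary_alt_1}, take $Y_1=Y_2=\mathbb{R}^n$ in alternative (III), and in alternative (II) use $\lambda_{\bm{\beta},b}(\bm{x}^{(0)})>0$ together with Lemma~\ref{lem.minors_and_sing_values}~\eqref{lem.singular_1} and Lemma~\ref{lem.bilinear_small} to get~\eqref{eq.corollary_alt_2} with the dimension count $(\sigma+b+1)+(n-b)=n+\sigma+1$. Your explicit reduction $K_k\subseteq K_{n-\sigma-1}$ when $k>n-\sigma-1$ (and the check $E_1\le CB$ via~\eqref{eq.useful_estimate_sing_values}) is in fact a slightly more careful treatment of a point the paper's proof passes over silently.
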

\begin{proof}
	Let $k \in \{0, \hdots, n-\sigma-1\}$ and $E_1, \hdots, E_{k+1} \in \mathbb{R}$ be such that 
	\begin{equation*}
		CB \geq E_1 \geq \hdots \geq E_{k+1} \geq 1.
	\end{equation*}
	We know that one of the alternatives (I)\textsubscript{$k$}, (II)\textsubscript{$k$} or (III) in Lemma \ref{lem.counting_K_k} holds. Assume first that~(I)\textsubscript{$k$} always holds so that the estimate
	\begin{equation} \label{eq.K_k_bound}
			\# (\mathbb{Z}^n \cap K_k(E_1, \hdots, E_{k+1})) \ll_{C,n} B^\sigma (E_1 \cdots E_{k+1}) E_{k+1}^{n-\sigma-k-1}.
	\end{equation} 
	holds for every $k \in \{0, \hdots, n-\sigma-1\}$ and $E_1, \hdots, E_{k+1} \in \mathbb{R}$ such that $CB \geq E_1 \geq \hdots \geq E_{k+1} \geq 1$. From Lemma~\ref{lem.N_2_aux_in_terms_K_k} we find that either we have
		 \begin{equation} \label{eq.alt_1_in_proof}
		 \frac{N_2^{\mathrm{aux}}(\bm{\beta}, B)}{B^n (\log B)^n} \ll_n \# (\mathbb{Z}^n \cap K_0(1)),
		 \end{equation}
		 or  there exist nonnegative integers $e_1, \hdots, e_k$ for some $k \in \{1, \hdots, n-1\}$ such that $\log B \gg_n e_1 \geq \hdots \geq e_k$ and
		 \begin{equation} \label{eq.alt_2_in_proof}
		 \frac{2^{e_1 + \cdots + e_k} N_2^{\mathrm{aux}}(\bm{\beta}, B)}{B^n (\log B)^n} \ll_n \# (\mathbb{Z}^n \cap K_k (2^{e_1}, \hdots, 2^{e_k}, 1) ),
		 \end{equation}
		or there exist nonnegative integers $e_1, \hdots, e_n$ such that $\log B \gg_n e_1 \geq \hdots \geq e_n$ and
		 \begin{equation} \label{eq.alt_3_in_proof}
		 \frac{2^{e_1 + \cdots + e_n} N_2^{\mathrm{aux}}(\bm{\beta}, B)}{B^n (\log B)^n} \ll_n \# (\mathbb{Z}^n \cap K_{n-1} (2^{e_1}, \hdots, 2^{e_n}) ).
		 \end{equation}
		 We may take $C$ to be large enough depending on $n$ such that $CB \geq 2^{e_1}$ is satisfied. Then upon sbustituting the bound~\eqref{eq.K_k_bound} into either of~\eqref{eq.alt_1_in_proof}, \eqref{eq.alt_2_in_proof} or~\eqref{eq.alt_3_in_proof} gives~\eqref{eq.corollary_alt_1}.
		 
		 If~(III) holds in Lemma~\ref{lem.counting_K_k} we can take $Y_1 = Y_2 = \mathbb{R}^n$ so that~\eqref{eq.corollary_alt_2} follows from~\eqref{eq.H_tilde_small_on_subspace}.
		 
		 Finally, assume there exist $k \in \{0, \hdots, n-\sigma-1\}$ and $E_1, \hdots, E_{k+1} \in \mathbb{R}$ with $CB \geq E_1 \geq \hdots \geq E_{k+1} \geq 1$ such that~(II)\textsubscript{$k$} in Lemma \ref{lem.counting_K_k} holds. Recall this means there exists some integer $b \in \{1, \hdots, k\}$, a $(\sigma + b +1)$-dimensional subspace $X \subset \mathbb{R}^n$ and $\bm{x}^{(0)} \in K_b(E_1, \hdots, E_{b+1})$ such that $E_{b+1} < C^{-1} E_b$ and
		\begin{equation} \label{eq.jacobian_bound}
			\nnorm{J_{\bm{D}^{(\bm{\beta}, b+1)}}(\bm{x}^{(0)}) \bm{X} }_\infty \leq C^{-1}\nnorm{ \bm{D}^{(\bm{\beta}, b)}(\bm{x}^{(0)})}_\infty \nnorm{\bm{X}}_\infty \quad \text{for all $\bm{X} \in X$.}
		\end{equation}
		As $\bm{x}^{(0)} \in K_b(E_1, \hdots, E_{b+1})$ we have $E_i/2 < \lambda_{\bm{\beta},i}(\bm{x}^{(0)}) \leq E_i$ for $i = 1, \hdots, k$ and $\lambda_{\bm{\beta},b+1}(\bm{x}^{(0)}) \leq E_{b+1}$.  This, together with the fact that $E_{b+1} < C^{-1} E_b$ implies
		\begin{equation} \label{eq.sing_value_bound}
			\lambda_{\bm{\beta},b+1}(\bm{x}^{(0)}) < 2C^{-1}\lambda_{\bm{\beta},b}(\bm{x}^{(0)}).
		\end{equation}
		Also we find  $\lambda_{\bm{\beta},b}(\bm{x}^{(0)}) \neq 0$, from which it follows from Lemma~\ref{lem.minors_and_sing_values}~\eqref{lem.singular_1} that $\bm{D}^{(\bm{\beta},b)}(\bm{x}^{(0)}) \neq 0$. Thus we may apply Lemma~\ref{lem.bilinear_small} to obtain spaces $Y_1,Y_2 \subseteq \mathbb{R}^n$ with $\dim Y_1 = \dim Y_2 = n-b$  such that the estimate \eqref{eq.estimate_bilinear_form} holds. Now taking $\bm{t} = \bm{X}$ in~\eqref{eq.estimate_bilinear_form} and using~\eqref{eq.jacobian_bound} and~\eqref{eq.sing_value_bound} then~\eqref{eq.corollary_alt_2} follows. Since $\dim X = \sigma +b+1$ we also have $\dim X + \dim Y_1 = \dim X + \dim Y_2 = n+\sigma +1$ as desired.
\end{proof}
Recall the definition of the quantity
\begin{equation*}
	s_{\mathbb{R}}^{(2)} \coloneqq \left\lfloor \frac{\max_{\bm{\beta} \in \mathbb{R}^R \setminus \{ 0\}} \dim \mathbb{V}(H_{\bm{\beta}}(\bm{y}) \bm{x})}{2} \right\rfloor +1, 
\end{equation*}
where $\lfloor x \rfloor$ denotes the largest integer $m$ such that $m \leq x$.  Although we have been assuming $n_1=n_2$ throughout the definition of this quantity remains valid if $n_1 \neq n_2$. Note that we have  $\mathbb{V}(H_{\bm{\beta}}(\bm{y}) \bm{x}) \subsetneq \mathbb{P}^{n_1-1}_{\mathbb{C}} \times \mathbb{P}^{n_2-1}_{\mathbb{C}}$ for all $\bm{\beta} \in \mathbb{R}^R \setminus \{0\}$. For if not, then the matrix $H_{\bm{\beta}}(\bm{y})$ is identically zero for some $\bm{\beta} \in \mathbb{R}^R \setminus \{0\}$ contradicting the fact that $\mathbb{V}(\bm{F})$ is a complete intersection. In particular this yields $s_{\mathbb{R}}^{(2)} \leq \frac{n_1+n_2}{2}-1$.

Before we prove the main result of this section we require another small Lemma.

\begin{lemma} \label{lem.H_tilde_or_normal_H}
	Let $\bm{\beta} \in \mathbb{R} \setminus \{0\}$. The system of equations
	\begin{equation*}
		\bm{y}^T \widetilde{H}_{\bm{\beta}}(\bm{e}_\ell) \bm{x} = 0, \; \text{for $\ell = 1, \hdots, n$}  \quad \text{and} \quad H_{\bm{\beta}}(\bm{y})\bm{x} = \bm{0}
	\end{equation*}
	define the same variety in $\mathbb{P}_{\mathbb{C}}^{n-1} \times \mathbb{P}_{\mathbb{C}}^{n-1}$.
\end{lemma}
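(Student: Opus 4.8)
The plan is to verify that the two systems of bihomogeneous forms of bidegree $(2,1)$ appearing in the statement coincide form-by-form up to one and the same positive scalar, whence they visibly cut out the same variety in $\mathbb{P}^{n-1}_{\mathbb{C}}\times\mathbb{P}^{n-1}_{\mathbb{C}}$.

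First I would record the elementary identity $H_{\bm{\beta}}(\bm{y}) = \sum_{m=1}^{n} y_m H_{\bm{\beta}}(\bm{e}_m)$, which holds because the entries of $H_{\bm{\beta}}(\bm{y})$ are homogeneous linear forms in $\bm{y}$; here each $H_{\bm{\beta}}(\bm{e}_m)$ is a symmetric matrix with constant entries. Next I would unwind the definition of $\widetilde{H}_{\bm{\beta}}$: its $m$-th row is $(\bm{x})^{T}H_{\bm{\beta}}(\bm{e}_m)/\nnorm{\bm{\beta}\cdot\bm{F}}_\infty$, so evaluating at $\bm{x}=\bm{e}_\ell$ gives $\bigl(\widetilde{H}_{\bm{\beta}}(\bm{e}_\ell)\bigr)_{mj}=\bigl(H_{\bm{\beta}}(\bm{e}_m)\bigr)_{\ell j}/\nnorm{\bm{\beta}\cdot\bm{F}}_\infty$ for all $m,j$.

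Combining the two, for every $\ell=1,\dots,n$ I would compute
\[
\bm{y}^{T}\widetilde{H}_{\bm{\beta}}(\bm{e}_\ell)\bm{x}
= \nnorm{\bm{\beta}\cdot\bm{F}}_\infty^{-1}\sum_{m=1}^{n}\sum_{j=1}^{n} y_m\bigl(H_{\bm{\beta}}(\bm{e}_m)\bigr)_{\ell j}x_j
= \nnorm{\bm{\beta}\cdot\bm{F}}_\infty^{-1}\bigl(H_{\bm{\beta}}(\bm{y})\bm{x}\bigr)_\ell .
\]
Since $\mathbb{V}(\bm{F})$ is a complete intersection the forms $F_i$ are linearly independent, so $\bm{\beta}\cdot\bm{F}\neq 0$ and $\nnorm{\bm{\beta}\cdot\bm{F}}_\infty$ is a positive real number (in any case $\widetilde{H}_{\bm{\beta}}$ is only defined when this quantity is nonzero). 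Hence for each $\ell$ the equation $\bm{y}^{T}\widetilde{H}_{\bm{\beta}}(\bm{e}_\ell)\bm{x}=0$ has exactly the same zero locus as $\bigl(H_{\bm{\beta}}(\bm{y})\bm{x}\bigr)_\ell=0$, and letting $\ell$ run over $1,\dots,n$ shows the two systems define the same variety in $\mathbb{P}^{n-1}_{\mathbb{C}}\times\mathbb{P}^{n-1}_{\mathbb{C}}$.

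I do not anticipate any genuine difficulty: the argument is pure index bookkeeping, the one point to keep straight being the distinction between the index labelling the rows of $\widetilde{H}_{\bm{\beta}}$ and the index $\ell$ at which we evaluate, i.e. that the $\ell$-th form of the first system, once expanded in the matrices $H_{\bm{\beta}}(\bm{e}_m)$, recovers precisely the $\ell$-th coordinate of $H_{\bm{\beta}}(\bm{y})\bm{x}$.
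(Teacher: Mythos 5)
Your proposal is correct and follows essentially the same route as the paper: expand $\bm{y}^T\widetilde{H}_{\bm{\beta}}(\bm{e}_\ell)\bm{x}$ using the row description of $\widetilde{H}_{\bm{\beta}}$ and the linearity of $H_{\bm{\beta}}(\bm{y})$ in $\bm{y}$ to recover the $\ell$-th coordinate of $H_{\bm{\beta}}(\bm{y})\bm{x}$, up to the nonzero scalar $\nnorm{\bm{\beta}\cdot\bm{F}}_\infty^{-1}$ which does not affect the zero locus. Your explicit handling of that normalising scalar is the only (harmless) difference from the paper's write-up, which suppresses it.
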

\begin{proof}
	Recall that by definition we have
	\begin{equation*}
		\widetilde{H}_{\bm{\beta}}(\bm{z}) = \begin{pmatrix}
			\bm{z}^T H_{\bm{\beta}}(\bm{e}_1) \\
			\vdots \\
			\bm{z}^T H_{\bm{\beta}}(\bm{e}_n).
		\end{pmatrix}
	\end{equation*}
	For $\ell \in \{1, \hdots, n\}$ we get
	\begin{equation*}
		\bm{y}^T \widetilde{H}_{\bm{\beta}}(\bm{e}_\ell) \bm{x} = \bm{y}^T \begin{pmatrix}
			\bm{e}_\ell^T H_{\bm{\beta}}(\bm{e}_1) \bm{x} \\
			\vdots \\
			\bm{e}_\ell^T H_{\bm{\beta}}(\bm{e}_n) \bm{x}
		\end{pmatrix} = \sum_{i=1}^n y_i \bm{e}_\ell^T H_{\bm{\beta}}(\bm{e}_i) \bm{x} = \bm{e}_\ell^T H_{\bm{\beta}}(\bm{y}) \bm{x},
	\end{equation*}
	where the last line follows since the entries of $H_{\bm{\beta}}(\bm{y})$ are linear homogeneous in $\bm{y}$. The result is now immediate.
\end{proof}

\begin{proposition} \label{prop.N_2aux_small}
	Let $s_{\mathbb{R}}^{(2)}$ be defined as above and let $B \geq 1$. Then for all $\bm{\beta} \in \mathbb{R}^{R} \setminus \{0\}$ the following holds
	\begin{equation*} \label{eq.N_2_estimate_that_we_want}
				N_2^{\mathrm{aux}}(\bm{\beta},B) \ll_{n} B^{n+s_{\mathbb{R}}^{(2)}} (\log B)^n.
	\end{equation*}
\end{proposition}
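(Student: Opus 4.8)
The plan is to invoke Corollary~\ref{cor.N_2_small_or_bilinear_vanishes} with the choice $\sigma = s_{\mathbb{R}}^{(2)}$; this is admissible since $0 \le s_{\mathbb{R}}^{(2)} \le n-1$, the upper bound being the remark preceding the proposition specialised to $n_1 = n_2 = n$. For every $C \ge 1$ and every $\bm{\beta}$, the corollary offers one of two alternatives: the desired bound \eqref{eq.corollary_alt_1}, or the existence of subspaces $X, Y_1, Y_2 \subseteq \mathbb{R}^n$ with $\dim X + \dim Y_1 = \dim X + \dim Y_2 = n + s_{\mathbb{R}}^{(2)} + 1$ for which $\bm{Y}_1^{T}\widetilde{H}_{\bm{\beta}}(\bm{X})\bm{Y}_2$ is $\ll_n C^{-1}\nnorm{\bm{Y}_1}_\infty\nnorm{\bm{X}}_\infty\nnorm{\bm{Y}_2}_\infty$ on all of $X\times Y_1\times Y_2$. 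I will show there is a threshold $C^{*}=C^{*}(n)$ so that the second alternative is impossible once $C\ge C^{*}$, for every $\bm{\beta}\in\mathbb{R}^R\setminus\{\bm{0}\}$. Taking $C=C^{*}$ then forces \eqref{eq.corollary_alt_1} for all $\bm{\beta}$ and all $B\ge1$, and since $C^{*}$ depends only on $n$ this is exactly the claimed estimate.

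To produce $C^{*}$, suppose no such threshold exists. Then there are $C_N\to\infty$, vectors $\bm{\beta}_N\in\mathbb{R}^R\setminus\{\bm{0}\}$ and subspaces $X_N,Y_{1,N},Y_{2,N}$ with the stated dimension constraints and $|\bm{Y}_1^{T}\widetilde{H}_{\bm{\beta}_N}(\bm{X})\bm{Y}_2|\ll_n C_N^{-1}\nnorm{\bm{Y}_1}_\infty\nnorm{\bm{X}}_\infty\nnorm{\bm{Y}_2}_\infty$ on $X_N\times Y_{1,N}\times Y_{2,N}$. Since $\widetilde{H}_{\bm{\beta}}$ is unchanged under positive rescaling of $\bm{\beta}$, I may normalise $\nnorm{\bm{\beta}_N}_\infty=1$; after a pigeonhole step I may assume the triple $(\dim X_N,\dim Y_{1,N},\dim Y_{2,N})$ is constant, and then, using Bolzano--Weierstrass together with compactness of the relevant Grassmannians, I pass to a subsequence with $\bm{\beta}_N\to\bm{\beta}$ (so $\nnorm{\bm{\beta}}_\infty=1$, hence $\bm{\beta}\neq\bm{0}$), $X_N\to X$, $Y_{1,N}\to Y_1$, $Y_{2,N}\to Y_2$, dimensions preserved. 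Linear independence of the $F_i$ (a consequence of $\mathbb{V}(\bm{F})$ being a complete intersection) keeps $\nnorm{\bm{\gamma}\cdot\bm{F}}_\infty$ bounded below on $\{\nnorm{\bm{\gamma}}_\infty=1\}$, so $\widetilde{H}_{\bm{\gamma}}(\bm{x})$ is jointly continuous there in $(\bm{\gamma},\bm{x})$; letting $N\to\infty$ yields the exact identity $\bm{Y}_1^{T}\widetilde{H}_{\bm{\beta}}(\bm{X})\bm{Y}_2=0$ for all $\bm{X}\in X$, $\bm{Y}_1\in Y_1$, $\bm{Y}_2\in Y_2$.

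It remains to translate this degeneracy into geometry. From the computation in the proof of Lemma~\ref{lem.H_tilde_or_normal_H} (and the scalar $\nnorm{\bm{\beta}\cdot\bm{F}}_\infty$ in the definition of $\widetilde{H}_{\bm{\beta}}$), the vanishing above is equivalent to $\bm{X}^{T}H_{\bm{\beta}}(\bm{Y}_1)\bm{Y}_2=0$ for all such vectors, i.e.\ $H_{\bm{\beta}}(\bm{Y}_1)\bm{Y}_2\in X^{\perp}$ for all $\bm{Y}_1\in Y_1$, $\bm{Y}_2\in Y_2$. Put $d=\dim Y_1=\dim Y_2$ and $d_X=\dim X=n+s_{\mathbb{R}}^{(2)}+1-d$; since $d_X\le n$ we get $d\ge s_{\mathbb{R}}^{(2)}+1$. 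Consider $W\coloneqq\mathbb{V}(H_{\bm{\beta}}(\bm{y})\bm{x})\cap\bigl(\mathbb{P}(Y_2)\times\mathbb{P}(Y_1)\bigr)\subseteq\mathbb{P}^{n-1}_{\mathbb{C}}\times\mathbb{P}^{n-1}_{\mathbb{C}}$: on $\mathbb{P}(Y_2)\times\mathbb{P}(Y_1)$ the vector $H_{\bm{\beta}}(\bm{y})\bm{x}$ always lies in the $(n-d_X)$-dimensional space $X^{\perp}$, so composing with a linear isomorphism $X^{\perp}\to\mathbb{C}^{n-d_X}$ realises $W$ as the common zero locus in $\mathbb{P}(Y_2)\times\mathbb{P}(Y_1)$ of only $n-d_X$ bilinear forms; each of these drops the dimension by at most one by Corollary~\ref{cor.geometry_intersections} (the case $n=d_X$ being trivial), so, starting from $\dim\bigl(\mathbb{P}(Y_2)\times\mathbb{P}(Y_1)\bigr)=2d-2$,
\[
\dim W \;\ge\; 2d-2-(n-d_X) \;=\; d+s_{\mathbb{R}}^{(2)}-1 \;\ge\; 2 s_{\mathbb{R}}^{(2)}.
\]
Since $W\subseteq\mathbb{V}(H_{\bm{\beta}}(\bm{y})\bm{x})$, this gives $\dim\mathbb{V}(H_{\bm{\beta}}(\bm{y})\bm{x})\ge 2 s_{\mathbb{R}}^{(2)}$, contradicting the definition $s_{\mathbb{R}}^{(2)}=\lfloor\tfrac{1}{2}\max_{\bm{\beta}}\dim\mathbb{V}(H_{\bm{\beta}}(\bm{y})\bm{x})\rfloor+1$, which forces $\dim\mathbb{V}(H_{\bm{\beta}}(\bm{y})\bm{x})\le 2 s_{\mathbb{R}}^{(2)}-1$ for every $\bm{\beta}$. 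This yields the threshold $C^{*}$ and hence the proposition. I expect the delicate point to be the compactness and limiting argument producing a single $\bm{\beta}$ with an \emph{exact} (not merely approximate) degeneracy uniformly over all $\bm{\beta}$ and $B$; once that is in place, the dimension bookkeeping via Corollary~\ref{cor.geometry_intersections} closes the argument cleanly.
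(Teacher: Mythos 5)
Your proposal is correct and follows essentially the same route as the paper: Corollary~\ref{cor.N_2_small_or_bilinear_vanishes} with $\sigma = s_{\mathbb{R}}^{(2)}$, a normalisation–compactness argument producing a single $\bm{\beta}\neq\bm{0}$ and subspaces $X,Y_1,Y_2$ on which the bilinear form vanishes exactly, and then a contradiction with $\dim\mathbb{V}(H_{\bm{\beta}}(\bm{y})\bm{x})\leq 2s_{\mathbb{R}}^{(2)}-1$ via Lemma~\ref{lem.H_tilde_or_normal_H} and Corollary~\ref{cor.geometry_intersections}. Your endgame (viewing $W=\mathbb{V}(H_{\bm{\beta}}(\bm{y})\bm{x})\cap(\mathbb{P}(Y_2)\times\mathbb{P}(Y_1))$ as cut out by only $n-\dim X$ bilinear forms because $H_{\bm{\beta}}(\bm{Y}_1)\bm{Y}_2\in X^{\perp}$) is just an equivalent repackaging of the paper's construction of $W$ by $b=n-\dim X$ equations, with the same dimension count $\dim W\geq 2s_{\mathbb{R}}^{(2)}+b$, and your fixed-threshold $C^{*}$ formulation is a harmless variant of the paper's choice $C=N$ (with the same mild imprecision that the constant may also depend on $\bm{F}$, which is all that is needed downstream).
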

\begin{proof}
	Suppose for a contradiction the result were false. Then for each positive integer $N$ there exists some $\bm{\beta}_N$ such that
	\begin{equation*}
				N_2^{\mathrm{aux}}(\bm{\beta}_N,B) \geq N B^{n+s_{\mathbb{R}}^{(2)}} (\log B)^n.
	\end{equation*}
	From Corollary~\ref{cor.N_2_small_or_bilinear_vanishes} it follows that there are linear subspaces $X^{(N)}, Y_1^{(N)}, Y_2^{(N)} \subset \mathbb{R}^n$ with 
	\begin{equation*}
		\dim X^{(N)} + \dim Y_i^{(N)} = n+ s_{\mathbb{R}}^{(2)} + 1, \quad i=1,2,
	\end{equation*}
	such that for all $\bm{X} \in X^{(N)}$, $\bm{Y}_i \in Y_i^{(N)}$ we get
	\begin{equation*}
		\norm{\bm{Y}_1^T \widetilde{H}_{\bm{\beta}_N}(\bm{X}) \bm{Y}_2} \leq N^{-1} \nnorm{\bm{Y}_1}_\infty \nnorm{\bm{X}}_\infty \nnorm{\bm{Y}_2}_\infty.
	\end{equation*}
	Note that $\widetilde{H}_{\bm{\beta}_N}(\bm{\beta})$ is unchanged when $\bm{\beta}_N$ is multiplied by a constant. Thus we may assume $\nnorm{\bm{\beta}_N}_\infty = 1$ and consider a converging subsequence of $\bm{\beta}_{N_r}$ converging to $\bm{\beta}$, say, as $N \rightarrow \infty$. This delivers subspaces $X,Y_1,Y_2 \subset \mathbb{R}^n$ with $\dim X + \dim Y_i = n + s_{\mathbb{R}}^{(2)} + 1$ for $i = 1,2$ such that 
	\begin{equation*}
		\bm{Y}_1^T \widetilde{H}_{\bm{\beta}}(\bm{X}) \bm{Y}_2 = 0 \quad \text{for all $\bm{X} \in X, \bm{Y}_1 \in Y_1, \bm{Y}_2 \in Y_2$.}
	\end{equation*}
	There exists some $b \in \{0, \hdots, n- s_{\mathbb{R}}^{(2)}-1 \}$ such that $\dim X = n-b$ and $\dim Y_i = s_{\mathbb{R}}^{(2)} + b+1$. Now let $\bm{x}^{(1)}, \hdots, \bm{x}^{(n)}$ be a basis for $\mathbb{R}^n$ such that $\bm{x}^{(b+1)}, \hdots, \bm{x}^{(n)}$ is a basis for $X$. Write $[Y_i] \subset \mathbb{P}_{\mathbb{C}}^{n-1}$ for the linear subspace of $\mathbb{P}_{\mathbb{C}}^{n-1}$ associated to $Y_i$ for $i = 1,2$.
	
	Define the biprojective variety $W \subset [Y_1] \times [Y_2]$ in the variables $(\bm{y}_1, \bm{y}_2)$ by 
	\begin{equation*}
		W = \mathbb{V}(\bm{y}_1 \widetilde{H}_{\bm{\beta}}(\bm{x}^{(i)}) \bm{y}_2 )_{i = 1, \hdots, b}.
	\end{equation*}
	Since the non-trivial equations defining $W$ have bidegree $(1,1)$ we can apply Corollary~\ref{cor.geometry_intersections} to find
	\begin{equation} \label{eq.dim_w_lower_bound}
		\dim W \geq \dim [Y_1] \times [Y_2] - b = 2 s_{\mathbb{R}}^{(2)} + b.
	\end{equation}
	Given $(\bm{y}_1, \bm{y}_2) \in W$ we have in particular $(\bm{y}_1, \bm{y}_2) \in [Y_1] \times [Y_2]$ and so 
	\begin{equation*}
		\bm{y}_1 \widetilde{H}_{\bm{\beta}}(\bm{x}^{(i)}) \bm{y}_2 = 0, \quad \text{for $i = b+1, \hdots, n$,}
	\end{equation*}
	and hence  $\bm{y}_1 \widetilde{H}_{\bm{\beta}}(\bm{z}) \bm{y}_2 = 0$ for all $\bm{z} \in \mathbb{R}^n$. From Lemma~\ref{lem.H_tilde_or_normal_H} we thus see $H_{\bm{\beta}}(\bm{y}_1) \bm{y}_2 = 0$ for all $(\bm{y}_1, \bm{y}_2) \in W$. Hence in particular
	\begin{equation*} \label{eq.dim_W_upper_bound}
		\dim W \leq \dim \mathbb{V}(H_{\bm{\beta}}(\bm{y})\bm{x}) \leq 2 s_{\mathbb{R}}^{(2)} -1,
	\end{equation*}
	where we regard $\mathbb{V}(H_{\bm{\beta}}(\bm{y})\bm{x})$ as a variety in $\mathbb{P}_{\mathbb{C}}^{n-1} \times \mathbb{P}_{\mathbb{C}}^{n-1}$ in the variables $(\bm{x}, \bm{y})$. This together with \eqref{eq.dim_w_lower_bound} implies $b \leq -1$, which is clearly a contradiction.
	\end{proof}
	
In the next Lemma we show that $s_{\mathbb{R}}^{(2)}$ is small if $\mathbb{V}(\bm{F})$ defines a smooth complete intersection. For this we no longer assume $n_1 = n_2$.

\begin{lemma} \label{lem.sigma_2_bounds}
		Let $s_{\mathbb{R}}^{(2)}$ be defined as above. If $\mathbb{V}(\bm{F})$ is a smooth complete intersection in $\mathbb{P}_{\mathbb{C}}^{n_1-1} \times \mathbb{P}_{\mathbb{C}}^{n_2-1}$ then we have the bound
		\begin{equation} \label{eq.sigma_2_bounds}
			\frac{n_2-1}{2} \leq s_{\mathbb{R}}^{(2)} \leq \frac{n_2+R}{2}.
		\end{equation}
	\end{lemma}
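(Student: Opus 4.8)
The plan is to prove the two inequalities separately after reducing both to dimension bounds on
\[
D := \max_{\bm\beta\in\mathbb{R}^R\setminus\{0\}}\dim\mathbb{V}(H_{\bm\beta}(\bm y)\bm x).
\]
Since $s^{(2)}_{\mathbb{R}} = \lfloor D/2\rfloor + 1$ and the floor function is monotone, the asserted bounds $\frac{n_2-1}{2}\le s^{(2)}_{\mathbb{R}}\le\frac{n_2+R}{2}$ follow at once from
\[
n_2-2\ \le\ D\ \le\ n_2+R-2,
\]
using the elementary identities $\lfloor(n_2-2)/2\rfloor+1=\lfloor n_2/2\rfloor\ge(n_2-1)/2$ and $\lfloor(n_2+R-2)/2\rfloor+1=\lfloor(n_2+R)/2\rfloor\le(n_2+R)/2$. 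For $n_2\le1$ both bounds are vacuous, so I may assume $n_2\ge2$.

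For the lower bound $D\ge n_2-2$ I would fix any $\bm\beta\neq0$ and note that $\mathbb{V}(H_{\bm\beta}(\bm y)\bm x)\subset\mathbb{P}^{n_1-1}_{\mathbb{C}}\times\mathbb{P}^{n_2-1}_{\mathbb{C}}$ is cut out by the $n_1$ coordinates of the vector $H_{\bm\beta}(\bm y)\bm x$, each a form of bidegree $(1,1)$ (or identically zero, in which case it imposes no condition). Intersecting $\mathbb{P}^{n_1-1}_{\mathbb{C}}\times\mathbb{P}^{n_2-1}_{\mathbb{C}}$, of dimension $n_1+n_2-2$, successively with these $n_1$ divisors and applying Corollary~\ref{cor.geometry_intersections} at each nontrivial step yields $\dim\mathbb{V}(H_{\bm\beta}(\bm y)\bm x)\ge(n_1+n_2-2)-n_1=n_2-2$; the dimension stays $\ge1$ throughout since $n_2\ge2$, so the hypotheses of the corollary are met.

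For the upper bound $D\le n_2+R-2$, pick $\bm\beta\neq0$ achieving $\dim\mathbb{V}(H_{\bm\beta}(\bm y)\bm x)=D$. As in the proofs of Lemma~\ref{lem.sing_bf_small} and Lemma~\ref{lem.sigma_2-1_small}, the partial derivatives of $\bm\beta\cdot\bm F=\bm x^{T}H_{\bm\beta}(\bm y)\bm x$ are $\partial_{x_m}(\bm\beta\cdot\bm F)=2(H_{\bm\beta}(\bm y)\bm x)_m$ and $\partial_{y_\ell}(\bm\beta\cdot\bm F)=\bm x^{T}H_{\bm\beta}(\bm e_\ell)\bm x$, so that
\[
\mathrm{Sing}\,\mathbb{V}(\bm\beta\cdot\bm F)=\mathbb{V}(H_{\bm\beta}(\bm y)\bm x)\cap\bigcap_{\ell=1}^{n_2}\mathbb{V}\!\left(\bm x^{T}H_{\bm\beta}(\bm e_\ell)\bm x\right).
\]
Each form $\bm x^{T}H_{\bm\beta}(\bm e_\ell)\bm x$ has bidegree $(2,0)$, hence — when not identically zero — defines an \emph{effective} divisor on $\mathbb{P}^{n_1-1}_{\mathbb{C}}\times\mathbb{P}^{n_2-1}_{\mathbb{C}}$; the point is that Lemma~\ref{lem.geometry_intersections} needs only effectivity, not ampleness, for its dimension estimate. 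Applying that lemma $n_2$ times to a top-dimensional irreducible component of $\mathbb{V}(H_{\bm\beta}(\bm y)\bm x)$ gives $\dim\mathrm{Sing}\,\mathbb{V}(\bm\beta\cdot\bm F)\ge D-n_2$ (trivially so when $D<n_2$, by the convention $\dim\emptyset=-1$). Since $\mathbb{V}(\bm F)$ is a smooth complete intersection, Lemma~\ref{lem.sing_bf_small} yields $\dim\mathrm{Sing}\,\mathbb{V}(\bm\beta\cdot\bm F)\le R-2$, and combining the two inequalities gives $D\le n_2+R-2$, as needed.

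There is no real obstacle here; the argument is a short bookkeeping exercise built on Lemmas~\ref{lem.sing_bf_small}, \ref{lem.geometry_intersections} and Corollary~\ref{cor.geometry_intersections}. The one subtlety worth flagging is that the extra equations $\bm x^{T}H_{\bm\beta}(\bm e_\ell)\bm x$ carving the singular locus out of $\mathbb{V}(H_{\bm\beta}(\bm y)\bm x)$ are of bidegree $(2,0)$ and therefore not ample, so one must use Lemma~\ref{lem.geometry_intersections} (valid for effective divisors) rather than the ample-divisor Corollary~\ref{cor.geometry_intersections}; besides that, one only needs to keep track of the harmless degenerate cases where an intermediate intersection drops to dimension $\le0$.
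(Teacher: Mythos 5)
Your reduction of both bounds to $n_2-2\le D\le n_2+R-2$ for $D=\max_{\bm\beta\ne\bm0}\dim\mathbb{V}(H_{\bm\beta}(\bm y)\bm x)$, and your lower-bound argument via Corollary~\ref{cor.geometry_intersections}, are fine and agree with the paper. The gap is in the upper bound, at the step where you apply Lemma~\ref{lem.geometry_intersections} $n_2$ times to the bidegree-$(2,0)$ divisors $\mathbb{V}(\bm x^{T}H_{\bm\beta}(\bm e_\ell)\bm x)$ and conclude $\dim\mathrm{Sing}\,\mathbb{V}(\bm\beta\cdot\bm F)\ge D-n_2$. Effectivity of the divisor does suffice for the component-dimension estimate in Lemma~\ref{lem.geometry_intersections}, but that estimate is about the irreducible components of $D\cap Z$ and is vacuous when the intersection is empty; with the convention $\dim\emptyset=-1$ the inequality $\dim(D\cap Z)\ge\dim Z-1$ can then simply fail, and for these particular divisors emptiness is not confined to the regime $D<n_2$ which you dismiss as harmless. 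Indeed the divisors are pulled back from the first factor: any component of an intermediate intersection whose projection to $\mathbb{P}^{n_1-1}_{\mathbb{C}}$ is a single point $\bm x_0$ with $\bm x_0^{T}H_{\bm\beta}(\bm e_\ell)\bm x_0\ne0$ for some $\ell$ meets the corresponding divisor in the empty set, yet such a component can have dimension as large as $n_2-2$, being contained in $\{\bm x_0\}\times\mathbb{P}\big(\ker(\bm y\mapsto H_{\bm\beta}(\bm y)\bm x_0)\big)$. So the chain of intersections can die while its expected dimension is still nonnegative, and nothing about $\mathrm{Sing}\,\mathbb{V}(\bm\beta\cdot\bm F)$ follows.

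The failure is clearest when $\mathbb{V}(\bm x^{T}H_{\bm\beta}(\bm e_\ell)\bm x)_{\ell=1,\dots,n_2}=\emptyset$: then $\mathrm{Sing}\,\mathbb{V}(\bm\beta\cdot\bm F)=\emptyset$, and your claimed inequality $\dim\mathrm{Sing}\,\mathbb{V}(\bm\beta\cdot\bm F)\ge D-n_2$ is precisely the assertion $D\le n_2-1$, which is not a bookkeeping triviality: it is essentially Lemma~\ref{lem.dimensions_varieties_not_too_big}, proved in the paper via a fibre-dimension/Chevalley semicontinuity argument combined with the affine dimension theorem, and you never invoke it. This is exactly why the paper's proof splits into two cases: when the quadrics $\bm x^{T}H_{\bm\beta}(\bm e_\ell)\bm x$ have no common zero it applies Lemma~\ref{lem.dimensions_varieties_not_too_big} to get $D\le n_2-2$ directly, and when they do have a common zero $\bm x$ it first establishes nonemptiness of the singular locus (the vectors $H_{\bm\beta}(\bm e_\ell)\bm x$ are linearly dependent, so some $\bm y\ne\bm0$ satisfies $H_{\bm\beta}(\bm y)\bm x=\bm0$) before carrying out the codimension count and feeding the result into Lemma~\ref{lem.sing_bf_small}. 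As written, your key inequality is unproved, and repairing it requires precisely these missing ingredients.
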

	\begin{proof}
		Let $\bm{\beta} \in \mathbb{R}^R \setminus \{ \bm{0} \}$ be such that
		\begin{equation*}
			s_{\mathbb{R}}^{(2)} = \left\lfloor \frac{\dim \mathbb{V}(H_{\bm{\beta}}(\bm{y} )\bm{x}) }{2} \right\rfloor +1.
		\end{equation*}
		Note that then
		\begin{equation} \label{eq.sigma_dimensions_bounds}
			2s_{\mathbb{R}}^{(2)} -2 \leq \dim \mathbb{V}(H_{\bm{\beta}}(\bm{y} )\bm{x}) \leq 2 s_{\mathbb{R}}^{(2)} -1.
		\end{equation}
		The variety $\mathbb{V}(H_{\bm{\beta}}(\bm{y} )\bm{x}) \subset \mathbb{P}_{\mathbb{C}}^{n_1-1} \times \mathbb{P}_{\mathbb{C}}^{n_2-1}$ is defined by $n_1$ bilinear polynomials. Using Corollary~\ref{cor.geometry_intersections} we thus find
		\begin{equation*}
			\dim \mathbb{V}(H_{\bm{\beta}}(\bm{y} )\bm{x}) \geq n_2-2
		\end{equation*}
		so the lower bound in~\eqref{eq.sigma_2_bounds} follows.
		We proceed by considering two cases. 
		
	\noindent \textbf{Case 1: $\mathbb{V}(\bm{x}^T H_{\bm{\beta}}(\bm{e}_\ell) \bm{x})_{\ell = 1, \hdots, n_2} = \emptyset$.} Note that this can only happen if $n_2 \geq n_1$. We can therefore apply Lemma~\ref{lem.dimensions_varieties_not_too_big} with $V_1 = \mathbb{V}(\bm{x}^T H_{\bm{\beta}}(\bm{e}_\ell) \bm{x})_{\ell = 1, \hdots, n_2}$, $V_2 = \mathbb{V}(H_{\bm{\beta}}(\bm{y} )\bm{x})$ and $A_i = H_{\bm{\beta}}(\bm{e}_i)$ to find
	\[
	\dim \mathbb{V}(H_{\bm{\beta}}(\bm{y} )\bm{x}) \leq n_2-1 + \dim \mathbb{V}(\bm{x}^T H_{\bm{\beta}}(\bm{e}_\ell) \bm{x})_{\ell = 1, \hdots, n_2} = n_2-2.
	\]
	From this and~\eqref{eq.sigma_dimensions_bounds} the upper bound in~\eqref{eq.sigma_2_bounds} follows for this case.
	
\noindent \textbf{Case 2: $\mathbb{V}(\bm{x}^T H_{\bm{\beta}}(\bm{e}_\ell) \bm{x})_{\ell = 1, \hdots, n_2} \neq \emptyset$:} By assumption there exists $\bm{x} \in \mathbb{C}^{n_1} \setminus \{ \bm{0}\}$ such that
	\begin{equation*}
		\bm{x}^T H_{\bm{\beta}}(\bm{e}_\ell) \bm{x} = 0, \quad \text{for all $\ell = 1, \hdots, n_2$}.
	\end{equation*} 
	We claim that there exists $\bm{y} \in \mathbb{C}^{n_2} \setminus \{ \bm{0}\}$ such that $H_{\bm{\beta}}(\bm{y})\bm{x} = \bm{0}$. For this define the vectors
	\begin{equation*}
		\bm{u}_\ell = H_{\bm{\beta}}(\bm{e}_\ell) \bm{x}, \quad \ell = 1, \hdots, n_2.
	\end{equation*}
	Note that $\bm{x} \in \langle \bm{u}_1, \hdots, \bm{u}_{n_2} \rangle^\perp$ so these vectors must be linearly dependent. 	Thus there exist $y_1, \hdots, y_{n_2} \in \mathbb{C}$ not all zero, such that
	\begin{equation*}
		H_{\bm{\beta}}(\bm{y})\bm{x} = \sum_{\ell = 1}^{n_2} y_\ell H_{\bm{\beta}}(\bm{e}_\ell)\bm{x} = \bm{0},
	\end{equation*} 
	where the first equality followed since the entries of $H_{\bm{\beta}}(\bm{y})$ are linear homogeneous in $\bm{y}$. The claim follows. In particular it follows from this that
	\begin{equation*}
\left( \mathbb{V}(\bm{x}^T H_{\bm{\beta}}(\bm{e}_\ell) \bm{x})_{\ell = 1, \hdots, n_2} \times \mathbb{P}^{n_2-1} \right)\cap \mathbb{V}(H_{\bm{\beta}}(\bm{y})\bm{x}) \neq \emptyset.
	\end{equation*}	
	Using Lemma~\ref{lem.geometry_intersections} and~\eqref{eq.sigma_dimensions_bounds} we therefore find
	\begin{multline} \label{eq.dimensions_sigma_small}
		\dim \left[\left( \mathbb{V}(\bm{x}^T H_{\bm{\beta}}(\bm{e}_\ell) \bm{x})_{\ell = 1, \hdots, n_2} \times \mathbb{P}^{n_2-1} \right)\cap \mathbb{V}(H_{\bm{\beta}}(\bm{y})\bm{x})\right] \geq  \\
		\dim \mathbb{V}(H_{\bm{\beta}}(\bm{y})\bm{x}) - n_2 \geq 2 s_{\mathbb{R}}^{(2)} -n_2-2.
	\end{multline}
	Recall $\bm{\beta} \cdot \bm{F} = \bm{x}^T H_{\bm{\beta}}(\bm{y}) \bm{x}$ so that
	\begin{equation*}
		\mathrm{Sing} \mathbb{V} (\bm{\beta} \cdot \bm{F}) = \left( \mathbb{V}(\bm{x}^T H_{\bm{\beta}}(\bm{e}_\ell) \bm{x})_{\ell = 1, \hdots, n_2} \times \mathbb{P}^{n_2-1} \right) \cap \mathbb{V}(H_{\bm{\beta}}(\bm{y})\bm{x}).
	\end{equation*}
 	Under our assumptions we can apply Lemma~\ref{lem.sing_bf_small} to find $\dim \mathrm{Sing} \mathbb{V} (\bm{\beta} \cdot \bm{F}) \leq R-2$. The result follows from this and~\eqref{eq.dimensions_sigma_small}.
	\end{proof}
	
	\begin{proof}[Proof of Theorem~\ref{thm.2,1}]
	Applying Theorem~\ref{thm.n_aux_imply_result} it suffices to show
	\begin{equation} \label{eq.aux_suff}
	N_i^{\mathrm{aux}}(\bm{\beta}; B) \leq C_0 B^{2n - 4 \mathscr{C}},
	\end{equation}
	holds for all $\bm{\beta} \in \mathbb{R}^R \setminus \{ 0 \}$ and $i = 1,2$, where $\mathscr{C} > (2b+u)R$. Let 
	\begin{equation*}
		s = \max \{s_{\mathbb{R}}^{(1)}, s_{\mathbb{R}}^{(2)} \},
	\end{equation*}
	where $s_{\mathbb{R}}^{(1)}$ and $s_{\mathbb{R}}^{(2)}$ are as defined in~\eqref{eq.defn_s_1} and~\eqref{eq.defn_s_2}, respectively. From Proposition~\ref{prop.aux_counting_2-1_small} and Proposition~\ref{prop.N_2aux_small} for any $\varepsilon >0$ we get
	\begin{equation*}
		N_i^{\mathrm{aux}}(\bm{\beta}; B) \ll_\varepsilon B^{n+s+\varepsilon},
	\end{equation*} 
	with the implied constant not depending on $\bm{\beta}$. Choose $\varepsilon = \frac{n-s-(8b+4u)R}{2}$, which is a positive real number by our assumption~\eqref{eq.assumption_n_i_(2,1)}. Taking
	\begin{equation*}
		\mathscr{C} = \frac{n-s-\varepsilon}{4},
	\end{equation*}
	we see that from the assumption $n-s_{\mathbb{R}}^{(i)} > (8b+4u)R$ for $i = 1,2$ we must have $\mathscr{C} > (2b+u)R$ for this choice. Therefore~\eqref{eq.aux_suff} holds and the first part of the theorem follows upon applying Theorem~\ref{thm.n_aux_imply_result}.

For the second part recall we assume $n >(16b+8u+1)R$ and that the forms $F_i(\bm{x},\bm{y})$ define a smooth complete intersection in $\mathbb{P}_{\mathbb{C}}^{n-1} \times \mathbb{P}_{\mathbb{C}}^{n-1}$. By Lemma~\ref{lem.sigma_2-1_small} in this case we obtain
\[
s_{\mathbb{R}}^{(1)} \leq R,
\] 
and from Lemma~\ref{lem.sigma_2_bounds} we find
\[
s_{\mathbb{R}}^{(2)} \leq \frac{n+R}{2}.
\]
Therefore it is easily seen that assuming $n > (16b+8u+1)R$ implies that
\[
n-s_{\mathbb{R}}^{(i)} > (8b+4u)R
\]
holds for $i=1,2$, which is what we wanted to show.
\end{proof}

	\subsection{Proof of Theorem~\ref{thm.2,1_different_dimensions}}
	\begin{proof}[Proof of Theorem~\ref{thm.2,1_different_dimensions}]
	If $n_1 = n_2$ then the result follows immediately from Proposition~\ref{thm.2,1}. We have two cases to consider and although their strategies are very similar they are not entirely symmetric. Therefore it is necessary to consider them individually.
	
	\noindent \textbf{Case 1:} $n_1 > n_2$. We consider a new system of equations $\widetilde{F}_i({\bm{x}}, \tilde{\bm{y}})$ in the variables ${\bm{x}} = (x_1, \hdots, x_n)$ and $\tilde{\bm{y}} = (y_1, \hdots, y_{n_2}, y_{n_2+1}, \hdots, y_{n_1})$ where the forms $\widetilde{F}_i({\bm{x}}, \tilde{\bm{y}})$ satisfy \[ \widetilde{F}_i({\bm{x}}, \tilde{\bm{y}}) = F(\bm{x},\bm{y}),
	\]
	where $\bm{y} = (y_1, \hdots, y_{n_2})$. 
	Write $\widetilde{N}(P_1,P_2)$ for the counting function associated to the system $\widetilde{\bm{F}} = \bm{0}$ and the boxes $\mathcal{B}_1 \times (\mathcal{B}_2 \times [0,1]^{n_1-n_2})$. Note in particular, that if we replace $F$ by $\widetilde{F}$ in~\eqref{eq.sing_series_real_density} and~\eqref{eq.sing_series_p_adic_expression} then the expressions for the singular series and the singular integral remain unchanged.
	Further denote by $\tilde{s}_{\mathbb{R}}^{(i)}$ the quantities defined in~\eqref{eq.defn_s_1} and~\eqref{eq.defn_s_2} but with $F$ replaced by $\widetilde{F}$.
	Note that we have $\tilde{s}_{\mathbb{R}}^{(1)} = s_{\mathbb{R}}^{(1)}$ and $\tilde{s}_{\mathbb{R}}^{(2)} \leq s_{\mathbb{R}}^{(2)} + \frac{n_1-n_2}{2}$. Therefore the assumptions~\eqref{eq.assumption_n_i_(2,1)_introduction} imply
	\[
	n_1 - \tilde{s}_{\mathbb{R}}^{(i)} > (8b+4u)R
	\]
	for $i=1,2$. Hence we may apply Proposition~\ref{thm.2,1} in order to obtain
	\[
	\widetilde{N}(P_1,P_2) = {\mathfrak{I}} {\mathfrak{S}}P_1^{n_1-2R} P_2^{n_1-R} + O(P_1^{n_1-2R} P_2^{n_1-R} \min\{P_1,P_2\}^{-\delta}),
	\]
	for some $\delta >0$. Finally it is easy to see that 
	\begin{align*}
	\widetilde{N}(P_1,P_2) &= N(P_1,P_2) \# \left\{ \bm{t} \in \Z^{n_1-n_2} \cap [0,P_2]^{n_1-n_2} \right\} \\
	&= N(P_1,P_2) (P_2^{n_1-n_2} + O(P_2^{n_1-n_2-1})),
	\end{align*}
	and so~\eqref{eq.asymptotic_bidegree_2_1} follows.
	
	\noindent \textbf{Case 2:} $n_2 > n_1$ We deal with this very similarly as in the first case; we define a new system of forms $\widetilde{F}_i(\tilde{\bm{x}}, \bm{y})$ in the variables $\tilde{\bm{x}} = (x_1, \hdots, x_{n_2})$ and $\bm{y} = (y_1, \hdots, y_{n_2})$ such that
	\[
	\widetilde{F}_i({\bm{x}}, \tilde{\bm{y}}) = F_i(\bm{x},\bm{y})
	\]
	holds. As before we define a new counting function $\widetilde{N}(P_1, P_2)$ with respect to the new product of boxes  $\left(\mathcal{B}_1 \times [0,1]^{n_2-n_1}\right) \times \mathcal{B}_2$, and we define $\tilde{s}_{\mathbb{R}}^{(i)}$ similarly to the previous case. Note that $\tilde{s}_{\mathbb{R}}^{(1)} = s_{\mathbb{R}}^{(1)}+n_2-n_1$ and $\tilde{s}_{\mathbb{R}}^{(2)} \leq s_{\mathbb{R}}^{(2)} + \frac{n_2-n_1}{2}$ so that~\eqref{eq.assumption_n_i_(2,1)_introduction} gives
	\[
	n_2 - \tilde{s}_{\mathbb{R}}^{(i)} > (8b+4u)R,
	\]
	for $i=1,2$.
	Therefore Proposition~\ref{thm.2,1} applies and we deduce again that~\eqref{eq.asymptotic_bidegree_2_1} holds as desired.
	
	Finally we turn to the case when $\mathbb{V}(\bm{F})$ defines a smooth complete intersection. Note first that by Lemma~\ref{lem.sigma_2_bounds} we have
	\[
	s_{\mathbb{R}}^{(2)} \leq \frac{n_2+R}{2},
	\]
	and therefore the condition
	\[
	\frac{n_1+n_2}{2} - s_{\mathbb{R}}^{(2)} > (8b+4u)R
	\]
	is satisfied if we assume $n_1 > (16b+8u+1)R$. Further, by Lemma~\ref{lem.sigma_2-1_small} we have
	\[
	s_{\mathbb{R}}^{(1)} \leq \max\{0, n_1+R-n_2 \},
	\]
	and so we may replace the condition $n_1 - s_{\mathbb{R}}^{(1)} > (8b+4u)R$ by
	\[
	n_1- \max\{0, n_1+R-n_2 \} > (8b+4u)R.
	\]
	If $n_2 \geq n_1+R$ then this reduces to assuming $n_1 > (8b+4u+1)R$, which follows immediately since we assumed $n_1 > (16b+8u+1)R$. If $n_2 \leq n_1+R$ on the other hand, then this is equivalent to assuming
	\[
	n_2 > (8b+4u+1)R.
	\]
	In any case, the assumptions~\eqref{eq.assumptions_n_i_(2,1)_smooth_case} imply the assumptions~\eqref{eq.assumption_n_i_(2,1)_introduction} as desired.
	\end{proof}

\bibliography{refs.bib}
\bibliographystyle{plain}
\end{document}